\DeclareMathAlphabet{\mathsf}{OT1}{\sfdefault}{m}{n}
\newcommand{\nocontentsline}[3]{}
\newcommand{\tocless}[2]{\bgroup\let\addcontentsline=\nocontentsline#1{#2}\egroup}
\def\dual#1{\expandafter\dual@aux#1\@nil}
\def\dual@aux#1/#2\@nil{\begin{tabular}{@{}c@{}}#1\\#2\end{tabular}}
\newcommand{\stirlingtwo}[2]{\biggl\{\genfrac{}{}{0pt}{}{#1}{#2}\biggr\}}
\newcommand{\tstirlingtwo}[2]{\left\{\genfrac{}{}{0pt}{}{#1}{#2}\right\}}
\DeclareMathAlphabet{\amathbb}{U}{bbold}{m}{n}
\newtheoremstyle{teoremas}
{11pt}
{11pt}
{\itshape}
{}
{\bfseries}
{}
{.5em}
{}
\theoremstyle{teoremas}
\newtheorem{theorem}{Theorem}[section]
\newtheorem{corollary}[theorem]{Corollary}
\newtheorem{lemma}[theorem]{Lemma}
\newtheorem{proposition}[theorem]{Proposition}
\newtheoremstyle{definition}
{11pt}
{11pt}
{}
{}
{\bfseries}
{}
{.5em}
{}
\theoremstyle{definition}
\newtheorem{definition}[theorem]{Definition}
\newtheorem{conjecture}[theorem]{Conjecture}
\newtheorem{problem}[theorem]{Problem}
\newtheorem{example}[theorem]{Example}
\newtheorem{remark}[theorem]{Remark}
\crefname{theorem}{theorem}{theorems}
\Crefname{theorem}{Theorem}{Theorems}
\crefname{lemma}{lemma}{lemmas}
\Crefname{lemma}{Lemma}{Lemmas}
\crefname{proposition}{proposition}{propositions}
\Crefname{proposition}{Proposition}{Propositions}
\DeclareMathOperator{\rk}{rk}
\DeclareMathOperator{\Poin}{Poin}
\DeclareMathOperator{\Tor}{Tor}
\DeclareMathOperator{\Ext}{Ext}
\newcommand{\M}{\mathsf{M}}
\newcommand{\N}{\mathsf{N}}
\newcommand{\U}{\mathsf{U}}
\newcommand{\cI}{\mathcal{I}}
\newcommand{\Q}{\mathbb{Q}}
\newcommand{\Z}{\mathbb{Z}}
\newcommand{\Hilb}{\operatorname{Hilb}}
\renewcommand{\H}{\mathrm{H}}
\newcommand{\CH}{\mathrm{CH}}
\newcommand{\aug}{\operatorname{aug}}
\newcommand{\IH}{\mathrm{IH}}
\newcommand{\uH}{\underline{\mathrm{H}}}
\newcommand{\uCH}{\underline{\mathrm{CH}}}
\newcommand{\cL}{\mathcal{L}}
\newcommand{\Rep}{\operatorname{Rep}}
\newcommand{\gr}{\operatorname{gr}}
\newcommand{\VRep}{\operatorname{VRep}}
\newcommand{\Ind}{\operatorname{Ind}}
\newcommand{\Res}{\operatorname{Res}}
\newcommand{\Aut}{\operatorname{Aut}}
\newcommand{\col}{\operatorname{col}}
\newcommand{\asc}{\operatorname{asc}}
\newcommand{\bad}{\operatorname{bad}}
\newcommand{\des}{\operatorname{des}}
   \def\MR#1{}
\title[Hilbert series of matroids]{Hilbert--Poincar\'e series of matroid\\ Chow rings and intersection cohomology}
\author[L. Ferroni]{Luis Ferroni}
\address{(L. Ferroni)
  Department of Mathematics, KTH Royal Institute of Technology, Stockholm, Sweden
}
\email{ferroni@kth.se}
\thanks{LF is supported by the Swedish
Research Council, Grant 2018-03968. }
\author[J. Matherne]{Jacob P. Matherne}
\address{(J. Matherne)
Department of Mathematics, North Carolina State University, Raleigh, NC, USA.
}
\email{jpmather@ncsu.edu}
\thanks{JM was supported by the Max Planck Institute for Mathematics in Bonn and the Deutsche Forschungsgemeinschaft (DFG) under Germany's Excellence Strategy - GZ 2047/1, Projekt-ID 390685813.}
\author[M. Stevens]{Matthew Stevens}
\address{(M. Stevens) Department of Mathematics, University of Pennsylvania, Philadelphia, PA, USA}
\email{mcs0042@sas.upenn.edu}
\author[L. Vecchi]{Lorenzo Vecchi}
\address{(L. Vecchi)
  Department of Mathematics, KTH Royal Institute of Technology, Stockholm, Sweden
}
\email{lvecchi@kth.se}
\subjclass[2020]{Primary: 05B35, 14C15, 52B40, 13D40, 05A15. Secondary: 52B45, 05E18, 05E45, 30C15.}
\begin{document}

\begin{abstract}
    We study the Hilbert series of four objects arising in the Chow-theoretic and Kazhdan--Lusztig framework of matroids. These are, respectively, the Hilbert series of the Chow ring, the augmented Chow ring, the intersection cohomology module, and its stalk at the empty flat. (The last two are known as the $Z$-polynomial and the Kazhdan--Lusztig polynomial, respectively.)
    We develop an explicit parallelism between the Kazhdan--Lusztig polynomial of a matroid and the Hilbert--Poincar\'e series of its Chow ring. This extends to a parallelism between the $Z$-polynomial of a matroid and the Hilbert--Poincar\'e series of its augmented Chow ring. This suggests to bring ideas from one framework to the other. Our two main motivations are the real-rootedness conjecture for all of these polynomials, and the problem of computing them.
    
    We provide several intrinsic definitions of these invariants via recursions they satisfy.
    Uniform matroids are a case of combinatorial interest; we link the resulting polynomials with certain real-rooted families appearing in combinatorics such as the Eulerian and the binomial Eulerian polynomials, and we settle a conjecture of Hameister, Rao, and Simpson.  Furthermore, we prove the real-rootedness of the Hilbert series of the augmented Chow rings of uniform matroids via a technique introduced by Haglund and Zhang; and in addition, we prove a version of a conjecture of Gedeon in the Chow setting: uniform matroids maximize coefficient-wise these polynomials for matroids with fixed rank and cardinality.
    By relying on the nonnegativity of the coefficients of the Kazhdan--Lusztig polynomials and the semi-small decompositions of Braden, Huh, Matherne, Proudfoot, and Wang, we strengthen the unimodality of the Hilbert series of Chow rings, augmented Chow rings, and intersection cohomologies to $\gamma$-positivity, a property for palindromic polynomials that lies between unimodality and real-rootedness; this also settles a conjecture of Ferroni, Nasr, and Vecchi. 
\end{abstract}

\maketitle


\section{Introduction}\label{sec:introduction}

\subsection{Overview} Starting from a loopless matroid $\M$, one can construct various algebraic objects having good properties that in turn can be used to answer purely combinatorial questions. Notable examples of such objects are the \textbf{Chow ring} $\uCH(\M)$, the \textbf{augmented Chow ring} $\CH(\M)$, and the \textbf{intersection cohomology module} $\IH(\M)$. These three algebraic structures possess a number of remarkable features and play an instrumental role in the proofs of the log-concavity of the Whitney numbers of the first kind \cite{adiprasito-huh-katz}, and the top-heaviness of the Whitney numbers of the second kind of the lattice of flats $\mathcal{L}(\M)$ \cite{braden-huh-matherne-proudfoot-wang}. Our main objects of study in this article are the coefficients of the respective Hilbert--Poincar\'e series of each of these structures. 

The intersection cohomology module $\IH(\M)$ is particularly relevant in the Kazhdan--Lusztig theory of matroids. Its Hilbert--Poincar\'e series is known in the literature as the \textbf{$Z$-polynomial} of the matroid $\M$ and is denoted by $Z_{\M}(x)$. In \cite{proudfoot-xu-young} Proudfoot, Xu, and Young introduced this polynomial using a purely combinatorial language, i.e. without making reference to any (at that point) meaningful notion of intersection cohomology for general matroids. Later, in the work of Braden, Huh, Matherne, Proudfoot, and Wang a precise description of the intersection cohomology was obtained. In fact, the $Z$-polynomial is strongly related to the \textbf{Kazhdan--Lusztig polynomial} of $\M$, denoted $P_\M(x)$ and first studied by Elias, Proudfoot, and Wakefield in \cite{elias-proudfoot-wakefield}. The following can be used as a simultaneous recursive definition for both the Kazhdan--Lusztig and $Z$-polynomials of matroids.

\begin{theorem}[{\cite{proudfoot-xu-young,braden-vysogorets}}]\label{thm:definition-kl-and-zeta}
    There is a unique way to assign to each loopless matroid $\M$ a polynomial $P_{\M}(x) \in \Z[x]$ such that the following properties hold:
    \begin{enumerate}[\normalfont(i)]
        \item If $\rk(\M) = 0$, then $P_{\M}(x) = 1$.
        \item If $\rk(\M) > 0$, then $\deg P_{\M}(x) < \frac{1}{2} \rk(\M)$.
        \item For every matroid $\M$, the polynomial
            \[ Z_{\M}(x) := \sum_{F\in \mathcal{L}(\M)} x^{\rk(F)}\, P_{\M/F}(x)\]
        is palindromic.\footnote{In this context, we say that $p(x)$ is palindromic if $p(x) = x^d p(x^{-1})$ where $d = \deg p(x)$. For example, the polynomial $q(x) = x^2+x$ is \emph{not} palindromic although it satisfies $x^3q(x^{-1})=q(x)$.}
    \end{enumerate}
\end{theorem}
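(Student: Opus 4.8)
The plan is to argue by strong induction on $\rk(\M)$, using condition (ii) to decouple the palindromicity requirement (iii) into a part that \emph{forces} the definition of $P_{\M}$ and a part that is then \emph{automatic}. For the base case $\rk(\M)=0$, looplessness forces $\M$ to be the empty matroid, so $\mathcal{L}(\M)=\{\hat 0\}$, $Z_{\M}(x)=P_{\M}(x)$, and (i) forces $P_{\M}(x)=1$, which is palindromic; existence and uniqueness are immediate. So suppose $\rk(\M)=:d>0$ and that $P_{\M'}$ has been defined, uniquely, for every loopless matroid $\M'$ with $\rk(\M')<d$. For a flat $F\neq\hat 0$ of $\M$, the contraction $\M/F$ is loopless of rank $d-\rk(F)<d$, so $P_{\M/F}$ is already determined; hence in
\[ Z_{\M}(x)=P_{\M}(x)+Q_{\M}(x),\qquad Q_{\M}(x):=\sum_{\hat 0\neq F\in\mathcal{L}(\M)}x^{\rk(F)}\,P_{\M/F}(x), \]
the polynomial $Q_{\M}\in\Z[x]$ is known and $P_{\M}$ is the only unknown.

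Next I would pin down $\deg Z_{\M}$. For $F\neq E$ with $F\neq\hat 0$, the inductive form of (ii) gives $\deg\!\big(x^{\rk(F)}P_{\M/F}\big)<\rk(F)+\tfrac12(d-\rk(F))=\tfrac12(d+\rk(F))\le d-\tfrac12$, so that summand has degree $<d$; the summand $F=E$ equals $x^{d}$ exactly; and $\deg P_{\M}<\tfrac d2\le d$. Therefore $\deg Q_{\M}=\deg Z_{\M}=d$, and requirement (iii) becomes the identity $Z_{\M}(x)=x^{d}Z_{\M}(x^{-1})$.

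Now set $W_{\M}(x):=x^{d}Z_{\M}(x^{-1})-Z_{\M}(x)$, a polynomial of degree $\le d$. A direct computation gives $x^{d}W_{\M}(x^{-1})=-W_{\M}(x)$, so the coefficient of $x^{i}$ in $W_{\M}$ equals minus that of $x^{d-i}$; hence $W_{\M}\equiv 0$ if and only if its truncation $[W_{\M}]_{<d/2}$ to degrees $<d/2$ vanishes (the coefficients in degrees $>d/2$ being forced from those in degrees $<d/2$, and the middle one, when $d$ is even, being forced to vanish by antisymmetry). Since $\deg P_{\M}<d/2$, the polynomial $x^{d}P_{\M}(x^{-1})$ has all its terms in degrees $>d/2$, so
\[ [W_{\M}]_{<d/2}=\big[x^{d}Q_{\M}(x^{-1})\big]_{<d/2}-P_{\M}(x)-[Q_{\M}(x)]_{<d/2}, \]
which vanishes if and only if $P_{\M}(x)=\big[x^{d}Q_{\M}(x^{-1})-Q_{\M}(x)\big]_{<d/2}$. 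This formula defines a polynomial in $\Z[x]$ (because $Q_{\M}\in\Z[x]$) of degree $<\tfrac12\rk(\M)$ (by construction), and with $P_{\M}$ so defined, $[W_{\M}]_{<d/2}=0$, hence $W_{\M}\equiv 0$, hence $Z_{\M}$ is palindromic; this gives existence. Conversely, any $P_{\M}$ satisfying (i)--(iii) leaves $Q_{\M}$ unchanged, still forces $\deg Z_{\M}=d$ and $\deg P_{\M}<d/2$, hence must solve the displayed equation, so it equals the $P_{\M}$ just constructed; this gives uniqueness and completes the induction.

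The only delicate point is the bookkeeping around $d/2$: one must be slightly careful with the parity of $d$ and with the middle coefficient, but the antisymmetry $x^{d}W_{\M}(x^{-1})=-W_{\M}(x)$ handles all cases uniformly. I expect that small case analysis, together with the verification that $\deg Z_{\M}=\rk(\M)$, to be the fussiest step; conceptually, condition (ii) is precisely what guarantees that $P_{\M}$ contributes nothing in degrees $\ge d/2$, which is exactly what makes the low-degree half of the palindromicity equation both solvable and uniquely solvable.
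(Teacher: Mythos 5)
Your argument is correct. All the key checks go through: the contractions $\M/F$ with $F\neq\varnothing$ have strictly smaller rank, so induction on rank is legitimate; the degree estimate $\deg\bigl(x^{\rk(F)}P_{\M/F}\bigr)<\tfrac12(d+\rk(F))\le d-\tfrac12$ for proper nonempty flats, together with the exact term $x^{d}$ from $F=E$ and $\deg P_{\M}<d/2<d$, does pin down $\deg Z_{\M}=d$ independently of the choice of $P_{\M}$ (this matters because the footnote's notion of palindromicity requires the center of symmetry to sit at $\deg Z_\M$); and the antisymmetry $x^{d}W_{\M}(x^{-1})=-W_{\M}(x)$ correctly reduces $W_{\M}\equiv 0$ to the vanishing of its part in degrees $<d/2$, where $P_{\M}$ enters linearly and the reversed terms $x^{d}P_{\M}(x^{-1})$ do not appear. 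That yields both the forced formula $P_{\M}(x)=\bigl[x^{d}Q_{\M}(x^{-1})-Q_{\M}(x)\bigr]_{<d/2}$ and, conversely, existence.

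For comparison: the paper does not prove this statement at all — it is quoted from Proudfoot--Xu--Young and Braden--Vysogorets — but it does prove the structurally parallel Theorem~\ref{thm:intrinsic-main-body} for the Chow polynomials, and there the mechanism is different from yours. That proof invokes Lemma~\ref{lemma:stapledon2}, the unique decomposition $p(x)=a(x)+b(x)$ with $a$ symmetric about $d/2$ and $b$ symmetric about $(d-1)/2$, and defines the unknown polynomial as $-b(x)$; this works because in the Chow setting the unknown is itself required to be symmetric with center $(\rk(\M)-1)/2$ and only bounded in degree by $\rk(\M)$. Your route is the classical Kazhdan--Lusztig--Stanley-style argument: isolate the known part $Q_{\M}$ and observe that the low-degree half of the palindromicity equation both determines and is solved by a polynomial of degree $<d/2$. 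The two approaches implement the same decoupling idea, but the symmetric-decomposition lemma is tailored to the Chow-type degree/symmetry constraints, while your truncation-plus-antisymmetry argument is the one adapted to the strict bound $\deg P_{\M}<\tfrac12\rk(\M)$, where no symmetry of $P_{\M}$ is available; it is essentially the argument in the cited sources.
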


The original definition of the Kazhdan--Lusztig polynomials of Elias, Proudfoot, and Wakefield is given by the following result, which provides a recursion that defines them uniquely in terms of characteristic polynomials of restrictions $\M|_F$ and Kazhdan--Lusztig polynomials of contractions $\M/F$ for flats $F\in\mathcal{L}(\M)$. In contrast to Theorem~\ref{thm:definition-kl-and-zeta}, this does not make any reference to the $Z$-polynomial.

\begin{theorem}[\cite{elias-proudfoot-wakefield}]\label{thm:kazhdan-lusztig}
    There is a unique way to assign to each loopless matroid $\M$ a polynomial $P_{\M}(x)\in \mathbb{Z}[x]$ such that the following conditions hold:
    \begin{enumerate}[\normalfont (i)]
        \item If $\rk(\M) = 0$, then $P_{\M}(x) = 1$.
        \item If $\rk(\M) > 0$, then $\deg P_{\M}(x) < \frac{1}{2} \rk(\M)$.
        \item For every matroid $\M$, the following recursion holds:
            \[ x^{\rk(\M)} P_{\M}(x^{-1}) = \sum_{F\in\mathcal{L}(\M)} \chi_{\M|_F}(x)\, P_{\M/F}(x).\]
    \end{enumerate}
\end{theorem}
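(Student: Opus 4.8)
The plan is to establish existence and uniqueness simultaneously, by strong induction on $\rk(\M)$. The base case $\rk(\M)=0$ is immediate: (i) forces $P_{\M}(x)=1$, (ii) is vacuous, and (iii) reads $1=\chi_{\M}(x)\cdot 1$, which holds because the characteristic polynomial of the rank-zero matroid is the constant $1$. For the inductive step, fix a loopless matroid $\M$ with $n:=\rk(\M)\geq 1$. Since $\M$ is loopless, each $\M|_F$ is loopless, and $\M/F$ is loopless with $\rk(\M/F)=n-\rk(F)$ for every flat $F$; in particular $\rk(\M/F)<n$ when $F\neq\emptyset$, so by the inductive hypothesis the polynomials $P_{\M/F}$ exist and are unique for all $F\neq\emptyset$. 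Separating off the $F=\emptyset$ summand $\chi_{\M|_{\emptyset}}(x)\,P_{\M/\emptyset}(x)=P_{\M}(x)$, condition (iii) for $\M$ becomes equivalent to
\[
    x^{n}P_{\M}(x^{-1})-P_{\M}(x) \;=\; Q_{\M}(x), \qquad \text{where} \qquad Q_{\M}(x):=\sum_{\emptyset\neq F\in\mathcal{L}(\M)}\chi_{\M|_F}(x)\,P_{\M/F}(x),
\]
and $Q_{\M}(x)$ is already determined by the inductive data.

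Uniqueness is then easy: if $P$ and $P'$ both satisfy (i)--(iii) for $\M$, they yield the same $Q_{\M}$, hence $x^{n}(P-P')(x^{-1})=(P-P')(x)$; if $P-P'\neq 0$ with $d:=\deg(P-P')<n/2$, the left-hand side is divisible by $x^{\,n-d}$ with $n-d>n/2>d$, forcing $P-P'=0$. For existence, I would first record that $Q_{\M}(x)$ is monic of degree exactly $n$: the term $F=E$ (the unique rank-$n$ flat) contributes $\chi_{\M|_E}(x)\,P_{\M/E}(x)=\chi_{\M}(x)\cdot 1$, while every other nonempty flat contributes a polynomial of degree $\rk(F)+\deg P_{\M/F}<\rk(F)+\tfrac12\bigl(n-\rk(F)\bigr)=\tfrac12\bigl(n+\rk(F)\bigr)\leq n$, with equality impossible since $\rk(F)\leq n-1$. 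The crux is the claim that $Q_{\M}$ is \emph{anti-palindromic of degree $n$}, meaning $x^{n}Q_{\M}(x^{-1})=-Q_{\M}(x)$. Granting this, write $Q_{\M}(x)=\sum_{j=0}^{n}q_jx^j$ with $q_{n-j}=-q_j$ (so that $q_{n/2}=0$ when $n$ is even), and simply set
\[
    P_{\M}(x) \;:=\; -\sum_{j=0}^{\lfloor (n-1)/2\rfloor} q_j\,x^{j}.
\]
Then $\deg P_{\M}\leq\lfloor(n-1)/2\rfloor<n/2$, giving (ii); and since the supports of $P_{\M}(x)$ and $x^{n}P_{\M}(x^{-1})$ are the complementary sets $\{0,\dots,\lfloor(n-1)/2\rfloor\}$ and $\{\lceil(n+1)/2\rceil,\dots,n\}$, the relation $q_{n-j}=-q_j$ yields $x^{n}P_{\M}(x^{-1})-P_{\M}(x)=Q_{\M}(x)$, which is (iii).

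It remains to prove the anti-palindromicity of $Q_{\M}$. The key input is the purely combinatorial identity
\[
    \sum_{F\in\mathcal{L}(\N)}\Bigl(x^{\rk(F)}\chi_{\N|_F}(x^{-1})\Bigr)\,\chi_{\N/F}(x) \;=\; x^{\rk(\N)}\,\chi_{\N}(1) \;=\; 0,
\]
valid for every loopless matroid $\N$ of rank at least $1$. To prove it, expand $x^{\rk(F)}\chi_{\N|_F}(x^{-1})=\sum_{H\leq F}\mu(\hat{0},H)\,x^{\rk(H)}$ and $\chi_{\N/F}(x)=\sum_{G\geq F}\mu(F,G)\,x^{\rk(\N)-\rk(G)}$ in terms of the Möbius function of $\mathcal{L}(\N)$, interchange the order of summation, and use $\sum_{F:\,H\leq F\leq G}\mu(F,G)=\delta_{H,G}$ to collapse everything to $x^{\rk(\N)}\sum_{G}\mu(\hat{0},G)$; the remaining sum is $\chi_{\N}(1)=\sum_{\hat{0}\leq G\leq\hat{1}}\mu(\hat{0},G)=0$ since $\hat 0\neq\hat 1$ when $\rk(\N)\geq 1$.

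Now, applying the inductive recursion (iii) to $\M/F$ for each nonempty flat $F$ (using that the flats of $\M/F$ are the $G/F$ for flats $G\supseteq F$, that $(\M/F)|_{G/F}=(\M|_G)/F$, and that $(\M/F)/(G/F)=\M/G$) gives $x^{\,n-\rk(F)}P_{\M/F}(x^{-1})=\sum_{G\supseteq F}\chi_{(\M|_G)/F}(x)\,P_{\M/G}(x)$, whence, after writing $x^n=x^{\rk(F)}x^{\,n-\rk(F)}$ and interchanging the $F$- and $G$-sums,
\[
    x^{n}Q_{\M}(x^{-1}) \;=\; \sum_{\emptyset\neq F}\Bigl(x^{\rk(F)}\chi_{\M|_F}(x^{-1})\Bigr)\Bigl(x^{\,n-\rk(F)}P_{\M/F}(x^{-1})\Bigr) \;=\; \sum_{\emptyset\neq G}P_{\M/G}(x)\sum_{\emptyset\neq F\subseteq G}\Bigl(x^{\rk(F)}\chi_{\M|_F}(x^{-1})\Bigr)\chi_{(\M|_G)/F}(x).
\]
Applying the displayed identity to $\N=\M|_G$ (whose nonempty flats are exactly the flats $F$ of $\M$ with $\emptyset\neq F\subseteq G$, and whose $F=\emptyset$ term is $\chi_{\M|_G}(x)$), the inner sum equals $-\chi_{\M|_G}(x)$, so $x^{n}Q_{\M}(x^{-1})=-\sum_{\emptyset\neq G}\chi_{\M|_G}(x)P_{\M/G}(x)=-Q_{\M}(x)$, as desired. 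The main obstacle is exactly isolating this anti-palindromicity statement and proving the Möbius identity that powers it; once those are in hand, the bookkeeping with flats of contractions and the concluding degree-splitting argument are routine.
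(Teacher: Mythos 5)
Your proof is correct. Note that the paper does not prove this statement itself: it is quoted from \cite{elias-proudfoot-wakefield}, and your argument is essentially the one given there (and in the general Kazhdan--Lusztig--Stanley framework): your M\"obius-inversion identity $\sum_{F} x^{\rk(F)}\chi_{\N|_F}(x^{-1})\,\chi_{\N/F}(x)=0$ is exactly the statement that the characteristic polynomial is a ``$P$-kernel,'' and the induction on rank together with the observation that $P\mapsto x^{\rk(\M)}P(x^{-1})-P(x)$ is a bijection from polynomials of degree $<\tfrac12\rk(\M)$ onto anti-palindromic polynomials of center $\tfrac12\rk(\M)$ is the standard existence-and-uniqueness step. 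It is worth noting the contrast with how the paper handles its own analogous statement for Chow polynomials (Theorem~\ref{thm:intrinsic-main-body}): there the defect polynomial $\mathrm{S}_{\M}(x)$ is split using the symmetric decomposition of Lemma~\ref{lemma:stapledon2} rather than an anti-palindromicity argument, because in that setting the unknown polynomial is required to be palindromic of degree $\rk(\M)-1$ instead of having degree $<\tfrac12\rk(\M)$; your complementary-support splitting of the anti-palindromic $Q_{\M}$ plays precisely the role that Lemma~\ref{lemma:stapledon2} plays there. All the supporting details you use (restrictions and contractions of loopless matroids at flats are loopless, the identification of $\mathcal{L}(\M/F)$ with the interval $[F,E]$, the degree bound $\deg Q_{\M}\le\rk(\M)$) check out, so the argument is complete.
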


These results provide compact and purely combinatorial definitions of $P_{\M}(x)$ and $Z_{\M}(x)$. However, many properties of these families of polynomials are not easy to deduce from such statements. For instance, 
a non-obvious property that these polynomials possess is the nonnegativity of their coefficients. In fact, the second main result of \cite{braden-huh-matherne-proudfoot-wang} establishes this by proving that the coefficients of these polynomials are given by graded dimensions, more precisely, $Z_{\M}(x) = \Hilb(\IH(\M), x)$ and $P_{\M}(x) = \Hilb(\IH(\M)_{\varnothing},x) $ (see \cite[Theorem~1.9]{braden-huh-matherne-proudfoot-wang}).

We now turn history on its head. Recall that in the matroid Kazhdan--Lusztig setting, the combinatorially-defined Kazhdan--Lusztig and $Z$-polynomials came first, and their descriptions as Hilbert--Poincar\'e series of $\IH(\M)_\varnothing$ and $\IH(\M)$ came later.  For the Chow-theoretic setting, the Chow ring $\uCH(\M)$ and augmented Chow ring $\CH(\M)$ were first introduced in \cite{feichtner-yuzvinsky} and \cite{semismall}, respectively.  In what follows, we give an ``intrinsic'' combinatorial definition of their Hilbert--Poincar\'e series by mirroring Theorem~\ref{thm:definition-kl-and-zeta} and Theorem~\ref{thm:kazhdan-lusztig}.  (Theorem~\ref{thm:correcthpseries} below asserts that these polynomials are indeed the correct Hilbert--Poincar\'e series.)  These combinatorial definitions can be seen as the starting point of our study.

\begin{theorem}\label{thm:main-recursion-defi-H-and-uH}
    There is a unique way to assign to each loopless matroid $\M$ a palindromic polynomial $\uH_{\M}(x) \in \Z[x]$ such that the following properties hold:
    \begin{enumerate}[\normalfont(i)]
        \item If $\rk(\M) = 0$, then $\uH_{\M}(x) = 1$.\label{it:mainfirst}\
        \item If $\rk(\M) > 0$, then $\deg \uH_{\M}(x) = \rk(\M) - 1$.\label{it:mainsecond}
        \item For every matroid $\M$, the polynomial
            \[ \H_{\M}(x) := \sum_{F\in \mathcal{L}(\M)} x^{\rk(F)}\, \uH_{\M/F}(x)\]
        is palindromic.\label{it:mainthird}
    \end{enumerate}
\end{theorem}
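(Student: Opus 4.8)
The plan is to treat condition \eqref{it:mainthird} as a recursion and mirror the proof of Theorem~\ref{thm:definition-kl-and-zeta}, arguing by induction on $\rk(\M)$. The base case is forced: the only loopless matroid of rank $0$ is the empty matroid, where \eqref{it:mainfirst} dictates $\uH_{\M}(x) = 1$, and then $\H_{\M}(x) = 1$ is palindromic. For the inductive step, assume $\rk(\M) = r > 0$ and that $\uH_{\M'}$ has already been uniquely pinned down for every loopless matroid $\M'$ of rank $< r$ (recall $\M/F$ is loopless whenever $F \in \mathcal{L}(\M)$). Isolating the term $F = \varnothing$ in the defining sum — which equals $\uH_{\M}(x)$ since $\M/\varnothing = \M$ and $\rk(\varnothing) = 0$ — we write $\H_{\M}(x) = \uH_{\M}(x) + Q(x)$, where $Q(x) := \sum_{\varnothing \neq F \in \mathcal{L}(\M)} x^{\rk(F)}\,\uH_{\M/F}(x)$ is, by the inductive hypothesis, a completely determined element of $\Z[x]$. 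The flat $F = E$ (the whole ground set) contributes exactly $x^{r}$, while every other nonempty flat $F$ contributes a term of degree $\rk(F) + (\rk(\M/F) - 1) = r - 1$; hence $Q$ has degree exactly $r$, leading coefficient $1$, and vanishing constant term.

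Next I would introduce the reciprocal operator $\sigma_n(p)(x) := x^{n} p(x^{-1})$ on polynomials of degree at most $n$: it is an involution, a polynomial of degree $\le n$ is palindromic with respect to $n$ precisely when it is fixed by $\sigma_n$, and a palindromic polynomial of degree $e$ satisfies $\sigma_n(p) = x^{\,n-e}p$. Condition \eqref{it:mainsecond} requires $\deg \uH_{\M} = r - 1$, so $\H_{\M} = \uH_{\M} + Q$ has degree exactly $r$, and condition \eqref{it:mainthird} reads $\sigma_r(\uH_{\M}) + \sigma_r(Q) = \uH_{\M} + Q$. Since $\uH_{\M}$ is palindromic of degree $r-1$ we have $\sigma_r(\uH_{\M}) = x\,\uH_{\M}$, so this rearranges to $(x-1)\,\uH_{\M}(x) = Q(x) - \sigma_r(Q)(x)$, that is,
\[ \uH_{\M}(x) \;=\; \frac{Q(x) - \sigma_r(Q)(x)}{x - 1}. \]
This already yields uniqueness, since the right-hand side is forced, and the division takes place in $\Z[x]$ because the numerator vanishes at $x = 1$.

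It remains to check that this $\uH_{\M}$ satisfies all three requirements, which is the only step demanding genuine care. Applying $\sigma_r$ shows the numerator $Q - \sigma_r(Q)$ is anti-palindromic with respect to $r$, and its leading coefficient agrees with that of $Q$ — namely $1$ — because $\sigma_r(Q)$ has degree at most $r-1$ (the constant term of $Q$ being zero). A short bookkeeping argument with anti-palindromic polynomials then shows that dividing by $x-1$ produces a polynomial of degree exactly $r-1$ that is palindromic with respect to $r-1$; this gives \eqref{it:mainsecond} together with the palindromicity of $\uH_{\M}$ claimed in the statement. Running the algebra of the previous paragraph in reverse shows $\H_{\M} = \uH_{\M} + Q$ is fixed by $\sigma_r$, i.e. palindromic, establishing \eqref{it:mainthird}; and \eqref{it:mainfirst} is vacuous when $r > 0$. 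The main (and only mild) obstacle is thus not conceptual but lies in confirming that no degree drop occurs in the quotient and that the quotient is \emph{genuinely} palindromic rather than merely satisfying a one-sided reciprocity of the type warned against in the footnote to Theorem~\ref{thm:definition-kl-and-zeta}; both are settled by tracking the extreme coefficients of $Q$ as indicated above.
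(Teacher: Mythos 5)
Your argument is correct, and its skeleton is the same as the paper's: induction (the paper inducts on ground-set size, you on rank, which is immaterial), isolating the $F=\varnothing$ term, and using the palindromicity constraint on $\H_{\M}$ to solve uniquely for $\uH_{\M}$. In fact your closed formula $\uH_{\M}(x)=\bigl(Q(x)-x^{r}Q(x^{-1})\bigr)/(x-1)$ is exactly what the paper's mechanism produces: the paper invokes a Stapledon-type decomposition (its Lemma on writing a degree-$d$ polynomial uniquely as $a+b$ with $a=x^{d}a(x^{-1})$ and $b=x^{d-1}b(x^{-1})$) and sets $\uH_{\M}=-b$, and unwinding that decomposition for $\mathrm{S}_{\M}=Q$ recovers precisely your quotient by $x-1$. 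The one genuine difference is how existence of a solution with the \emph{exact} properties of the statement (degree exactly $\rk(\M)-1$ and genuine palindromicity of $\uH_{\M}$) is secured: the paper proves a deliberately weakened version (only $\deg<\rk(\M)$ and $x^{\rk(\M)-1}$-symmetry) and then obtains the strong form by identifying the solution with the Hilbert series of the Chow ring via its convolution formula (Proposition~\ref{prop:recursion-H-in-terms-uH}, resting on Feichtner--Yuzvinsky) together with Poincar\'e duality; you instead get the strong form purely combinatorially by tracking the extreme coefficients of $Q$ (leading coefficient $1$, zero constant term), so no degree drop occurs and the quotient is honestly palindromic. This buys you a self-contained combinatorial proof of the theorem as stated, at the cost of not simultaneously identifying $\uH_{\M}$ with $\Hilb(\uCH(\M),x)$, which in the paper comes for free from the same argument and is needed for Theorem~\ref{thm:correcthpseries}. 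Your sketched bookkeeping steps (that $Q-x^{r}Q(x^{-1})$ is anti-palindromic with respect to $r$, vanishes at $x=1$, and that dividing an anti-palindromic polynomial by $x-1$ yields a polynomial symmetric with respect to $r-1$) are all correct and easy to complete.
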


The polynomial $\uH_{\M}(x)$ will be sometimes referred to as the \textbf{Chow polynomial} of the matroid $\M$, whereas $\H_{\M}(x)$ will be called the \textbf{augmented Chow polynomial} of $\M$.\footnote{We thank Nick Proudfoot for proposing these names.} Similar to the case of the Kazhdan--Lusztig polynomials and the $Z$-polynomials, a non-obvious property that one may observe after working out some examples is that Chow polynomials and augmented Chow polynomials of matroids appear to have nonnegative coefficients. (The reason for the name of these polynomials and the nonnegativity of their coefficients is given by Theorem~\ref{thm:correcthpseries} below.) Additionally, given the resemblance between Theorem~\ref{thm:main-recursion-defi-H-and-uH} and Theorem~\ref{thm:definition-kl-and-zeta}, it is reasonable to ask for a counterpart for Theorem~\ref{thm:kazhdan-lusztig} in this alternative setting.

\begin{theorem}\label{thm:intro-main0}
    There is a unique way to assign to each loopless matroid $\M$ a polynomial $\uH_{\M}(x)\in \mathbb{Z}[x]$ such that the following conditions hold:
    \begin{enumerate}[\normalfont (i)]
        \item If $\rk(\M) = 0$, then $\uH_{\M}(x) = 1$.
        \item For every matroid $\M$, the following recursion holds:
            \[ \uH_{\M}(x) = \sum_{\substack{F\in\mathcal{L}(\M)\\ F\neq\varnothing}} \overline{\chi}_{\M|_F}(x)\, \uH_{\M/F}(x).\]
    \end{enumerate}
\end{theorem}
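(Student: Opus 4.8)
The plan is to derive both assertions from Theorem~\ref{thm:main-recursion-defi-H-and-uH}. \emph{Uniqueness} is essentially free: for a nonempty flat $F$ one has $\rk(\M/F)<\rk(\M)$, so the recursion in (ii) expresses $\uH_\M$ entirely in terms of the polynomials $\uH_{\M/F}$ of matroids of strictly smaller rank, and together with the base case (i) this determines the whole assignment by induction on $\rk(\M)$. (When $\rk(\M)=0$ the sum in (ii) is empty, so (ii) is to be read as imposed only for $\rk(\M)>0$; this is the convention under which it is consistent with (i).) For the \emph{existence} part, it suffices to check that the Chow polynomial $\uH_\M$ of Theorem~\ref{thm:main-recursion-defi-H-and-uH} satisfies the recursion in (ii); throughout, $\overline{\chi}_\N(x)$ denotes $\chi_\N(x)/(x-1)$, which is a genuine polynomial whenever $\N$ is loopless of positive rank.

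I would first recast (ii) into a Kazhdan--Lusztig-type shape. Multiplying (ii) by $x-1$ and transferring the $F=\varnothing$ term $\chi_{\M|_\varnothing}(x)\,\uH_\M(x)=\uH_\M(x)$ to the left-hand side shows that (ii) is equivalent to
\[
x\,\uH_\M(x)=\sum_{F\in\mathcal{L}(\M)}\chi_{\M|_F}(x)\,\uH_{\M/F}(x)
\]
(dividing back by $x-1$ is legitimate because $\chi_{\M|_F}$ is divisible by $x-1$ for every nonempty flat $F$); and since $\uH_\M$ is palindromic of degree $\rk(\M)-1$ by Theorem~\ref{thm:main-recursion-defi-H-and-uH}, its left-hand side equals $x^{\rk(\M)}\uH_\M(x^{-1})$. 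So it is enough to prove, for every loopless $\M$,
\[
x^{\rk(\M)}\,\uH_\M(x^{-1})=\sum_{F\in\mathcal{L}(\M)}\chi_{\M|_F}(x)\,\uH_{\M/F}(x),\tag{$\dagger$}
\]
which is exactly the form of the recursion in Theorem~\ref{thm:kazhdan-lusztig}, with $\uH$ in place of $P$.

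The identity $(\dagger)$ follows from the palindromicity of $\H_\M(x)=\sum_{F\in\mathcal{L}(\M)}x^{\rk(F)}\uH_{\M/F}(x)$ (Theorem~\ref{thm:main-recursion-defi-H-and-uH}) by the same formal computation that shows the palindromicity clause of Theorem~\ref{thm:definition-kl-and-zeta} is equivalent to the recursion of Theorem~\ref{thm:kazhdan-lusztig}. Concretely, I would induct on $r=\rk(\M)$. Palindromicity of $\H_\M$ gives
\[
\H_\M(x)=x^{r}\H_\M(x^{-1})=\sum_{F\in\mathcal{L}(\M)}x^{\rk(\M/F)}\uH_{\M/F}(x^{-1}),
\]
whose $F=\varnothing$ term is $x^{r}\uH_\M(x^{-1})$. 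For $F\ne\varnothing$ the matroid $\M/F$ has smaller rank and its flats are the flats $G\ge F$ of $\M$; for such $G$ the restriction $(\M/F)|_{G}$ is the minor $(\M|_G)/F$ of $\M$, whose characteristic polynomial I denote $\chi_{[F,G]}$, and $(\M/F)/G=\M/G$. Hence the inductive hypothesis $(\dagger)$ applied to $\M/F$ yields $x^{\rk(\M/F)}\uH_{\M/F}(x^{-1})=\sum_{G\ge F}\chi_{[F,G]}(x)\,\uH_{\M/G}(x)$. Summing over $F\ne\varnothing$, regrouping by $G$, and inserting the Möbius-inversion identity $\sum_{F\le G}\chi_{[F,G]}(x)=x^{\rk(G)}$ — valid in any geometric lattice, and following from $\chi_{[F,G]}(x)=\sum_{F\le H\le G}\mu(F,H)x^{\rk(G)-\rk(H)}$ by exchanging the order of summation and using $\sum_{F\le H}\mu(F,H)=\delta_{\varnothing,H}$ — together with $\chi_{[\varnothing,G]}=\chi_{\M|_G}$, I obtain
\begin{align*}
\H_\M(x)&=x^{r}\uH_\M(x^{-1})+\sum_{G}\bigl(x^{\rk(G)}-\chi_{\M|_G}(x)\bigr)\uH_{\M/G}(x)\\
&=x^{r}\uH_\M(x^{-1})+\H_\M(x)-\sum_{G}\chi_{\M|_G}(x)\,\uH_{\M/G}(x),
\end{align*}
and cancelling $\H_\M(x)$ gives $(\dagger)$. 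The base case $\rk(\M)=0$ is the tautology $1=1$.

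No serious obstacle arises once Theorem~\ref{thm:main-recursion-defi-H-and-uH} is granted — the heart of the matter is just the classical transfer between the two descriptions of the Kazhdan--Lusztig polynomial, carried over almost verbatim. The points needing a little care are the equivalence of (ii) with $(\dagger)$ (which uses the degree and palindromicity of $\uH_\M$ and the divisibility of $\chi_{\M|_F}$ by $x-1$ for nonempty $F$) and the bookkeeping at the top flat of $\mathcal{L}(\M)$, where $\M/F$ has rank $0$, $\uH_{\M/F}=1$, and the relations above collapse to $1=1$ — so this boundary case fits the general scheme without exception.
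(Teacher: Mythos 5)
Your proof is correct, and it reaches the statement by a somewhat different route than the paper. The paper establishes Theorem~\ref{thm:intro-main0} together with Theorem~\ref{thm:convolution-reduced-char-poly}: it takes the right-hand side of the recursion (with the genuine Chow polynomials $\uH_{\M/F}$ inserted) as a candidate $\widetilde{\uH}_{\M}(x)$ and verifies that this candidate satisfies the uniqueness characterization of Theorem~\ref{thm:intrinsic-main-body}, by checking the alternative convolution $1+x\sum_{F\neq E}\widetilde{\uH}_{\M/F}(x)=\H_{\M}(x)$ of Remark~\ref{remark:alternative-convolution}; that computation interchanges sums over pairs of flats, collapses the inner sum via Lemma~\ref{lemma:sum-reduced-char-poly-contractions}, and finishes with the chain-derived identity \eqref{eq:rec-aug-chow} from Proposition~\ref{prop:hilbert-augmented-chow}. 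You instead prove directly, by induction on rank, that the polynomials of Theorem~\ref{thm:main-recursion-defi-H-and-uH} satisfy the Elias--Proudfoot--Wakefield-type identity $x^{\rk(\M)}\uH_{\M}(x^{-1})=\sum_{F}\chi_{\M|_F}(x)\,\uH_{\M/F}(x)$, using only the palindromicity and degree of $\uH_{\M}$ and the palindromicity of $\H_{\M}$, plus the M\"obius-inversion identity $\sum_{F\le G}\chi_{(\M|_G)/F}(x)=x^{\rk(G)}$ (which is exactly the identity inside the proof of Lemma~\ref{lemma:sum-reduced-char-poly-contractions}); the passage between clause (ii) and your $(\dagger)$ via multiplication by $x-1$ and palindromicity of $\uH_{\M}$ is handled correctly, as is the rank-$0$ reading of (ii) and the divisibility needed to divide back by $x-1$. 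What your route buys is that, once Theorem~\ref{thm:main-recursion-defi-H-and-uH} is granted, the argument is a purely formal Kazhdan--Lusztig--Stanley-style transfer (the exact analogue of how Theorems~\ref{thm:definition-kl-and-zeta} and \ref{thm:kazhdan-lusztig} are related), with no further appeal to the Feichtner--Yuzvinsky chain formulas; what the paper's route buys is that the same computation simultaneously identifies the recursively defined polynomials with the Hilbert series $\uH_{\M}(x)$, which is the content of Theorem~\ref{thm:convolution-reduced-char-poly} and feeds directly into the incidence-algebra inversion of Section~\ref{subsec:incidence-uniform}. Your uniqueness argument (induction on rank) agrees with the paper's.
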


The proof of this fact is a straightforward induction. What is less evident is that the polynomials $\uH_{\M}(x)$ arising from this result coincide with those arising from Theorem~\ref{thm:main-recursion-defi-H-and-uH}. In particular, it is already an interesting conclusion that the polynomials as defined in Theorem~\ref{thm:intro-main0} are palindromic and have nonnegative coefficients, as this is not at all hinted by the recursion since the \emph{reduced} characteristic polynomial has coefficients that alternate in sign.

Somewhat against intuition, the polynomials $\uH_{\M}(x)$ and $\H_{\M}(x)$ are from some perspectives less manageable than $P_{\M}(x)$ and $Z_{\M}(x)$, especially from a computational point of view. For example, their coefficients are much larger, and neither $\uH_{\M}(x)$ nor $\H_{\M}(x)$ behave well under direct sums of matroids; one of the many possible explanations for this phenomenon is that the reduced characteristic polynomial of a direct sum is not the product of the reduced characteristic polynomials of the summands and, in particular, the arguments that Elias, Proudfoot, and Wakefield used in \cite[Proposition~2.7]{elias-proudfoot-wakefield} to prove that $P_{\M_1\oplus \M_2}(x) = P_{\M_1}(x)P_{\M_2}(x)$ do not hold. As a result, even simple operations within matroid theory (such as ``adding a coloop'') change the polynomials $\uH_{\M}(x)$ and $\H_{\M}(x)$ in a less evident way.

The reason that the coefficients of these polynomials are always nonnegative integers is given by the following connection with the Chow ring and the augmented Chow ring.

\begin{theorem}\label{thm:correcthpseries}
    Let $\M$ be a loopless matroid. The polynomial $\uH_{\M}(x)$ is the Hilbert--Poincar\'e series of the Chow ring $\uCH(\M)$. The polynomial $\H_{\M}(x)$ is the Hilbert--Poincar\'e series of the augmented Chow ring $\CH(\M)$. In particular, both of them have nonnegative coefficients. 
\end{theorem}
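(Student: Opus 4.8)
The plan is to verify that the two Hilbert--Poincar\'e series
\[
\underline{h}_{\M}(x) := \Hilb(\uCH(\M), x) \qquad\text{and}\qquad h_{\M}(x) := \Hilb(\CH(\M), x)
\]
satisfy properties \eqref{it:mainfirst}--\eqref{it:mainthird} of \Cref{thm:main-recursion-defi-H-and-uH}, and then to invoke the uniqueness asserted there to conclude $\underline{h}_{\M}(x) = \uH_{\M}(x)$ and, as a consequence, $h_{\M}(x) = \H_{\M}(x)$. The stated nonnegativity is then automatic, since $\underline{h}_{\M}$ and $h_{\M}$ record dimensions of graded pieces. Property \eqref{it:mainfirst} is immediate: if $\rk(\M)=0$ then $\M$ is the empty matroid, $\uCH(\M)=\Q$ is concentrated in degree $0$, and $\underline{h}_{\M}(x)=1$. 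For \eqref{it:mainsecond} and the palindromicity of $\underline{h}_{\M}$, I would quote that $\uCH(\M)$ is an Artinian Poincar\'e duality algebra whose socle lies in degree $\rk(\M)-1$ when $\rk(\M)>0$ -- this is part of the K\"ahler package of \cite{adiprasito-huh-katz} (the Poincar\'e duality statement going back to \cite{feichtner-yuzvinsky}) -- so $\underline{h}_{\M}(x)$ is palindromic (trivially in the rank-$0$ case) and has degree exactly $\rk(\M)-1$ for $\rk(\M)>0$.

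The crux is property \eqref{it:mainthird}: that $\sum_{F\in\mathcal{L}(\M)} x^{\rk(F)}\,\underline{h}_{\M/F}(x)$ is palindromic. Here I would use the structural decomposition of the augmented Chow ring: there is an isomorphism of graded vector spaces
\[
\CH(\M)\;\cong\;\bigoplus_{F\in\mathcal{L}(\M)}\uCH(\M/F)\,[-\rk(F)],
\]
read off from the decomposition of the augmented Bergman fan along flags of flats (equivalently, from the stratification of the augmented wonderful variety in the realizable case); this is available from the work of Braden--Huh--Matherne--Proudfoot--Wang \cite{braden-huh-matherne-proudfoot-wang, semismall}. Note that $\M/F$ is again loopless since $F$ is a flat, so every term makes sense. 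Taking Hilbert series gives $h_{\M}(x)=\sum_{F\in\mathcal{L}(\M)} x^{\rk(F)}\,\underline{h}_{\M/F}(x)$, and since $\CH(\M)$ is itself an Artinian Poincar\'e duality algebra with socle in degree $\rk(\M)$ \cite{braden-huh-matherne-proudfoot-wang}, this polynomial is palindromic -- which is exactly \eqref{it:mainthird} for $\underline{h}_{\M}$.

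Having verified \eqref{it:mainfirst}--\eqref{it:mainthird}, the uniqueness clause of \Cref{thm:main-recursion-defi-H-and-uH} forces $\underline{h}_{\M}(x)=\uH_{\M}(x)$ for every loopless matroid $\M$. Substituting this into the displayed Hilbert-series identity yields
\[
h_{\M}(x)=\sum_{F\in\mathcal{L}(\M)} x^{\rk(F)}\,\uH_{\M/F}(x)=\H_{\M}(x),
\]
which is the second assertion; nonnegativity of coefficients of both polynomials follows from their interpretation as graded dimensions.

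The main obstacle is the graded vector space decomposition of $\CH(\M)$ displayed above: this is the one genuinely geometric input linking the augmented Chow ring to the non-augmented Chow rings of contractions, and one must either extract it from the semi-small decomposition machinery of \cite{semismall} or re-derive it directly from the combinatorics of the augmented Bergman fan; everything else is formal once Poincar\'e duality for $\uCH(\M)$ and $\CH(\M)$ is in hand. (An alternative route that sidesteps \eqref{it:mainthird} is to check instead that $\underline{h}_{\M}$ satisfies the reduced-characteristic-polynomial recursion of \Cref{thm:intro-main0} -- a consequence of the analogous flag decomposition of $\uCH(\M)$ -- and to conclude via the uniqueness there; the decomposition of $\CH(\M)$ is in any case what identifies $h_{\M}$ with $\H_{\M}$.)
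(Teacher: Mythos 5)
Your proposal is correct, and its overall scaffolding (verify conditions (i)--(iii) of \Cref{thm:main-recursion-defi-H-and-uH} for the Hilbert series and then appeal to the uniqueness clause, whose proof via \Cref{lemma:stapledon2} is independent of any Hilbert-series interpretation, so there is no circularity) is the same as the paper's. Where you diverge is in the one genuinely nontrivial step, the convolution identity $\Hilb(\CH(\M),x)=\sum_{F\in\mathcal{L}(\M)}x^{\rk(F)}\Hilb(\uCH(\M/F),x)$: the paper derives it in \Cref{prop:recursion-H-in-terms-uH} by an elementary manipulation of the chain-of-flats formulas of \Cref{prop:hilbert-chow} and \Cref{prop:hilbert-augmented-chow}, which come from the Feichtner--Yuzvinsky Gr\"obner basis (and Mastroeni--McCullough in the augmented case), whereas you read it off from the graded vector space decomposition $\CH(\M)\cong\bigoplus_{F\in\mathcal{L}(\M)}\uCH(\M/F)[-\rk(F)]$. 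This is precisely the alternative route the paper records in the remark following \Cref{prop:recursion-H-in-terms-uH}, with the precise input being the stalk computation of \cite[Proposition~2.15 and Lemma~5.7]{braden-huh-matherne-proudfoot-wang}; your vaguer attribution to the augmented Bergman fan/wonderful stratification should be sharpened to that reference, and note that Poincar\'e duality for $\CH(\M)$ itself is \cite[Theorem~1.6]{semismall} rather than \cite{braden-huh-matherne-proudfoot-wang}. The trade-off: your route is shorter and more conceptual but imports the heavier $\H(\M)$-module machinery of Braden--Huh--Matherne--Proudfoot--Wang as a black box, while the paper's Gr\"obner-basis computation is self-contained at the level of chains of flats and keeps those explicit formulas in play, which the paper then reuses (for the uniform-matroid formulas, the valuativeness argument, and the dominance result \Cref{thm:uniform-dominates}). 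Your closing alternative (checking the recursion of \Cref{thm:intro-main0} instead) is also viable and is in fact carried out in \Cref{thm:convolution-reduced-char-poly}, but, as you correctly observe, identifying $\Hilb(\CH(\M),x)$ with $\H_{\M}(x)$ still requires either the decomposition you invoke or the augmented chain formula.
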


The proof relies on a construction by Feichtner and Yuzvinsky \cite{feichtner-yuzvinsky} of a certain Gr\"obner basis for the Chow ring of atomic lattices with respect to arbitrary building sets. The strategy is to start with a raw expression for the Hilbert--Poincar\'e series of both the Chow ring and the augmented Chow ring and prove that they satisfy the recursions of both Theorem~\ref{thm:main-recursion-defi-H-and-uH} and Theorem~\ref{thm:intro-main0}.

\subsection{Real-rootedness and \texorpdfstring{$\gamma$}{gamma}-positivity}

Naturally, one can view our previous two results as ``intrinsic'' combinatorial definitions of the Hilbert series of $\uCH(\M)$ and $\CH(\M)$ that avoid their explicit construction. Another consequence is that some statements (many of which are still conjectural) regarding the Kazhdan--Lusztig polynomial and the $Z$-polynomial of matroids now admit a ``natural'' counterpart for the Chow ring and the augmented Chow ring. In particular, two outstanding conjectures posed in \cite{gedeon-proudfoot-young-survey} and \cite{proudfoot-xu-young} assert the \textbf{real-rootedness} of both $P_{\M}(x)$ and $Z_{\M}(x)$. A well-known fact (see, e.g. \cite{branden}) is that the real-rootedness of a polynomial with positive coefficients implies that the coefficients form an \textbf{ultra log-concave sequence} and, in the case the polynomial is palindromic, that it is \textbf{$\gamma$-positive}. Each of these two properties, i.e. ultra log-concavity or $\gamma$-positivity, is also known to be stronger than the \textbf{unimodality} of the coefficients of the polynomial.

The similarity between the defining recursions of the polynomials arising in Theorem~\ref{thm:definition-kl-and-zeta} and Theorem~\ref{thm:main-recursion-defi-H-and-uH}, along with a significant amount of experiments\footnote{These experiments include all matroids on up to $9$ elements, all lattice path matroids on up to $14$ elements, sparse paving matroids on up to $40$ elements, and all the matroids of the catalog in \cite{oxley} that have cardinality at most $12$.}, make it plausible to postulate that these desirable properties of the coefficients might hold for $\uH_{\M}(x)$ and $\H_{\M}(x)$ as well. Indeed, these conjectural properties have already been observed in different sources.  

\begin{conjecture}\label{conj:real-rootedness}
    For every matroid $\M$, the following polynomials have only real roots.
    \begin{itemize}
        \item (\cite[Conjecture~8.18]{ferroni-schroter}) The polynomial $\uH_{\M}(x)$, i.e. the Hilbert--Poincar\'e series of the Chow ring $\uCH(\M)$.
        \item (\cite[Conjecture~4.3.3]{stevens-bachelor}) The polynomial $\H_{\M}(x)$, i.e. the Hilbert--Poincar\'e series of the augmented Chow ring $\CH(\M)$.
        \item (\cite[Conjecture~5.1] {proudfoot-xu-young})  The polynomial $Z_{\M}(x)$, i.e. the Hilbert--Poincar\'e series of the intersection cohomology module $\IH(\M)$.
        \item (\cite[Conjecture~3.2]{gedeon-proudfoot-young-survey})  The polynomial $P_{\M}(x)$, i.e. the Hilbert--Poincar\'e series of the stalk $\IH(\M)_{\varnothing}$ at the empty flat of $\IH(\M)$.
    \end{itemize}
\end{conjecture}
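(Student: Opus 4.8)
The plan is to approach \Cref{conj:real-rootedness} in two stages: first settle it for combinatorially structured families — above all, uniform matroids — where explicit formulas are available, and then attempt a general attack through the theory of interlacing (``compatible'') polynomials applied to the recursions of \Cref{thm:definition-kl-and-zeta}, \Cref{thm:kazhdan-lusztig}, \Cref{thm:intro-main0}, and \Cref{thm:main-recursion-defi-H-and-uH}. The parallelism developed above is precisely what makes a uniform treatment feasible: a root-location argument devised for $P_{\M}(x)$ should, after trading $\chi_{\M|_F}$ for $\overline{\chi}_{\M|_F}$ and adjusting the $\rk$-shifts, transfer to $\uH_{\M}(x)$, and likewise the $Z_{\M}(x)$ argument should transfer to $\H_{\M}(x)$. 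I would also record at the outset that real-rootedness is \emph{not} a valuative property, so the matroid-polytope-subdivision toolkit — which is so effective for computation — does not assist the proof directly; its role is to produce the explicit formulas one then analyzes by hand.

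For uniform matroids I would first compute all four polynomials $\uH_{\U_{k,n}}(x)$, $\H_{\U_{k,n}}(x)$, $Z_{\U_{k,n}}(x)$, $P_{\U_{k,n}}(x)$ in closed form. Because the lattice of flats of $\U_{k,n}$ is a truncated Boolean lattice, the recursions collapse into manageable sums; alternatively one invokes valuativity together with the known subdivisions of hypersimplices. The next step is to recognize the resulting polynomials as members of classical real-rooted families: I expect $\H_{\U_{k,n}}(x)$ and $Z_{\U_{k,n}}(x)$ to be expressible via Eulerian and binomial Eulerian polynomials and near relatives, whose real-rootedness is standard, and this is also where the conjecture of Hameister, Rao, and Simpson should fall out. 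Where no such identification is at hand, I would argue real-rootedness directly: for $\H_{\U_{k,n}}(x)$ in particular, the recursion in $n$ (adjoining a parallel element) or in $k$ fits the interlacing scheme of Haglund and Zhang — one exhibits an auxiliary polynomial, verifies the requisite compatibility, and checks the base cases.

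For the conjecture in general the natural move is to run an interlacing argument on $\uH_{\M}(x) = \sum_{F\neq\varnothing} \overline{\chi}_{\M|_F}(x)\, \uH_{\M/F}(x)$ (and its three analogues), organizing matroids by rank and seeking, for each rank $r$, a single polynomial that interlaces every $\uH_{\M}$ with $\rk(\M) = r$. The obstruction is immediate and, I believe, is the crux of the whole problem: the reduced characteristic polynomials $\overline{\chi}_{\M|_F}(x)$ have coefficients of alternating sign, so the right-hand side is not a nonnegative combination of the interlacing polynomials $\uH_{\M/F}(x)$, and the standard closure properties of interlacing families simply do not apply. One remedy is to abandon that recursion in favor of the manifestly positive expansion coming from the semi-small decomposition of Braden, Huh, Matherne, Proudfoot, and Wang — which realizes $\CH(\M)$ (resp.\ $\uCH(\M)$) as a direct sum of shifted copies of Chow modules of smaller matroids tensored with factors of the form $1 + x + \cdots + x^{j}$ — and to try to show this direct-sum structure respects a well-chosen common interleaver. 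Even then, which smaller matroids and which shifts occur depends delicately on $\mathcal{L}(\M)$, so producing a uniform interleaver in the sense of the theory of compatible polynomials is the main difficulty. Consequently I would not anticipate a short proof of the full conjecture; the realistic deliverables are the uniform-matroid cases above, the strengthening of unimodality to $\gamma$-positivity for all matroids (which follows by combining the nonnegativity of the Kazhdan--Lusztig coefficients with the semi-small decomposition), and, plausibly, real-rootedness for further families — sparse paving, graphic, or fan matroids — where explicit formulas or sign-coherent recursions survive.
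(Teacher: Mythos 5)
This statement is a conjecture; the paper does not prove it, and your text is a research program rather than a proof, so it should be judged against the partial results the paper actually delivers. A good deal of your plan does line up with those: the real-rootedness of $\H_{\U_{k,n}}(x)$ is indeed obtained by recognizing it as a Haglund--Zhang generalized binomial Eulerian polynomial (Theorem~\ref{thm:intro-hz}, proved as Theorem~\ref{thm:unifrealrooted} via the choice $\mathbf{s}=(n-k+2,\dots,n)$ and a recurrence check); the Hameister--Rao--Simpson conjecture does fall out of the uniform-matroid formulas (Corollary~\ref{corollary:uH-uniform} and Appendix~\ref{appendix}); and the strengthening of unimodality to $\gamma$-positivity for $\uH_{\M}$, $\H_{\M}$, and $Z_{\M}$ is proved exactly in the spirit you describe, via the semi-small decompositions for the Chow and augmented Chow rings (Theorem~\ref{thm:gamma-positivity-uH-H}) and, for $Z_{\M}$, via the Braden--Vysogorets deletion formula together with the nonnegativity of Kazhdan--Lusztig coefficients (Theorem~\ref{thm:z-is-gamma-positive}) --- note that for $\IH(\M)$ no semi-small decomposition is available, so the mechanism there is the deletion formula, not a direct-sum decomposition. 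Your observation that valuativity computes but does not prove real-rootedness also matches the paper's use of it (sparse paving matroids up to $40$ elements are only verified computationally, not settled).

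The concrete gaps are in what you expect to ``fall out.'' First, you anticipate that $Z_{\U_{k,n}}(x)$ (and implicitly $P_{\U_{k,n}}(x)$) will be identified with classical real-rooted families; the paper explicitly records that the real-rootedness of $P_{\M}(x)$ and $Z_{\M}(x)$ for uniform matroids remains an open problem, and no such identification is produced, so this part of your plan has no support and cannot be claimed as a deliverable. Second, your hope that the Haglund--Zhang argument for $\H$ ``transfers'' to $\uH$ after adjusting shifts is directly obstructed: the paper shows by exhaustive search that $\uH_{\U_{5,6}}(x)$ is not of the form $\widetilde{E}^{\mathbf{s}}_{\rk(\M)-2}(x)$ for any $\mathbf{s}$, and likewise $\H_{\U_{3,4}\oplus\U_{1,1}}(x)$ is not an instance of that family, so the parallelism between the two settings does not carry the interlacing construction across; real-rootedness of $\uH_{\U_{k,n}}(x)$ is left open. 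Third, your proposed general interlacing attack on the convolution recursions is correctly diagnosed as blocked by the alternating signs of $\overline{\chi}_{\M|_F}$, but the paper does not attempt to repair it via a common interleaver for the semi-small decomposition either; the only general real-rootedness results it obtains come from a different source you do not mention, namely Koszulness of the (augmented) Chow ring combined with the Reiner--Welker criteria, which yields real-rootedness of $\uH_{\M}(x)$ for $\rk(\M)\le 5$ and of $\H_{\M}(x)$ for $\rk(\M)\le 4$.
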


The first two of the above four assertions were observed and conjectured by June Huh in a private communication with the third author in 2019.  

\begin{remark}
    We note that $\CH(\M)$ and $\IH(\M)$ are both modules over the graded M\"obius algebra $\H(\M)$ (see Section~\ref{subsec:ih} for its definition).  We make this point only to be able to define $\IH(\M)_\varnothing$ (see Definition~\ref{def:ih}), which depends on the $\H(\M)$-module structure.  As the purpose of this paper is to study various Hilbert--Poincar\'e series, we only need to view $\H(\M)$, $\uCH(\M)$, $\CH(\M)$, $\IH(\M)$, and $\IH(\M)_\varnothing$ as graded vector spaces.
\end{remark}

The main result of Adiprasito, Huh, and Katz \cite[Theorem~1.4]{adiprasito-huh-katz} asserts the validity of the \textbf{K\"ahler package} for the Chow rings of matroids. In particular, \textbf{Poincar\'e duality} and the \textbf{hard Lefschetz property} are valid and they respectively imply the palindromicity and the unimodality of the coefficients of the Hilbert series of the Chow ring. On the other hand, the K\"ahler package was proved for the augmented Chow ring by Braden, Huh, Matherne, Proudfoot, and Wang in \cite[Theorem~1.6]{semismall} and for the intersection cohomology module in \cite[Theorem~1.6]{braden-huh-matherne-proudfoot-wang}, so one can conclude analogous statements for the Hilbert--Poincar\'e series of $\CH(\M)$ and $\IH(\M)$. Less is known regarding the Hilbert--Poincar\'e series of $\IH(\M)_{\varnothing}$, i.e. the Kazhdan--Lusztig polynomial $P_{\M}(x)$. Although its coefficients are nonnegative, it is not known whether they are always unimodal in general.

Another of our contributions will be to provide a proof of the $\mathbf{\gamma}$-positivity for three of the families of polynomials above. (The notion of $\gamma$-positivity is only meaningful for palindromic polynomials, so we only ask this question for these families.) This gives further evidence to part of Conjecture~\ref{conj:real-rootedness} and resolves a problem posed in \cite[Conjecture 5.6]{ferroni-nasr-vecchi} by Ferroni, Nasr, and Vecchi that was previously known to hold only in certain cases.

\begin{theorem}\label{thm:main-gamma-positivity}
    For every matroid $\M$, the polynomials $\uH_{\M}(x)$, $\H_{\M}(x)$, and $Z_{\M}(x)$ are $\gamma$-positive.
\end{theorem}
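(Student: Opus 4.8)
The plan is to exploit the \emph{semismall decompositions} of Braden--Huh--Matherne--Proudfoot--Wang together with the nonnegativity of the Kazhdan--Lusztig coefficients, passing through an inductive argument on the rank. The key observation is that $\gamma$-positivity for palindromic polynomials behaves well under certain operations: if $p(x)$ and $q(x)$ are palindromic of degrees $d$ and $d-2k$ (centered at $d/2$), both $\gamma$-positive, then $p(x) + x^k q(x)$ is again palindromic of degree $d$ and $\gamma$-positive; and if $p(x)$ is $\gamma$-positive and palindromic of degree $d$, then $(1+x)p(x)$ is $\gamma$-positive and palindromic of degree $d+1$. So the strategy is to find recursions for $\uH_{\M}(x)$, $\H_{\M}(x)$, and $Z_{\M}(x)$ whose right-hand sides are manifestly built out of pieces of this shape, with the ``correction terms'' being nonnegative combinations of (shifts of) lower-rank polynomials already known to be $\gamma$-positive by induction.

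First I would treat $Z_{\M}(x)$. The semismall decomposition of $\IH(\M)$ (see \cite{braden-huh-matherne-proudfoot-wang}) expresses $Z_{\M}(x)$ as a sum indexed by a suitable collection of flats, where each summand is a product of $(1+x+\cdots)$-type factors (coming from projective bundles) times a Kazhdan--Lusztig polynomial $P_{\M'}(x)$ of a smaller matroid, the whole summand being palindromic. By induction each $P_{\M'}(x)$ contributes nonnegatively, but $P$ itself is not palindromic, so the real content is that each \emph{summand} (not each $P$) is palindromic of the right degree, and that multiplication by the bundle factor $[k]_x := 1 + x + \cdots + x^{k-1}$ preserves $\gamma$-positivity when $k$ is such that the product stays palindromic — equivalently, pairing up the $P$-contributions symmetrically. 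I would make this precise by showing each semismall summand equals $x^{a} P_{\M'}(x) + x^{a'} P_{\M'}(x^{-1})x^{\deg}$-type symmetrization, hence is a nonnegative combination of the $\gamma$-positive ``atoms'' $x^j(1+x)^{d-2j}$. The same scheme applies to $\H_{\M}(x)$ using the semismall decomposition of the augmented Chow ring from \cite{semismall}, and to $\uH_{\M}(x)$ using the analogous decomposition of $\uCH(\M)$; in the Chow cases the $P_{\M'}$ get replaced by lower-rank Chow polynomials $\uH_{\M'}$, which are palindromic already, making the bookkeeping slightly easier.

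Concretely the steps are: (1) recall the precise form of the three semismall decompositions and extract the resulting identities $Z_{\M}=\sum_{\Theta} \mathrm{(bundle\ factor)}\cdot P_{(\cdot)}$, and likewise for $\H_{\M}$, $\uH_{\M}$; (2) record the two closure lemmas for $\gamma$-positivity (multiplication by $1+x$, and symmetric sums) and a lemma that $x^a P(x) + x^b x^{\deg P}P(x^{-1})$ is $\gamma$-positive whenever $P$ has nonnegative coefficients and $a+b+\deg P = d$ is the target degree — this uses only $P \ge 0$ coefficientwise, which is \cite[Theorem~1.9]{braden-huh-matherne-proudfoot-wang}; (3) run the induction on rank, checking the base case $\rk = 0,1$ by hand and feeding each semismall summand through the lemmas; (4) conclude. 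The main obstacle I anticipate is step (2)'s symmetrization lemma in the form actually needed: the semismall summands are not literally $x^aP + x^b x^{\deg}P(x^{-1})$ but products of a bundle factor with a single $P_{\M'}$, and one must verify that the indexing set of the decomposition is itself stable under the Poincaré-duality involution so that these summands pair up (or are individually symmetric) — tracking this combinatorial symmetry of the building-set data, and confirming the degree of the bundle factor is exactly what makes each paired sum palindromic of degree $\rk(\M)-1$ (resp.\ $\rk(\M)$), is where the real work lies.
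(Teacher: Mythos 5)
Your plan for $\uH_{\M}(x)$ and $\H_{\M}(x)$ is essentially the paper's proof: use the semi-small decompositions of Braden--Huh--Matherne--Proudfoot--Wang (Theorem~\ref{thm:semismall-decompositions}), take Hilbert series to get a quadratic recursion (Corollary~\ref{coro:recursion-hilbert-chow}), and observe that $\gamma(\cdot,x)$ is multiplicative and additive on summands sharing a center of symmetry (Lemma~\ref{lemma:properties-gamma}). Since every summand in the recursion is a product of lower-rank Chow polynomials, which are themselves palindromic, the induction closes; the base case is Boolean matroids, handled by known $\gamma$-positivity of Eulerian and binomial Eulerian polynomials (or, as in Remark~\ref{remark:with-coloop}, via the coloop case of the semi-small decomposition).

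Your plan for $Z_{\M}(x)$ has a genuine gap, on two counts. First, no semi-small decomposition of $\IH(\M)$ of the form you invoke exists in \cite{braden-huh-matherne-proudfoot-wang}; the paper says this explicitly and instead uses the Braden--Vysogorets deletion formula, which writes $Z_\M(x) = Z_{\M\smallsetminus\{i\}}(x) + \sum_{F} \tau(\M/(F\cup\{i\}))\, x^{(k-\rk F)/2}\, Z_{\M|_F}(x)$. Crucially, the smaller polynomials appearing there are \emph{$Z$-polynomials of restrictions}, which are palindromic, so the argument for $\uH$ and $\H$ transfers verbatim once one knows the $\tau$-coefficients are nonnegative (a consequence of nonnegativity of $P_\M(x)$). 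Second, the symmetrization lemma you propose in step (2)---that $x^a P(x) + x^b\, x^{\deg P} P(x^{-1})$ is $\gamma$-positive whenever $P$ has nonnegative coefficients and the sum is palindromic---is false. For instance, with $P(x)=1+x^2$ and $a=b=0$, the symmetrization is $2+2x^2$, which is palindromic of degree $2$ but has $\gamma$-vector $(2,-4)$. Positivity of the coefficients of a polynomial does not control the $\gamma$-vector of its symmetrization; you genuinely need the recursion to produce already-palindromic pieces (products of $Z$- or $\uH$-type polynomials), which is precisely what the Braden--Vysogorets formula provides.
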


The proofs of the $\gamma$-positivity of each of these families depend on a recursion witnessed by the validity of the semi-small decompositions of \cite{semismall} for the Chow ring and the augmented Chow ring. For the intersection cohomology, a recursion found by Braden and Vysogorets \cite{braden-vysogorets} does the job, but a crucial requirement is the nonnegativity of the coefficients of the Kazhdan--Lusztig polynomials.

\subsection{The case of uniform matroids}\label{subsec:uniform-matroids} Computing all of the polynomials mentioned so far is, in general, a very difficult challenge. Even for uniform matroids, the task of producing convenient formulas is of much interest. For the $Z$-polynomial and the Kazhdan--Lusztig polynomial of uniform matroids, some explicit formulas are known, see, e.g. \cite{gao-uniform}.\footnote{We point out that the real-rootedness of $P_{\M}(x)$ and $Z_{\M}(x)$ for $\M\cong \U_{k,n}$ are still open problems.} On the other hand, prior to this work, no clear formulas for $\uH_\M(x)$ and $\H_\M(x)$ of arbitrary uniform matroids were known. We mention, however, that the polynomials $\uH_{\M}(x)$ for arbitrary uniform matroids were addressed by Hameister, Rao, and Simpson in \cite{hameister-rao-simpson}. One of their main results \cite[Theorem~5.1]{hameister-rao-simpson} is useful to retrieve the fact that, for a Boolean matroid $\M\cong \U_{n,n}$, one has $\uH_{\U_{n,n}}(x)=A_n(x)$, where $A_n$ is the $n$-th \emph{Eulerian polynomial}; whereas for uniform matroids of corank $1$, it follows that  $\uH_{\U_{n-1,n}}(x) = \frac{1}{x}d_n(x)$, where $d_n(x)$ denotes the $n$-th \emph{derangement polynomial}. Unfortunately, deducing compact expressions for Chow polynomials of arbitrary uniform matroids via their result seems rather difficult; although their formula is nice in terms of statistics of permutations, it is intricate from a computational point of view.

Without making any references to augmented Chow rings of matroids, the polynomial $\H_{\M}(x)$ for Boolean matroids has been studied in detail recently in \cite{postnikov-reiner-williams,athanasiadis-eulerian,shareshian-wachs,haglund-zhang,binhan,branden-jochemko}. To be precise, one has $\H_{\U_{n,n}}(x)=\widetilde{A}_n(x)$ where $\widetilde{A}_n(x)$ denotes the $n$-th \emph{binomial Eulerian polynomial}. Notice that, when applied to a Boolean matroid the recursion in Theorem~\ref{thm:main-recursion-defi-H-and-uH}~\eqref{it:mainthird} asserts the known identity
    \[ \widetilde{A}_n(x) = \sum_{j=0}^n \binom{n}{j} x^j\, A_{n-j}(x).\]
A further motivation to study the Hilbert series of Chow rings and augmented Chow rings is that they may be viewed as vast generalizations of the Eulerian, binomial Eulerian, and derangement polynomials, and within this broader framework one can derive  interesting new identities relating them. The following result provides a compact expression for the Chow polynomials and augmented Chow polynomials of general uniform matroids.

\begin{theorem}\label{thm:uniform-formulas-main}
    The Hilbert series of the Chow ring and augmented Chow ring of arbitrary uniform matroids are given by:
    \begin{align*}
        \uH_{\U_{k,n}}(x) &= \enspace \sum_{j=0}^{k-1}\, \binom{n}{j} \, d_j(x) (1+x+\cdots+x^{k-1-j}),\\
        \H_{\U_{k,n}}(x) &= 1 + x\sum_{j=0}^{k-1} \binom{n}{j}\, A_j(x) (1+x+\cdots+x^{k-1-j}).
    \end{align*}
\end{theorem}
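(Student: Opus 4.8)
The plan is to specialize the two recursions already established—the characterization of $\uH_{\M}(x)$ in Theorem~\ref{thm:intro-main0}, and the identity $\H_{\M}(x)=\sum_{F\in\mathcal{L}(\M)}x^{\rk(F)}\uH_{\M/F}(x)$ from Theorem~\ref{thm:main-recursion-defi-H-and-uH}—to the uniform matroid $\U_{k,n}$, and to feed them two classical identities relating the Eulerian and derangement polynomials. First I would record the relevant combinatorics of $\U_{k,n}$: its nonempty proper flats are exactly the subsets $F\subseteq[n]$ with $1\le\size{F}\le k-1$, for which $\U_{k,n}|_F\cong\U_{\size{F},\size{F}}$ is Boolean (so $\overline{\chi}_{\U_{i,i}}(x)=(x-1)^{i-1}$) and $\U_{k,n}/F\cong\U_{k-\size{F},\,n-\size{F}}$, while the top flat $[n]$ contributes $\overline{\chi}_{\U_{k,n}}(x)$ with contraction $\U_{0,0}$. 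A short computation with the characteristic polynomial of $\U_{k,n}$ gives the closed form
\[
\overline{\chi}_{\U_{k,n}}(x)=\sum_{m=0}^{k-1}(-1)^m\binom{n}{m}\bigl(1+x+\cdots+x^{k-1-m}\bigr).
\]
Theorem~\ref{thm:intro-main0} then becomes the recursion
\[
\uH_{\U_{k,n}}(x)=\sum_{i=1}^{k-1}\binom{n}{i}(x-1)^{i-1}\,\uH_{\U_{k-i,\,n-i}}(x)+\overline{\chi}_{\U_{k,n}}(x),
\]
which I would use to induct on $k$, with $n$ a free parameter and the base cases $k\le 1$ (where $\uH_{\U_{k,n}}(x)=1$) immediate.

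Substituting the conjectured formula for each $\uH_{\U_{k-i,\,n-i}}(x)$, setting $m=i+j$, and using the Vandermonde-type identity $\binom{n}{i}\binom{n-i}{m-i}=\binom{n}{m}\binom{m}{i}$, the sum over flats collapses to
\[
\sum_{m=1}^{k-1}\binom{n}{m}\bigl(1+x+\cdots+x^{k-1-m}\bigr)\sum_{i=1}^{m}\binom{m}{i}(x-1)^{i-1}d_{m-i}(x).
\]
The crux is the single-variable identity $\sum_{i=0}^{m}\binom{m}{i}(x-1)^{i}d_{m-i}(x)=x\,d_m(x)+(-1)^{m+1}(x-1)$ for $m\ge0$, equivalently (after extracting the coefficient of $t^m/m!$) the classical exponential generating function $\sum_{m\ge0}d_m(x)\tfrac{t^m}{m!}=\tfrac{(x-1)e^{-t}}{x-e^{(x-1)t}}$; peeling off the $i=0$ term it says $\sum_{i=1}^{m}\binom{m}{i}(x-1)^{i-1}d_{m-i}(x)=d_m(x)+(-1)^{m+1}$. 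Plugging this in, the ``$d_m(x)$'' parts reassemble (together with the $m=0$ summand $1+x+\cdots+x^{k-1}$) into the claimed formula, and the leftover ``$(-1)^{m+1}$'' parts are exactly $\overline{\chi}_{\U_{k,n}}(x)-\bigl(1+x+\cdots+x^{k-1}\bigr)$ by the closed form above, so they cancel the $\overline{\chi}$ term on the nose.

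For the augmented Chow polynomial I would use $\H_{\U_{k,n}}(x)=\sum_{i=0}^{k-1}\binom{n}{i}x^{i}\,\uH_{\U_{k-i,\,n-i}}(x)+x^{k}$, plug in the formula for $\uH$ just proved, reorganize exactly as before, and apply the companion identity $\sum_{i=0}^{m}\binom{m}{i}x^{i}d_{m-i}(x)=x\,A_m(x)$ for $m\ge1$. This one has a one-line bijective proof—write a permutation of $[m]$ as its set of fixed points together with a derangement of the complement, tracking weak excedances (equivalently, it comes from the $\tfrac{t^m}{m!}$-coefficients of $e^{(x-1)t}\sum_m A_m(x)\tfrac{t^m}{m!}=x\sum_m A_m(x)\tfrac{t^m}{m!}-(x-1)$). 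Combined with the telescoping identity $(x-1)(1+x+\cdots+x^{k-1})=x^{k}-1$, the stray $x^{k}$ and the $m=0$ summand collapse to the single $1$ appearing in the formula, finishing the proof.

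The only real difficulty I anticipate is organizational rather than conceptual: one must check that the two ``defects''—the $(-1)^{m+1}$ error in the derangement identity and the mismatch between $x^{k}$ and the leading $1$—cancel precisely against, respectively, the reduced characteristic polynomial $\overline{\chi}_{\U_{k,n}}(x)$ of the top flat and the geometric-series telescoping, and that the degenerate pieces (the base case, the $\U_{0,0}$ contractions, and the conventions $d_0(x)=A_0(x)=1$, $d_1(x)=0$) are handled consistently. The index bookkeeping around the $j=0$ and $i=0$ summands, which must be peeled off before the identities apply, is the place where an error is most likely to creep in.
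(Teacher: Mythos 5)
Your proof is correct, and it takes a genuinely different route from the paper. The paper starts from the chain-of-flats (Feichtner--Yuzvinsky Gr\"obner basis) formula of Proposition~\ref{prop:hilbert-chow}, splits the chains in $\mathcal{L}(\U_{k,n})$ according to whether or not they terminate at the top element, and recognizes the resulting sums as $d_j(x)$ and $xA_j(x)$ via a separate lemma (Lemma~\ref{lem:dn-and-An}) that in turn rests on a Juhnke-Kubitzke--Murai--Sieg recursion for derangement polynomials and on Stanley's local-$h$-polynomial identity for barycentric subdivisions. You instead specialize the self-recursion $\uH_{\M}(x)=\sum_{F\neq\varnothing}\overline{\chi}_{\M|_F}(x)\uH_{\M/F}(x)$ (Theorem~\ref{thm:intro-main0}) to $\U_{k,n}$, exploit that all proper restrictions are Boolean and all contractions are again uniform, and induct on rank; the combinatorial content is pushed into the two classical binomial convolutions $\sum_i\binom{m}{i}(x-1)^{i-1}d_{m-i}(x)=d_m(x)+(-1)^{m+1}$ and $\sum_i\binom{m}{i}x^i d_{m-i}(x)=xA_m(x)$, which you justify via the exponential generating function of $d_m(x)$ and a fixed-point/derangement bijection. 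Both routes are sound; I verified your closed form $\overline{\chi}_{\U_{k,n}}(x)=\sum_{m=0}^{k-1}(-1)^m\binom{n}{m}(1+x+\cdots+x^{k-1-m})$ (it agrees with Lemma~\ref{reduced-char-poly} via $\sum_{m\le j}(-1)^m\binom{n}{m}=(-1)^j\binom{n-1}{j}$), the Vandermonde-type reindexing, and the two cancellations against $\overline{\chi}_{\U_{k,n}}(x)$ and the stray $x^k$, and everything closes up. The trade-off is roughly this: the paper's proof is shorter once Lemma~\ref{lem:dn-and-An} is in hand but imports the local-$h$ machinery, while yours stays entirely within the recursions already proven in the paper at the cost of invoking (and, for a self-contained write-up, needing to prove) the two EGF identities; your approach also makes visible that the alternating-sign formula of Corollary~\ref{corollary:uH-uniform} and the positive-coefficient formula differ precisely by the derangement-convolution defect, which is a nice point that the paper's proof does not surface.
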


The first of the above two formulas, when $k=n-1$ yields a recursion for the derangement polynomials deduced in recent work by Juhnke-Kubitzke, Murai, and Sieg in \cite[Corollary~4.2]{juhnke-murai-sieg}.

On the other hand, we can leverage the parallelism between the Kazhdan--Lusztig and the Chow-theoretic settings to produce alternative formulas that allow one to compute $\uH_{\U_{k,n}}(x)$ and $\H_{\U_{k,n}}(x)$, by a different method. Indeed, we imitate one of the main tricks used in the literature to compute $P_{\U_{k,n}}(x)$ and $Z_{\U_{k,n}}(x)$, which consists of considering the incidence algebra of the lattice of flats and the \textbf{inverse Kazhdan--Lusztig polynomial} of Gao and Xie \cite{gao-xie}. In the Chow setting this elementary idea yields useful formulas that apply to matroids in general, and uniform matroids in particular. As an application of this, we prove a conjecture of Hameister, Rao, and Simpson \cite[Conjecture 6.2]{hameister-rao-simpson} and an analogue for the augmented case. For the precise statements of those formulas, we refer directly to Proposition~\ref{prop:hilbert-chow-inverse-incidence} and Proposition~\ref{prop:hilbert-aug-chow-inverse-incidence}. 

We lend support to the part of Conjecture~\ref{conj:real-rootedness} stating the real-rootedness of the Hilbert series of augmented Chow rings. 

\begin{theorem}\label{thm:intro-hz}
     For every uniform matroid $\U_{k,n}$, the augmented Chow polynomial $\H_{\U_{k,n}}(x)$ is real-rooted.
\end{theorem}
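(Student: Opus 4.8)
The plan is to combine the explicit formula for $\H_{\U_{k,n}}(x)$ from Theorem~\ref{thm:uniform-formulas-main} with an inductive interlacing argument modeled on Haglund and Zhang's proof of the real-rootedness of the binomial Eulerian polynomials $\widetilde A_n(x)$. Fix $n$ and write $\H_k(x) := \H_{\U_{k,n}}(x)$. First I would extract from that formula a one-step recursion in $k$. Setting
\[ U_k(x) := 1 + x\sum_{j=1}^{k}\binom{n}{j}\,A_j(x) \]
(so that $U_0(x) = 1$, $U_{k+1}(x) = U_k(x) + \binom{n}{k+1}\,x\,A_{k+1}(x)$, and $U_n(x) = \widetilde A_n(x)$), the telescoping identity $(1 + x + \cdots + x^{k-j}) - x(1 + x + \cdots + x^{k-1-j}) = 1$ yields after a short computation
\[ \H_{k+1}(x) = x\,\H_k(x) + U_k(x), \qquad \H_0(x) = 1. \]
This reduces the statement to two tasks: controlling the auxiliary family $\{U_k(x)\}$, and propagating real-rootedness along this recursion.

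For the first task I would appeal to the Haglund--Zhang machinery: using only that the Eulerian polynomials form an interlacing sequence $A_1 \preceq A_2 \preceq \cdots$, the same inductive argument that proves $\widetilde A_n(x) = U_n(x)$ real-rooted shows that every truncation $U_k(x)$ is real-rooted, together with a precise interlacing relation (e.g. with $x A_{k+1}(x)$). For the second task I would run a simultaneous induction on $k$ carrying the hypotheses that $\H_k(x)$ and $U_k(x)$ are real-rooted --- automatically with all roots negative, since their coefficients are nonnegative and $\H_k(0) = U_k(0) = 1$ --- and that an interlacing invariant holds between them; the small cases suggest the pair $\H_k \preceq U_k$ and $U_k \preceq x\H_k$. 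Given such an invariant, the recursion $\H_{k+1} = x\H_k + U_k$ exhibits $\H_{k+1}$ as a nonnegative combination of $x\H_k$ and $U_k$, where the latter interlaces the former, so $\H_{k+1}$ is real-rooted by the standard interlacing lemma of Hermite--Kakeya--Obreschkoff type; the same lemma together with the recursion for $U_{k+1}$ and the interlacing of the $A_j$'s should re-establish the invariant at level $k+1$.

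The hard part will be the second task, and specifically the choice of interlacing invariant: it must hold at the base cases ($\H_0 = 1$, $\H_1 = 1 + x$, $U_1 = 1 + nx$), be strong enough to force real-rootedness of the mixed-degree combination $x\H_k + U_k$ (note $\deg U_k = k$ while $\deg(x\H_k) = k+1$, and the constant terms behave awkwardly for a naive bookkeeping), and reproduce itself under both recursions at once. Carrying out this bookkeeping for the two interlocked families $\{\H_k\}$ and $\{U_k\}$ with the binomial weights $\binom{n}{j}$ --- rather than for the single family $\widetilde A_n$ as in Haglund--Zhang --- is where the genuine work lies; the remaining steps are routine manipulations with interlacing polynomials.
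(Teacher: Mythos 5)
Your one-step recursion $\H_{k+1}(x) = x\,\H_k(x) + U_k(x)$, with $\H_k := \H_{\U_{k,n}}(x)$ for fixed $n$ and $U_k(x) := 1+x\sum_{j=1}^k\binom{n}{j}A_j(x)$, is correct and is a genuinely nice observation that does not appear in the paper. The paper's actual proof, however, is of a different nature: in Appendix~\ref{appendixb} it establishes the identity $\H_{\U_{k,n}}(x) = \widetilde E^{\mathbf{s}}_{k-1}(x)$ for $\mathbf{s} = (n-k+2,\ldots,n)$ by checking that both sides satisfy the two-variable recurrence of Lemma~\ref{lemma:recurrence-hz}, and then invokes Theorem~\ref{thm:hz} as a black box. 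The interlacing work is thus outsourced entirely to Haglund and Zhang, and no direct interlacing argument on $\H_{\U_{k,n}}$ is attempted.

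Your route would instead reproduce the interlacing argument internally, and the gaps you flag as ``the hard part'' are substantive rather than routine. First, the truncations $U_k(x)$ for $k<n$ are not binomial Eulerian polynomials and are not visibly of the form $\widetilde E^{\mathbf{s}}_\ell(x)$ for any $\mathbf{s}$, so their real-rootedness and the interlacing relation you want with $xA_{k+1}(x)$ cannot simply be cited from Haglund--Zhang's machinery; establishing them is a separate piece of work from the paper's Lemma~\ref{lemma:recurrence-hz} computation. Second, the proposed invariant (``$\H_k$ interlaces $U_k$'' and ``$U_k$ interlaces $x\H_k$'') is not self-reproducing under the two coupled recursions: checking the interlacing of $\H_{k+1}$ and $U_{k+1}$ at the next level requires control of $U_{k+1}(x)-\H_{k+1}(x) = x\left(\binom{n}{k+1}A_{k+1}(x) - \H_k(x)\right)$, and nothing in the stated hypotheses constrains how $\H_k$ sits relative to $A_{k+1}$; similarly the $U$-recursion needs the position of $U_k$ relative to $xA_{k+1}$, which is also outside the stated invariant. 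A working invariant would need to track both families against the Eulerian sequence $A_1, A_2, \ldots$ simultaneously. Your plan is plausible and, if completed, would have the merit of being self-contained (modulo the classical interlacing of Eulerian polynomials), but as written the central inductive step is missing, so this is an outline rather than a proof.
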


The proof of the preceding result relies on a combination of Theorem~\ref{thm:uniform-formulas-main} and a technical tool introduced by Haglund and Zhang \cite{haglund-zhang} on interlacing sequences of polynomials.

\subsection{Dominance of uniform matroids} 
An unpublished conjecture of Gedeon states that the Kazhdan--Lusztig and $Z$-polynomials of $\U_{k,n}$ are coefficient-wise maximal among all matroids of rank $k$ on $n$ elements. This conjecture is known to hold for all paving matroids in the Kazhdan--Lusztig setting \cite[Theorem~1.5]{ferroni-nasr-vecchi}. Here, we prove a Chow-theoretic counterpart for all matroids.

\begin{theorem}\label{thm:dominance-uniform-main}
    Let $\M$ be a loopless matroid of rank $k$ on a ground set of size $n$. The following coefficient-wise inequalities hold:
    \begin{align*}
        \uH_{\M}(x) &\preceq \uH_{\U_{k,n}}(x),\\
        \H_{\M}(x) &\preceq \H_{\U_{k,n}}(x).
    \end{align*}
    In other words, uniform matroids maximize 
    the coefficients of Chow polynomials and augmented Chow polynomials among all matroids with fixed rank and size.
\end{theorem}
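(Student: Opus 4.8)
The plan is to prove both coefficient-wise inequalities simultaneously by induction on the rank $k$, exploiting the non-recursive characterization of $\uH_{\M}(x)$ given in Theorem~\ref{thm:intro-main0} together with the analogous statement for $\H_{\M}(x)$ (which follows from Theorem~\ref{thm:main-recursion-defi-H-and-uH}\eqref{it:mainthird} applied to $\uH$). The base case $k=0$ is trivial since $\uH_{\M}(x)=1=\uH_{\U_{0,n}}(x)$, and $k=1$ is a direct check. For the inductive step, I would fix a loopless matroid $\M$ of rank $k$ on $n$ elements and compare the recursion
\[
    \uH_{\M}(x) = \sum_{\substack{F\in\mathcal{L}(\M)\\ F\neq\varnothing}} \overline{\chi}_{\M|_F}(x)\, \uH_{\M/F}(x)
\]
with the corresponding sum for $\U_{k,n}$, whose nonempty flats are all subsets of size between $1$ and $k-1$ together with the whole ground set.

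The key combinatorial input is a comparison, rank by rank, of the contributions of flats of a given rank $r$. Grouping the sum above according to $\rk(F)=r$, the rank-$r$ part of $\uH_{\M}(x)$ is $\sum_{\rk(F)=r}\overline{\chi}_{\M|_F}(x)\,\uH_{\M/F}(x)$. For $1 \le r \le k-1$, each contraction $\M/F$ is a loopless matroid of rank $k-r$, so by the inductive hypothesis $\uH_{\M/F}(x) \preceq \uH_{\U_{k-r,\,n-|F|}}(x)$; and since the number of elements of $\M/F$ is $n-|F| \le n-r$, a monotonicity statement for $\uH$ of uniform matroids in the ground-set parameter (which one reads off from the explicit formula in Theorem~\ref{thm:uniform-formulas-main}, or proves by a short induction) gives $\uH_{\U_{k-r,\,n-|F|}}(x)\preceq \uH_{\U_{k-r,\,n-r}}(x)$. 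The remaining task is to bound $\sum_{\rk(F)=r}\overline{\chi}_{\M|_F}(x)$ coefficient-wise by the corresponding quantity $\binom{n}{r}\overline{\chi}_{\U_{r,r}}(x)$ for the uniform matroid — note $\overline{\chi}_{\U_{r,r}}(x)=(x-1)^{r-1}$ has signs that alternate, so this is \emph{not} a termwise positive comparison and must be handled with care. I would instead combine the restriction and contraction sums: the product $\overline{\chi}_{\M|_F}(x)\uH_{\M/F}(x)$ is, after the estimates above, controlled by noticing that summing $\overline{\chi}_{\M|_F}(x)(1+x+\cdots)$ telescopes, exactly as in the Chow-ring identity underlying Theorem~\ref{thm:paving-intro}. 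Concretely, I expect the cleanest route is to use the finer recursion only to reduce to the flat $F$ equal to the whole ground set (contributing $\overline{\chi}_{\M}(x)$, which by Adiprasito–Huh–Katz has coefficients dominated in absolute value appropriately) versus proper flats, and to invoke that any loopless $\M$ of rank $k$ on $n$ elements admits a weak map / has at least as few flats of each rank as $\U_{k,n}$ — the number of rank-$r$ flats of size $\le$ something is maximized by the uniform matroid because \emph{every} $r$-subset is a flat there.

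A cleaner alternative, which I would pursue in parallel, is to use the valuativity of $\M\mapsto\uH_{\M}(x)$ and $\M\mapsto\H_{\M}(x)$ under matroid polytope subdivisions together with the fact that $\mathscr{P}(\M)\subseteq\mathscr{P}(\U_{k,n})$ and that $\U_{k,n}$ is obtained from $\M$ by a sequence of stressed-subset relaxations; then Theorem~\ref{thm:paving-intro}'s underlying mechanism shows each relaxation changes $\uH$ by adding a polynomial of the form $\uH_{\U_{k,h+1}}(x)-\uH_{\U_{k-1,h}\oplus\U_{1,1}}(x)$, so it suffices to prove that each such difference is coefficient-wise nonnegative. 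This reduces the theorem to the single inequality $\uH_{\U_{k-1,h}\oplus\U_{1,1}}(x)\preceq \uH_{\U_{k,h+1}}(x)$ (and its augmented analogue), which can be checked directly from the explicit formulas of Theorem~\ref{thm:uniform-formulas-main}, using $d_j(x)\succeq 0$, $A_j(x)\succeq 0$, and the behavior of $\uH$ under adding a coloop.

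The main obstacle is the sign-alternation of the reduced characteristic polynomial: the recursion in Theorem~\ref{thm:intro-main0} is not a sum of nonnegative terms, so one cannot bound it term by term, and the honest comparison requires either the telescoping/relaxation bookkeeping described above or an appeal to the Kähler-package positivity (hard Lefschetz for $\uCH$) to control the ``error'' contributed by the top flat. I expect the relaxation approach to be the most robust, with the entire difficulty concentrated in verifying the single base inequality $\uH_{\U_{k-1,h}\oplus\U_{1,1}}(x)\preceq \uH_{\U_{k,h+1}}(x)$ coefficient-wise, and likewise for $\H$.
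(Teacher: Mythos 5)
There is a genuine gap in both routes you sketch. The route you call the most robust---relax stressed subsets until you reach $\U_{k,n}$ and add up the differences---does not apply to general matroids: as recalled in Section~\ref{sec:two}, relaxing all stressed subsets (with nonempty cusp) yields $\U_{k,n}$ precisely when $\M$ is \emph{elementary split}, so for an arbitrary loopless matroid there is no such relaxation sequence terminating at the uniform matroid. Moreover, the per-relaxation correction $\uH_{\U_{k,h+1}}(x)-\uH_{\U_{k-1,h}\oplus\U_{1,1}}(x)$ you quote is the \emph{stressed-hyperplane} (paving) formula from Theorem~\ref{thm:paving-intro}; for a general stressed subset the correction involves the cuspidal matroids $\LL_{r,k,h,n}$ of \cite{ferroni-schroter}, so even on the class where relaxations do reach $\U_{k,n}$ you would have to prove nonnegativity of a different family of differences. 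Finally, note that $\mathscr{P}(\M)\subseteq\mathscr{P}(\U_{k,n})$ is a containment, not a subdivision, so valuativity by itself gives no comparison. Your first route is also incomplete for the reason you yourself identify: the recursion of Theorem~\ref{thm:intro-main0} has summands $\overline{\chi}_{\M|_F}(x)\,\uH_{\M/F}(x)$ with alternating signs, and the step you propose to ``invoke''---a rank-preserving, injective comparison of (chains of) flats of $\M$ with those of $\U_{k,n}$, or monotonicity under weak maps---is exactly the nontrivial content; in the general weak-map form this is Conjecture~\ref{conj:weak-maps-monotonicity}, which was open at the time of this paper (even the single-flat case requires the nontrivial result of \cite{lucas}), so it cannot simply be cited.

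The paper's proof supplies precisely the ingredient your proposal is missing, and it avoids the signed recursion altogether. It uses the Feichtner--Yuzvinsky chain formulas of Propositions~\ref{prop:hilbert-chow} and~\ref{prop:hilbert-augmented-chow}, in which $\uH_{\M}(x)$ and $\H_{\M}(x)$ are sums over chains of flats of polynomials with \emph{nonnegative} coefficients depending only on the rank sequence of the chain, together with Lemma~\ref{lemma:chains-flats-injection-uniform}: an explicit injection from chains of flats of $\M$ to chains of flats of $\U_{k,n}$ preserving all ranks, built by taking lexicographically minimal independent sets and closing them in $\U_{k,n}$. Once each summand for $\M$ is matched injectively with an identical summand for $\U_{k,n}$, the coefficient-wise inequalities follow at once, for both $\uH$ and $\H$ simultaneously. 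To repair your argument you would need either to prove such a chain injection yourself (essentially reproving that lemma) or to establish the uniform-target case of the weak-map monotonicity; as written, neither of your two routes closes.
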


\subsection{Outline}

The paper is organized as follows. In Section~\ref{sec:two} we review all the necessary background on matroids, $\gamma$-positivity, and the definitions of the derangement polynomials and the (binomial) Eulerian polynomials. 
In Section~\ref{sec:three} we discuss the Chow ring and augmented Chow ring. The validity of Theorem~\ref{thm:main-recursion-defi-H-and-uH}, Theorem~\ref{thm:intro-main0}, and Theorem~\ref{thm:correcthpseries} is a consequence of Theorem~\ref{thm:intrinsic-main-body} and Theorem~\ref{thm:convolution-reduced-char-poly}.
We prove the part of Theorem~\ref{thm:main-gamma-positivity} asserting $\gamma$-positivity for $\uH_\M(x)$ and $\H_\M(x)$ in Theorem~\ref{thm:gamma-positivity-uH-H}. The discussion on incidence algebras and the formulas for uniform matroids happens in Section \ref{subsec:incidence-uniform}; this includes the verification of the conjecture of Hameister, Rao, and Simpson and the proof of Theorem~\ref{thm:uniform-formulas-main}. The proof of Theorem~\ref{thm:intro-hz} is briefly motivated in Section~\ref{subsec:haglund-zhang}, but mostly deferred to Appendix~\ref{appendixb}. Theorem~\ref{thm:dominance-uniform-main} is discussed in Section~\ref{subsec:dominance-uniform}.
In Section~\ref{sec:four} we put our attention on the Kazhdan--Lusztig invariants. We establish the $\gamma$-positivity of the $Z$-polynomial as Theorem~\ref{thm:z-is-gamma-positive}.
In Section~\ref{sec:five} we collect some concluding remarks revolving around Conjecture~\ref{conj:real-rootedness}. In particular, we prove the real-rootedness of $\H_{\M}(x)$ when $\M$ is a uniform matroid via a construction due to Haglund and Zhang; additionally, we pose some conjectures and problems, discuss some aspects of total positivity, and survey the consequences of Koszulness of the Chow ring and  augmented Chow ring of a matroid applied to the Hilbert series of our interest.

\section{Background}\label{sec:two}

\subsection{Matroids}

Although we will assume familiarity with the usual techniques and constructions in matroid theory, in this subsection we recall some basic notions to establish the notation we will use throughout the paper. For any undefined concept that appears, we refer to Oxley's book on matroid theory \cite{oxley}. 

\begin{definition}
    A \emph{matroid} $\M$ is a pair $(E,\mathscr{B})$, where $E$ is a finite set and $\mathscr{B}\subseteq 2^E$ is a family of subsets of $E$ that satisfies the following two conditions:
    \begin{enumerate}[(a)]
        \item $\mathscr{B}\neq \varnothing$.
        \item If $B_1\neq B_2$ are members of $\mathscr{B}$ and $a\in B_1\smallsetminus B_2$, then there exists an element $b\in B_2\smallsetminus B_1$ such that $(B_1\smallsetminus \{a\})\cup \{b\}\in \mathscr{B}$.
    \end{enumerate}
\end{definition}

The set $E$ is usually called the \emph{ground set} and the members of $\mathscr{B}$ the \emph{bases} of $\M$. The most basic example of a matroid is that of \emph{uniform matroids}. We write $\U_{k,n}$ for the uniform matroid of rank $k$ and cardinality $n$. The matroid $\U_{k,n}$ is defined by $E=[n]$ and $\mathscr{B} = \binom{[n]}{k}$. The uniform matroid $\U_{n,n}$ of rank $n$ with $n$ elements will be referred to as the \emph{Boolean matroid} of rank $n$. The matroid $\U_{0,0}$, the only matroid having the empty set as its ground set, will be called the \emph{empty matroid}.

An important operation in matroid theory is that of \emph{dualization}. If $\M = (E,\mathscr{B})$ is a matroid, then by considering the set $\mathscr{B}^*=\{E\smallsetminus B: B\in\mathscr{B}\}$, it can be shown that $\M^*:= (E,\mathscr{B}^*)$ is a matroid as well---this is called the \emph{dual matroid} of $\M$. For example, the dual of the uniform matroid $\U_{k,n}$ is precisely $\U_{n-k,n}$.

\begin{definition}
    Let $\M=(E,\mathscr{B})$ be a matroid.
    \begin{itemize}
        \item If $e\in E$ is in no basis of $\M$, then we say that $e$ is a \emph{loop}. If $e$ is in all the bases, then we say that $e$ is a \emph{coloop}.
        \item An \emph{independent set} is a set $I\subseteq E$ contained in some $B\in \mathscr{B}$. If a set is not independent, we will say that it is \emph{dependent}.
        \item A \emph{circuit} of $\M$ is a minimal dependent set. 
        \item The \emph{rank} of an arbitrary subset $A\subseteq E$ is given by
            \[ \rk_{\M}(A) := \max_{B\, \in \,\mathscr{B}}\, |A\cap B|.\]
        We define the rank of $\M$ to be $\rk(\M) := \rk_{\M}(E)$.
        \item A \emph{flat} of $\M$ is a set $F$ with the property that adjoining new elements to $F$ strictly increases its rank. The poset of all flats of $\M$ ordered by inclusion happens to be a lattice called the \emph{lattice of flats}, and we will  denote it by $\mathcal{L}(\M)$. A flat of rank $\rk(\M)-1$ is said to be a \emph{hyperplane}.
    \end{itemize}
\end{definition}

Two operations that we will use often are those of restriction and contraction. We briefly comment about the fact that these operations are usually written using different notations in different articles. We therefore stick to Oxley's notation \cite{oxley} to avoid any confusion. 

\begin{definition}
    Let $\M=(E,\mathscr{B})$ be a matroid and let $A\subseteq E$.
    \begin{itemize}
    \item The matroid $\M|_A$, called the \emph{restriction of $\M$ to $A$}, is the matroid on the ground set $A$ with bases $\mathscr{B}(\M|_A) := \{B\cap A: B\in \mathscr{B} \text{ and } |B\cap A| = \rk_{\M}(A)\}$.
    \item The matroid $\M/A$, called the \emph{contraction of $\M$ by $A$}, is the matroid $(\M^*|_{E\smallsetminus A})^*$.
    \end{itemize}
    Matroids of the form $(\M|_{A_1})/{A_2}$ for some subsets $A_2\subseteq A_1$ of $E$ are called \emph{minors} of $\M$.
\end{definition}

If $\M$ is a loopless matroid on $E$, the minimum element of the lattice of flats $\mathcal{L}(\M)$ corresponds to the empty set $\varnothing$. Moreover, if $A$ is a flat, the operations of restriction and contraction can be understood nicely in $\mathcal{L}(\M)$: the lattice of flats of $\M|_A$ is isomorphic (as a poset) to the interval $[\varnothing,A]$ in $\mathcal{L}(\M)$, whereas the lattice of flats of $\M/A$ is isomorphic to the interval $[A,E]$ in $\mathcal{L}(\M)$. We will use these identifications  often.

An invariant that one may associate to a bounded graded poset is its \emph{characteristic polynomial}. In the case of the lattice of flats $\mathcal{L}(\M)$ of a loopless matroid $\M$, it is defined by 
    \begin{equation}\label{eq:definition-char-poly}
        \chi_{\M}(x)=\chi_{\mathcal{L}(\M)}(x) := \sum_{F\in \mathcal{L}(\M)} \mu(\varnothing, F) x^{\rk(E)-\rk(F)},
    \end{equation}
where $\mu\colon\mathcal{L}(\M)\times\mathcal{L}(\M)\to \mathbb{Z}$ denotes the M\"obius function (defined below) of the lattice $\mathcal{L}(\M)$. Using the above displayed formula as the definition, we have $\chi_{\U_{0,0}}(x)=1$ for the empty matroid. On the other hand, it is customary to extend the definition to arbitrary matroids by setting $\chi_{\M}(x):=0$ whenever $\M$ has loops.

In general, we define the \emph{M\"obius function} of a finite poset $\mathcal{L}$ as the map $\mu\colon\mathcal{L}\times\mathcal{L}\to \mathbb{Z}$ satisfying the recursion
    \begin{equation} \label{eq:mobius}
    \mu(x, y) = 
    \begin{cases}
        0 & \text{if }x\not\leq y,\\
        1 & \text{if }x = y,\\
        -\displaystyle\sum_{x\leq z < y} \mu(x, z) & \text{if }x < y. \end{cases}\end{equation}

If one evaluates both sides of equation \eqref{eq:definition-char-poly} at $x=1$, the recursion that defines the M\"obius function yields
\[ \chi_{\M}(1) = \sum_{F\in\mathcal{L}(\M)} \mu(\varnothing, F) = \begin{cases} 0 & \text{ if } E\neq \varnothing,\\ 1 & \text{ if } E=\varnothing.
\end{cases}\]

In other words, for a nonempty loopless matroid $\M$, the polynomial $\chi_{\M}(x)$ is a multiple of $x-1$. In particular, when $\M$ is nonempty, the quotient $\frac{\chi_{\M}(x)}{x-1}$ is itself a polynomial.

\begin{definition}
    Let $\M$ be a loopless matroid. We define the \emph{reduced characteristic polynomial} of $\M$ as the polynomial
    \[ \overline{\chi}_{\M}(x) = 
    \begin{cases} -1 & \text{ if $\M$ is empty,}\\
    \dfrac{\chi_{\M}(x)}{x-1} & \text{ otherwise.}
    \end{cases}\]
\end{definition}

The choice we made of how to define the reduced characteristic polynomial for empty matroids will be useful to simplify some statements later. Since this is not a standard definition, every time we rely on it, we will indicate that explicitly. We now state a useful lemma that we will need later, in Section~\ref{subsec:self-rec-and-their-geometry}.

\begin{lemma}\label{lemma:sum-reduced-char-poly-contractions}
    Let $\M$ be a loopless matroid on $E$, then
    \[ \sum_{\substack{F\in \mathcal{L}(\M)\\F\neq E}} \overline{\chi}_{\M/F}(x) = 1 + x + x^2 + \cdots + x^{\rk(\M) - 1}.\]
\end{lemma}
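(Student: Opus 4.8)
The plan is to clear denominators and reduce the claim to a one-line Möbius-inversion identity. First observe that whenever $F\in\mathcal{L}(\M)$ is a flat with $F\neq E$, the contraction $\M/F$ is a loopless matroid with nonempty ground set $E\smallsetminus F$, so $\overline{\chi}_{\M/F}(x)=\chi_{\M/F}(x)/(x-1)$ by the main clause of the definition of the reduced characteristic polynomial; in particular the special value $\overline{\chi}_{\U_{0,0}}(x)=-1$ never intervenes. Since $1+x+\cdots+x^{\rk(\M)-1}=(x^{\rk(\M)}-1)/(x-1)$ and $x-1$ is not a zerodivisor in $\Z[x]$, the identity to be proved is equivalent to
\[
    \sum_{\substack{F\in\mathcal{L}(\M)\\ F\neq E}} \chi_{\M/F}(x) \;=\; x^{\rk(\M)}-1 .
\]
(If $\M$ is the empty matroid both sides are empty sums equal to $0$, so we may assume $\M$ is nonempty.)

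Next I would expand each summand via the lattice-of-flats description of contraction. Using the poset isomorphism $\mathcal{L}(\M/F)\cong[F,E]$, which sends a flat to itself and shifts ranks by $\rk(F)$, the definition \eqref{eq:definition-char-poly} gives
\[
    \chi_{\M/F}(x)=\sum_{G\in[F,E]}\mu(F,G)\,x^{\rk(\M)-\rk(G)} .
\]
Summing this over \emph{all} flats $F\in\mathcal{L}(\M)$ (temporarily including $F=E$) and interchanging the two summations yields
\[
    \sum_{F\in\mathcal{L}(\M)}\chi_{\M/F}(x)=\sum_{G\in\mathcal{L}(\M)}x^{\rk(\M)-\rk(G)}\sum_{\substack{F\in\mathcal{L}(\M)\\ F\leq G}}\mu(F,G) .
\]
Now I invoke the dual form of the recursion \eqref{eq:mobius}: since $\mu$ is the two-sided inverse of the zeta function in the incidence algebra of the finite poset $\mathcal{L}(\M)$, one has $\sum_{F\leq G}\mu(F,G)=\delta_{\varnothing,G}$ for every flat $G$ (recall that $\varnothing$ is the minimum of $\mathcal{L}(\M)$). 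Hence only the term $G=\varnothing$ contributes, and it contributes $x^{\rk(\M)-\rk(\varnothing)}=x^{\rk(\M)}$, so $\sum_{F\in\mathcal{L}(\M)}\chi_{\M/F}(x)=x^{\rk(\M)}$.

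Finally I would peel off the term $F=E$: because $\M/E=\U_{0,0}$ we have $\chi_{\M/E}(x)=1$, so subtracting it from the previous line gives exactly $\sum_{F\neq E}\chi_{\M/F}(x)=x^{\rk(\M)}-1$, which, upon dividing by $x-1$, is the assertion of the lemma. I do not expect a genuine obstacle here: the content is a single interchange of summations plus Möbius inversion, and the only points requiring care are the empty-matroid edge case and remembering to use the dual Möbius relation $\sum_{F\le G}\mu(F,G)=\delta_{\varnothing,G}$ rather than the form literally displayed in \eqref{eq:mobius}.
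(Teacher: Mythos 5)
Your proposal is correct and follows the same route as the paper: both reduce the claim, after clearing the factor $x-1$ and accounting for the term $1=\chi_{\U_{0,0}}(x)$, to the identity $\sum_{F\in\mathcal{L}(\M)}\chi_{\M/F}(x)=x^{\rk(\M)}$, which the paper obtains by citing M\"obius inversion and which you verify directly by interchanging summations and using the dual relation $\sum_{F\leq G}\mu(F,G)=\delta_{\varnothing,G}$. The extra care you take with the empty-matroid edge case and the nonemptiness of the contractions matches the paper's remark that none of the contractions $\M/F$ with $F\neq E$ is empty.
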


\begin{proof}
    Notice that none of the contractions is the empty matroid. In particular, after multiplying both sides by $x-1$ and adding $1 = \chi_{\U_{0,0}}(x)$, proving the above identity is equivalent to proving that \[\sum_{F\in \mathcal{L}(\M)} \chi_{\M/F}(x) = x^{\rk(\M)}.\]
    This is obtained by applying the M\"obius inversion formula (see for example \cite[Proposition~3.7.2]{stanley-ec1}) to the definition of the characteristic polynomial in equation \eqref{eq:definition-char-poly}.
\end{proof}

\subsection{\texorpdfstring{$\gamma$}{gamma}-polynomials and \texorpdfstring{$\gamma$}{gamma}-positivity}

A polynomial $f(x) = \sum_i a_i x^i$ is said to be \emph{symmetric} with center of symmetry $\frac{d}{2}$, if $a_i = a_{d-i}$ for each $i\in \mathbb{Z}$ (where $a_i := 0$ for negative values of $i$). This can be rephrased by saying that $f(x)$ satisfies $x^df(x^{-1})=f(x)$. If the center of symmetry is $\frac{d}{2}$ for $d=\deg f(x)$, the polynomial $f(x)$ will be called  \emph{palindromic}.

It is often useful to express symmetric polynomials in different bases. For example, the family of polynomials $\left\{x^i(1+x+\cdots+x^{d-2i})\right\}_{i=0}^{\lfloor\frac{d}{2}\rfloor}$ is a basis for the space of all symmetric polynomials with center of symmetry $\frac{d}{2}$. It is not difficult to see that $f(x)$ can be written as a nonnegative linear combination in this basis if and only if the sequence $a_0,\ldots, a_d$ is nonnegative, symmetric, and unimodal. A property stronger than unimodality arises when changing the basis to $\left\{x^i(1+x)^{d-2i}\right\}_{i=0}^{\lfloor\frac{d}{2}\rfloor}$.

\begin{proposition}
    If $f(x)\in \mathbb{Z}[x]$ is a polynomial such that $x^df(x^{-1})=f(x)$, then there exist integers $\gamma_0,\ldots,\gamma_{\lfloor \frac{d}{2}\rfloor}$ such that
    \begin{equation}\label{eq:gamma-exp} 
        f(x) = \sum_{i=0}^{\lfloor\frac{d}{2}\rfloor} \gamma_i\, x^i (1+x)^{d-2i}.
    \end{equation}
\end{proposition}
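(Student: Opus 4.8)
The plan is to prove the existence of the integers $\gamma_i$ by induction on the degree $d$, showing more precisely that the family $\left\{x^i(1+x)^{d-2i}\right\}_{i=0}^{\lfloor d/2\rfloor}$ spans the $\mathbb{Z}$-module of palindromic polynomials (with center of symmetry $d/2$) over $\mathbb{Z}$. The base cases $d=0$ and $d=1$ are immediate: a symmetric polynomial of degree $\le 0$ is $\gamma_0$, and one with center of symmetry $1/2$ is $\gamma_0(1+x)$.

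For the inductive step, suppose $f(x)\in\mathbb{Z}[x]$ satisfies $x^d f(x^{-1}) = f(x)$ and write $f(x) = \sum_{i=0}^d a_i x^i$ with $a_i = a_{d-i}$. First I would peel off the correct multiple of the top basis element: set $g(x) := f(x) - a_0\,(1+x)^d$. Since $(1+x)^d$ is itself palindromic with center $d/2$ and leading (and constant) coefficient $1$, the polynomial $g(x)$ is again symmetric with center of symmetry $d/2$, but now its constant and leading coefficients both vanish, so $g(x) = x\cdot h(x)$ where $h(x)$ is a polynomial of degree at most $d-2$. The symmetry $x^d g(x^{-1}) = g(x)$ translates into $x^{d-2}h(x^{-1}) = h(x)$, i.e. $h$ is symmetric with center of symmetry $(d-2)/2$. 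By the induction hypothesis applied to $h$ (whose relevant degree parameter is $d-2$), there are integers $\gamma_1,\ldots,\gamma_{\lfloor d/2\rfloor}$ with $h(x) = \sum_{j=0}^{\lfloor (d-2)/2\rfloor}\gamma_{j+1}\,x^j(1+x)^{d-2-2j}$; multiplying by $x$ and adding back $a_0(1+x)^d$ (setting $\gamma_0 := a_0$) yields the desired expansion \eqref{eq:gamma-exp}. One should note the indexing works out since $\lfloor(d-2)/2\rfloor = \lfloor d/2\rfloor - 1$.

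The only genuinely delicate point is the bookkeeping that $g(x)/x$ really is a polynomial of degree $\le d-2$ whose symmetry center is exactly $(d-2)/2$; this is where it matters that we subtracted off $(1+x)^d$ with the \emph{constant} coefficient $a_0$ rather than something else, and that $(1+x)^d$ is palindromic of the same degree and center as $f$. Everything else is a routine verification that subtraction and multiplication by $x$ interact correctly with the functional equation $x^{(\cdot)}p(x^{-1}) = p(x)$. An alternative, entirely non-inductive argument would observe that the transition matrix from the basis $\{x^i(1+x+\cdots+x^{d-2i})\}$ (already noted to be a $\mathbb{Z}$-basis of the symmetric polynomials with center $d/2$) to $\{x^i(1+x)^{d-2i}\}$ is unitriangular with integer entries — indeed both families are ``graded'' by the exponent $i$ of the leading factor $x^i$, and $x^i(1+x)^{d-2i}$ expands as $x^i(1+x+\cdots+x^{d-2i})$ plus a $\mathbb{Z}$-combination of terms $x^{i'}(1+x+\cdots+x^{d-2i'})$ with $i'>i$ — hence its inverse is also integral unitriangular, proving that $\{x^i(1+x)^{d-2i}\}$ is itself a $\mathbb{Z}$-basis. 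I would present the inductive proof as the main line since it is shortest, but mention the transition-matrix viewpoint as the conceptual reason the coefficients land in $\mathbb{Z}$.
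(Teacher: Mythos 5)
Your proof is correct. Note that the paper gives no argument for this proposition at all---it simply cites Gal \cite[Proposition~2.1.1]{gal}---so there is no ``paper's proof'' to compare against; you have supplied the missing argument in full.

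Both of your routes are sound. The inductive one is the standard peeling-off argument: since $(1+x)^d$ is monic, palindromic, and has the same center $d/2$ as $f$, the difference $g(x)=f(x)-a_0(1+x)^d$ vanishes in degrees $0$ and $d$, so $g=xh$ with $x^{d-2}h(x^{-1})=h(x)$, and the index bookkeeping $\lfloor (d-2)/2\rfloor+1=\lfloor d/2\rfloor$ closes the induction. (One small point worth stating explicitly: the hypothesis $x^d f(x^{-1})=f(x)$ already forces $\deg f\le d$, since a coefficient $a_N$ with $N>d$ would produce a negative-exponent term $a_N x^{d-N}$; so writing $f=\sum_{i=0}^d a_ix^i$ is legitimate.) Your alternative via the change of basis is also correct and is, if anything, the more conceptual explanation for why the $\gamma_i$ land in $\mathbb{Z}$: the paper has just observed that $\{x^i(1+x+\cdots+x^{d-2i})\}_{i}$ is a $\mathbb{Z}$-basis of the palindromic polynomials with center $d/2$, and the transition matrix to $\{x^i(1+x)^{d-2i}\}_i$ is unitriangular with integer entries (each $x^i(1+x)^{d-2i}$ has coefficient $1$ on $x^i(1+x+\cdots+x^{d-2i})$ and integer coefficients on the $x^{i'}(\cdots)$ with $i'>i$), hence its inverse is integral unitriangular as well. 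Either argument is a complete, self-contained proof of the proposition.
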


\begin{proof}
    See \cite[Proposition~2.1.1]{gal}.
\end{proof}

\begin{definition}
    Let $f(x) \in \Z[x]$ be a polynomial such that $x^df(x^{-1})=f(x)$. If $\gamma_0,\ldots,\gamma_{\lfloor\frac{d}{2}\rfloor}$ are as in equation \eqref{eq:gamma-exp}, we define the \emph{$\gamma$-polynomial} associated to $f$ by
        \[\gamma(f,x) := \sum_{i=0}^{\lfloor\frac{d}{2}\rfloor} \gamma_i\, x^i.\]
\end{definition}

If $f(x)$ is a palindromic polynomial of degree $d$, we will say that $f(x)$ is \emph{$\gamma$-positive} if all the coefficients of $\gamma(f,x)$ are nonnegative. For a thorough study of $\gamma$-positivity in combinatorics, we refer the reader to the survey \cite{athanasiadis-gamma-positivity} by Athanasiadis. As mentioned before, being $\gamma$-positive is a stronger property than having unimodal coefficients. On the other hand, a well-known fact is that if $f(x)$ has only negative real roots then it is $\gamma$-positive. Let us record these observations in a proposition.

\begin{proposition}
    Let $f$ be a symmetric polynomial with nonnegative coefficients. We have the following strict implications.
    \[\gamma(f,x) \text{ is real-rooted} \iff f(x) \text{ is real-rooted} \Longrightarrow f(x) \text{  is $\gamma$-positive} \Longrightarrow f(x) \text{ is unimodal.}\]
\end{proposition}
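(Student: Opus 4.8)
The plan is to handle the three implications separately and then produce small counterexamples witnessing that the last two are strict. \emph{For the equivalence} $\gamma(f,x)\text{ real-rooted}\iff f\text{ real-rooted}$: when $d=\deg f$ is even, say $d=2n$, identity \eqref{eq:gamma-exp} rewrites as $f(x)=(1+x)^{2n}\,\gamma\!\bigl(f,\tfrac{x}{(1+x)^2}\bigr)$, using $x^i(1+x)^{d-2i}=(1+x)^{2n}\bigl(\tfrac{x}{(1+x)^2}\bigr)^i$; when $d$ is odd, palindromicity together with nonnegativity of the coefficients forces $x=-1$ to be a root, so one first divides $f$ by $(1+x)$, which lowers the degree to an even number without changing the $\gamma$-polynomial. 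The substitution $x\mapsto\tfrac{x}{(1+x)^2}$ restricts to a monotone bijection from each of $(-1,0)$ and $(-\infty,-1)$ onto $(-\infty,0)$, and it carries the reciprocal involution $\rho\mapsto\rho^{-1}$ — under which the root multiset of a palindromic polynomial is stable — to the identity; hence the $2n$ roots of $f$ (all negative, again by nonnegativity of the coefficients) descend two-to-one onto the $n$ roots of $\gamma(f,x)$, and the correspondence is reversible. This is classical, and in the write-up I would simply cite it (see \cite{gal}, or the surveys \cite{branden} and \cite{athanasiadis-gamma-positivity}).

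\emph{For ``real-rooted $\Rightarrow$ $\gamma$-positive''} I would give the standard explicit factorization. Reduce as above to $d=2n$. Since the coefficients of $f$ are nonnegative and $a_0=a_d\neq 0$, every root of $f$ is a negative real number, and palindromicity partitions the roots into reciprocal pairs $\{\rho_j,\rho_j^{-1}\}$ (with $-1$ occurring with even multiplicity). Therefore
\[ f(x)=a_d\prod_{j=1}^{n}\bigl(x^2+c_jx+1\bigr)=a_d\prod_{j=1}^{n}\bigl((1+x)^2+(c_j-2)\,x\bigr),\qquad c_j:=-(\rho_j+\rho_j^{-1})\geq 2. \]
Expanding the product, every monomial that appears is a nonnegative multiple of some $x^k(1+x)^{2n-2k}$, so by uniqueness of the $\gamma$-expansion each $\gamma_k$ is nonnegative. (As a bonus this re-proves the forward half of the equivalence, since the expansion gives $\gamma(f,x)=a_d\prod_j\bigl(1+(c_j-2)x\bigr)$, visibly real-rooted.)

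\emph{For ``$\gamma$-positive $\Rightarrow$ unimodal''}: each basis polynomial $x^i(1+x)^{d-2i}$ has coefficient sequence $\bigl(\binom{d-2i}{\ell-i}\bigr)_\ell$, which is nonnegative, symmetric about $\tfrac d2$, and unimodal; a nonnegative combination of sequences that are symmetric about the common center $\tfrac d2$ and unimodal is again symmetric and unimodal (summing termwise preserves a nondecreasing run on $[0,\lfloor d/2\rfloor]$, and the shared center aligns the peaks), so $f=\sum_i\gamma_i\,x^i(1+x)^{d-2i}$ with all $\gamma_i\geq 0$ is unimodal. \emph{For strictness}: the implication ``$\gamma$-positive $\Rightarrow$ real-rooted'' fails for $f(x)=(1+x)^4+x^2=x^4+4x^3+7x^2+4x+1$, whose coefficients are nonnegative and which satisfies $\gamma(f,x)=x^2+1$ (so $f$ is $\gamma$-positive), yet $f(-t)=(1-t)^4+t^2>0$ for all $t$, so $f$ is not real-rooted; the implication ``unimodal $\Rightarrow$ $\gamma$-positive'' fails for $f(x)=1+x+x^2$, since $(1,1,1)$ is unimodal but $\gamma(f,x)=1-x$ has a negative coefficient.

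The only step requiring genuine care is the equivalence, i.e. tracking how real-rootedness behaves under the change of basis from $\{x^i(1+x)^{d-2i}\}$ to the power basis, which is governed by the substitution $x\mapsto x/(1+x)^2$; once one accounts for the pole at $x=-1$ and carries out the odd-degree reduction, the argument is routine. Everything else — the explicit quadratic factorization, the binomial-basis unimodality argument, and the two counterexamples — is elementary.
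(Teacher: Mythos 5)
The paper does not prove this proposition; it only cites \cite{gal}, \cite{branden}, and \cite[Proposition~5.3]{ferroni-nasr-vecchi}. Your self-contained argument is therefore a genuine expansion, and most of it is correct and well-organized: the explicit factorization $f(x)=a_d\prod_j\bigl((1+x)^2+(c_j-2)x\bigr)$ is the cleanest route to ``real-rooted $\Rightarrow$ $\gamma$-positive'' and, as you note, yields $\gamma(f,x)=a_d\prod_j(1+(c_j-2)x)$ as a bonus; the binomial-basis argument for ``$\gamma$-positive $\Rightarrow$ unimodal'' is right; and the two counterexamples for strictness are correct.

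The one place where you should slow down is the converse arrow of the first equivalence. You write ``the correspondence is reversible,'' but reversing the substitution $x\mapsto x/(1+x)^2$ sends a root $\beta$ of $\gamma(f,x)$ to the roots of $x-\beta(1+x)^2$, whose discriminant is $1-4\beta$; these are real precisely when $\beta\le\tfrac14$. That bound is automatic when $\gamma(f,x)$ has nonnegative coefficients, but ``real-rooted'' alone does not give it. In fact your own second counterexample already refutes the unqualified $\Leftarrow$: for $f=1+x+x^2$ one has $\gamma(f,x)=1-x$, which is real-rooted, while $f$ is not. So the precise statement is ``$f$ real-rooted $\iff$ $\gamma(f,x)$ is real-rooted with nonpositive roots (equivalently, $\gamma$-positive and real-rooted).'' The paper states the equivalence loosely and you inherit that; since you are already in possession of the counterexample, it would be worth remarking on this explicitly rather than deferring the converse wholesale to the references.
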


The proofs of these implications, as well as counterexamples for the missing ones, can be found in \cite{gal} and \cite{branden} or, alternatively, the reader can look at \cite[Proposition~5.3]{ferroni-nasr-vecchi}.

Observe that a nonzero symmetric polynomial $f(x)\in \mathbb{Z}[x]$ has a unique center of symmetry. In other words, there is exactly one integer $d$ such that $x^df(x^{-1}) = f(x)$. In particular, whenever $f$ is symmetric there is no ambiguity in writing $\gamma(f,x)$, even when the degree of the polynomial $f(x)$ is not specified. We have the following toolbox of basic identities that exhibit the behavior of the assignment $f(x) \mapsto \gamma(f,x)$ under simple operations.

\begin{lemma}\label{lemma:properties-gamma}
    Let $f(x)$ and $g(x)$ be symmetric polynomials. Then, we have:
    \begin{enumerate}[\normalfont(i)]
        \item $\gamma(fg, x) = \gamma(f,x)\cdot\gamma(g,x)$.
        \item $\gamma(xf, x) = x\cdot\gamma(f,x)$.
        \item $\gamma((x+1)f, x) = \gamma(f,x)$.
        \item If $f(x)$ and $g(x)$ have the same center of symmetry, then $\gamma(f+g,x) = \gamma(f,x) + \gamma(g,x)$.
    \end{enumerate}
\end{lemma}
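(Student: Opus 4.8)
The plan is to reduce all four identities to a single clean reformulation of the $\gamma$-expansion. For a symmetric polynomial $f$ with center of symmetry $d/2$, factoring $(1+x)^{d}$ out of every term of \eqref{eq:gamma-exp} yields
\[
  f(x) \;=\; (1+x)^{d}\,\gamma\!\left(f,\frac{x}{(1+x)^{2}}\right);
\]
this is an honest polynomial identity, not merely one of rational functions, because the constraint $0\le i\le\lfloor d/2\rfloor$ makes each $x^{i}(1+x)^{d-2i}$ a polynomial. I would first record this identity, together with the fact that $\gamma(f,\cdot)$ is well-defined (the family $\{x^{i}(1+x)^{d-2i}\}$ being a basis, as noted in the text above, so the $\gamma_i$ are uniquely determined by $f$ and $d$).

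Next I would observe that the substitution $y\mapsto x/(1+x)^{2}$ is invertible as a formal power series, since it equals $x+O(x^{2})$; hence two polynomials in $y$ that agree after this substitution must be equal. After that, each part is a short computation. Writing $u=x/(1+x)^{2}$: for (i), one first checks that $fg$ is symmetric with center $(d_{f}+d_{g})/2$ (immediate from $x^{d}f(1/x)=f(x)$), applies the reformulation to $fg$, to $f$, and to $g$, multiplies the last two, and cancels $(1+x)^{d_{f}+d_{g}}$ to get $\gamma(fg,u)=\gamma(f,u)\,\gamma(g,u)$, then passes back through the substitution. For (ii) and (iii) the same recipe works once one checks that $xf$ has center $(d+2)/2$ and $(1+x)f$ has center $(d+1)/2$, the point being the trivial algebra $(1+x)^{d+2}u = x(1+x)^{d}$ and $(1+x)^{d+1}=(1+x)\cdot(1+x)^{d}$ respectively. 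For (iv), with $f$ and $g$ of a common center $d/2$ the sum $f+g$ has that same center, and the reformulation is linear in $\gamma(\cdot,u)$, so the claim is immediate (the degenerate case $f+g=0$ being trivial).

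I do not expect a genuine obstacle here; the argument is entirely formal. The one point requiring care is the bookkeeping of how the center of symmetry $d$ behaves under multiplication by $x$, by $1+x$, and under products, since the reformulation must be applied with the correct exponent in each instance — in particular one must distinguish "center of symmetry" from "degree", which differ exactly when $f$ has vanishing constant term, and a sloppy choice here would spoil the cancellation. A reader wishing to avoid power series can instead note that $y=x/(1+x)^{2}$ is injective on $(-1,1)$ with infinite image, so the passage from an identity in $u$ to the claimed identity in $x$ still goes through; I would mention this as an alternative.
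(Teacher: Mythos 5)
Your proposal is correct. The paper in fact states this lemma without proof, treating the four identities as elementary, so there is no argument of record to compare against; your reformulation $f(x)=(1+x)^{d}\,\gamma\bigl(f,\tfrac{x}{(1+x)^{2}}\bigr)$, combined with the uniqueness of the center of symmetry for nonzero symmetric polynomials and the invertibility of the substitution $x\mapsto x/(1+x)^{2}$, is the standard way to verify all four parts uniformly, and your bookkeeping of how the center behaves under multiplication by $x$, by $1+x$, and under products (center $=$ degree exactly when the constant term is nonzero) is exactly the point that needs care.
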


\subsection{Derangements and (binomial) Eulerian polynomials}

We present three families of polynomials that will be of much importance when we deal with Hilbert series of Chow rings and augmented Chow rings of uniform and Boolean matroids.

\subsubsection{Eulerian polynomials} 

One of the most pervasive objects in enumerative combinatorics is the family of  Eulerian polynomials. Given a permutation $\sigma\in\mathfrak{S}_n$, $n\geq 1$, written in one-line notation as $\sigma=\sigma_1\cdots\sigma_n$, the number of \emph{descents} of $\sigma$ is defined as the cardinality of the set $\{i\in [n-1]:\sigma_i>\sigma_{i+1}\}$, and is denoted by $\des(\sigma)$. For $n\geq 1$, we define the $n$-th \emph{Eulerian polynomial} as follows:
    \[ A_n(x) := \sum_{\sigma\in\mathfrak{S}_n} x^{\des(\sigma)}.\] 
We note that this differs by a factor of $x$ from the definition in \cite[p.~33]{stanley-ec1}. Furthermore, we define $A_0(x)=1$. Explicitly, we have:
    \[ A_n(x) = \begin{cases}
        1 & \text{if } n = 0,1,\\
        x+1 & \text{if } n = 2,\\
        x^2+4x+1 & \text{if } n=3,\\
        x^3+11x^2+11x+1 & \text{if } n=4,\\
        x^4+26x^3+66x^2+26x+1 & \text{if } n=5,\\
        \text{etc.} &
    \end{cases}\]
The polynomials $A_n(x)$ are palindromic and $\deg A_n(x) = n-1$. It is a classical result attributed to Frobenius that these polynomials are real-rooted (for a proof, see \cite[Example 7.3]{branden}). The coefficients of the Eulerian polynomials admit several combinatorial interpretations, many of which can be found in \cite[Chapter 1]{stanley-ec1}.

\subsubsection{Derangement polynomials}

A permutation $\sigma\in\mathfrak{S}_n$ is said to be a \emph{derangement} if $\sigma_i\neq i$ for all $i$, i.e. if $\sigma$ has no fixed points. The set of all derangements on $n$ elements is usually denoted by $\mathfrak{D}_n$. For each $n\geq 1$, the $n$-th \emph{derangement polynomial}, denoted $d_n(x)$, is defined by
    \[ d_n(x) := \sum_{\sigma\in\mathfrak{D}_n} x^{\operatorname{exc}(\sigma)}.\]
where $\operatorname{exc}(\sigma):=|\{i\in [n]:\sigma_i>i\}|$ denotes the number of \emph{excedances} of $\sigma$. We extend this definition to $n=0$ by taking $d_0(x):=1$. The first few values of $d_n(x)$ are:
    \[ d_n(x) = \begin{cases}
        1 & \text{if } n = 0,\\
        0 & \text{if } n = 1,\\
        x & \text{if } n = 2,\\
        x^2+x& \text{if } n=3,\\
        x^3+7x^2+x & \text{if } n=4,\\
        x^4+21x^3+21x^2+x & \text{if } n=5,\\
        \text{etc.} &
    \end{cases}\]

We have $\deg d_n(x) = n-1$ for each $n\geq 1$. With only the exception of $d_0(x)$, the polynomials $d_n(x)$ are a multiple of $x$ and are symmetric with center of symmetry $\frac{n}{2}$.  Derangement polynomials are known to be real-rooted (see for example \cite[Theorems 3.5 and 4.1]{gustafsson-solus}), and therefore are in particular $\gamma$-positive.

\subsubsection{Binomial Eulerian polynomials}

A related family of polynomials that will play an important role is that of the \emph{binomial Eulerian polynomials}, which were named this way, e.g. in \cite{shareshian-wachs,athanasiadis-eulerian}. The $n$-th binomial Eulerian polynomial $\widetilde{A}_n(x)$ is defined by
    \[ \widetilde{A}_n(x) := 1 + x \sum_{j=1}^{n} \binom{n}{j} A_j(x).\]
In particular, the first few values of these polynomials are given by:
    \[ \widetilde{A}_n(x) = \begin{cases}
        1 & \text{if } n = 0,\\
        x+1 & \text{if } n = 1,\\
        x^2+3x+1 & \text{if } n=2,\\
        x^3+7x^2+7x+1 & \text{if } n=3,\\
        x^4+15x^3+33x^2+15x+1 & \text{if } n=4,\\
        x^5+31x^4+131x^3+131x^2+31x+1 & \text{if } n=5,\\
        \text{etc.} &
    \end{cases}\]
We note that for every $n\geq 0$, $\deg\widetilde{A}_n(x)=n$. It is a non-obvious fact that these polynomials are palindromic and $\gamma$-positive, see for example \cite[Theorem~11.6]{postnikov-reiner-williams} or \cite[Theorem~1.1]{athanasiadis-eulerian}. Furthermore, they are real-rooted, by \cite[Theorem~3.1]{haglund-zhang} or \cite[Theorem 4.4]{branden-jochemko}.

\section{The Chow ring and the augmented Chow ring}\label{sec:three}

\subsection{The definitions}

In \cite{feichtner-yuzvinsky} Feichtner and Yuzvinsky introduced the notion of Chow ring for an arbitrary finite atomic lattice. Their construction takes as inputs an atomic lattice $\mathcal{L}$ and a so-called \emph{building set} $\mathcal{G}\subseteq\mathcal{L}$, and returns a ring $D(\mathcal{L},\mathcal{G})$ that they refer to as \emph{the Chow ring of $\mathcal{L}$ with respect to $\mathcal{G}$}. For the definition of building set and \emph{irreducible element}, we refer to \cite[Definition~1]{feichtner-yuzvinsky}. Let us denote by $\widehat{0}$ and $\widehat{1}$, respectively, the bottom and top elements of $\mathcal{L}$. There are two distinguished building sets; the first, which is the one we care the most about, is called the \emph{maximal} building set $\mathcal{G}_{\max}$ and is given by $\mathcal{L}\smallsetminus\{\widehat{0}\}$. The second is the \emph{minimal} building set $\mathcal{G}_{\min}$, and it consists of all elements of $\mathcal{L}\smallsetminus\{\widehat{0}\}$ that are irreducible.

The terminology ``Chow ring'' has its roots in the classical notion of Chow ring in algebraic geometry, and particularly in toric geometry. More precisely, if one considers the lattice of intersections of a hyperplane arrangement, i.e. the lattice of flats of a representable matroid, and an arbitrary building set containing the top element, the resulting Chow ring is isomorphic to the Chow ring of the associated wonderful compactification of De~Concini and Procesi \cite{deconcini-procesi}. Since the notion presented by Feichtner and Yuzvinsky is meaningful for arbitrary geometric lattices (as opposed to only ``representable'' ones), we can define a notion of Chow ring for the lattice of flats of an arbitrary (loopless) matroid. What is usually called the ``Chow ring of a  matroid'' in the literature is obtained by taking the maximal building set of its lattice of flats.

By making a slight modification to its presentation, and following the notation of \cite{semismall}, we introduce the Chow ring of a matroid using the following definition.

\begin{definition}
    Let $\M$ be a loopless matroid. The \textit{Chow ring} of $\M$ is the quotient
        \[
        \uCH(\M) = \mathbb{Q}[x_F : F\in\mathcal{L}(\M)\smallsetminus\{\varnothing,E\}]/{(I+J)},
        \]
    where the ideals $I$ and $J$ are defined respectively by
    \begin{align*}
        I &= \left< x_{F_1} x_{F_2} \,:\, F_1,F_2 \in \mathcal{L}(\M)\smallsetminus\{\varnothing,E\} \text{ are incomparable}\right>,\\
        J &= \left< \sum_{F\ni i} x_F - \sum_{F\ni j} x_F \,:\, i,j\in E\right>.
    \end{align*}
\end{definition}

In \cite{semismall} and \cite{braden-huh-matherne-proudfoot-wang}, Braden, Huh, Matherne, Proudfoot, and Wang introduced an ``augmented'' version of the Chow ring of a matroid. 
For the interested reader, we mention that the augmented Chow ring of a matroid $\M$ can be defined in terms of the construction of Feichtner and Yuzvinsky, namely as the Chow ring of the lattice of flats of the free coextension of $\M$ with respect to a certain building set; we refer to \cite[Lemma 5.14]{stellahedral} for more details regarding this perspective. The definition of augmented Chow ring that we will use is the following.

\begin{definition}
     Let $\M$ be a loopless matroid. The \textit{augmented Chow ring} of $\M$ is the quotient
        \[
        \CH(\M) = \mathbb{Q}[x_F,\, y_i : F\in\mathcal{L}(\M)\smallsetminus\{E\} \text{ and } i\in E]/{(I+J)},
        \]
    where the ideals $I$ and $J$ are defined respectively by
    \begin{align*}
        I &= \left< y_i - \sum_{F\notni i} x_F \,:\, i\in E\right>,\\
        J &= \left< x_{F_1} x_{F_2} \,:\, F_1,F_2 \in \mathcal{L}(\M)\smallsetminus\{E\} \text{ are incomparable}\right> + \left< y_i x_F :F\in\mathcal{L}(\M)\smallsetminus\{E\},\, i \notin F\right>.
    \end{align*}
\end{definition}

For a fixed loopless matroid $\M$ of rank $k$, both the Chow ring $\uCH(\M)$ and the augmented Chow ring $\CH(\M)$ are graded rings, each admitting a decomposition of the form
    \begin{equation} \label{eq:grading-chow}
    \uCH(\M) = \bigoplus_{j=0}^{k-1} \uCH^j(\M),\qquad\CH(\M) = \bigoplus_{j=0}^{k} \CH^j(\M).
    \end{equation}
The generating function of the graded dimensions of these rings, i.e. their \textit{Hilbert--Poincar\'e series}, or just \textit{Hilbert series}, are defined respectively by
    \begin{align*} 
    \uH_{\M}(x) &:= \Hilb(\uCH(\M), x) = \sum_{j=0}^{k-1} \dim_{\mathbb{Q}}(\uCH^j(\M)) \, x^j,\\ \H_{\M}(x) &:= \Hilb(\CH(\M), x) = \sum_{j=0}^{k} \dim_{\mathbb{Q}}(\CH^j(\M)) \, x^j.
    \end{align*}
Notice that both $\uH_{\M}(x)$ and $\H_{\M}(x)$ are polynomials, having degree $k-1$ and $k$ respectively. The main result of \cite{adiprasito-huh-katz} asserts the validity of the ``K\"ahler package'' in $\uCH(\M)$. In particular, Poincar\'e duality and the hard Lefschetz property hold. This automatically implies that the coefficients of $\uH_{\M}(x)$ are palindromic and form a unimodal sequence. As a consequence of \cite{semismall}, we have analogous statements for $\CH(\M)$, which therefore guarantee that $\H_{\M}(x)$ is a palindromic polynomial having unimodal coefficients as well.

\begin{example}
    In Section~\ref{subsec:incidence-uniform} we will address the Hilbert series of the Chow rings of uniform matroids $\U_{k,n}$. When a uniform matroid is Boolean, i.e. $k=n$, one has the following formula:
    \[ \uH_{\U_{n,n}}(x) = A_n(x),\]
    where $A_n(x)$ is the $n$-th Eulerian polynomial. A geometric way of establishing the identity $\uH_{\U_{n,n}}(x) = A_n(x)$ relies on the fact that the $h$-vector of the $n$-th \emph{permutohedron} $\underline{\Pi}_n$ is given by the Eulerian numbers and that the Chow ring of $\U_{n,n}$ is isomorphic to the Chow ring of the permutohedral variety. In Hampe's work \cite{hampe}, the Chow ring of $\U_{n,n}$ is also referred to as ``the intersection ring of matroids''.
\end{example}

\begin{example}
    Later, in Section~\ref{subsec:incidence-uniform}, we will address a general formula for the Hilbert series of the augmented Chow ring of a uniform matroid $\U_{k,n}$. For a Boolean matroid, one has the following formula:
    \[ \H_{\U_{n,n}}(x) = \widetilde{A}_n(x),\]
    where $\widetilde{A}_n(x)$ is the $n$-th binomial Eulerian polynomial. The coefficients of these polynomials coincide with the $h$-vector of the $n$-th \emph{stellahedron} $\Pi_n$. This is a consequence of $\CH(\U_{n,n})$ being isomorphic to the Chow ring of the stellahedral variety, see, e.g. \cite{stellahedral} for further details. 
\end{example}

\subsection{Flags of flats and convolutions}

A natural question that arises when studying Hilbert series of (augmented) Chow rings is how to actually compute them. A naive approach is to compute the polynomials by constructing the full ring from the definition, 
but the computational cost of this method is excessively high for all
but the smallest matroids.
For example, a small matroid such as $\M\cong\U_{4,8}$ has $94$ flats, so both the Chow ring and the augmented Chow ring are defined as a quotient of a polynomial ring with nearly $100$ variables. 

One of the main results of Feichtner and Yuzvinsky, \cite[Proposition~1]{feichtner-yuzvinsky}, simplifies the computational challenge by providing an explicit Gr\"obner basis for the Chow ring of an atomic lattice with respect to an arbitrary building set. When translated into the setting of Hilbert series of Chow rings of matroids, their result yields the following formula.

\begin{proposition}\label{prop:hilbert-chow}
    Let $\M$ be a loopless matroid. The Hilbert--Poincar\'e series of the Chow ring $\uCH(\M)$ is given by
        \[ \uH_{\M}(x) = \sum_{\varnothing = F_0 \subsetneq F_1 \subsetneq \cdots \subsetneq F_m} \prod_{i=1}^m \frac{x ( 1 - x^{\rk(F_i)-\rk(F_{i-1})-1})}{1-x}.\]
    Here the sum is taken over all the nonempty chains of flats starting at the empty set, i.e. $\varnothing = F_0\subsetneq \cdots \subsetneq F_m $ in $\mathcal{L}(\M)$ for every $0\leq m\leq k-1$.\footnote{The chain consisting of only the empty flat yields $m=0$, and the corresponding summand is an empty product, which by convention will be considered as $1$.}
\end{proposition}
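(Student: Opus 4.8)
The plan is to derive the claimed formula directly from the Feichtner--Yuzvinsky Gr\"obner basis description of $\uCH(\M)$, translated into the language of the lattice of flats. Recall that $\uCH(\M)$ coincides with the Chow ring $D(\mathcal{L}(\M),\mathcal{G}_{\max})$ where $\mathcal{G}_{\max} = \mathcal{L}(\M)\smallsetminus\{\varnothing\}$ is the maximal building set, and that $E$ itself corresponds to a variable we suppress (equivalently, we may work in the ambient polynomial ring on $x_F$ for $F \in \mathcal{L}(\M)\smallsetminus\{\varnothing\}$, including $x_E$, and then account for the single degree-lowering relation). By \cite[Proposition~1]{feichtner-yuzvinsky}, a monomial basis for $D(\mathcal{L}(\M),\mathcal{G}_{\max})$ is indexed by certain ``admissible'' monomials $x_{F_1}^{d_1}\cdots x_{F_m}^{d_m}$, where $\varnothing \subsetneq F_1 \subsetneq \cdots \subsetneq F_m$ is a chain in $\mathcal{G}_{\max}$ and the exponents satisfy $1 \leq d_i \leq \rk(F_i) - \rk(F_{i-1}) - 1$ for each $i$ (with $F_0 = \varnothing$), subject to the analogous upper bound involving the top; the degree of such a monomial is $d_1 + \cdots + d_m$.

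First I would recall precisely the statement of the Feichtner--Yuzvinsky standard-monomial basis in the special case of the maximal building set of a geometric lattice, being careful about the normalization that matches the presentation of $\uCH(\M)$ used in this paper (in particular the fact that the degree of $\uCH(\M)$ is $\rk(\M)-1$, so the chain is required to avoid $E$ and the exponent bounds are the ones displayed above, using $\rk(F_i)-\rk(F_{i-1})-1$). Then the Hilbert series is simply the generating function counting these standard monomials by degree: grouping the monomials according to the underlying flag $\varnothing = F_0 \subsetneq F_1 \subsetneq \cdots \subsetneq F_m$ (with all $F_i \ne E$), the contribution of a fixed flag is $\prod_{i=1}^m (x + x^2 + \cdots + x^{\rk(F_i)-\rk(F_{i-1})-1})$, since $d_i$ ranges independently over $\{1,\dots,\rk(F_i)-\rk(F_{i-1})-1\}$. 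Summing the geometric series, $x + \cdots + x^{r-1} = \dfrac{x(1-x^{r-1})}{1-x}$ with $r = \rk(F_i)-\rk(F_{i-1})$, yields exactly the factor in the statement. Summing over all flags (the empty flag $m=0$ contributing the empty product $1$) gives the asserted formula for $\uH_{\M}(x)$.

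The main obstacle is bookkeeping rather than anything conceptual: one must verify that the Feichtner--Yuzvinsky basis, as stated for an abstract atomic lattice and building set, specializes correctly to the presentation of $\uCH(\M)$ given here, including the handling of the ``missing'' variable $x_E$ and the linear relations $J$ generated by $\sum_{F\ni i} x_F - \sum_{F\ni j} x_F$. I would address this by noting that in the Feichtner--Yuzvinsky setup the linear relations are already incorporated into the choice of standard monomials (one eliminates the variables indexed by atoms, or equivalently the relations are used to reduce any monomial to a standard one supported on a flag), so that the monomial count above is genuinely a vector-space basis. Once the correspondence between the two presentations is pinned down --- which is exactly the content invoked in the sentence preceding the proposition --- the computation of the Hilbert series is the short geometric-series manipulation described above, and no further subtlety arises.
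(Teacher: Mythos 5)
Your overall route --- invoke the Feichtner--Yuzvinsky standard monomial basis for $D(\mathcal{L}(\M),\mathcal{G}_{\max})$, group the basis monomials by their supporting flag, and sum a geometric series over the admissible exponents at each step --- is exactly the argument the paper relies on (it cites the computation on p.~526 of Feichtner--Yuzvinsky, with the caveat that their metric $d$ agrees with the rank difference on geometric lattices, and points to Backman--Eur--Simpson for an alternative Gr\"obner basis). However, there is a concrete error in your description of the basis: you require the chains to avoid the top flat $E$. In the maximal building set $\mathcal{G}_{\max}=\mathcal{L}(\M)\smallsetminus\{\varnothing\}$ the element $E$ is present, the Feichtner--Yuzvinsky presentation has a variable $x_E$, and the basis monomials are supported on chains $\varnothing\subsetneq F_1\subsetneq\cdots\subsetneq F_m$ that are allowed to contain $E$, with the same exponent bounds $1\le d_i\le \rk(F_i)-\rk(F_{i-1})-1$ at every step (there is no extra condition ``involving the top''). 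The proposition's sum is correspondingly over \emph{all} chains in $\mathcal{L}(\M)$ starting at $\varnothing$, including those ending at $E$. Your inference ``$\deg\uCH(\M)=\rk(\M)-1$, so the chain must avoid $E$'' is not valid: a chain ending at $E$ contributes monomials of degree at most $\rk(E)-m\le\rk(\M)-1$, so no degree bound is violated; on the contrary, the unique top-degree basis element is $x_E^{\rk(\M)-1}$, so excluding $E$ would leave nothing in degree $\rk(\M)-1$, contradicting Poincar\'e duality.

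The omission is not cosmetic: for $\M=\U_{2,2}$ your restricted sum consists of the empty flag and the rank-one flats, whose factors vanish, giving $1$, whereas $\uH_{\U_{2,2}}(x)=A_2(x)=1+x$; the missing term $x$ is exactly the contribution of the chain $\varnothing\subsetneq E$. (More generally, for Boolean matroids the chains through $E$ contribute the derangement polynomial $d_n(x)$, as the paper uses later in Lemma~\ref{lem:dn-and-An}.) The repair is simply to drop the restriction $F_i\neq E$. The only bookkeeping genuinely needed is the graded isomorphism between Feichtner--Yuzvinsky's presentation (variables $x_F$ for all $F\neq\varnothing$, including $x_E$, with linear relations $\sum_{F\ni i}x_F=0$) and the paper's presentation of $\uCH(\M)$ (no $x_E$, relations $\sum_{F\ni i}x_F-\sum_{F\ni j}x_F$), given by $x_E\mapsto-\sum_{F\ni i}x_F$; since this is a graded isomorphism it does not affect the Hilbert series, and with the corrected index set your geometric-series computation yields precisely the displayed formula.
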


For the Chow ring of a matroid, the building set under consideration is the maximal one. In particular, this case is worked out by Feichtner and Yuzvinsky \cite[p.~526]{feichtner-yuzvinsky}.\footnote{We point out that there is a misprint in \cite[p.~526]{feichtner-yuzvinsky}. Namely, in the displayed formula the numerator of one of the fractions is $t(1-t)^{r_i-r_{i-1}-1}$, and it should be $t(1-t^{r_i-r_{i-1}-1})$.} In fact, the way it is stated, this formula holds true for graded atomic lattices with an extra property: the metric $d(A,A')$ that Feichtner and Yuzvinsky introduce in their Definition~4 must coincide with the difference of the ranks of $A$ and $A'$. This property holds for geometric lattices. For an alternative construction of a Gr\"obner basis in the case of matroids which yields a short proof of the above proposition, see alternatively a paper by Backman, Eur, and Simpson \cite[Corollary 3.3.3]{backman-eur-simpson}. Moreover, in \cite[Theorem 3.3.8]{backman-eur-simpson} they provide an appealing interpretation for the coefficients of $\uH_{\M}(x)$ in terms of relative nested quotients.

The next result provides a counterpart result for the augmented Chow ring.

\begin{proposition}\label{prop:hilbert-augmented-chow}
    Let $\M$ be a loopless matroid. The Hilbert--Poincar\'e series of the augmented Chow ring $\CH(\M)$ is given by
        \[ \H_{\M}(x) = 1+\sum_{F_0 \subsetneq F_1 \subsetneq \cdots \subsetneq F_m} \frac{x(1-x^{\rk(F_0)})}{1-x} \prod_{i=1}^m \frac{x(1- x^{\rk(F_i)-\rk(F_{i-1})-1})}{1-x}.\]
    Here the sum is taken over all the nonempty chains of flats, i.e. $F_0\subsetneq \cdots \subsetneq F_m$ in $\mathcal{L}(\M)$ for every $0\leq m\leq k-1$.
\end{proposition}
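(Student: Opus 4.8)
The plan is to run, for the augmented Chow ring, the very argument that yields \Cref{prop:hilbert-chow}. That proof realizes $\uCH(\M)$ as the Feichtner--Yuzvinsky Chow ring of the geometric lattice $\mathcal{L}(\M)$ with its maximal building set and extracts the Hilbert series from the explicit Gr\"obner basis of \cite[Proposition~1]{feichtner-yuzvinsky}; here I would instead use the known presentation of $\CH(\M)$ as a Feichtner--Yuzvinsky Chow ring of a larger lattice. Concretely, set $\widetilde{\M}:=\M\oplus\U_{1,1}$, write $0$ for the adjoined coloop, and recall that $\CH(\M)$ is isomorphic, as a graded vector space, to $D\bigl(\mathcal{L}(\widetilde{\M}),\mathcal{G}\bigr)$, where $\mathcal{G}$ is the building set of $\mathcal{L}(\widetilde{\M})$ consisting of all flats of $\widetilde{\M}$ that contain $0$ together with all atoms of $\widetilde{\M}$; under the isomorphism $x_F\mapsto x_{F\cup\{0\}}$ and $y_i\mapsto x_{\{i\}}$ (see \cite[Lemma~5.14]{stellahedral} and \cite{semismall}). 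Since $\CH(\M)$, the lattice $\mathcal{L}(\M)$, and the right-hand side of the claimed formula are all unchanged when $\M$ is replaced by its simplification, I may assume $\M$ is simple; then $\widetilde{\M}$ is loopless and simple, so $\mathcal{L}(\widetilde{\M})$ is geometric and the Feichtner--Yuzvinsky metric on it equals the rank difference. Applying \cite[Proposition~1]{feichtner-yuzvinsky} to $\mathcal{G}$ exactly as it is applied to the maximal building set in \Cref{prop:hilbert-chow} then gives
\[ \H_{\M}(x)=\sum_{\varnothing=G_0\subsetneq G_1\subsetneq\cdots\subsetneq G_m}\ \prod_{i=1}^{m}\frac{x\bigl(1-x^{\rk(G_i)-\rk(G_{i-1})-1}\bigr)}{1-x}, \]
the sum running over all chains in $\mathcal{L}(\widetilde{\M})$ with $G_0=\widehat{0}=\varnothing$ and $G_1,\dots,G_m\in\mathcal{G}$ (the empty chain, $m=0$, contributing $1$), with $\rk$ the rank of $\widetilde{\M}$.

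Next I would split this sum by the possible shapes of chains in $\mathcal{G}$. A rank-one atom $\{i\}$ of $\widetilde{\M}$ with $i\neq 0$ is incomparable to every other atom and lies below $F\cup\{0\}$ exactly when $i\in F$; hence a chain in $\mathcal{G}$ contains at most one such atom, and if it does then that atom is the least element of the chain. For any such chain the factor with $i=1$ is $\tfrac{x(1-x^{\rk(\{i\})-\rk(\varnothing)-1})}{1-x}=\tfrac{x(1-x^{0})}{1-x}=0$, so all chains meeting these atoms contribute $0$; thus only the chains all of whose terms are flats of $\widetilde{\M}$ containing $0$ survive. For such a chain, write $G_j=\widehat{F}_j\cup\{0\}$ with $\widehat{F}_1\subsetneq\cdots\subsetneq\widehat{F}_m$ a chain of flats of $\M$; since $\rk_{\widetilde{\M}}(\widehat{F}_j\cup\{0\})=\rk_{\M}(\widehat{F}_j)+1$ and $\rk_{\widetilde{\M}}(\varnothing)=0$, the $i=1$ factor becomes $\tfrac{x(1-x^{\rk_{\M}(\widehat{F}_1)})}{1-x}$ and each $i\geq 2$ factor becomes $\tfrac{x(1-x^{\rk_{\M}(\widehat{F}_i)-\rk_{\M}(\widehat{F}_{i-1})-1})}{1-x}$. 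Relabelling $F_j:=\widehat{F}_{j+1}$ for $0\leq j\leq m-1$ and setting $m':=m-1$, the contribution of the nonempty chains becomes
\[ \sum_{F_0\subsetneq F_1\subsetneq\cdots\subsetneq F_{m'}}\ \frac{x\bigl(1-x^{\rk(F_0)}\bigr)}{1-x}\ \prod_{i=1}^{m'}\frac{x\bigl(1-x^{\rk(F_i)-\rk(F_{i-1})-1}\bigr)}{1-x}, \]
which is a sum over all nonempty chains of flats of $\M$; adding the empty chain's contribution $1$ recovers the asserted identity. (Chains with $F_0=\varnothing$ contribute $0$, which is why the range $0\leq m\leq k-1$ suffices, though the argument does not require this.)

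The step I expect to be the main obstacle is not an estimate but keeping the reduction to \cite{feichtner-yuzvinsky} honest: one must invoke that result for the correct building set $\mathcal{G}$ on the correct lattice $\mathcal{L}(\M\oplus\U_{1,1})$, check that geometricity forces the metric there to coincide with the rank difference, and carry the rank shift by $1$ between a flat $F$ of $\M$ and the flat $F\cup\{0\}$ of $\M\oplus\U_{1,1}$ consistently through the reindexing. The one genuinely new point beyond \Cref{prop:hilbert-chow} is the vanishing of every chain that meets an atom $\{i\}$ with $i\neq 0$; this is exactly what makes the extra generators $y_i$ of $\CH(\M)$ invisible to the Hilbert series, and it should be written out carefully.
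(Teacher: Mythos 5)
Your route is genuinely different from the paper's: the paper proves this proposition essentially by citation, invoking the Gr\"obner basis computed by Mastroeni and McCullough \cite[Section~5.1, Corollary~5.4]{mastroeni-mccullough} (alternatively \cite[Lemma~7.8]{stellahedral} or \cite[Corollary~3.12]{liao}), whereas you redo a Feichtner--Yuzvinsky computation from scratch on $\mathcal{L}(\M\oplus\U_{1,1})$ with the building set $\mathcal{G}$ of flats containing $0$ together with all atoms. The skeleton is viable: the reduction to simple $\M$, the rank shift $\rk_{\M\oplus\U_{1,1}}(F\cup\{0\})=\rk_{\M}(F)+1$, the reindexing, and the observation that factors attached to atoms vanish (which is exactly why the $y_i$ are invisible in the Hilbert series) are all correct. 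But as written there are two gaps. First, the identification $\CH(\M)\cong D(\mathcal{L}(\M\oplus\U_{1,1}),\mathcal{G})$ is the crux of the argument and is not established by the sources you cite: \cite[Lemma~5.14]{stellahedral} presents $\CH(\M)$ as a Feichtner--Yuzvinsky Chow ring of the lattice of flats of the \emph{free coextension} of $\M$, which is a different lattice from $\mathcal{L}(\M\oplus\U_{1,1})$ (the free coextension has additional flats not containing $0$; e.g.\ for $\M=\U_{1,2}$ it is $\U_{2,3}$), and \cite{semismall} does not give such a presentation either. The statement you want is true, but it needs its own proof (say by matching the defining relations of $\CH(\M)$, or the augmented Bergman fan, with the FY presentation for this $\mathcal{G}$) or an accurate reference; this is precisely the work the paper outsources to \cite{mastroeni-mccullough}.

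Second, for a non-maximal building set, \cite[Proposition~1, Corollary~1]{feichtner-yuzvinsky} give a monomial basis indexed by \emph{nested sets} of $\mathcal{G}$, not by chains, so the step ``applying FY exactly as it is applied to the maximal building set in Proposition~\ref{prop:hilbert-chow}'' does not by itself yield your chain-indexed sum: your $\mathcal{G}$ admits nested antichains, for instance two atoms $\{i\},\{j\}$ with $i,j\in E$ whenever $\cl(\{i,j\})$ is not in $\mathcal{G}$. The conclusion survives, by the same vanishing mechanism you already use for chains: a nested set containing any atom $A$ supports no basis monomial, because $A$ is minimal in it and the exponent bound is $m_A<d(\widehat{0},A)=1$; and a nested set containing no atoms consists of flats containing $0$, any two of which have join again in $\mathcal{G}$, so it is forced to be a chain. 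You need to say this explicitly; with that (and a justification or correct citation for the presentation above), your argument goes through and recovers the stated formula.
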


\begin{proof}
    As mentioned before, one can construct the augmented Chow ring of $\mathcal{L}(\M)$ by considering the lattice of flats of the free coextension of $\M$ and taking a suitable building set on it; this allows one to use the Gr\"obner basis of Feichtner and Yuzvinsky. This computation was carried out by Mastroeni and McCullough in \cite[Section~5.1]{mastroeni-mccullough}. In particular, the basis they construct in \cite[Corollary~5.4]{mastroeni-mccullough} immediately yields our claimed formula. Alternatively, see \cite[Lemma~7.8]{stellahedral} or \cite[Corollary~3.12]{liao}.
\end{proof}

We point out that although the formulas of Propositions \ref{prop:hilbert-chow} and \ref{prop:hilbert-augmented-chow} can be used to compute the Hilbert series of (augmented) Chow rings of small matroids, a drawback that they have is that they require iterating over all the chains of flats in the matroid. The total number of chains of flats of a matroid on $n$ elements can be as large as $\frac{2n!}{\log_2(n)}$ (see sequence \hyperlink{https://oeis.org/A000670}{A000670} in the OEIS \cite{oeis}), so this approach is also considerably slow even for relatively small values of $n$. 

It is of interest to produce alternative or more conceptual ways of computing both  $\uH_{\M}(x)$ and $\H_{\M}(x)$. A remarkable connection between these two invariants that can be used to compute one in terms of the other is given by the following result, which is also the first step towards proving Theorem~\ref{thm:main-recursion-defi-H-and-uH}.

\begin{proposition}\label{prop:recursion-H-in-terms-uH}
    Let $\M$ be a loopless matroid. Then
    \[ \H_{\M}(x) = \sum_{F\in \mathcal{L}(\M)} x^{\rk(F)}\, \uH_{\M/F}(x).\]
\end{proposition}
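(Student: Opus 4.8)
The plan is to prove the identity by comparing the two explicit chain-sum formulas for $\H_\M(x)$ and $\uH_{\M/F}(x)$ provided in Propositions~\ref{prop:hilbert-augmented-chow} and \ref{prop:hilbert-chow}. Recall from Proposition~\ref{prop:hilbert-chow} that for a loopless matroid $\mathsf{N}$ one has
\[
\uH_{\mathsf{N}}(x) = \sum_{\varnothing = G_0 \subsetneq G_1 \subsetneq \cdots \subsetneq G_m} \prod_{i=1}^m \frac{x(1 - x^{\rk(G_i)-\rk(G_{i-1})-1})}{1-x},
\]
the sum being over chains of flats of $\mathsf{N}$ starting at $\varnothing$. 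Applying this with $\mathsf{N} = \M/F$ and using the identification of $\mathcal{L}(\M/F)$ with the interval $[F,E]$ in $\mathcal{L}(\M)$ (so that a chain in $\mathcal{L}(\M/F)$ starting at the minimum corresponds to a chain $F = F_0 \subsetneq F_1 \subsetneq \cdots \subsetneq F_m$ in $\mathcal{L}(\M)$, with ranks computed in $\M/F$ being $\rk_\M(F_i) - \rk_\M(F)$), the factors $\rk(G_i) - \rk(G_{i-1})$ are unchanged for $i \geq 1$, so
\[
x^{\rk(F)}\,\uH_{\M/F}(x) = x^{\rk(F)}\!\!\!\sum_{F = F_0 \subsetneq \cdots \subsetneq F_m}\ \prod_{i=1}^m \frac{x(1 - x^{\rk(F_i)-\rk(F_{i-1})-1})}{1-x}.
\]

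Next I would sum this over all flats $F \in \mathcal{L}(\M)$. A chain $F = F_0 \subsetneq F_1 \subsetneq \cdots \subsetneq F_m$ in $\mathcal{L}(\M)$ with a marked bottom flat $F_0 = F$ is the same datum as an arbitrary nonempty chain $F_0 \subsetneq F_1 \subsetneq \cdots \subsetneq F_m$ in $\mathcal{L}(\M)$ (the bottom element $F_0$ plays the role of the marked flat $F$). Therefore
\[
\sum_{F \in \mathcal{L}(\M)} x^{\rk(F)}\,\uH_{\M/F}(x) = \sum_{F_0 \subsetneq F_1 \subsetneq \cdots \subsetneq F_m} x^{\rk(F_0)}\ \prod_{i=1}^m \frac{x(1 - x^{\rk(F_i)-\rk(F_{i-1})-1})}{1-x},
\]
where the sum ranges over all nonempty chains of flats of $\M$. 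It remains to match this with the formula of Proposition~\ref{prop:hilbert-augmented-chow}, which reads
\[
\H_\M(x) = 1 + \sum_{F_0 \subsetneq F_1 \subsetneq \cdots \subsetneq F_m} \frac{x(1-x^{\rk(F_0)})}{1-x}\ \prod_{i=1}^m \frac{x(1 - x^{\rk(F_i)-\rk(F_{i-1})-1})}{1-x}.
\]
So the claim reduces to the identity
\[
\sum_{F_0 \subsetneq \cdots \subsetneq F_m} x^{\rk(F_0)}\,\Phi = 1 + \sum_{F_0 \subsetneq \cdots \subsetneq F_m} \frac{x(1-x^{\rk(F_0)})}{1-x}\,\Phi,
\]
where $\Phi = \prod_{i=1}^m \frac{x(1-x^{\rk(F_i)-\rk(F_{i-1})-1})}{1-x}$ depends only on the chain.

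To finish, I would isolate the dependence on $\rk(F_0)$. Write $r = \rk(F_0)$; the key elementary identity is
\[
x^{r} = \frac{x(1-x^{r})}{1-x} - \frac{x(1 - x^{r-1})}{1-x} \cdot \text{(something)} \quad\text{— more cleanly:}\quad x^r - \frac{x(1-x^r)}{1-x} = \frac{(1-x)x^r - x + x^{r+1}}{1-x} = \frac{x^r - x}{1-x} = -\,\frac{x(1-x^{r-1})}{1-x}.
\]
Thus the term-by-term difference of the two chain sums equals $-\sum_{F_0 \subsetneq \cdots \subsetneq F_m}\frac{x(1-x^{\rk(F_0)-1})}{1-x}\,\Phi$. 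Now I would reorganize this last sum by grouping chains according to whether the bottom flat $F_0$ is the empty flat or not: if $F_0 = \varnothing$ then $\rk(F_0) = 0$ and the summand vanishes; if $F_0 \neq \varnothing$, then prepending $\varnothing$ gives a chain $\varnothing \subsetneq F_0 \subsetneq \cdots \subsetneq F_m$ whose associated $\uH_{\M}$-type factor contributes exactly $\frac{x(1-x^{\rk(F_0)-1})}{1-x}$ for the new bottom step, so that this sum telescopes into $\uH_\M(x) - 1$ — except this needs care, so instead I would simply observe directly that $-\sum_{F_0 \subsetneq \cdots \subsetneq F_m}\frac{x(1-x^{\rk(F_0)-1})}{1-x}\,\Phi = -(\uH_\M(x) - 1) = 1 - \uH_\M(x)$, by recognizing the left side as (minus) the chain-sum of Proposition~\ref{prop:hilbert-chow} with the bottom flat $\varnothing$ relabeled. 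Wait — this is circular; the honest route is: chains with $F_0 \neq \varnothing$ biject with chains starting at $\varnothing$ of length one more, and the factor $\frac{x(1-x^{\rk(F_0)-1})}{1-x}$ is precisely the $i=1$ factor $\frac{x(1-x^{\rk(F_1)-\rk(F_0)-1})}{1-x}$ with $F_0 = \varnothing, F_1 = \text{old }F_0$; hence $\sum_{F_0 \neq \varnothing}\frac{x(1-x^{\rk(F_0)-1})}{1-x}\Phi = \uH_\M(x) - 1$ (the $-1$ accounts for the trivial chain $\{\varnothing\}$). Therefore the difference of the two sides of the desired identity is $1 - \uH_\M(x) + (\uH_\M(x) - 1) - 1 = -1$...

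Let me restate cleanly: the identity to prove becomes $\big(\text{difference of chain sums}\big) = 1$, and the difference equals $-\sum \frac{x(1-x^{\rk(F_0)-1})}{1-x}\Phi = -(\uH_\M(x)-1) \ne 1$ in general — so the bookkeeping of the $+1$ and of which chains are "nonempty" must be done very carefully; the main obstacle is exactly this index-shifting and boundary-term accounting, distinguishing the chain $\{\varnothing\}$, chains with $F_0=\varnothing$, and chains with $F_0 \ne \varnothing$, and tracking how the stray constant $1$ in Proposition~\ref{prop:hilbert-augmented-chow} arises. Once the bijection "chain of $\mathcal{L}(\M/F)$ starting at the minimum" $\leftrightarrow$ "chain of $\mathcal{L}(\M)$ with marked bottom flat $F$" is set up and the generating-function identity $x^r = 1 + \frac{x(1-x^r)}{1-x} - \frac{x(1-x^{r-1})}{1-x} - (\text{correction})$ is pinned down, the computation is routine. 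An alternative, and perhaps cleaner, approach would be to verify directly that the right-hand side $\sum_F x^{\rk(F)}\uH_{\M/F}(x)$ satisfies the defining recursion of $\H_\M$ coming from the semismall decomposition, but the chain-formula comparison above is the most elementary path and I would pursue it, being scrupulous about the boundary terms.
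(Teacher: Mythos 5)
Your strategy---comparing the chain-sum formulas of Propositions~\ref{prop:hilbert-chow} and \ref{prop:hilbert-augmented-chow} via the identification $\mathcal{L}(\M/F)\cong[F,E]$ and then invoking the elementary identity $\tfrac{x(1-x^{r})}{1-x}=\tfrac{x(1-x^{r-1})}{1-x}+x^{r}$---is exactly the paper's, but your execution stalls at the boundary accounting and, as written, ends with an unresolved contradiction rather than a proof. The concrete error is your claim that chains with $F_0=\varnothing$ contribute nothing to the term-by-term difference: for $r=\rk(F_0)=0$ the difference is $x^{0}-\tfrac{x(1-x^{0})}{1-x}=1-0=1$, not $0$ (your identity $x^{r}-\tfrac{x(1-x^{r})}{1-x}=-\tfrac{x(1-x^{r-1})}{1-x}$ is only usable for $r\ge 1$, where the fractions are the polynomials $x+\cdots+x^{r}$ and $x+\cdots+x^{r-1}$). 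Restoring this term repairs everything: the chains with $F_0=\varnothing$ contribute $\sum_{\varnothing=F_0\subsetneq\cdots\subsetneq F_m}\Phi=\uH_{\M}(x)$ to the difference by Proposition~\ref{prop:hilbert-chow}, the chains with $F_0\neq\varnothing$ contribute $-(\uH_{\M}(x)-1)$ exactly as you computed by prepending $\varnothing$, and the total difference is $+1$, which is precisely the stray constant in Proposition~\ref{prop:hilbert-augmented-chow}. The spurious $-1$ you obtained is the footprint of this single misapplied identity; a complete proof requires you to carry out this correction rather than defer it as ``routine.''

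For comparison, the paper sidesteps the boundary issue by peeling off the bottom step of each chain \emph{before} manipulating: it first derives from the two chain formulas the recursions $\uH_{\M}(x)=1+\sum_{F\neq\varnothing}\tfrac{x(1-x^{\rk(F)-1})}{1-x}\,\uH_{\M/F}(x)$ and $\H_{\M}(x)=1+\sum_{F\neq\varnothing}\tfrac{x(1-x^{\rk(F)})}{1-x}\,\uH_{\M/F}(x)$, and only then applies $\tfrac{x(1-x^{r})}{1-x}=\tfrac{x(1-x^{r-1})}{1-x}+x^{r}$, exclusively for $r=\rk(F)\ge 1$ where it is valid; the $F=\varnothing$ term of the target sum is then just $\uH_{\M}(x)$ itself. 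Reorganizing your argument in this way turns it into the paper's proof and eliminates the delicate bookkeeping you were worried about.
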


\begin{proof}
    From the formula of Proposition~\ref{prop:hilbert-chow}, by considering each flat $F\neq \varnothing$ as the term $F_1$ in the chain, we see that
    \begin{align}
        \uH_{\M}(x) &= 1 +  \sum_{\substack{F\in\mathcal{L}(\M)\\F\neq\varnothing}} \frac{x(1-x^{\rk(F)-1})}{1-x} \sum_{F=F_1\subsetneq F_2\subsetneq\cdots\subsetneq F_m} \prod_{i=2}^m \frac{x(1-x^{\rk(F_i)-\rk(F_{i-1})-1})}{1-x} \nonumber\\
        &=1 + \sum_{\substack{F\in\mathcal{L}(\M)\\F\neq\varnothing}} \frac{x(1-x^{\rk(F)-1})}{1-x}\cdot \uH_{\M/F}(x). \label{eq:rec-chow}
    \end{align}
    The summand equal to $1$ comes from considering the chain that consists of only the flat $F_0=\varnothing$ separately. The last equation follows from the fact that the lattice of flats of $\M/F$ is isomorphic to the interval $[F,E]$ in $\mathcal{L}(\M)$. Analogously, by applying the same argument to the formula of Proposition~\ref{prop:hilbert-augmented-chow}, we can fix the flat $F=F_0$ of the chain and write
    \begin{align}
        \H_{\M}(x) &= 1 + \sum_{F\in\mathcal{L}(\M)} \frac{x(1-x^{\rk(F)})}{1-x} \sum_{F\subsetneq F_1\subsetneq F_2\subsetneq\cdots\subsetneq F_m} \prod_{i=1}^m \frac{x(1-x^{\rk(F_i)-\rk(F_{i-1})-1})}{1-x} \nonumber\\
        &=1+\sum_{F\in\mathcal{L}(\M)} \frac{x(1-x^{\rk(F)})}{1-x}\cdot \uH_{\M/F}(x)\nonumber\\
        &= 1+\sum_{\substack{F\in\mathcal{L}(\M)\\F\neq\varnothing}} \frac{x(1-x^{\rk(F)})}{1-x}\cdot \uH_{\M/F}(x). \label{eq:rec-aug-chow}
    \end{align}
    For each integer $r\geq 1$, we have that $\frac{x(1-x^r)}{1-x} = \frac{x(1-x^{r-1})}{1-x} + x^r$. In particular, by combining equations \eqref{eq:rec-chow} and \eqref{eq:rec-aug-chow}, we obtain
    \begin{align*}
        \H_{\M}(x)
        &= 1 + \sum_{\substack{F\in\mathcal{L}(\M)\\F\neq\varnothing}} \frac{x(1-x^{\rk(F)-1})}{1-x}\cdot \uH_{\M/F}(x) + \sum_{\substack{F\in\mathcal{L}(\M)\\F\neq\varnothing}} x^{\rk(F)}\, \uH_{\M/F}(x) \\
        &= \uH_{\M}(x) +  \sum_{\substack{F\in\mathcal{L}(\M)\\F\neq\varnothing}} x^{\rk(F)}\, \uH_{\M/F}(x) \\
        &= \sum_{F\in \mathcal{L}(\M)} x^{\rk(F)}\, \uH_{\M/F}(x).
    \end{align*}
    This proves the desired recursion. 
\end{proof}

\begin{remark}\label{remark:alternative-convolution}
    The careful reader may notice that since $\uH_{\M}(x)$ and $\H_{\M}(x)$ are palindromic and their degrees are $\rk(\M)-1$ and $\rk(\M)$ respectively, it follows from the above recursion that 
    \[ \H_{\M}(x) = x^{\rk(\M)} \H_{\M}(x^{-1}) = x^{\rk(\M)} \sum_{F\in\mathcal{L}(\M)} x^{-\rk(F)} \, \uH_{\M/F}(x^{-1}) = 1 + x \sum_{\substack{F\in\mathcal{L}(\M)\\F\neq E}} \uH_{\M/F}(x).\]
    Clearly, this new recursion also implies the previous one. 
\end{remark}

\begin{remark}
   It is possible to deduce Proposition~\ref{prop:recursion-H-in-terms-uH} in an alternative way. In the work of Braden, Huh, Matherne, Proudfoot, and Wang, \cite[Proposition~1.8]{braden-huh-matherne-proudfoot-wang} states an isomorphism of modules that, when restricting to graded dimensions, yields automatically that
    \[ Z_{\M}(x) = \sum_{F\in\mathcal{L}(\M)} x^{\rk(F)} P_{\M/F}(x).\]
   The proof they provided requires only results about filtrations that are valid for arbitrary pure modules. One may apply a similar reasoning to the Chow ring and the augmented Chow ring. By leveraging \cite[Proposition~2.15 and Lemma~5.7]{braden-huh-matherne-proudfoot-wang}, one deduces that the \emph{stalk} of $\CH(\M)$ at the flat $F$ is isomorphic to $\uCH(\M/F)$. In particular, if $\H(\M)$ denotes the graded M\"obius algebra (defined below, in Section \ref{subsec:ih}), one sees that there is an isomorphism of $\H(\M)$-modules:
    \[ \CH(\M) \cong \bigoplus_{F\in\mathcal{L}(\M)} \uCH(\M/F)[-\rk(F)].\]
    We refer the reader also to Remark~\ref{remark:stalk-empty} below.
\end{remark}

\subsection{An intrinsic definition}\label{subsec:intrinsic-definition}

The goal of this subsection is to give the  proof of Theorem~\ref{thm:main-recursion-defi-H-and-uH}. As was mentioned in the introduction, this theorem highlights the similarities between the polynomials we are studying in this section and the Kazhdan--Lusztig polynomials of matroids. For technical reasons and clarity of the proof, we put fewer requirements in the following statement than in the statement in the introduction.

\begin{theorem}\label{thm:intrinsic-main-body}
    There is a unique way of assigning to each loopless matroid $\M$ a polynomial $\widehat{\uH}_{\M}(x) \in \Z[x]$ satisfying the following properties:
    \begin{enumerate}[\normalfont(i)]
        \item If $\rk(\M) = 0$, then $\widehat{\uH}_\M(x) = 1$. 
        \item If $\rk(\M) > 0$, then $\deg \widehat{\uH}_{\M}(x) < \rk(\M)$ and $x^{\rk(\M)-1}\, \widehat{\uH}_{\M}(x^{-1})=\widehat{\uH}_{\M}(x)$.
        \item For every loopless matroid $\M$, the polynomial
            \[ \widehat{\H}_{\M}(x) := \sum_{F\in\mathcal{L}(\M)} x^{\rk(F)}\, \widehat{\uH}_{\M/F}(x)\]
        is palindromic.
    \end{enumerate} 
    In particular, for every matroid $\M$ we have $\widehat{\uH}_{\M}(x) = \uH_{\M}(x)$ and $\widehat{\H}_{\M}(x) = \H_{\M}(x)$.
\end{theorem}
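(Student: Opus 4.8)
The plan is to prove the statement in three movements: (a) exhibit one valid assignment, (b) show it is the only one, (c) read off the two equalities with the actual Hilbert series. Throughout, the genuinely substantial inputs are external and already recorded above: Poincar\'e duality for $\uCH(\M)$ \cite{adiprasito-huh-katz}, Poincar\'e duality for $\CH(\M)$ \cite{semismall}, and the convolution identity of Proposition~\ref{prop:recursion-H-in-terms-uH}; everything else is bookkeeping.

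\emph{Existence.} I would take $\widehat{\uH}_\M := \uH_\M$ and verify (i)--(iii) directly. Property (i) holds since $\uCH(\U_{0,0}) = \mathbb{Q}$, so $\uH_{\U_{0,0}}(x) = 1$. Property (ii) is exactly Poincar\'e duality for $\uCH(\M)$, which gives the symmetry $x^{\rk(\M)-1}\uH_\M(x^{-1}) = \uH_\M(x)$, together with $\uCH^0(\M)\cong\mathbb{Q}$, which forces $\deg \uH_\M = \rk(\M)-1 < \rk(\M)$. Property (iii) is precisely Proposition~\ref{prop:recursion-H-in-terms-uH}, which identifies $\sum_{F\in\mathcal{L}(\M)} x^{\rk(F)}\uH_{\M/F}(x)$ with $\H_\M(x)$, and the latter is palindromic by Poincar\'e duality for $\CH(\M)$. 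Hence $\uH$ is a valid assignment.

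\emph{Uniqueness.} I would show that any two assignments $\widehat{\uH}^{(1)}, \widehat{\uH}^{(2)}$ satisfying (i)--(iii) agree on every loopless matroid, by strong induction on the rank. The rank-$0$ case is forced by (i). For $\rk(\M) = k \ge 1$, observe that $\M/F$ is loopless of rank $k - \rk(F) < k$ for every flat $F \ne \varnothing$, so by the inductive hypothesis $\widehat{\uH}^{(1)}_{\M/F} = \widehat{\uH}^{(2)}_{\M/F}$; therefore the polynomials $Q^{(a)} := \sum_{F \neq \varnothing} x^{\rk(F)}\widehat{\uH}^{(a)}_{\M/F}$ coincide, call their common value $Q$. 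Using (ii) for each $\M/F$ with $\varnothing \ne F \ne E$, the corresponding summand of $Q$ has degree $<k$; the only summand of degree $\geq k$ is the one for $F = E$, which equals $x^k$. Thus $Q$ has degree exactly $k$ with leading coefficient $1$, and since each $\widehat{\uH}^{(a)}_\M$ has degree $<k$ by (ii), each $\widehat{\H}^{(a)}_\M = \widehat{\uH}^{(a)}_\M + Q$ has degree exactly $k$; hence ``$\widehat{\H}^{(a)}_\M$ palindromic'' unambiguously means symmetric about $k/2$. Setting $v := \widehat{\uH}^{(1)}_\M - \widehat{\uH}^{(2)}_\M$, property (ii) makes $v$ supported in degrees $0,\dots,k-1$ and symmetric about $(k-1)/2$, while subtracting the two instances of (iii) makes $v$ symmetric about $k/2$ as well.

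\emph{The pinning-down lemma, and conclusion.} The step I expect to be the only mildly delicate point is the elementary claim that a polynomial supported in degrees $\le k-1$ and symmetric about \emph{both} $(k-1)/2$ and $k/2$ must vanish (equivalently: the space of polynomials of degree $\le k$ is the direct sum of those symmetric about $k/2$ and those of degree $\le k-1$ symmetric about $(k-1)/2$). I would prove it by a two-ended sweep: symmetry about $k/2$ kills the constant coefficient, symmetry about $(k-1)/2$ then kills the degree-$(k-1)$ coefficient, symmetry about $k/2$ then kills the degree-$1$ coefficient, and so on alternately from the two ends until all coefficients are zero. This gives $v = 0$, so $\widehat{\uH}^{(1)}_\M = \widehat{\uH}^{(2)}_\M$ and the induction closes. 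Combining existence and uniqueness, $\widehat{\uH}_\M = \uH_\M$ for every loopless $\M$, and then $\widehat{\H}_\M(x) = \sum_{F\in\mathcal{L}(\M)} x^{\rk(F)}\uH_{\M/F}(x) = \H_\M(x)$ by Proposition~\ref{prop:recursion-H-in-terms-uH}. Apart from this combinatorial lemma and the degree accounting that upgrades ``palindromic'' to ``symmetric about $k/2$'', there is no real obstacle in the argument.
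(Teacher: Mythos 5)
Your proposal is correct; every step checks out, including the degree bookkeeping that makes ``palindromic'' unambiguous (the $F=E$ summand contributes $x^{\rk(\M)}$, so $\widehat{\H}_\M$ has degree exactly $\rk(\M)$) and the observation that contractions by nonempty flats strictly drop the rank, so your induction on rank closes (the paper inducts on ground-set cardinality instead; both work). The route differs from the paper's mainly in organization. The paper proves existence and uniqueness simultaneously and \emph{intrinsically}: it applies the symmetric-decomposition statement of Lemma~\ref{lemma:stapledon2} to $\mathrm{S}_\M(x)=\sum_{F\neq\varnothing}x^{\rk(F)}\widehat{\uH}_{\M/F}(x)$, defines $\widehat{\uH}_\M:=-b(x)$ from the decomposition $\mathrm{S}_\M=a+b$, and only at the very end invokes Proposition~\ref{prop:recursion-H-in-terms-uH} to identify the result with $\uH_\M$; in particular the recursion defines the polynomials without any reference to Chow rings. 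You instead witness existence by the Hilbert series itself, which forces you to import Poincar\'e duality for both $\uCH(\M)$ and $\CH(\M)$ up front, and you prove uniqueness by the ``two symmetries plus a degree bound force vanishing'' sweep --- which is exactly the uniqueness half of Lemma~\ref{lemma:stapledon2}, just phrased as a vanishing statement rather than a decomposition. The trade-off: your version is slightly shorter and needs no explicit construction, but it ties existence to the geometric input; the paper's version keeps Theorem~\ref{thm:intrinsic-main-body} usable as a stand-alone combinatorial definition, with the geometry entering only in the final identification (where, as you note, Proposition~\ref{prop:recursion-H-in-terms-uH} is needed in either approach).
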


As it turns out, although the second condition only requires that $\deg \widehat{\uH}_{\M}(x) < \rk(\M)$, the last part of the statement will end up guaranteeing that in fact $\deg \widehat{\uH}_{\M}(x) = \deg \uH_{\M}(x) = \rk(\M) - 1$, and therefore the condition $x^{\rk(\M)-1}\widehat{\uH}_{\M}(x^{-1})=\widehat{\uH}_{\M}(x)$ states that $\widehat{\uH}(x)$ is palindromic.  

The statement of Theorem~\ref{thm:main-recursion-defi-H-and-uH} follows. In that formulation we posed stronger restrictions than in Theorem~\ref{thm:intrinsic-main-body} and, as we know from the conclusion of this, the polynomials resulting from the statement of Theorem~\ref{thm:intrinsic-main-body} satisfy all the conditions of Theorem~\ref{thm:main-recursion-defi-H-and-uH}.

As for the proof, we rely on an elementary symmetric decomposition that is of particular interest in Ehrhart theory due to the work of Stapledon (see also \cite{branden-solus} and \cite{athanasiadis-tzanaki} for related work on real-rootedness of these decompositions). Precisely, we need a slight modification of \cite[Lemma~2.3]{stapledon}.

\begin{lemma}\label{lemma:stapledon2}
    Let $p(x)$ be a polynomial of degree $d$. There exist unique polynomials $a(x)$ of degree $d$ and $b(x)$ of degree at most $d-1$ with the properties that $a(x) = x^d a(x^{-1})$ and $b(x)=x^{d-1} b(x^{-1})$, and that satisfy
        \[ p(x) = a(x) + b(x).\]
\end{lemma}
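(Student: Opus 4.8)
The plan is to reduce everything to linear algebra by means of the \emph{reversal} operation. For a polynomial $f$ and an integer $m\ge\deg f$, write $f^{\langle m\rangle}(x):=x^{m}f(x^{-1})$; this is again a polynomial of degree $\le m$, whose coefficient sequence is that of $f$ read backwards and padded to length $m+1$. The two symmetry conditions in the statement then read simply $a=a^{\langle d\rangle}$ and $b=b^{\langle d-1\rangle}$. Note that $(-)^{\langle d\rangle}$ is an involution on polynomials of degree $\le d$, and that whenever $\deg b\le d-1$ one has $b^{\langle d\rangle}(x)=x\cdot b^{\langle d-1\rangle}(x)$.

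First I would establish uniqueness, which simultaneously produces the formula. Suppose $p=a+b$ is such a decomposition. Applying $(-)^{\langle d\rangle}$ and using $a^{\langle d\rangle}=a$ together with $b^{\langle d\rangle}=x\,b^{\langle d-1\rangle}=x\,b$, we get $p^{\langle d\rangle}=a+x\,b$; subtracting $p=a+b$ yields $p^{\langle d\rangle}-p=(x-1)\,b$. Since $p^{\langle d\rangle}(1)=p(1)$, the polynomial $p^{\langle d\rangle}-p$ is divisible by $x-1$ in $\Q[x]$, so $b=\dfrac{p^{\langle d\rangle}-p}{x-1}$ and $a=p-b$ are both forced.

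For existence I would take these formulas as definitions: set $b:=(p^{\langle d\rangle}-p)/(x-1)$ and $a:=p-b$, and verify the three properties. The degree claims are immediate: $\deg(p^{\langle d\rangle}-p)\le d$ forces $\deg b\le d-1$, and since $\deg p=d$ while $\deg b\le d-1$ the leading coefficient of $a$ is that of $p$, so $\deg a=d$. For the symmetry of $b$, substitute $x\mapsto x^{-1}$ in the identity $(x-1)b(x)=p^{\langle d\rangle}(x)-p(x)$ and multiply by $x^{d}$; working in $\Q(x)$ and using that $(-)^{\langle d\rangle}$ is an involution, the left side becomes $(1-x)\,b^{\langle d-1\rangle}(x)$ and the right side becomes $p(x)-p^{\langle d\rangle}(x)=(1-x)b(x)$, so cancelling $1-x$ in the domain $\Q[x]$ gives $b=b^{\langle d-1\rangle}$. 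Finally $a^{\langle d\rangle}=p^{\langle d\rangle}-b^{\langle d\rangle}=p^{\langle d\rangle}-x\,b$, and this equals $p-b=a$ precisely because $(x-1)b=p^{\langle d\rangle}-p$, i.e.\ by the very definition of $b$.

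There is no real obstacle here; the argument is elementary. The only points needing a little care are the divisibility of $p^{\langle d\rangle}-p$ by $x-1$ (which is what makes $b$ a genuine polynomial), the observation that the two centers of symmetry differing by exactly one is what produces the clean factor $x-1$ upon subtracting $p$ from $p^{\langle d\rangle}$, and keeping the two degree bounds $d$ and $d-1$ straight when passing between $f^{\langle d\rangle}$ and $f^{\langle d-1\rangle}$. One could instead run the whole thing purely coefficient-wise, solving the system $a_i+b_i=p_i$, $a_i=a_{d-i}$, $b_i=b_{d-1-i}$ by peeling off coefficients from the outside in; this makes existence and uniqueness equally transparent but is less compact.
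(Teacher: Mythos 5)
Your argument is correct, and it takes a genuinely different route from the paper's. The paper proves the lemma purely coefficient-wise: writing $p(x)=\sum p_ix^i$, it peels off coefficients from the outside in and arrives at the explicit partial-sum formulas $a_i = p_d+\cdots+p_{d-i}-p_0-\cdots-p_{i-1}$ and $b_i = p_0+\cdots+p_i-p_d-\cdots-p_{d-i}$, which give existence and uniqueness simultaneously (this is the "less compact" alternative you mention at the end). You instead exploit the reversal operator: subtracting $p$ from $x^dp(x^{-1})$ forces $(x-1)b = x^dp(x^{-1})-p(x)$, and since both sides vanish at $x=1$ you get the closed formulas $b=\bigl(x^dp(x^{-1})-p(x)\bigr)/(x-1)$ and $a=p-b$, with the symmetry checks following from the involutivity of reversal. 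Your route is more compact and makes transparent why the shift by one in the centers of symmetry is exactly what produces the factor $x-1$; the paper's route has the advantage of exhibiting the coefficients of $a$ and $b$ explicitly (in particular, integrality of $a$ and $b$ for $p\in\Z[x]$ is immediate there, whereas in your version it follows from the mild extra remark that $x-1$ is monic, so the division stays in $\Z[x]$). Both arguments are elementary and complete; the only cosmetic caveat in yours is to note that applying $(-)^{\langle d\rangle}$ termwise to $p=a+b$ is legitimate because both summands have degree at most $d$.
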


\begin{proof}
    Let us denote by $p_i$, $a_i$, and $b_i$ the coefficients of $x^i$ in each of $p(x)$, $a(x)$, and $b(x)$. The condition that $\deg a(x) = \deg p(x)$ and $\deg b(x) < \deg p(x)$ implies that $a_d = p_d$ and the condition that $x^d a(x^{-1}) = a(x)$ yields that $a_0 = p_d$ as well. This together with $a(x)+b(x)$ determines $b_0 = p_0 - p_d$, and this in turn determines $b_{d-1}=b_0=p_0-p_d$. Continuing this way, we determine all the coefficients of $a(x)$ and $b(x)$ inductively. Indeed, for each $i$ the coefficients are determined by the equations
    \begin{align*}
        a_i &= p_d + \cdots + p_{d-i} - p_0 - \cdots - p_{i-1},\\
        b_i &= p_0 + \cdots + p_i - p_d - \cdots - p_{d-i}.\qedhere
    \end{align*}
\end{proof}

\begin{proof}[Proof of Theorem~\ref{thm:intrinsic-main-body} and thus of Theorems \ref{thm:main-recursion-defi-H-and-uH} and \ref{thm:correcthpseries}]
    Let us prove the first part of the statement by induction on the size of the ground set of $\M$. We need to establish the uniqueness of $\widehat{\uH}_{\M}(x)$, as $\widehat{\H}_{\M}(x)$ is determined by the former. If $\M$ has cardinality $n=0$, then $\rk(\M) = 0$ and the polynomial $\widehat{\uH}_{\M}(x)$ is uniquely defined and equal to $1$ by the first property. Now, assume the uniqueness has  already been established for matroids with cardinality at most $n-1$, and consider a matroid $\M$ of cardinality $n$. The polynomial
        \[ \mathrm{S}_\M(x) := \sum_{\substack{F\in\mathcal{L}(\M)\\F\neq\varnothing}} x^{\rk(F)}\,\widehat{\uH}_{\M/F}(x),\]
    is determined because all the matroids $\M/F$ for flats $F\neq\varnothing$ have ground sets with cardinality at most $n-1$. Observe that since $F=E$ (the ground set of $\M$) is a nonempty flat of $\M$, in $\mathrm{S}_{\M}(x)$ we have a summand of degree $\rk(\M)$, whereas for $F\subsetneq E$, condition (ii) guarantees that $\deg\left(x^{\rk(F)}\, \widehat{\uH}_{\M/F}(x)\right) \leq \rk(F) + \rk(\M/F) - 1 = \rk(\M) - 1$. In particular $\deg \mathrm{S}_{\M}(x) = \rk(\M)$. 
    
    Now, using Lemma~\ref{lemma:stapledon2}, we can find unique polynomials $a(x)$ and $b(x)$ such that $\deg a(x) = \rk(\M)$, $\deg b(x) \leq \rk(\M) - 1$, the polynomial $a(x)$ is palindromic, the polynomial $b(x)$ satisfies $b(x) = x^{\rk(\M)-1}b(x^{-1})$, and the following property holds:
        \[ \mathrm{S}_{\M}(x) = a(x) + b(x).\]
    In particular, by defining $\widehat{\uH}_{\M}(x) = -b(x)$, which satisfies the requirements of (ii), we obtain that
        \[ \sum_{F\in \mathcal{L}(\M)} x^{\rk(F)} \, \widehat{\uH}_{\M/F}(x) = \widehat{\uH}_{\M}(x) + \mathrm{S}_{\M}(x) = -b(x) + \mathrm{S}_{\M}(x) =  a(x),\]
    which is palindromic, as required. Notice that the uniqueness of the decomposition of Lemma~\ref{lemma:stapledon2} yields the uniqueness for $\widehat{\uH}_{\M}(x)$ as we claimed. Now, since Proposition~\ref{prop:recursion-H-in-terms-uH} guarantees that the Hilbert series of the Chow ring satisfies all these conditions, then the last assertion of the statement follows.
\end{proof}

\subsection{More recursions and their geometry}\label{subsec:self-rec-and-their-geometry}

In the matroid Kazhdan--Lusztig framework (cf. Section~\ref{sec:four}) one has an analogous convolution formula linking the $Z$-polynomial of $\M$ with the Kazhdan--Lusztig polynomial of all the contractions of $\M$, i.e.
    \[ Z_{\M}(x) = \sum_{F\in\mathcal{L}(\M)} x^{\rk(F)}\, P_{\M/F}(x).\]

This recursion can be used to intrinsically define $P_{\M}(x)$ and $Z_{\M}(x)$ for all loopless matroids by proving Theorem~\ref{thm:definition-kl-and-zeta}. However, the original definition of the Kazhdan--Lusztig polynomial of a matroid by Elias, Proudfoot, and Wakefield \cite{elias-proudfoot-wakefield} does not make any reference to $Z_{\M}(x)$. They introduce $P_{\M}(x)$ via a convolution of itself with the characteristic polynomial, i.e.
    \[ x^{\rk(\M)}P_{\M}(x^{-1}) = \sum_{F\in\mathcal{L}(\M)} \chi_{\M|_F}(x)\, P_{\M/F}(x).\] 

It is not initially clear whether an analogue of this expression exists for $\uH_{\M}(x)$---the content of the following result is precisely such a formula. After the proof of this statement, we will discuss some of the geometric motivation behind it.

\begin{theorem}\label{thm:convolution-reduced-char-poly}
    Let $\M$ be a loopless matroid. Then, the Hilbert series of the Chow ring of $\M$ satisfies
    \begin{equation}
        \uH_{\M}(x) = \sum_{\substack{F\in\mathcal{L}(\M)\\F\neq\varnothing}} \overline{\chi}_{\M|_F}(x) \, \uH_{\M/F}(x).\label{eq:convolution-reduced-char-poly}
    \end{equation}
    In particular, the above recursion and the initial condition $\uH_{\U_{0,0}}(x)=1$  uniquely define the map associating to each loopless matroid the Hilbert series of its Chow ring. 
\end{theorem}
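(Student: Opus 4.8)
The plan is to prove that the Hilbert series $\uH_{\M}(x)$ of the Chow ring satisfies the recursion \eqref{eq:convolution-reduced-char-poly} for every loopless matroid $\M$ with $\rk(\M)\geq 1$; the uniqueness assertion is then immediate, since for $\rk(\M)\geq 1$ that recursion expresses $\uH_{\M}(x)$ purely in terms of the $\overline{\chi}_{\M|_F}(x)$ (which depend only on $\M$) and the polynomials $\uH_{\M/F}(x)$ for nonempty flats $F$, each such contraction $\M/F$ having ground set of size $|E|-|F|<|E|$, so that an induction on the number of elements pins down the whole assignment once $\uH_{\U_{0,0}}(x)=1$ is imposed as the initialization.

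The first move is to recast \eqref{eq:convolution-reduced-char-poly} in a shape parallel to the Elias--Proudfoot--Wakefield recursion of \Cref{thm:kazhdan-lusztig}. Using $\chi_{\U_{0,0}}(x)=1$ and $\chi_{\M|_F}(x)=(x-1)\,\overline{\chi}_{\M|_F}(x)$ for $F\neq\varnothing$, and isolating the $F=\varnothing$ summand, one checks that \eqref{eq:convolution-reduced-char-poly} is equivalent, for loopless $\M$ with $\rk(\M)\geq 1$, to
\[
    \sum_{F\in\mathcal{L}(\M)} \chi_{\M|_F}(x)\,\uH_{\M/F}(x)\;=\;x\,\uH_{\M}(x)
\]
(the division by $x-1$ being harmless in the domain $\Q[x]$). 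Since $\uH_{\M}(x)$ is palindromic of degree $\rk(\M)-1$ (via \Cref{thm:intrinsic-main-body}), the right-hand side equals $x^{\rk(\M)}\uH_{\M}(x^{-1})$, so this is the exact analogue of \Cref{thm:kazhdan-lusztig}(iii) with $\uH$ in place of $P$. This displayed identity is what I would actually prove.

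To prove it, expand $\chi_{\M|_F}(x)=\sum_{G\leq F}\mu(\varnothing,G)\,x^{\rk(F)-\rk(G)}$ from \eqref{eq:definition-char-poly}, interchange the order of summation over flats, and recognize the inner sum via \Cref{prop:recursion-H-in-terms-uH} applied to the minor $\M/G$ (whose lattice of flats is $[G,E]$, with ranks shifted by $\rk(G)$, and $(\M/G)/(F/G)=\M/F$):
\[
    \sum_{F\in\mathcal{L}(\M)}\chi_{\M|_F}(x)\,\uH_{\M/F}(x)
    \;=\;\sum_{G\in\mathcal{L}(\M)}\mu(\varnothing,G)\sum_{G\leq F\leq E}x^{\rk(F)-\rk(G)}\,\uH_{\M/F}(x)
    \;=\;\sum_{G\in\mathcal{L}(\M)}\mu(\varnothing,G)\,\H_{\M/G}(x).
\]
Now the key point: rewrite $\H_{\M/G}(x)$ using the \emph{palindromic} form of the augmented Chow polynomial from \Cref{remark:alternative-convolution}, namely $\H_{\M/G}(x)=1+x\sum_{G\leq F<E}\uH_{\M/F}(x)$ (which also reads correctly when $G=E$). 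In this form the coefficient of each $\uH_{\M/F}(x)$ is the constant $x$, \emph{independent of} $G$, so a second interchange of summation makes the expression collapse:
\[
    \sum_{G\in\mathcal{L}(\M)}\mu(\varnothing,G)\,\H_{\M/G}(x)
    \;=\;\Bigl(\sum_{G}\mu(\varnothing,G)\Bigr)\;+\;x\sum_{F<E}\uH_{\M/F}(x)\,\Bigl(\sum_{G\leq F}\mu(\varnothing,G)\Bigr).
\]
The first parenthesis is $\chi_{\M}(1)=0$ since $E\neq\varnothing$; and $\sum_{G\leq F}\mu(\varnothing,G)=\chi_{\M|_F}(1)$ vanishes unless $F=\varnothing$, where it equals $1$. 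Hence the whole sum is $x\,\uH_{\M/\varnothing}(x)=x\,\uH_{\M}(x)$, which is precisely the identity sought; tracing back the equivalence of the previous paragraph yields \eqref{eq:convolution-reduced-char-poly}.

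The step needing the most care is not any single computation but the bookkeeping around the empty matroid: the nonstandard convention $\overline{\chi}_{\U_{0,0}}(x)=-1$, the failure of the normalization $\deg\uH_{\M}=\rk(\M)-1$ at rank $0$, and the fact that \eqref{eq:convolution-reduced-char-poly} is asserted only for $\rk(\M)\geq 1$ (with $\U_{0,0}$ serving as the initialization). It is also worth flagging that one genuinely needs the \emph{palindromic} expression for $\H_{\M/G}$ from \Cref{remark:alternative-convolution}: feeding in the ``raw'' expression of \Cref{prop:recursion-H-in-terms-uH} instead merely reproduces the left-hand side and leads nowhere. The only ingredient not already established in this section is the palindromicity of $\uH_{\M}(x)$ and $\H_{\M}(x)$, which we have from \Cref{thm:intrinsic-main-body} (equivalently, from the K\"ahler package of \cite{adiprasito-huh-katz} and \cite{semismall}).
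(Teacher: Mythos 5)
Your proof is correct, but it follows a genuinely different route from the paper's. The paper proves \eqref{eq:convolution-reduced-char-poly} by treating the right-hand side as a candidate family $\widetilde{\uH}_{\M}(x)$ and checking that it satisfies the intrinsic characterization of Theorem~\ref{thm:intrinsic-main-body}: the decisive computation there shows $1+x\sum_{F\neq E}\widetilde{\uH}_{\M/F}(x)=\H_{\M}(x)$ by expanding each $\widetilde{\uH}_{\M/F}$, interchanging sums, and applying Lemma~\ref{lemma:sum-reduced-char-poly-contractions} together with equation~\eqref{eq:rec-aug-chow}, after which the uniqueness clause of Theorem~\ref{thm:intrinsic-main-body} identifies $\widetilde{\uH}_{\M}$ with $\uH_{\M}$. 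You instead verify the identity directly for the actual Hilbert series: you first reduce \eqref{eq:convolution-reduced-char-poly} to the EPW-style identity $\sum_{F\in\mathcal{L}(\M)}\chi_{\M|_F}(x)\,\uH_{\M/F}(x)=x\,\uH_{\M}(x)$, then expand $\chi_{\M|_F}$ through M\"obius values, recognize $\H_{\M/G}(x)$ via Proposition~\ref{prop:recursion-H-in-terms-uH}, rewrite it in the palindromic form of Remark~\ref{remark:alternative-convolution}, and collapse using $\sum_{G\le F}\mu(\varnothing,G)=\delta_{\varnothing,F}$ and $\chi_{\M}(1)=0$. So your combinatorial input is the M\"obius recursion rather than Lemma~\ref{lemma:sum-reduced-char-poly-contractions}, and you need no uniqueness statement at all---only Proposition~\ref{prop:recursion-H-in-terms-uH} and the palindromicity facts, both available before this theorem, so there is no circularity. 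What each approach buys: the paper's argument is organized so that it simultaneously yields Theorems~\ref{thm:main-recursion-defi-H-and-uH} and \ref{thm:intro-main0}; yours produces as a byproduct the exact Chow analogue of the Elias--Proudfoot--Wakefield recursion, $\sum_{F}\chi_{\M|_F}(x)\,\uH_{\M/F}(x)=x^{\rk(\M)}\uH_{\M}(x^{-1})$, and effectively anticipates the incidence-algebra statement that $-\overline{\chi}$ and $\uH$ are mutually inverse, which the paper only exploits later in Section~\ref{subsec:incidence-uniform}. Your bookkeeping around the empty matroid (the recursion asserted only for $\rk(\M)\geq 1$, with $\U_{0,0}$ as initialization, and the harmless division by $x-1$ in $\Q[x]$) matches the paper's intended reading, and the uniqueness-by-induction argument is complete.
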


\begin{proof}
    To establish this result, we rely on Theorem~\ref{thm:main-recursion-defi-H-and-uH}. This will allow us to show that the right-hand side of \eqref{eq:convolution-reduced-char-poly} indeed coincides with the Hilbert series of $\uCH(\M)$. It suffices to prove that the polynomials
    \[ \widetilde{\uH}_{\M}(x) := 
    \begin{cases} 1 & \text{if $\M$ is empty,}\\ \displaystyle\sum_{\substack{F\in\mathcal{L}(\M)\\F\neq\varnothing}} \overline{\chi}_{\M|_F}(x) \, \uH_{\M/F}(x) & \text{if $\M$ is nonempty}
    \end{cases}\]
    satisfy all the properties of that statement. The first two conditions are immediate to check, and for the last, it will suffice to verify that $\widetilde{\uH}_{\M}(x)$ satisfies the recursion of Remark \ref{remark:alternative-convolution}. We have a chain of equalities:
    \begin{align}
        1 + x \sum_{\substack{F\in \mathcal{L}(\M)\\F\neq E}} \widetilde{\uH}_{\M/F}(x) 
        &=1+x\sum_{\substack{F\in \mathcal{L}(\M)\\F\neq E}} \sum_{\substack{G\in \mathcal{L}(\M/F)\\G\neq \varnothing}}  \overline{\chi}_{(\M/F)|_G}(x)\, \uH_{(\M/F)/G}(x) \nonumber\\
        &=1+x\sum_{\substack{F\in \mathcal{L}(\M)\\F\neq E}} \sum_{\substack{G\in \mathcal{L}(\M)\\G\supsetneq F}} \overline{\chi}_{(\M/F)|_{G\smallsetminus F}}(x)\, \uH_{\M/G}(x)\nonumber\\
        &= 1 +  x \sum_{\substack{G\in \mathcal{L}(\M)\\G\neq \varnothing}} \uH_{\M/G}(x) \sum_{\substack{F\in \mathcal{L}(\M)\\F\subsetneq G}} \overline{\chi}_{(\M/F)|_{G\smallsetminus F}}(x)\nonumber\\
        &=1 +  x \sum_{\substack{G\in \mathcal{L}(\M)\\G\neq \varnothing}} \uH_{\M/G}(x) \sum_{\substack{F\in \mathcal{L}(\M|_G)\\F\subsetneq G}} \overline{\chi}_{(\M|_G)/F}(x)\nonumber\\
        &= 1 +  x \sum_{\substack{G\in \mathcal{L}(\M)\\G\neq \varnothing}} \uH_{\M/G}(x) \cdot \frac{1-x^{\rk(\M|_G)}}{1-x} \label{eq:sum-char-poly}\\
        &= \H_{\M}(x),\nonumber
    \end{align}
    where in \eqref{eq:sum-char-poly} we used Lemma~\ref{lemma:sum-reduced-char-poly-contractions} and in the last step we used a formula we had obtained in equation~\eqref{eq:rec-aug-chow}.
\end{proof}

The preceding proof, which also shows the validity of Theorem~\ref{thm:intro-main0}, resembles the proof of \cite[Proposition~2.3]{proudfoot-xu-young} and relies on a very similar lemma. 

In analogy to the Kazhdan--Lusztig setting, Theorem~\ref{thm:convolution-reduced-char-poly} is not only a different computational shortcut to compute Hilbert series of Chow rings of matroids, but also a shadow of an algebro-geometric fact that we describe now. 

\begin{remark}
    Suppose a matroid $\M$ is realized by a central, essential hyperplane arrangement $\mathcal{A} = \mathcal{A}(\M) = \{H_i \}_{i \in E} \subseteq V$ over $\mathbb{C}$. In this setting, the polynomial $\uH_\M(x)$ is equal to the Hilbert series of the Chow ring of the wonderful variety $\underline{X}_\mathcal{A}$.
    If we denote by $\mathcal{M}(\mathcal{A}) = V \smallsetminus \mathcal{A}$ the complement of the hyperplane arrangement, it is known that $\chi_\M(x)$ (after taking the absolute value of the coefficients) is equal to the Hilbert series of $\mathcal{M}(\mathcal{A})$ and, as a consequence, $\overline{\chi}_\M(x)$ coincides with the Hilbert series of the projectivization $\mathbb{P}(\mathcal{M}(\mathcal{A}))$. 
    Moreover, following the notation in \cite[Section~1.5]{semismall}, for any nonempty proper subset $S \subseteq E$ we define
    \[
    H_S = \bigcap_{i \in S}H_i  \quad \text{and} \quad H_S^\circ = H_S \smallsetminus \bigcup_{S \subsetneq T} H_T.
    \]
    Then $H_S^\circ$ is nonempty if and only if $S$ is a nonempty proper flat of $\M$. This gives us a stratification of $\bigcup_{i \in E} H_i = \bigsqcup_{F}H_F^\circ \subseteq \mathbb{P}(V)$. We recall that the wonderful variety $\underline{X}_\mathcal{A}$ can be realized by blowing up the points in $\mathbb{P}(V)$ corresponding to corank $1$ flats, then blowing up the strict transforms corresponding to corank $2$ flats and so on up until (and including) the rank $1$ flats. This induces a stratification of $\underline{X}_\mathcal{A}$, with the strata labeled by nonempty flats, in the following way: the stratum corresponding to the flat $F$ is given by removing from the exceptional divisor corresponding to $F$ the strict transforms of the linear spaces corresponding to flats smaller than $F$.  
    Now define, as usual, the following two arrangements for any flat $F$:
    \begin{itemize} 
        \item $\mathcal{A}^F = \{H \in \mathcal{A} : H \supseteq F\}$ is an arrangement in $V/F$ called the localization at $F$. We know that $\mathcal{A}(\M|_F) = \mathcal{A}^F$.
        \item $\mathcal{A}_F = \{H \cap F : H \not\supseteq F\} $ is an arrangement in $F$ called the restriction to $F$. We know that $\mathcal{A}(\M/F) = \mathcal{A}_F$.
    \end{itemize}
    It can be shown that the stratum of $\underline{X}_{\mathcal{A}}$ associated to the flat $F$ is isomorphic to $\mathbb{P}(\mathcal{M}(\mathcal{A}^F)) \times \underline{X}_{\mathcal{A}_F}$.
    The formula for the global cohomology of $\underline{X}_\mathcal{A}$ (i.e.  equation~\eqref{eq:convolution-reduced-char-poly} in the realizable case) follows by considering the map on the Grothendieck ring of varieties over $\mathbb{C}$ that takes a smooth projective variety to the Hilbert series of its cohomology.
\end{remark}

At this point it is also of interest to produce a recursion for $\H_{\M}(x)$ in terms of Hilbert series of augmented Chow rings of contractions $\H_{\M/F}(x)$ for $F\in\mathcal{L}(\M)$, but with specific care to not making any references to $\uH_{\M}(x)$. Following the parallelism with the Kazhdan--Lusztig framework, we underline the fact that such type of recursion is certainly complicated for the $Z$-polynomial.\footnote{
    Working carefully in the incidence algebra and using the notion of \emph{inverse Kazhdan--Lusztig polynomial} of a matroid $\M$ (usually denoted $Q_{\M}(x)$) as introduced by Gao and Xie in \cite{gao-xie}, one has the following expression:
    \[ Z_{\M}(x) = \sum_{\substack{F\in\mathcal{L}(\M)\\F\neq\varnothing}} Y_{\M|_F}(x)\, Z_{\M/F}(x),\]
    where for each loopless matroid $\M$ on a ground set $E$ one defines
    \[ Y_{\M}(x) := -\sum_{F\in\mathcal{L}(\M)} (-1)^{\rk(F)} Q_{\M|_F}(x) \mu(F,E) x^{\rk(\M)-\rk(F)}.\]
}

Fortunately, the Hilbert series of the augmented Chow ring is more tractable in this respect.

\begin{theorem}\label{thm:convolution-mobius}
     Let $\M$ be a loopless matroid. Then the Hilbert series of the augmented Chow ring of $\M$ satisfies  
    \begin{equation} 
        \H_{\M}(x) = - \sum_{\substack{F\in\mathcal{L}(\M)\\F\neq\varnothing}} \mu(\varnothing,F) (1+x+\cdots+ x^{\rk(F)}) \, \H_{\M/F}(x).\label{eq:convolution-mobius}
    \end{equation}
    In particular, the above recursion and the initial condition $\H_{\U_{0,0}}(x)=1$  uniquely define the map associating to each loopless matroid the Hilbert series of its augmented Chow ring. 
\end{theorem}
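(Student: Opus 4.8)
The plan is to show that the right-hand side of \eqref{eq:convolution-mobius}, call it $T_{\M}(x)$, equals $\H_{\M}(x)$, and then to deduce the uniqueness assertion by a short induction. The entry point is Proposition~\ref{prop:recursion-H-in-terms-uH} applied to the contraction $\M/F$: since $\mathcal{L}(\M/F)$ is the interval $[F,E]$, on which the rank function is $\rk_{\M}(\cdot)-\rk(F)$ and the further contractions are again $\M/G$ for $G\supseteq F$, we get $\H_{\M/F}(x)=\sum_{G\in\mathcal{L}(\M),\,G\supseteq F} x^{\rk(G)-\rk(F)}\,\uH_{\M/G}(x)$. Substituting this into $T_{\M}(x)$ and interchanging the order of the two sums, $T_{\M}(x)$ becomes a single sum over nonempty flats $G$ of $\uH_{\M/G}(x)$ weighted by the coefficient $-x^{\rk(G)}\sum_{\varnothing\ne F\le G}\mu(\varnothing,F)\,x^{-\rk(F)}\bigl(1+x+\cdots+x^{\rk(F)}\bigr)$, the inner sum running over the nonempty flats $F$ of the restriction $\M|_G$.

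The crux is the evaluation of that inner sum. Writing $x^{-\rk(F)}\bigl(1+x+\cdots+x^{\rk(F)}\bigr)=\tfrac{x^{-\rk(F)}-x}{1-x}$ splits it into two pieces. The piece $-x\sum_{\varnothing\ne F\le G}\mu(\varnothing,F)$ equals $x$, because $\chi_{\M|_G}(1)=0$ forces $\sum_{\varnothing\ne F\le G}\mu(\varnothing,F)=-1$ (here we use $G\ne\varnothing$). The other piece, $\sum_{\varnothing\ne F\le G}\mu(\varnothing,F)x^{-\rk(F)}$, equals $x^{-\rk(G)}\chi_{\M|_G}(x)-1$ straight from the definition of the characteristic polynomial, using that $[\varnothing,G]=\mathcal{L}(\M|_G)$ with the inherited Möbius function and ranks. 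Combining, the inner sum is $\tfrac{x^{-\rk(G)}\chi_{\M|_G}(x)-1+x}{1-x}=-x^{-\rk(G)}\overline{\chi}_{\M|_G}(x)-1$, where we used $\chi_{\M|_G}(x)/(1-x)=-\overline{\chi}_{\M|_G}(x)$ (valid since $\M|_G$ is loopless and nonempty). Hence the coefficient of $\uH_{\M/G}(x)$ in $T_{\M}(x)$ is simply $\overline{\chi}_{\M|_G}(x)+x^{\rk(G)}$.

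Therefore $T_{\M}(x)=\sum_{\varnothing\ne G\in\mathcal{L}(\M)}\overline{\chi}_{\M|_G}(x)\,\uH_{\M/G}(x)+\sum_{\varnothing\ne G\in\mathcal{L}(\M)}x^{\rk(G)}\,\uH_{\M/G}(x)$. The first sum is $\uH_{\M}(x)$ by Theorem~\ref{thm:convolution-reduced-char-poly}, and the second is $\H_{\M}(x)-\uH_{\M}(x)$ by Proposition~\ref{prop:recursion-H-in-terms-uH} (isolating the $G=\varnothing$ term), so $T_{\M}(x)=\H_{\M}(x)$. For uniqueness, induct on $|E|$: the base case $\M=\U_{0,0}$ is the prescribed initial condition, and since every flat $F\ne\varnothing$ has $|E\smallsetminus F|<|E|$, the right-hand side of \eqref{eq:convolution-mobius} only involves $\H_{\M/F}(x)$ for strictly smaller matroids and thus pins down $\H_{\M}(x)$. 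The delicate point is the inner-sum computation: the weights $1+x+\cdots+x^{\rk(F)}$ must carry exactly the shift that converts $\sum_F\mu(\varnothing,F)x^{\rk(G)-\rk(F)}=\chi_{\M|_G}(x)$ into the \emph{reduced} characteristic polynomial, and the leftover constant term $-1$ must then be precisely what reproduces the extra $x^{\rk(G)}$ needed to pass from $\uH_{\M}(x)$ to $\H_{\M}(x)$.
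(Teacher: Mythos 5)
Your proof is correct and follows essentially the same route as the paper: expand $\H_{\M/F}(x)$ via Proposition~\ref{prop:recursion-H-in-terms-uH}, interchange the order of summation, split the geometric sum $1+x+\cdots+x^{\rk(F)}$ into two pieces, recognize $\chi_{\M|_G}(x)$ (and hence $\overline{\chi}_{\M|_G}(x)$) in the inner sum, and close with Theorem~\ref{thm:convolution-reduced-char-poly} together with Proposition~\ref{prop:recursion-H-in-terms-uH} again. The only differences are cosmetic (you factor out $x^{\rk(G)}$ and work with $\frac{x^{-\rk(F)}-x}{1-x}$ rather than carrying $x^{\rk(G)-\rk(F)}\frac{1-x^{\rk(F)+1}}{1-x}$) plus a short explicit justification of the uniqueness claim, which the paper leaves implicit.
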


\begin{proof}
    Again, we will rely on Theorem~\ref{thm:main-recursion-defi-H-and-uH}, but this time we will also need Theorem~\ref{thm:convolution-reduced-char-poly}. Starting from the right-hand side of \eqref{eq:convolution-mobius} (without the minus sign) and using the recursion of Theorem~\ref{thm:main-recursion-defi-H-and-uH}, we have
    \begin{align*}
        &\sum_{\substack{F\in\mathcal{L}(\M)\\F\neq\varnothing}} \mu(\varnothing,F) \frac{1-x^{\rk(F)+1}}{1-x}\, \H_{\M/F}(x)\\ &= \sum_{\substack{F\in\mathcal{L}(\M)\\F\neq\varnothing}} \mu(\varnothing,F) \frac{1-x^{\rk(F)+1}}{1-x}\sum_{\substack{G\in\mathcal{L}(\M)\\G\supseteq F}}x^{\rk(G) - \rk(F)} \uH_{\M/G}(x)\\
        &=\sum_{\substack{G\in\mathcal{L}(\M)\\G\neq\varnothing}} \uH_{\M/G}(x)  \sum_{\substack{F\in\mathcal{L}(\M)\\\varnothing\neq F\subseteq G}}\mu(\varnothing, F)\, x^{\rk(G) - \rk(F)}\frac{1-x^{\rk(F)+1}}{1-x},
        \intertext{where in the last step we just interchanged the order of summation. Now, breaking the inner sum into a difference of two sums yields}
        &= \sum_{\substack{G\in\mathcal{L}(\M)\\G\neq\varnothing}} \uH_{\M/G}(x) \frac{1}{1-x}\left( \sum_{\substack{F\in\mathcal{L}(\M)\\\varnothing\neq F\subseteq G}}\mu(\varnothing, F)\, x^{\rk(G) - \rk(F)} - \sum_{\substack{F\in\mathcal{L}(\M)\\\varnothing\neq F\subseteq G}} \mu(\varnothing, F) x^{\rk(G)+1}\right)\\
        &= \sum_{\substack{G\in\mathcal{L}(\M)\\G\neq\varnothing}} \uH_{\M/G}(x) \frac{1}{1-x}\left( \left(\chi_{\M|_G}(x) - x^{\rk(G)}\right) + x^{\rk(G)+1}\right)\\
        &= -\sum_{\substack{G\in\mathcal{L}(\M)\\G\neq\varnothing}} \overline{\chi}_{\M|_G}(x)\, \uH_{\M/G}(x) -\sum_{\substack{G\in\mathcal{L}(\M)\\G\neq\varnothing}} x^{\rk(G)} \uH_{\M/G}(x).\\
        \intertext{Using the recursions of Theorem~\ref{thm:convolution-reduced-char-poly} and Theorem~\ref{thm:main-recursion-defi-H-and-uH}, we obtain}
        &= -\uH_{\M}(x) - \left(\H_{\M}(x) - \uH_{\M}(x)\right)\\
        &= -\H_{\M}(x),
    \end{align*}
    and therefore the proof is complete.
\end{proof}

\subsection{Incidence algebras and the case of uniform matroids}\label{subsec:incidence-uniform}

One of the goals we now pursue is determining $\uH_{\M}(x)$ and $\H_{\M}(x)$ whenever $\M$ is an arbitrary uniform matroid. 

We will first produce more general formulas using the incidence algebra of the lattice of flats $\mathcal{L}(\M)$. These new formulas work for arbitrary matroids, but will be of particular use to produce the first concrete expressions for the Hilbert series of both the Chow ring and the augmented Chow ring of arbitrary uniform matroids. In particular, we will use these formulas to settle a conjecture posed by Hameister, Rao, and Simpson \cite[Conjecture 6.2]{hameister-rao-simpson}, and we will provide a counterpart of it in the augmented case. Afterwards, in Theorem~\ref{thm:uniform-formulas-main}, we will deduce alternative expressions for $\uH_{\U_{k,n}}(x)$ and $\H_{\U_{k,n}}(x)$; these alternative formulas arise via Propositions \ref{prop:hilbert-chow} and \ref{prop:hilbert-augmented-chow}, along with some results on local $h$-vectors of simplicial complexes. 

We recall some notions of poset theory and incidence algebras; for a detailed treatment we refer to \cite[Section 3.6]{stanley-ec1}. Let $\mathcal{L}$ be a locally finite poset, and denote by $\operatorname{Int}(\mathcal{L})$ the set of all  closed intervals of $\mathcal{L}$. (Recall that $\varnothing$ is \emph{not} a closed interval.) The \textit{incidence algebra} of $\mathcal{L}$ over a ring $R$, denoted $\mathcal{I}(\mathcal{L})$, is the $R$-module of all maps $f\colon\operatorname{Int}(\mathcal{L})\to R$, where the multiplication (or ``convolution'') is defined via
    \[ (f\cdot g)(s,u) := \sum_{s\leq t\leq u} f(s,t)g(t,u).\]
Recall that $\mathcal{I}(\mathcal{L})$ is associative and possesses a two-sided identity $\delta$ defined by $\delta(s,t) =1$ when $s=t$ and $\delta(s,t)=0$ when $s<t$. A useful fact is that if $f\cdot g=\delta$ in $\mathcal{I}(\mathcal{L})$, then one also has $g \cdot f=\delta$. For a proof 
 of this fact, see \cite[Proposition~3.6.2]{stanley-ec1}.

In what follows we will be concerned with the case in which $\mathcal{L}=\mathcal{L}(\M)$ is the lattice of flats of a loopless matroid $\M$ on a ground set $E$, and $R$ is the field of rational functions on a variable $x$. Recalling that we have defined the characteristic polynomial of a (loopless) matroid of rank $0$ to be $-1$, the identity of Theorem~\ref{thm:convolution-reduced-char-poly} can be rephrased as follows: 
    \[ -\delta_{\M} := -\delta_{[\varnothing,E]}= \sum_{F\in\mathcal{L}(\M)} \overline{\chi}_{\M|_F}(x)\, \uH_{\M/F}(x) = \sum_{F\in\mathcal{L}(\M)} \overline{\chi}_{[\varnothing, F]} \, \uH_{[F,E]} = (\overline{\chi}\cdot \uH)_{[\varnothing,E]},\]
where $\overline{\chi}_{[\varnothing,F]}$ stands for the evaluation of $\overline{\chi}$ at the interval $[\varnothing, F]$, and analogously for $\uH_{[F,E]}$. In other words, $-\overline{\chi}$ and $\uH$ are inverses in $\mathcal{I}(\mathcal{L}(\M))$ and hence one has
    \[ -\delta_{\M} = (\uH\cdot \overline{\chi})_{[\varnothing, E]} = \sum_{F\in \mathcal{L}(\M)} \uH_{[\varnothing, F]}\, \overline{\chi}_{[F,E]} = \sum_{F\in\mathcal{L}(\M)} \uH_{\M|_F}(x)\, \overline{\chi}_{\M/F}(x).\]

This proves the following alternative convolution formula.

\begin{proposition}\label{prop:hilbert-chow-inverse-incidence}
    Let $\M$ be a loopless matroid on $E$. The following formula holds:
    \[ \uH_{\M}(x) = \sum_{\substack{F\in\mathcal{L}(\M)\\F\neq E}} \uH_{\M|_F}(x)\, \overline{\chi}_{\M/F}(x).\]
\end{proposition}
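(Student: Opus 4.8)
The plan is to recognize the claimed identity as the ``other-sided'' version of the inverse relation that Theorem~\ref{thm:convolution-reduced-char-poly} already records, exploiting the fact that a one-sided inverse in an incidence algebra is automatically two-sided.

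First I would pass to the incidence algebra $\mathcal{I}(\mathcal{L}(\M))$ over $R=\Q(x)$ and view both $\uH$ and $\overline{\chi}$ as elements of it: for a closed interval $[F,G]$ of $\mathcal{L}(\M)$, set $\uH_{[F,G]}$ and $\overline{\chi}_{[F,G]}$ equal to $\uH$ and $\overline{\chi}$ of the minor of $\M$ attached to that interval (this minor is loopless because $F$ is a flat), using the convention $\overline{\chi}_{[F,F]}=\overline{\chi}_{\U_{0,0}}(x)=-1$ on the trivial intervals. The one genuinely structural input is that restriction and contraction are compatible with passing to subintervals of $\mathcal{L}(\M)$, which is exactly what makes these prescriptions well-defined elements of $\mathcal{I}(\mathcal{L}(\M))$ and which is used implicitly in the proof of Theorem~\ref{thm:convolution-reduced-char-poly} itself.

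Next I would reinterpret Theorem~\ref{thm:convolution-reduced-char-poly}. Applied to the minor indexed by an arbitrary interval $[F,G]$ it says $\sum_{F\subsetneq F'\subseteq G}\overline{\chi}_{[F,F']}\,\uH_{[F',G]}=\uH_{[F,G]}$; adding in the omitted term $F'=F$, which contributes $-\uH_{[F,G]}$ by the rank-zero convention, shows that $\bigl((-\overline{\chi})\cdot\uH\bigr)_{[F,G]}$ is $0$ for $F\neq G$ and $1$ for $F=G$, i.e.\ $(-\overline{\chi})\cdot\uH=\delta$ in $\mathcal{I}(\mathcal{L}(\M))$. Then, by the two-sided inverse property \cite[Proposition~3.6.2]{stanley-ec1}, also $\uH\cdot(-\overline{\chi})=\delta$. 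Evaluating this identity at $[\varnothing,E]$ gives $\sum_{F\in\mathcal{L}(\M)}\uH_{\M|_F}(x)\,\bigl(-\overline{\chi}_{\M/F}(x)\bigr)=\delta_{[\varnothing,E]}$, which vanishes for nonempty $\M$; isolating the summand $F=E$, which equals $\uH_\M(x)\cdot\bigl(-\overline{\chi}_{\U_{0,0}}(x)\bigr)=\uH_\M(x)$, and transposing it yields precisely $\uH_\M(x)=\sum_{F\neq E}\uH_{\M|_F}(x)\,\overline{\chi}_{\M/F}(x)$.

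I do not expect any real obstacle: once Theorem~\ref{thm:convolution-reduced-char-poly} is available the argument is purely formal. The only point demanding care is the bookkeeping of the rank-zero convention $\overline{\chi}_{\U_{0,0}}(x)=-1$ along the diagonal of the incidence algebra — this is what makes $-\overline{\chi}$, rather than $\overline{\chi}$, the two-sided inverse of $\uH$, and it is also what produces the correct sign when the $F=E$ term is split off at the end (the empty matroid being a harmless degenerate case one may simply exclude).
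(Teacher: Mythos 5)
Your proposal is correct and follows essentially the same route as the paper: both reinterpret Theorem~\ref{thm:convolution-reduced-char-poly} as the identity $(-\overline{\chi})\cdot\uH=\delta$ in the incidence algebra $\mathcal{I}(\mathcal{L}(\M))$, invoke the two-sided inverse property from \cite[Proposition~3.6.2]{stanley-ec1} to get $\uH\cdot(-\overline{\chi})=\delta$, and then evaluate at $[\varnothing,E]$ and split off the $F=E$ term. You are slightly more explicit than the paper in noting that Theorem~\ref{thm:convolution-reduced-char-poly} must be applied to every interval $[F,G]$ (i.e.\ to the minor $(\M|_G)/F$) to certify the incidence-algebra identity at all intervals, which is a fair point of care, but it is the same argument.
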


This result was also achieved using an  algebro-geometric perspective by Jensen, Kutler, and Usatine, and it was instrumental in proving one of their main results: a recursion that the motivic zeta functions of matroids satisfy \cite[Theorem~1.8]{jensen-kutler-usatine}.  Yet another way of deducing Proposition~\ref{prop:hilbert-chow-inverse-incidence}, via tropical geometry, is by using the main result of Amini and Piquerez \cite[Theorem~1.1]{amini-piquerez}; we omit the details here. 


This proposition provides a recursion that is particularly useful to compute $\uH_{\M}(x)$ whenever $\M$ is an arbitrary uniform matroid. The reason for this is that the restriction $\M|_F$ for $F\in\mathcal{L}(\M)$ is always a Boolean matroid whenever $\M$ is uniform and $F$ is a proper flat. One of the motivations for this idea came from the Kazhdan--Lusztig theory of matroids, where the inverse Kazhdan--Lusztig polynomial plays a role to compute the Kazhdan--Lusztig polynomial of arbitrary uniform matroids in \cite[Section 3]{gao-xie}.

\begin{corollary}\label{corollary:uH-uniform}
    The Hilbert series of the Chow ring of a uniform matroid of rank $k$ and cardinality $n$ is given by
    \[ \uH_{\U_{k,n}}(x) = \sum_{j=0}^{k-1} \binom{n}{j} A_{j}(x)\, \overline{\chi}_{\U_{k-j,n-j}}(x).\]
\end{corollary}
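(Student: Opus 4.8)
The plan is to obtain this as an immediate specialization of Proposition~\ref{prop:hilbert-chow-inverse-incidence} to $\M = \U_{k,n}$. That formula reads $\uH_{\M}(x) = \sum_{F \in \mathcal{L}(\M),\, F \neq E} \uH_{\M|_F}(x)\, \overline{\chi}_{\M/F}(x)$, so the only work is to describe the proper flats of $\U_{k,n}$ together with the corresponding restrictions and contractions, and then to group terms.

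First I would recall the structure of $\mathcal{L}(\U_{k,n})$: its flats other than $E = [n]$ are exactly the subsets $F \subseteq [n]$ with $|F| \leq k-1$, and such an $F$ satisfies $\rk(F) = |F|$. Consequently, for a proper flat $F$ with $|F| = j$ (where $0 \leq j \leq k-1$), the restriction $\U_{k,n}|_F$ is the Boolean matroid $\U_{j,j}$, since every subset of $F$ is independent, and the contraction $\U_{k,n}/F$, being the contraction of the independent set $F$, is $\U_{k-j,\,n-j}$. Grouping the flats of $\mathcal{L}(\U_{k,n}) \smallsetminus \{E\}$ according to their size, and using that there are $\binom{n}{j}$ flats of size $j$, the sum in Proposition~\ref{prop:hilbert-chow-inverse-incidence} collapses to $\sum_{j=0}^{k-1} \binom{n}{j}\, \uH_{\U_{j,j}}(x)\, \overline{\chi}_{\U_{k-j,n-j}}(x)$. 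Inserting the identity $\uH_{\U_{j,j}}(x) = A_j(x)$ recorded earlier (with the convention $A_0(x) = 1 = \uH_{\U_{0,0}}(x)$ covering the term $j = 0$, i.e.\ $F = \varnothing$) yields exactly the claimed expression.

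There is no genuine obstacle here; the statement is bookkeeping once Proposition~\ref{prop:hilbert-chow-inverse-incidence} is available. The only points needing a short justification are the combinatorial facts that the proper flats of a uniform matroid are precisely its subsets of size at most $k-1$, that restricting to such a flat produces a Boolean matroid, and that contracting it produces a smaller uniform matroid; each follows directly from the definitions of flat, restriction, and contraction, together with the identifications of $\mathcal{L}(\U_{k,n}|_F)$ and $\mathcal{L}(\U_{k,n}/F)$ with the intervals $[\varnothing, F]$ and $[F, E]$ in $\mathcal{L}(\U_{k,n})$.
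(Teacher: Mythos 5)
Your proposal is correct and is exactly the argument the paper intends: the corollary is presented as an immediate consequence of Proposition~\ref{prop:hilbert-chow-inverse-incidence}, justified by the observation that the proper flats of $\U_{k,n}$ are the subsets of size $j\leq k-1$, with restriction $\U_{j,j}$ (so $\uH_{\U_{j,j}}(x)=A_j(x)$) and contraction $\U_{k-j,n-j}$, counted by $\binom{n}{j}$. Your bookkeeping, including the $j=0$ term, matches the paper's reasoning.
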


The preceding formula can be made explicit because the reduced characteristic polynomial of a uniform matroid is not difficult to compute (see Lemma~\ref{reduced-char-poly} below). Moreover, it essentially resolves a conjecture posed by Hameister, Rao, and Simpson in \cite{hameister-rao-simpson} regarding the face enumeration of the Bergman complex of a matroid (i.e. the order complex of the proper part of the lattice of flats). We reformulate their conjecture here.

\begin{corollary}[{\cite[Conjecture 6.2]{hameister-rao-simpson}}]
    Let us denote by $h_{\Delta(\widehat{\mathcal{L}}(\U_{k,n}))}(x)$ the $h$-polynomial of the Bergman complex of $\U_{k,n}$. Then
    \[ h_{\Delta(\widehat{\mathcal{L}}(\U_{k,n}))}(x) = \sum_{i=1}^k \binom{n-i-1}{k-i}\, \uH_{\U_{i,n}}(x).\]
\end{corollary}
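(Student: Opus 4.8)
The plan is to put both sides into closed form and then reduce the identity to the Vandermonde convolution.

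\textit{Left-hand side.} First I would use that the proper nonempty flats of $\U_{k,n}$ are exactly the subsets of $[n]$ of cardinality $1,\dots,k-1$; hence a face of $\Delta(\widehat{\mathcal{L}}(\U_{k,n}))$ is a chain $A_1\subsetneq\cdots\subsetneq A_m$ of such subsets, and converting the face numbers of this $(k-2)$-dimensional complex into its $h$-vector gives
\[
h_{\Delta(\widehat{\mathcal{L}}(\U_{k,n}))}(x)=\sum_{A_1\subsetneq\cdots\subsetneq A_m}(x-1)^{k-1-m},
\]
the sum ranging over all such chains (the empty chain contributing $(x-1)^{k-1}$). Grouping the chains according to their top element $A_m$: if $|A_m|=s$ then the interval $[\varnothing,A_m]$ of $\mathcal{L}(\U_{k,n})$ is the Boolean lattice of rank $s$, and the portion of the chain below $A_m$ runs over the proper nonempty flats of $\U_{s,s}$. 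Factoring out the appropriate power of $x-1$ and invoking the classical identity $h_{\Delta(\widehat{\mathcal{L}}(\U_{s,s}))}(x)=A_s(x)$ (the $h$-vector of the barycentric subdivision of the boundary of a simplex is Eulerian) yields
\[
h_{\Delta(\widehat{\mathcal{L}}(\U_{k,n}))}(x)=\sum_{s=0}^{k-1}\binom{n}{s}(x-1)^{k-1-s}A_s(x).
\]

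\textit{Right-hand side.} Next I would substitute Corollary~\ref{corollary:uH-uniform}, namely $\uH_{\U_{i,n}}(x)=\sum_{j=0}^{i-1}\binom{n}{j}A_j(x)\,\overline{\chi}_{\U_{i-j,\,n-j}}(x)$, into $\sum_{i=1}^{k}\binom{n-i-1}{k-i}\uH_{\U_{i,n}}(x)$ and interchange the order of summation; grouping by $j$ and comparing with the previous display, it suffices to prove the polynomial identity
\[
\sum_{p=1}^{K}\binom{N-p-1}{K-p}\,\overline{\chi}_{\U_{p,N}}(x)=(x-1)^{K-1},\qquad 1\le K\le N,
\]
with $K=k-j$ and $N=n-j$. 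For this I would use the elementary formula $\overline{\chi}_{\U_{p,N}}(x)=\sum_{l=0}^{p-1}(-1)^{l}\binom{N-1}{l}x^{\,p-1-l}$, which follows from \eqref{eq:definition-char-poly} since every interval below a proper nonempty flat of $\U_{p,N}$ is Boolean. Comparing coefficients of $x^{q}$ turns the identity into $\sum_{r=0}^{M}(-1)^{r}\binom{N-1}{r}\binom{N-q-r-2}{M-r}=(-1)^{M}\binom{M+q}{q}$ with $M=K-1-q$, and after the upper-index reversal $\binom{N-q-r-2}{M-r}=(-1)^{M-r}\binom{M-N+q+1}{M-r}$ this is exactly the Vandermonde convolution $\sum_{r}\binom{a}{r}\binom{c}{M-r}=\binom{a+c}{M}$.

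The step requiring the most care is computing the left-hand side. One subtlety is the normalization of the $h$-polynomial: the statement concerns the $h$-vector of the Bergman complex read off against decreasing powers of $x$ (the convention of \cite{hameister-rao-simpson}); in the usual normalization $h$ has constant term $1$, whereas the right-hand side is, e.g., $2+x$ when $(k,n)=(2,3)$. The other point is to account correctly for the chains whose top flat is a maximal proper flat, which is where the summand $s=k-1$ — and, after unwinding, the term $\uH_{\U_{k,n}}$ itself — enters. Everything after the reduction to the displayed binomial identity is routine.
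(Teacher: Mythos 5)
Your argument is correct, and it reaches the same destination as the paper with the same essential ingredients (the Brenti--Welker-type expansion of the $h$-polynomial, Corollary~\ref{corollary:uH-uniform}, and the explicit formula of Lemma~\ref{reduced-char-poly}), but it is organized differently. The paper first inverts the triangular relation by inclusion--exclusion, reducing the conjecture to the equivalent identity \eqref{eq:hameister-rao-simspon-to-prove} expressing $\uH_{\U_{k,n}}(x)$ in terms of the $h$-polynomials $h_{\Delta(\widehat{\mathcal{L}}(\U_{i,n}))}(x)$, and then matches coefficients using the positive-term convolution of Lemma~\ref{lemma:riordan}; you instead work in the original direction, isolating the clean intermediate identity $\sum_{p=1}^{K}\binom{N-p-1}{K-p}\,\overline{\chi}_{\U_{p,N}}(x)=(x-1)^{K-1}$ for $1\le K\le N$, which you prove by upper negation plus Chu--Vandermonde (an equivalent avatar of the paper's Riordan identity). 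You also rederive the Brenti--Welker formula $h_{\Delta(\widehat{\mathcal{L}}(\U_{k,n}))}(x)=\sum_{s=0}^{k-1}\binom{n}{s}A_s(x)(x-1)^{k-1-s}$ by grouping chains by their top flat and using that the barycentric subdivision of the boundary of a simplex has Eulerian $h$-polynomial, whereas the paper simply cites \cite{brenti-welker}; and your remark on the reversed normalization of $h$ (checked against $(k,n)=(2,3)$, where both sides equal $x+2$) correctly pins down the convention implicit in the statement. The main thing your route buys is that it avoids the inversion step (whose validity the paper leaves implicit) and produces a reusable closed form for the alternating sums of reduced characteristic polynomials of uniform matroids; the paper's route buys a final binomial identity with only nonnegative terms.
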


Since the proof involves quite a bit of calculations and  background, we reserve it until Appendix~\ref{appendix}.

Regarding the augmented case, it is evident that we can use the same reasoning as above, together with Theorem~\ref{thm:convolution-mobius} to produce a formula similar to that of Proposition~\ref{prop:hilbert-chow-inverse-incidence} for the Hilbert series of the augmented Chow ring. 

\begin{proposition}\label{prop:hilbert-aug-chow-inverse-incidence}
    Let $\M$ be a loopless matroid on $E$. The following formula holds:
    \[ \H_{\M}(x) = - \sum_{\substack{F\in\mathcal{L}(\M)\\F\neq E}} \H_{\M|_F}(x)\, \mu(F,E) \left(1 + x + \cdots + x^{\rk(\M)-\rk(F)}\right).\]
\end{proposition}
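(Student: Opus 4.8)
The plan is to imitate the incidence-algebra derivation of Proposition~\ref{prop:hilbert-chow-inverse-incidence}, replacing Theorem~\ref{thm:convolution-reduced-char-poly} by Theorem~\ref{thm:convolution-mobius}. I would work in the incidence algebra $\mathcal{I}(\mathcal{L}(\M))$ over the field $\Q(x)$ of rational functions in $x$. For an interval $[F,G]$ of $\mathcal{L}(\M)$, write $\M[F,G] := (\M/F)|_{G\smallsetminus F}$ for the loopless matroid whose lattice of flats is canonically identified with $[F,G]$; its rank is $\rk(G)-\rk(F)$ and its M\"obius function is the restriction of $\mu$. Define two elements of $\mathcal{I}(\mathcal{L}(\M))$: let $\H$ take the value $\H_{\M[F,G]}(x)$ on the interval $[F,G]$, and let $\phi$ take the value $\phi_{[F,G]} := \mu(F,G)\,(1+x+\cdots+x^{\rk(G)-\rk(F)})$ on $[F,G]$.

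The first step is to check that $\phi\cdot\H = \delta$. Fix an interval $[F_0,G_0]$ and apply Theorem~\ref{thm:convolution-mobius} to the loopless matroid $\M[F_0,G_0]$; transferring the summand indexed by the bottom flat to the left-hand side, the recursion reads $\sum_{G}\mu(\widehat{0},G)(1+\cdots+x^{\rk(G)})\,\H_{\M[F_0,G_0]/G}(x)=0$ when $\M[F_0,G_0]$ is nonempty (the sum being over all flats $G$ of $\M[F_0,G_0]$), and equals $1$ when $\M[F_0,G_0]$ is the empty matroid, i.e. when $F_0=G_0$. Re-indexing the flats $G$ of $\M[F_0,G_0]$ as the flats $F$ of $\M$ with $F_0\le F\le G_0$, and using that $\mu$, the rank differences, and the augmented Chow polynomial are all local to the interval, the left-hand side is exactly $(\phi\cdot\H)_{[F_0,G_0]}$, while the right-hand side is $\delta_{[F_0,G_0]}$. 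Hence $\phi\cdot\H=\delta$.

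It then remains to invoke the standard fact that a one-sided inverse in an incidence algebra is two-sided (recalled in the text, see \cite[Proposition~3.6.2]{stanley-ec1}), so that $\H\cdot\phi=\delta$ as well. Evaluating this identity on the interval $[\varnothing,E]$ and assuming $\M$ is nonempty, so that $\delta_{[\varnothing,E]}=0$, gives
\[ 0 = \sum_{F\in\mathcal{L}(\M)}\H_{\M|_F}(x)\,\mu(F,E)\,(1+x+\cdots+x^{\rk(\M)-\rk(F)}), \]
and splitting off the term $F=E$, which equals $\H_{\M}(x)$ since $\mu(E,E)=1$ and the geometric sum reduces to $1$, yields the stated formula after moving that term to the other side.

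There is no serious obstacle here: the proposition is a formal consequence of Theorem~\ref{thm:convolution-mobius}, the identification of subintervals of $\mathcal{L}(\M)$ with lattices of flats of minors of $\M$ (used freely throughout the paper), and the compatibility of $\mu$, $\rk$, and $\H_{(-)}(x)$ with that identification. The only points demanding care are the bookkeeping of the extremal terms --- the bottom flat $F=\varnothing$ when applying Theorem~\ref{thm:convolution-mobius}, versus the top flat $F=E$ when reading off the final identity --- and tracking the sign so that it matches the minus sign in the statement.
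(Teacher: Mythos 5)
Your proof is correct and follows exactly the approach the paper intends: it rephrases Theorem~\ref{thm:convolution-mobius} as an inverse pair in the incidence algebra $\mathcal{I}(\mathcal{L}(\M))$ and then uses the two-sidedness of inverses (the paper's stated justification is literally ``use the same reasoning as above,'' referring to the argument given for Proposition~\ref{prop:hilbert-chow-inverse-incidence}, together with Theorem~\ref{thm:convolution-mobius}). The bookkeeping of the trivial interval (giving $\delta=1$ via $\H_{\U_{0,0}}(x)=1$) and of the $F=E$ term, as well as the sign coming from moving that term across the equality, are all handled correctly.
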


The above proposition yields a formula for the Hilbert series of the augmented Chow ring of arbitrary uniform matroids, via expressing them in terms of binomial Eulerian polynomials. 

\begin{corollary}\label{corollary:H-uniform}
    The Hilbert series of the augmented Chow ring of a uniform matroid of rank $k$ and cardinality $n$ is given by
    \[ \H_{\U_{k,n}}(x) = \sum_{j=0}^{k-1} (-1)^{k-1-j} \binom{n}{j} \binom{n-1-j}{k-1-j} \widetilde{A}_{j}(x)\, (1+x+\cdots+x^{k-j}).\]
\end{corollary}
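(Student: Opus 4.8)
The plan is to apply Proposition~\ref{prop:hilbert-aug-chow-inverse-incidence} to the uniform matroid $\M = \U_{k,n}$ and to simplify the resulting sum using explicit knowledge of the lattice of flats of a uniform matroid together with the identity $\H_{\U_{j,j}}(x) = \widetilde{A}_j(x)$ for Boolean matroids. First I would recall the structure of $\mathcal{L}(\U_{k,n})$: its proper flats are exactly the subsets of $[n]$ of size $j$ for $0 \le j \le k-1$, each of which has rank $j$; the restriction $\U_{k,n}|_F$ to such a flat $F$ with $|F| = j$ is the Boolean matroid $\U_{j,j}$, so $\H_{\U_{k,n}|_F}(x) = \widetilde{A}_j(x)$. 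Moreover $\mathrm{rk}(\U_{k,n}) = k$, so for a rank-$j$ flat $F$ the polynomial $1 + x + \cdots + x^{\mathrm{rk}(\M) - \mathrm{rk}(F)}$ equals $1 + x + \cdots + x^{k-j}$. Substituting all of this into Proposition~\ref{prop:hilbert-aug-chow-inverse-incidence} gives
\[
\H_{\U_{k,n}}(x) = - \sum_{j=0}^{k-1} \binom{n}{j}\, \widetilde{A}_j(x)\, (1 + x + \cdots + x^{k-j}) \sum_{\substack{F \in \mathcal{L}(\U_{k,n}) \\ |F| = j}} \mu(F, E),
\]
where I have grouped the flats by cardinality; by symmetry $\mu(F, E)$ depends only on $|F| = j$, and there are $\binom{n}{j}$ such flats.

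The remaining task is purely a Möbius-function computation: I need to show that for a rank-$j$ flat $F$ of $\U_{k,n}$ one has
\[
\mu(F, E) = (-1)^{k-j}\binom{n-1-j}{k-1-j}.
\]
The interval $[F, E]$ in $\mathcal{L}(\U_{k,n})$ is isomorphic to the lattice of flats of the contraction $\U_{k,n}/F \cong \U_{k-j, n-j}$, so it suffices to compute $\mu(\varnothing, E)$ in $\mathcal{L}(\U_{a,m})$ where $a = k-j$, $m = n-j$, and to check this equals $(-1)^{a}\binom{m-1}{a-1}$. This is a standard fact: the Möbius number $\mu(\varnothing, \hat{1})$ of a uniform matroid equals $(-1)^a \binom{m-1}{a-1}$ (it follows, for instance, from the well-known formula $\chi_{\U_{a,m}}(t) = \sum_{i=0}^{a-1}(-1)^i\binom{m}{i}t^{a-i} - (-1)^a\binom{m-1}{a-1}$ for the characteristic polynomial, or directly by induction on $m$ using the defining recursion~\eqref{eq:mobius} and the fact that every proper flat of $\U_{a,m}$ is Boolean). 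Plugging $\mu(F,E) = (-1)^{k-j}\binom{n-1-j}{k-1-j}$ into the displayed sum and absorbing the overall minus sign by writing $-(-1)^{k-j} = (-1)^{k-1-j}$ yields exactly
\[
\H_{\U_{k,n}}(x) = \sum_{j=0}^{k-1} (-1)^{k-1-j}\binom{n}{j}\binom{n-1-j}{k-1-j}\, \widetilde{A}_j(x)\,(1 + x + \cdots + x^{k-j}),
\]
as claimed.

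I do not anticipate a genuine obstacle here; the only point requiring care is the bookkeeping of the Möbius numbers and signs — in particular, confirming the closed form for $\mu$ in a uniform matroid and tracking the single global sign from Proposition~\ref{prop:hilbert-aug-chow-inverse-incidence} through the identity $-(-1)^{k-j} = (-1)^{k-1-j}$. One could alternatively avoid citing the Möbius formula by deriving the corollary directly from Corollary~\ref{corollary:uH-uniform} together with the convolution $\H_{\M}(x) = \sum_F x^{\mathrm{rk}(F)} \uH_{\M/F}(x)$ of Theorem~\ref{thm:main-recursion-defi-H-and-uH}, but the route through Proposition~\ref{prop:hilbert-aug-chow-inverse-incidence} is the most direct and mirrors the proof of Corollary~\ref{corollary:uH-uniform}.
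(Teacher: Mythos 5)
Your proposal is correct and takes the same route the paper intends: specialize Proposition~\ref{prop:hilbert-aug-chow-inverse-incidence} to $\U_{k,n}$, using that the proper rank-$j$ flats are exactly the $\binom{n}{j}$ subsets of size $j$, each with $\H_{\U_{k,n}|_F}(x) = \widetilde{A}_j(x)$, and that $\mu(F,E) = (-1)^{k-j}\binom{n-1-j}{k-1-j}$. One small sign typo in your parenthetical aside: the constant term of $\chi_{\U_{a,m}}(t)$ is $+(-1)^a\binom{m-1}{a-1}$, not $-(-1)^a\binom{m-1}{a-1}$; since that constant term equals $\mu(\varnothing,\widehat{1})$, your quoted formula would actually contradict the (correct) Möbius value $(-1)^a\binom{m-1}{a-1}$ that you go on to use, but the final computation is unaffected.
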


Although Corollary~\ref{corollary:uH-uniform} and Corollary~\ref{corollary:H-uniform} are explicit expressions, they are both alternating sums, due to basic properties of the M\"obius function and the reduced characteristic polynomial. In the remainder of this subsection we will prove Theorem~\ref{thm:uniform-formulas-main}. That result provides the cleanest way we are aware of for computing the Hilbert series of Chow rings and augmented Chow rings of uniform matroids. The proof will be carried out by leveraging the following lemma, whose proof relies on (and later will extend) a recursion found by Juhnke-Kubitzke, Murai, and Sieg \cite{juhnke-murai-sieg} for derangement polynomials.

\begin{lemma}\label{lem:dn-and-An}
    Let $\M=\U_{n,n}$ be a Boolean matroid on a ground set $E$ of $n\geq 1$ elements. Then, by considering only chains of flats that end at the top element $E$ of $\mathcal{L}(\M)$, we have 
    \begin{align*}
         d_n(x) &= \sum_{\varnothing = F_0 \subsetneq \cdots \subsetneq F_m= E} \prod_{i=1}^m \frac{x ( 1 - x^{\rk(F_i)-\rk(F_{i-1})-1})}{1-x},\\
         xA_n(x) &= \sum_{F_0 \subsetneq \cdots \subsetneq F_m=E} \frac{x(1-x^{\rk(F_0)})}{1-x} \prod_{i=1}^m \frac{x(1- x^{\rk(F_i)-\rk(F_{i-1})-1})}{1-x}.
    \end{align*}
\end{lemma}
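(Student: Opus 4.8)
The plan is to recognize the two right-hand sides as the contributions to $\uH_{\U_{n,n}}(x)$ and $\H_{\U_{n,n}}(x)$ (via Propositions~\ref{prop:hilbert-chow} and~\ref{prop:hilbert-augmented-chow}) coming only from the chains of flats that reach the top flat $E$, and to identify those contributions combinatorially. Since $\U_{n,n}$ is Boolean, $\mathcal{L}(\U_{n,n})$ is the Boolean lattice $2^{[n]}$, every flat $F$ has $\rk(F)=\size{F}$, and the contraction $\U_{n,n}/F$ is again Boolean of rank $n-\size{F}$, so $\uH_{\U_{n,n}/F}(x)=A_{n-\size F}(x)$ by the known identity $\uH_{\U_{m,m}}(x)=A_m(x)$. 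For the first identity, I would split the sum in Proposition~\ref{prop:hilbert-chow} for $\uH_{\U_{n,n}}(x)=A_n(x)$ according to the top flat $F_m$ of the chain: the chains with $F_m=E$ give exactly the claimed right-hand side, while the chains with $F_m=F\subsetneq E$ decompose as ``a chain from $\varnothing$ to $F$ that stops at $F$'' followed by nothing, contributing $x^{\size F}\,\uH_{\U_{n,n}/F}(x)$-type terms; reorganizing, one sees $A_n(x)=\sum_{F}\bigl(\text{chains capped at }F\bigr)$ and the ``capped at $E$'' piece is what we want. The cleanest way to name that piece: define $g_m(x)$ to be $\sum_{\varnothing=F_0\subsetneq\cdots\subsetneq F_m=E}\prod_{i=1}^m \frac{x(1-x^{\size{F_i}-\size{F_{i-1}}-1})}{1-x}$; then Proposition~\ref{prop:hilbert-chow} together with the isomorphism $\mathcal{L}(\U_{n,n}/F)\cong[F,E]$ gives the recursion $A_n(x)=\sum_{F\subsetneq E} x^{?}\cdots$ — more precisely, grouping chains by their smallest element after $\varnothing$ or by their largest element $F_m$ yields $A_n(x)=\sum_{j=0}^{n}\binom{n}{j}(\text{top-capped chain sum on a rank-}j\text{ Boolean lattice})$, and matching against the well-known identity $A_n(x)=\sum_{j}\binom{n}{j}d_j(x)$ forces the top-capped sum on the rank-$j$ lattice to equal $d_j(x)$. (Here I use that the summand for the one-element chain $F_0=\varnothing=E$ only occurs when $n=0$, consistent with $d_0=1$.)

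Concretely I would argue by induction on $n$. Write $D_n(x)$ for the first right-hand side and $B_n(x)$ for the second. From Proposition~\ref{prop:hilbert-chow}, grouping every chain $\varnothing=F_0\subsetneq\cdots\subsetneq F_m$ in $2^{[n]}$ by its top flat $F_m=F$ (with $\size F=j$, and using that restricting the chain to $[\varnothing,F]\cong 2^{[j]}$ gives a top-capped chain there, while the remainder of $\uH$ beyond $F$ is absorbed by taking $F=F_m$ to be the literal top of the sub-chain), one gets
\[
A_n(x)=\uH_{\U_{n,n}}(x)=\sum_{j=0}^{n}\binom{n}{j}\,\widetilde D_j(x),
\]
where $\widetilde D_j(x)$ is exactly $D_j(x)$ as defined (the top-capped chain sum on a rank-$j$ Boolean lattice, with $\widetilde D_0=1$). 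Comparing with the classical identity $A_n(x)=\sum_{j=0}^n\binom{n}{j}d_j(x)$ and inducting (the $j=n$ terms are $\widetilde D_n$ and $d_n$ respectively, all lower terms agree by induction) yields $\widetilde D_n(x)=d_n(x)$, i.e. $D_n(x)=d_n(x)$. For the second identity, I do the analogous grouping in Proposition~\ref{prop:hilbert-augmented-chow} for $\H_{\U_{n,n}}(x)=\widetilde A_n(x)$, again indexing chains by their top flat $F_m=F$; the extra factor $\frac{x(1-x^{\size{F_0}})}{1-x}$ at the bottom is untouched by this grouping, so one obtains $\widetilde A_n(x)=1+\sum_{j=1}^{n}\binom{n}{j}\widetilde B_j(x)$ where $\widetilde B_j(x)$ is the top-capped augmented-chain sum on $2^{[j]}$ (the ``$1$'' being the empty chain $F_0=\cdots=F_m$ contributing $1$, which in the top-capped count only survives... — actually here I must be careful: in the top-capped sum the chain must reach $E$, so for $j\geq 1$ there is no stray $+1$, and the global $+1$ in Proposition~\ref{prop:hilbert-augmented-chow} is the $m=0$, $F_0=\varnothing$ chain which has $F_m=\varnothing\neq E$, hence is \emph{not} top-capped and must be set aside as the standalone $1$). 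Then comparing with the defining identity $\widetilde A_n(x)=1+x\sum_{j=1}^n\binom{n}{j}A_j(x)$ from equation~\eqref{eq:binomial-eulerian} and inducting gives $\widetilde B_j(x)=xA_j(x)$, i.e. $B_n(x)=xA_n(x)$.

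The main obstacle I anticipate is purely bookkeeping: getting the grouping-by-top-flat argument exactly right, in particular (a) correctly accounting for the degenerate one-flat chains (the ``$m=0$'' cases) so that the base cases $d_0=1$, $A_0=1$ and the stray constant $1$ in the augmented formula land where they should, and (b) verifying that when a chain is truncated at its top flat $F$, the portion lying in $[\varnothing,F]$ is genuinely a ``top-capped chain of $2^{[j]}$'' with the product of local factors matching $D_j$ or $B_j$ verbatim — this is where one uses that $\rk$ in a Boolean lattice is just cardinality and that intervals of $2^{[n]}$ are Boolean lattices of the appropriate rank. Once the combinatorial reorganization is set up, the identification with $d_n$ and $xA_n$ is forced by the classical identities $A_n=\sum_j\binom nj d_j$ and $\widetilde A_n=1+x\sum_j\binom nj A_j$ together with a one-line induction. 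An alternative, essentially equivalent route — which I would mention as a remark — is to compare the binomial transforms directly: both sides, when summed against $\binom nj$ over all $j$, reproduce $A_n$ (resp.\ $\widetilde A_n-1$), and since the binomial transform is invertible this pins down each $D_n$, $B_n$ uniquely.
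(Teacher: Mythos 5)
Your argument is correct, but it takes a genuinely different route from the paper's, and the comparison is instructive. Both proofs reorganize a sum over chains, but they condition on different flats and calibrate against different classical facts. You group the chains in Propositions~\ref{prop:hilbert-chow} and~\ref{prop:hilbert-augmented-chow} by the \emph{top} flat $F_m$: since $[\varnothing,F]\cong 2^{[j]}$ for $|F|=j$ and the local factors only see rank differences, the chains ending at $F$ contribute exactly $D_j(x)$ (resp.\ $B_j(x)$), and you obtain
\[
A_n(x)=\uH_{\U_{n,n}}(x)=\sum_{j=0}^{n}\binom{n}{j}D_j(x),
\qquad
\widetilde{A}_n(x)=\H_{\U_{n,n}}(x)=1+\sum_{j=1}^{n}\binom{n}{j}B_j(x).
\]
You then match the first against the classical $A_n(x)=\sum_{j}\binom{n}{j}d_j(x)$ (partition permutations by their set of fixed points, using the excedance interpretation together with the equidistribution of descents and excedances) and the second against the defining identity~\eqref{eq:binomial-eulerian}, and invert the binomial transform by induction to force $D_n=d_n$ and $B_n=xA_n$. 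The paper instead stays intrinsic: for the first identity it conditions on the \emph{penultimate} flat of a top-capped chain to obtain the self-recursion $\mathfrak{d}_n(x)=\sum_{j=0}^{n-2}\binom{n}{j}\mathfrak{d}_j(x)(x+\cdots+x^{n-j-1})$ and cites Juhnke-Kubitzke, Murai, and Sieg for the fact that this characterizes the derangement polynomials; for the second it conditions on $F_0$, reduces to the evaluation $\sum_{i=0}^{n-1}\binom{n}{i}d_i(x)(x+\cdots+x^{n-i})=xA_n(x)$, and invokes Stanley's locality formula for the barycentric subdivision of the boundary of a simplex. The trade-off is clear: your argument is shorter and leans on more familiar identities, but it presupposes $\uH_{\U_{n,n}}=A_n$ and $\H_{\U_{n,n}}=\widetilde{A}_n$ (stated in the examples of Section~\ref{sec:three} via the permutohedral and stellahedral varieties), whereas the paper's proof never needs to evaluate $\uH$ or $\H$ on the full Boolean lattice. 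One small correction to your write-up: in the first exploratory paragraph, the claim that chains with $F_m=F\subsetneq E$ contribute ``$x^{|F|}\uH_{\U_{n,n}/F}(x)$-type terms'' is off --- under grouping by the top flat those chains contribute precisely $D_{|F|}(x)$ and nothing more --- but your ``Concretely'' paragraph sets this straight, so no harm results.
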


\begin{proof}
    Let us prove the first identity. For each $n\geq 1$, denote by $\mathfrak{d}_n(x)$ the sum on the right-hand side. By considering what the penultimate element of the chain ending in $E$ is, we see that the sequence $\mathfrak{d}_n(x)$ satisfies the recurrence
        \[ \mathfrak{d}_n(x) = \sum_{j=0}^{n-2} \binom{n}{j} \mathfrak{d}_j(x) (x+x^2+\cdots+x^{n-j-1}).\]
    In \cite[Corollary 4.2]{juhnke-murai-sieg}, it is proved that this recursion determines the derangement polynomials, and hence we have $\mathfrak{d}_n(x)=d_n(x)$ for each $n\geq 1$, as claimed.
    
    To prove the second identity we rely on the first. Call the right-hand side $\mathfrak{a}_n(x)$. Choosing the set $F_0$ we obtain
    \begin{align*}
        \mathfrak{a}_n(x) &= \sum_{j=1}^n \binom{n}{j} (x+x^2+\cdots+x^j) d_{n-j}(x)\\
        &= \sum_{i=0}^{n-1} \binom{n}{i} d_i(x) (x+x^2+\cdots+x^{n-i}) = xA_n(x),
    \end{align*}
    where the last equality follows from the locality formula for the $h$-polynomial of the barycentric subdivision of the boundary of the $(n-1)$-simplex, i.e. from combining \cite[Proposition~2.4]{stanley-local} and \cite[Theorem~3.2]{stanley-local}.
\end{proof}

\begin{theorem}\label{thm:uniform-formulas-body}
    The Hilbert series of the Chow ring and augmented Chow ring of arbitrary uniform matroids are given by 
    \begin{align*}
        \uH_{\U_{k,n}}(x) &= \enspace \sum_{j=0}^{k-1}\, \binom{n}{j} \, d_j(x) (1+x+\cdots+x^{k-1-j}),\\
        \H_{\U_{k,n}}(x) &= 1 + x\sum_{j=0}^{k-1} \binom{n}{j}\, A_j(x) (1+x+\cdots+x^{k-1-j}).
    \end{align*}
\end{theorem}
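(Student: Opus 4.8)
The plan is to compute both Hilbert series directly from the chain expansions of Propositions~\ref{prop:hilbert-chow} and~\ref{prop:hilbert-augmented-chow}, organizing the sum over chains of flats of $\U_{k,n}$ according to their largest element and then invoking Lemma~\ref{lem:dn-and-An} on the Boolean intervals that appear. Throughout I would use the following structural facts about $\mathcal{L}(\U_{k,n})$: a proper flat $G$ of rank $j$ (with $0 \le j \le k-1$) is precisely a $j$-element subset of $[n]$, so there are exactly $\binom{n}{j}$ of them; the interval $[\varnothing, G]$ is, as a graded poset, the lattice of flats of $\M|_G \cong \U_{j,j}$; and the unique flat of rank $k$ is the whole ground set $[n]$ itself, with $\M/[n] \cong \U_{0,0}$.

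For $\uH_{\U_{k,n}}(x)$ I would partition the chains $\varnothing = F_0 \subsetneq \cdots \subsetneq F_m$ of Proposition~\ref{prop:hilbert-chow} according to their top element $F_m$. If $F_m = G$ is a proper flat of rank $j$, then these are exactly the chains ending at the top of $[\varnothing, G] \cong \mathcal{L}(\U_{j,j})$, and the first identity of Lemma~\ref{lem:dn-and-An} evaluates the corresponding partial sum to $d_j(x)$; summing over the $\binom{n}{j}$ such flats $G$ contributes $\sum_{j=0}^{k-1}\binom{n}{j}d_j(x)$. If instead $F_m = [n]$, I would split further according to the penultimate flat $F_{m-1} = G'$, a proper flat of rank $j$: the final step $G' \subsetneq [n]$ contributes the factor $\frac{x(1-x^{k-1-j})}{1-x} = x + x^2 + \cdots + x^{k-1-j}$, while the initial segment contributes $d_j(x)$ exactly as before, giving $\sum_{j=0}^{k-1}\binom{n}{j}d_j(x)(x + x^2 + \cdots + x^{k-1-j})$ (the summand with $j = k-1$ vanishing). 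Adding the two contributions telescopes to $\sum_{j=0}^{k-1}\binom{n}{j}d_j(x)(1 + x + \cdots + x^{k-1-j})$, which is the first formula.

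The computation of $\H_{\U_{k,n}}(x)$ is entirely parallel, now starting from Proposition~\ref{prop:hilbert-augmented-chow}; the extra factor $\frac{x(1-x^{\rk(F_0)})}{1-x}$ annihilates every chain with $F_0 = \varnothing$, so only chains with $\rk(F_0) \ge 1$ survive. Partitioning by the top element $F_m$: a proper flat top of rank $j \ge 1$ contributes $xA_j(x)$ by the second identity of Lemma~\ref{lem:dn-and-An}; the singleton chain $\{[n]\}$ contributes $\frac{x(1-x^k)}{1-x} = x(1 + x + \cdots + x^{k-1})$, which is exactly the $j = 0$ summand of the target since $A_0(x) = 1$; and a longer chain with top $[n]$ and penultimate flat of rank $j \ge 1$ contributes $xA_j(x)\cdot\frac{x(1-x^{k-1-j})}{1-x}$, again by Lemma~\ref{lem:dn-and-An}. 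Collecting $\binom{n}{j}$ copies of each and fusing the ``proper top of rank $j$'' and ``penultimate of rank $j$'' contributions into $xA_j(x)(1 + x + \cdots + x^{k-1-j})$ yields $\H_{\U_{k,n}}(x) = 1 + x\sum_{j=0}^{k-1}\binom{n}{j}A_j(x)(1 + x + \cdots + x^{k-1-j})$. Alternatively, once the first formula is in hand, the second drops out of the convolution $\H_{\M}(x) = \sum_{F}x^{\rk(F)}\uH_{\M/F}(x)$ of Proposition~\ref{prop:recursion-H-in-terms-uH} using $\M/G \cong \U_{k-j,n-j}$, although this route requires a somewhat longer double-sum rearrangement.

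The one place where care is genuinely needed --- and which I would write out in full rather than sketch --- is the bookkeeping of the degenerate chains: the vanishing of terms with $F_0 = \varnothing$ or with a rank-one gap $\rk(F_i) - \rk(F_{i-1}) = 1$, the length-zero and length-one chains, and the matching of the $j = 0$ summands ($d_0 = A_0 = 1$) against the trivial chain $\{\varnothing\}$ and the singleton chain $\{[n]\}$ respectively. Everything else is a direct substitution into Lemma~\ref{lem:dn-and-An}, once one observes that every proper interval $[\varnothing, G]$ in $\mathcal{L}(\U_{k,n})$ is Boolean.
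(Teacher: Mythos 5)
Your proposal is correct and follows essentially the same route as the paper's proof: apply the chain expansions of Propositions~\ref{prop:hilbert-chow} and~\ref{prop:hilbert-augmented-chow} to $\U_{k,n}$, split the chains according to whether the top flat is proper or equals $E$ (fixing the penultimate flat in the latter case), and evaluate the Boolean partial sums via Lemma~\ref{lem:dn-and-An}. Your treatment of the augmented case and of the degenerate chains ($j=0$, $j=k-1$, the trivial and singleton chains) just writes out in detail what the paper dismisses as ``completely analogous,'' so there is nothing substantively different to flag.
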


\begin{proof}
    Let us apply the formula of Proposition~\ref{prop:hilbert-chow} to the uniform matroid $\U_{k,n}$. Each chain of flats $\varnothing=F_0\subsetneq\cdots\subsetneq F_m$ appearing in the sum either has $F_m\subsetneq E$ or $F_m=E$. Lemma \ref{lem:dn-and-An} tells us that, for each flat $F\subsetneq E$ of rank $j$, the sum over all chains finishing at $F$ yields the polynomial $d_j(x)$. On the other hand,  by fixing the flat $F_{m-1}$,  Lemma~\ref{lem:dn-and-An} also allows us to calculate the sum of all the summands for which $F_m=E$. This yields
    \[ \uH_{\U_{k,n}}(x) = \sum_{j=0}^{k-1} \binom{n}{j}\, d_j(x) + \sum_{j=0}^{k-1} \binom{n}{j}\, d_j(x) \frac{x(1-x^{k-j-1})}{1-x}.\]
    After rearranging, the claimed identity is proved. The formula for the augmented Chow ring follows in a completely analogous way. 
\end{proof}

\begin{remark}
   The expression for $\H_{\U_{k,n}}(x)$ derived in the last statement bears an intriguing resemblance to the $h$-polynomials of the class of polytopes known as \emph{partial permutohedra}, studied recently in \cite{partial-permutohedra}. More precisely, compare our formula with their \cite[Theorem~3.17]{partial-permutohedra}.
\end{remark}

\begin{table}[htb]
    \caption{Examples of $\uH_{\M}(x)$ and $\H_{\M}(x)$ for some uniform matroids $\M$}
    \begin{subtable}[t]{.5\textwidth}
        \caption{Examples of $\uH_{\U_{k,k+1}}(x)$}
        \raggedright
            \begin{tabular}{l l l l l l l l}\hline
$k=$      & 1   & 2 & 3 & 4 & 5 & 6 & 7  \\ \hline
 $1$      & 1   & 1  & 1  & 1  & 1  & 1  & 1   \\
 $x$      &     & 1  & 7  & 21  & 51  &  113 & 239  \\
 $x^2$    &     &   &  1   & 21  & 161  & 813  & 3361   \\
 $x^3$    &     &   &     & 1  & 51  & 813  & 7631   \\
 $x^4$    &     &   &     &    &  1  & 113  & 3361    \\
 $x^5$    &     &   &   &   &   &  1&  239  \\
 $x^6$    &     &   &   &   &   &   &   1
\end{tabular}
    \end{subtable}%
   \begin{subtable}[t]{.5\textwidth}
        \raggedleft
        \caption{Examples of $\H_{\U_{k,k+1}}(x)$}
        \begin{tabular}{l l l l l l l l l l l l l l l}\hline
$k=$   & 1   & 2 & 3 & 4 & 5 & 6 & 7   \\ \hline
$1$  & 1 & 1 & 1 & 1 & 1 & 1 & 1 \\
$x$  & 1 & 4 & 11 & 26 & 57 & 120 & 247 \\
$x^2$  &  & 1 & 11 & 66 & 302 & 1191 & 4293 \\
$x^3$  &  &  & 1 & 26 & 302 & 2416 & 15619 \\
$x^4$  &  &  &  & 1 & 57 & 1191 & 15619 \\
$x^5$  &  &  &  &  & 1 & 120 & 4293\\
$x^6$  &  &  &  &  &  & 1 & 247 \\
$x^7$  &  &  &  &  &  &  & 1
\end{tabular}
    \end{subtable}
\vspace{2mm}

       \begin{subtable}[t]{.5\textwidth}
        \caption{Examples of $\uH_{\U_{k,k+2}}(x)$}
        \raggedright
\begin{tabular}{l l l l l l l l l l l l l l l}\hline
$k=$      & 1   & 2 & 3 & 4 & 5 & 6 & 7\\ \hline
$1$  & 1 & 1 & 1 & 1 & 1 & 1 & 1 \\
$x$  &  & 1 & 11 & 36 & 92 & 211 & 457 \\
$x^2$  &  &  & 1 & 36 & 337 & 1877 & 8269 \\
$x^3$  &  &  &  & 1 & 92 & 1877 & 20155 \\
$x^4$  &  &  &  &  & 1 & 211 & 8269 \\
$x^5$  &  &  &  &  &  & 1 & 457 \\
$x^6$  &  &  &  &  &  &  & 1 \\
\end{tabular}
    \end{subtable}%
   \begin{subtable}[t]{.5\textwidth}
        \raggedleft
        \caption{Examples of $\H_{\U_{k,k+2}}(x)$}
\begin{tabular}{l l l l l l l l l l l l l l l}\hline
$k=$      & 1   & 2 & 3 & 4 & 5 & 6 & 7   \\ \hline
$1$  & 1 & 1 & 1 & 1 & 1 & 1 & 1 \\
$x$  & 1 & 5 & 16 & 42 & 99 & 219 & 466 \\
$x^2$  &  & 1 & 16 & 117 & 610 & 2641 & 10204 \\
$x^3$  &  &  & 1 & 42 & 610 & 5637 & 40444 \\
$x^4$  &  &  &  & 1 & 99 & 2641 & 40444 \\
$x^5$  &  &  &  &  & 1 & 219 & 10204 \\
$x^6$  &  &  &  &  &  & 1 & 466 \\
$x^7$  &  &  &  &  &  &  & 1 \\
\end{tabular}
    \end{subtable}
\end{table}

\subsection{Semi-small decompositions and \texorpdfstring{$\gamma$}{gamma}-positivity}\label{subsec:gamma-positivity}

Now we turn our attention back to the general case. In \cite{semismall} Braden, Huh, Matherne, Proudfoot, and Wang found a \textit{semi-small decomposition} for both the Chow ring and the augmented Chow ring of arbitrary loopless matroids. These decompositions can be used to prove the K\"ahler package for both of these rings. Before stating this result, let us introduce some useful notation: whenever $\M$ is a loopless matroid and $i$ is \emph{not} a coloop, we define two special families of flats of $\M$:
    \begin{align}
        \underline{\mathscr{S}}_i = \underline{\mathscr{S}}_i(\M) &= \left\{ F \in \mathcal{L}(\M): \varnothing \subsetneq F\subsetneq E\smallsetminus\{i\} \text{ and } F\cup\{i\}\in \mathcal{L}(\M)\right\},\nonumber\\
        \mathscr{S}_i = \mathscr{S}_i(\M) &=  \left\{ F \in \mathcal{L}(\M):  F\subsetneq E\smallsetminus\{i\} \text{ and } F\cup\{i\}\in \mathcal{L}(\M)\right\}.\label{eq:family_S_i}
    \end{align}

\begin{theorem}[{\cite[Theorems 1.2 and 1.5]{semismall}}]\label{thm:semismall-decompositions}
    Let $\M$ be a loopless matroid and let $i\in E$ be an element that is not a coloop. Then, there is an isomorphism of $\uCH(\M\smallsetminus\{i\})$-modules:
    \[ \uCH(\M) \cong \uCH(\M\smallsetminus\{i\}) \oplus \bigoplus_{F\in \underline{\mathscr{S}}_i} \uCH\left(\M/{\left(F\cup\{i\}\right)}\right)\otimes \uCH(\M|_F)[-1]. \]
    Additionally, there is an isomorphism of $\CH(\M\smallsetminus\{i\})$-modules:
    \[ \CH(\M) \cong \CH(\M\smallsetminus\{i\}) \oplus \bigoplus_{F\in \mathscr{S}_i} \uCH\left(\M/{\left(F\cup\{i\}\right)}\right)\otimes \CH(\M|_F)[-1]. \]
\end{theorem}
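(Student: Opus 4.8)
I would prove each isomorphism by exhibiting it explicitly: construct a graded $\uCH(\M\smallsetminus\{i\})$-module homomorphism $\Theta$ from the right-hand side to $\uCH(\M)$, and then show it is an isomorphism by checking, in order, that (i) its constituent maps are well defined on the presented quotients, (ii) $\Theta$ is $\uCH(\M\smallsetminus\{i\})$-linear, (iii) $\Theta$ is surjective, and (iv) $\Theta$ is injective. Geometrically (when $\M$ is realized by a complex hyperplane arrangement) the decomposition is the one attached to a semi-small map whose relevant strata are indexed by $\underline{\mathscr{S}}_i$: the flat $F$ contributes a stratum of codimension $1$ over which the fibre geometry produces the summand $\uCH(\M/(F\cup\{i\}))\otimes\uCH(\M|_F)$ together with the Tate twist $[-1]$. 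The proof itself is combinatorial, and the geometry only guides the choice of maps. The key combinatorial inputs are that for a flat $G$ of $\M\smallsetminus\{i\}$ one has $\cl_{\M}(G)\in\{G,\,G\cup\{i\}\}$, and that for $F\in\underline{\mathscr{S}}_i$ the intervals $[\varnothing,F]$ and $[F\cup\{i\},E]$ in $\mathcal{L}(\M)$ are, as lattices, $\mathcal{L}(\M|_F)$ and $\mathcal{L}(\M/(F\cup\{i\}))$.

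\textbf{The maps.} The deletion component is the pullback $\theta\colon\uCH(\M\smallsetminus\{i\})\to\uCH(\M)$, a graded $\mathbb{Q}$-algebra homomorphism defined on generators by an explicit rule according to whether $\cl_{\M}(G)$ equals $G$ or $G\cup\{i\}$; that $\theta$ annihilates both defining ideals of $\uCH(\M\smallsetminus\{i\})$ is a finite check against the linear and quadratic relations, and $\theta$ is what endows $\uCH(\M)$ with its $\uCH(\M\smallsetminus\{i\})$-module structure. For each $F\in\underline{\mathscr{S}}_i$ the component $\gamma_F\colon\uCH(\M/(F\cup\{i\}))\otimes\uCH(\M|_F)\to\uCH(\M)$ is the degree-raising map sending $a\otimes b$ to $x_{F\cup\{i\}}\cdot\iota_F(a)\cdot\kappa_F(b)$, where $\iota_F$ (resp.\ $\kappa_F$) is the algebra map $x_G\mapsto x_G$ on the generators coming from the interval $[F\cup\{i\},E]$ (resp.\ from $[\varnothing,F]$). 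It is essential here that $\kappa_F$ uses only $\mathcal{L}(\M|_F)$ and not $\mathcal{L}(\M|_{F\cup\{i\}})$ — the flat $F\cup\{i\}$ and the flats strictly between $F$ and $F\cup\{i\}$ are deliberately excluded — which is exactly why the summand is $\uCH(\M|_F)$ rather than the (larger) Chow ring of the star of the ray $x_{F\cup\{i\}}$. Well-definedness of $\gamma_F$ reduces to checking that products $x_{F\cup\{i\}}\cdot(\text{monomial from }[\varnothing,F])\cdot(\text{monomial from }[F\cup\{i\},E])$ respect the relations of $\uCH(\M)$, which one does using incomparability of flats together with the Feichtner--Yuzvinsky monomial basis underlying Proposition~\ref{prop:hilbert-chow}.

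\textbf{Linearity, surjectivity, injectivity.} With $\Theta=\theta\oplus\bigoplus_{F}\gamma_F$, linearity over $\uCH(\M\smallsetminus\{i\})$ amounts to the identity that $\theta(\xi)\cdot x_{F\cup\{i\}}$ equals $x_{F\cup\{i\}}$ times the image of $\xi$ under the induced pullback $\uCH(\M\smallsetminus\{i\})\to\uCH(\M/(F\cup\{i\}))\otimes\uCH(\M|_F)$, which one verifies on generators. For surjectivity, note $\uCH(\M)$ is generated in degree one and that $\uCH^1(\M)$ is spanned by $\theta(\uCH^1(\M\smallsetminus\{i\}))$ together with the classes $x_{F\cup\{i\}}$ for $F\in\underline{\mathscr{S}}_i$; bootstrapping with the monomial basis — organizing monomials by the least flat among $\{F\cup\{i\}\}$ that they involve — shows every element of $\uCH(\M)$ lies in the image of $\Theta$. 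Finally, for injectivity I would compare Hilbert series: by induction on $|E|$ the Hilbert series of the right-hand side is $\uH_{\M\smallsetminus\{i\}}(x)+x\sum_{F\in\underline{\mathscr{S}}_i}\uH_{\M/(F\cup\{i\})}(x)\,\uH_{\M|_F}(x)$, and this equals $\uH_{\M}(x)$ by a direct manipulation of the chain-of-flats formula of Proposition~\ref{prop:hilbert-chow} (partition the chains of $\mathcal{L}(\M)$ according to whether, and at which flat, they first ``pick up'' the element $i$); a graded surjection between finite-dimensional vector spaces of equal Hilbert series is an isomorphism. (Alternatively one constructs explicit retractions — restriction to the relevant strata — and checks they invert $\Theta$.)

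\textbf{The augmented case and the main obstacle.} For $\CH$ the argument is parallel: $\theta$ is replaced by the analogous pullback $\CH(\M\smallsetminus\{i\})\to\CH(\M)$ (now also specifying the images of the $y_j$), $F$ ranges over $\mathscr{S}_i$ — which now includes $F=\varnothing$, producing the ``new'' summand $\uCH(\M/\{i\})[-1]$ — and $\gamma_F$ maps $\uCH(\M/(F\cup\{i\}))\otimes\CH(\M|_F)$ into $\CH(\M)$, with the contraction factor the \emph{ordinary} Chow ring (the element $i$ consumes the augmented direction) and the restriction factor augmented (the interval $[\varnothing,F]$ includes its bottom). The four checks go through verbatim, with the Hilbert-series comparison now drawn from Proposition~\ref{prop:hilbert-augmented-chow}. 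The main obstacle is steps (i) and (iii): the purely matroid-combinatorial verification that the explicit maps respect the ideal $I+J$ and that their images jointly exhaust the Chow ring with no overlap — in particular, pinning down exactly which monomials $x_{F\cup\{i\}}\cdot(\cdots)$ survive in $\uCH(\M)$ and matching them with a basis of the tensor-product summand. Once that combinatorics is settled, module-linearity is a one-line relation and injectivity is the bookkeeping Hilbert-series identity.
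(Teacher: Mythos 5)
The paper does not prove this theorem; it cites it as \cite[Theorems 1.2 and 1.5]{semismall} and uses it as a black box (the only role it plays downstream is to produce the Hilbert-series recursion of Corollary~\ref{coro:recursion-hilbert-chow}). So there is no ``paper's own proof'' to line your sketch up against; the relevant comparison is with the proof given by Braden--Huh--Matherne--Proudfoot--Wang.

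Your outline of the \emph{maps} is faithful to theirs: the pullback $\theta$ along deletion, the components $\gamma_F(a\otimes b)=x_{F\cup\{i\}}\cdot\iota_F(a)\cdot\kappa_F(b)$, the observation that $\kappa_F$ uses only $\mathcal{L}(\M|_F)$ rather than the star of $x_{F\cup\{i\}}$, and the appearance of $\mathscr{S}_i$ (including $F=\varnothing$) and the mixed $\uCH\otimes\CH$ summands in the augmented case are all correct. The issue is in step~(iv). You propose to finish by a Hilbert-series count, and you assert that the identity
\[
\uH_{\M}(x)\;=\;\uH_{\M\smallsetminus\{i\}}(x)\;+\;x\sum_{F\in\underline{\mathscr{S}}_i}\uH_{\M/(F\cup\{i\})}(x)\,\uH_{\M|_F}(x)
\]
follows from ``a direct manipulation of the chain-of-flats formula of Proposition~\ref{prop:hilbert-chow} (partition the chains according to whether, and at which flat, they first pick up $i$).'' That claim is precisely what the present paper's authors flag as unknown: immediately after stating Corollary~\ref{coro:recursion-hilbert-chow} they write that ``to the best of our knowledge, there is no `formal' way of deducing the above identities from the ones in the previous subsections,'' the chain-of-flats formula being among ``the ones in the previous subsections.'' Concretely, the suggested partition does not behave well: a chain in $\mathcal{L}(\M)$ can jump across $i$ at a flat $G\supsetneq F_{j-1}\cup\{i\}$ without visiting any $F\in\underline{\mathscr{S}}_i$, and the Feichtner--Yuzvinsky summand of a chain vanishes whenever two consecutive flats differ in rank by $1$ (the factor $\frac{x(1-x^{0})}{1-x}=0$), so the would-be chains passing consecutively through $F$ and $F\cup\{i\}$ contribute nothing and cannot account for the $x\,\uH_{\M/(F\cup\{i\})}\,\uH_{\M|_F}$ terms. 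Thus the ``injectivity by Hilbert-series count'' route is circular here: the quadratic recursion is a \emph{consequence} of the semi-small decomposition, not an independently available input. Your parenthetical alternative --- explicit retractions / pairings --- is in fact how \cite{semismall} closes the argument (orthogonality of the summands under the Poincar\'e pairing), and that is the step you would need to carry out rather than the Hilbert-series comparison.
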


We stress the fact that in the second of the above two isomorphisms, i.e. in the case of augmented Chow rings, the terms appearing in the direct sum depend on the Chow ring (as opposed to \emph{augmented}) of certain contractions. 

As a consequence of the above isomorphisms, one obtains a different set of recurrences for the Hilbert series of the Chow rings and augmented Chow rings of matroids in terms of single-element deletions and certain restrictions and contractions.

\begin{corollary}\label{coro:recursion-hilbert-chow}
    Let $\M$ be a loopless matroid and let $i\in E$ be an element that is not a coloop. Then, the Hilbert series of the Chow ring satisfies
    \[ \uH_{\M}(x) = \uH_{\M\smallsetminus\{i\}}(x) + x \sum_{F\in \underline{\mathscr{S}}_i} \uH_{\M/{(F\cup\{i\})}}(x) \cdot \uH_{\M|_F}(x).\]
    Additionally, the Hilbert series of the augmented Chow ring satisfies
    \[ \H_{\M}(x) = \H_{\M\smallsetminus\{i\}}(x) + x \sum_{F\in \mathscr{S}_i} \uH_{\M/{(F\cup\{i\})}}(x) \cdot \H_{\M|_F}(x).\]
\end{corollary}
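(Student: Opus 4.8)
The plan is simply to take Hilbert--Poincar\'e series of both sides of each isomorphism in Theorem~\ref{thm:semismall-decompositions}. These are isomorphisms of graded modules, hence in particular of graded vector spaces, and that is all we need: the functor $\Hilb(-,x)$ on finite-dimensional graded vector spaces sends direct sums to sums, tensor products to products, and the degree shift $V\mapsto V[-1]$ (placing $V$ one degree higher) to multiplication by $x$.

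First I would apply these facts to the first isomorphism of Theorem~\ref{thm:semismall-decompositions}. Additivity over the direct sum gives $\uH_{\M}(x) = \uH_{\M\smallsetminus\{i\}}(x) + \sum_{F\in\underline{\mathscr{S}}_i}\Hilb(\uCH(\M/(F\cup\{i\}))\otimes\uCH(\M|_F)[-1],x)$; multiplicativity over tensor products turns each summand into $\uH_{\M/(F\cup\{i\})}(x)\cdot\uH_{\M|_F}(x)$ up to the shift, and the shift $[-1]$ contributes the factor $x$. This yields
\[ \uH_{\M}(x) = \uH_{\M\smallsetminus\{i\}}(x) + x\sum_{F\in\underline{\mathscr{S}}_i}\uH_{\M/(F\cup\{i\})}(x)\,\uH_{\M|_F}(x), \]
which is the first claimed recursion. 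The second recursion is obtained verbatim from the second isomorphism of Theorem~\ref{thm:semismall-decompositions}, the only changes being that the index set $\underline{\mathscr{S}}_i$ is replaced by $\mathscr{S}_i$ and that one tensor factor is $\CH(\M|_F)$ rather than $\uCH(\M|_F)$, so that factor's Hilbert series is $\H_{\M|_F}(x)$ instead of $\uH_{\M|_F}(x)$.

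There is no real obstacle here; the entire content is Theorem~\ref{thm:semismall-decompositions}. The only bookkeeping point worth a moment of care is making sure each $F$-summand contributes exactly one factor of $x$ (and not, say, $x^{-1}$ or $x^2$): this follows from the grading convention of~\eqref{eq:grading-chow}, under which $\uCH^{j}$ and $\CH^{j}$ sit in degree $j$, together with the fact that the module generators of the $F$-summand in Theorem~\ref{thm:semismall-decompositions} live in degree $1$, which is precisely why the shift $[-1]$ appears there. The hypothesis that $i$ is not a coloop is inherited directly from Theorem~\ref{thm:semismall-decompositions}.
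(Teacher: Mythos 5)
Your proof is correct and is exactly the argument the paper intends: the corollary is obtained by passing to Hilbert--Poincar\'e series in the two isomorphisms of Theorem~\ref{thm:semismall-decompositions}, using additivity over direct sums, multiplicativity over tensor products, and the factor $x$ coming from the shift $[-1]$. The paper states this as an immediate consequence without further detail, so there is nothing different or missing in your write-up.
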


In contrast to the previous formulas we obtained, the ones in the preceding result are \emph{quadratic} recurrences for $\uH_{\M}(x)$ and $\H_{\M}(x)$, i.e. this is the first time we encounter two polynomials $\uH_{\M_1}(x)$ and $\uH_{\M_2}(x)$ multiplying each other. To the best of our knowledge, there is no ``formal'' way of deducing the above identities from the ones in the previous subsections. A particularly fruitful application of these recursions arising from the semi-small decomposition is that they allow us to formulate a proof of the $\gamma$-positivity for $\uH_{\M}(x)$ and $\H_{\M}(x)$ using a simple inductive argument.


\begin{theorem}\label{thm:gamma-positivity-uH-H}
    Let $\M$ be a loopless matroid. The polynomials $\uH_{\M}(x)$ and $\H_{\M}(x)$ are $\gamma$-positive.
\end{theorem}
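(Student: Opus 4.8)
The plan is to establish both statements at once by induction on $n := \size{E}$, feeding the single-element-deletion recursions of Corollary~\ref{coro:recursion-hilbert-chow} into the formal rules for $\gamma$-polynomials recorded in Lemma~\ref{lemma:properties-gamma}. The point is that those recursions express $\uH_\M(x)$ (resp.\ $\H_\M(x)$) as a sum of $\uH_{\M\smallsetminus\{i\}}(x)$ (resp.\ $\H_{\M\smallsetminus\{i\}}(x)$) and products $x\,\uH_{\M/(F\cup\{i\})}(x)\cdot\uH_{\M|_F}(x)$ (resp.\ $x\,\uH_{\M/(F\cup\{i\})}(x)\cdot\H_{\M|_F}(x)$) indexed by flats $F$, each factor being the Chow or augmented Chow polynomial of a loopless matroid on strictly fewer elements, hence $\gamma$-positive by induction.

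For the base of the induction, if $n=0$ then $\M\cong\U_{0,0}$ and $\uH_\M(x)=\H_\M(x)=1$, which is $\gamma$-positive. More generally, if every element of $\M$ is a coloop then the unique basis of $\M$ is $E$, so $\M\cong\U_{n,n}$; in that case $\uH_{\U_{n,n}}(x)=A_n(x)$, which is real-rooted (Frobenius; see \cite[Example~7.3]{branden}) and hence $\gamma$-positive, and $\H_{\U_{n,n}}(x)=\widetilde{A}_n(x)$, which is $\gamma$-positive by \cite[Theorem~11.6]{postnikov-reiner-williams} (see also \cite[Theorem~1.1]{athanasiadis-eulerian}). So for the inductive step we may assume $\M$ is loopless with $\size{E}=n\geq 1$ and has an element $i$ that is not a coloop. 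Then $\rk(\M\smallsetminus\{i\})=\rk(\M)$, and each of $\M\smallsetminus\{i\}$, $\M|_F$, and $\M/(F\cup\{i\})$ appearing in Corollary~\ref{coro:recursion-hilbert-chow} is loopless with fewer than $n$ elements, so by induction all of $\uH_{\M\smallsetminus\{i\}}$, $\uH_{\M|_F}$, $\uH_{\M/(F\cup\{i\})}$, $\H_{\M\smallsetminus\{i\}}$, $\H_{\M|_F}$ are $\gamma$-positive.

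The crucial observation is that in each recursion every summand on the right is a symmetric polynomial with the \emph{same} center of symmetry as the left-hand side, so that Lemma~\ref{lemma:properties-gamma}(iv) applies termwise. Indeed, for $F\in\mathscr{S}_i$ the set $F\cup\{i\}$ is a \emph{proper} flat of $\M$ (since $F\subsetneq E\smallsetminus\{i\}$ forces $F\cup\{i\}\subsetneq E$) with $\rk(F\cup\{i\})=\rk(F)+1$; hence, writing $r=\rk(F)$, the matroid $\M/(F\cup\{i\})$ has rank $\rk(\M)-r-1\geq 1$, so (by Theorem~\ref{thm:intrinsic-main-body}) $\uH_{\M/(F\cup\{i\})}(x)$ is palindromic of degree $\rk(\M)-r-2$, while $\H_{\M|_F}(x)$ is palindromic of degree $r$, and $\uH_{\M|_F}(x)$ is palindromic of degree $r-1$ when $F\neq\varnothing$. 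Consequently $x\,\uH_{\M/(F\cup\{i\})}(x)\,\uH_{\M|_F}(x)$ is symmetric with center $\tfrac{1}{2}\big((r-1)+(\rk(\M)-r-2)\big)+1=\tfrac{\rk(\M)-1}{2}$, matching $\uH_\M(x)$ and $\uH_{\M\smallsetminus\{i\}}(x)$, and $x\,\uH_{\M/(F\cup\{i\})}(x)\,\H_{\M|_F}(x)$ is symmetric with center $\tfrac{1}{2}\big(r+(\rk(\M)-r-2)\big)+1=\tfrac{\rk(\M)}{2}$, matching $\H_\M(x)$ and $\H_{\M\smallsetminus\{i\}}(x)$. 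Applying $\gamma(-,x)$ to the two recursions of Corollary~\ref{coro:recursion-hilbert-chow} and using parts (i), (ii), and (iv) of Lemma~\ref{lemma:properties-gamma} then yields
\[
\gamma(\uH_\M,x)=\gamma(\uH_{\M\smallsetminus\{i\}},x)+x\sum_{F\in\underline{\mathscr{S}}_i}\gamma(\uH_{\M/(F\cup\{i\})},x)\,\gamma(\uH_{\M|_F},x),
\]
\[
\gamma(\H_\M,x)=\gamma(\H_{\M\smallsetminus\{i\}},x)+x\sum_{F\in\mathscr{S}_i}\gamma(\uH_{\M/(F\cup\{i\})},x)\,\gamma(\H_{\M|_F},x),
\]
and the right-hand sides have nonnegative coefficients by the inductive hypothesis.

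The substance of the argument is precisely this center-of-symmetry bookkeeping, and the one point that requires care is the remark that $F\cup\{i\}$ is always a \emph{proper} flat, so that the contracted matroid $\M/(F\cup\{i\})$ has positive rank and the degree/palindromicity data of Theorem~\ref{thm:intrinsic-main-body} plug in without the rank-zero edge case (where $\uH$ of the empty matroid equals $1$, not a palindrome of ``degree $-1$'') disrupting the count. Granting the semi-small decomposition recursions of Corollary~\ref{coro:recursion-hilbert-chow}, everything else is a mechanical application of Lemma~\ref{lemma:properties-gamma}, so I do not anticipate a deeper obstacle.
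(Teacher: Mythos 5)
Your proof is correct and follows the same route as the paper: induction on $|E|$, with Boolean matroids handled via the known $\gamma$-positivity (indeed real-rootedness) of the Eulerian and binomial Eulerian polynomials, and the semi-small decomposition recursion of Corollary~\ref{coro:recursion-hilbert-chow} combined with Lemma~\ref{lemma:properties-gamma} for the inductive step. The only genuine addition is that you carry out the center-of-symmetry verification needed to apply Lemma~\ref{lemma:properties-gamma}(iv) termwise (including the observation that $F\cup\{i\}$ is always a proper flat, so $\M/(F\cup\{i\})$ has positive rank), a point the paper's proof asserts without detail.
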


\begin{proof}
    We proceed by induction on the size of the ground set of $\M$. If the matroid $\M$ has a ground set of cardinality $1$, then $\M\cong \U_{1,1}$. In this case, $\uH_{\M}(x) = 1$ and $\H_{\M}(x)=x+1$. The associated $\gamma$-polynomials are $\gamma(\uH_{\M},x) = 1$, and $\gamma(\H_{\M},x) = 1$, and hence they are $\gamma$-positive. 
    
    Assuming that we have proved the validity of the statement for all matroids with cardinality at most $n-1$, let us consider a matroid $\M$ having cardinality $n$. If $\M$ is a Boolean matroid, i.e. $\M \cong \U_{n,n}$ for some $n\geq 1$, then $\uH_{\M}(x) = A_n(x)$, the Eulerian polynomial, whereas $\H_{\M}(x) = \widetilde{A}_n(x)$, the binomial Eulerian polynomial. As we mentioned before, both of these families of polynomials are known to be real-rooted and hence $\gamma$-positive.
    
    If $\M$ is not Boolean, then there is at least one element $i\in E$ that is not a coloop. Using Corollary~\ref{coro:recursion-hilbert-chow} in combination with Lemma~\ref{lemma:properties-gamma}, we obtain the following two recurrences:
    \begin{align*}
        \gamma(\uH_{\M},x) &= \gamma(\uH_{\M\smallsetminus\{i\}},x) + x \sum_{F\in \underline{\mathscr{S}}_i(\M)} \gamma(\uH_{\M/{(F\cup\{i\})}},x) \cdot \gamma(\uH_{\M|_F},x),\\
        \gamma(\H_{\M},x) &= \gamma(\H_{\M\smallsetminus\{i\}},x) + x \sum_{F\in \mathscr{S}_i(\M)} \gamma(\uH_{\M/{(F\cup\{i\})}},x) \cdot \gamma(\H_{\M|_F},x).
    \end{align*}
    The induction hypothesis guarantees that each of the summands on the right-hand side has nonnegative coefficients. The proof is now complete.
\end{proof}


\begin{remark}\label{remark:with-coloop}
    The preceding proof relies on the $\gamma$-positivity of the Eulerian and the binomial Eulerian polynomials. Although these two results are now well-known, since their proofs are not straightforward (see \cite[Section~11]{postnikov-reiner-williams}), a reasonable question that the reader might ask is whether it is possible to circumvent the base cases of Boolean matroids in the induction, or at least give a self-contained proof including this case. The answer is yes; in fact, the second case of the semi-small decompositions of \cite[Theorem~1.2 and Theorem~1.5]{semismall} consider the case in which the element to delete from the matroid is a coloop. By computing the graded dimensions, one obtains formulas for $\uH_{\M\oplus\U_{1,1}}(x)$ and $\H_{\M\oplus\U_{1,1}}(x)$ when deleting the coloop corresponding to the ground set of the direct summand $\U_{1,1}$:
        \begin{align*}
        \uH_{\M\oplus\U_{1,1}}(x) &= (1+x)\,\uH_{\M}(x) + x\sum_{\substack{F\in\mathcal{L}(\M)\\\varnothing\neq F\neq E}} \uH_{\M/F}(x)\, \uH_{\M|_F}(x),\\
        \H_{\M\oplus\U_{1,1}}(x) &= (1+x)\,\H_{\M}(x) + x\sum_{\substack{F\in\mathcal{L}(\M)\\F\neq E}} \uH_{\M/F}(x) \,\H_{\M|_F}(x).
        \end{align*}
    Therefore, by Lemma~\ref{lemma:properties-gamma}, at the level of $\gamma$-polynomials one has
    \begin{align*}
        \gamma(\uH_{\M\oplus\U_{1,1}},x) &= \gamma(\uH_{\M},x) + x\sum_{\substack{F\in\mathcal{L}(\M)\\\varnothing\neq F\neq E}} \gamma(\uH_{\M/F},x)\, \gamma(\uH_{\M|_F},x),\\
        \gamma(\H_{\M\oplus\U_{1,1}},x) &= \gamma(\H_{\M},x) + x\sum_{\substack{F\in\mathcal{L}(\M)\\F\neq E}} \gamma(\uH_{\M/F},x) \,\gamma(\H_{\M|_F},x).
        \end{align*}
    In particular, since all the restrictions and contractions in a Boolean matroid are again Boolean, reasoning inductively one proves that the $\gamma$-polynomial of $\uH_{\M}(x)$ and $\H_{\M}(x)$ have nonnegative coefficients for all Boolean matroids. This gives an independent proof of the $\gamma$-positivity of the families of Eulerian and binomial Eulerian polynomials.
\end{remark}

The $\gamma$-positivity for Hilbert series of Chow rings of matroids was also observed independently by Botong Wang in private communication with the authors. Continuing with our digression about $\gamma$-positivity really being a consequence of Theorem~\ref{thm:semismall-decompositions}, we comment about what may happen if one pretends to extend this property to other posets.

\begin{remark}
    Observe that for an arbitrary finite bounded graded poset $P$, there is no formal obstruction in using Theorem \ref{thm:main-recursion-defi-H-and-uH} as the definition of two polynomial invariants $\uH_{P}(x)$ and $\H_P(x)$; the proof of that statement did not rely on the fact that we were dealing with a geometric lattice.\footnote{Analogously, it is possible to define a Kazhdan--Lusztig and $Z$-polynomial for an arbitrary finite bounded graded poset; see \cite[Example~2.13]{proudfoot-kls}.} However, in such generality the real-rootedness and the $\gamma$-positivity fail. For instance, the poset depicted on the left in Figure \ref{fig:counterexample-generalposet} has
    \begin{align*}
        \uH_P(x) &= x^4+7x^3+11x^2+7x+1,\\ 
        \H_P(x) &= x^5+8x^4+18x^3+18x^2+8x+1.
    \end{align*}
    We observe that $\uH_{P}(x)$ and $\H_P(x)$ are not real-rooted since they are not even $\gamma$-positive. In this case we have $\gamma(\uH_P,x) = \gamma(\H_P,x)=-x^2+3x+1$. 

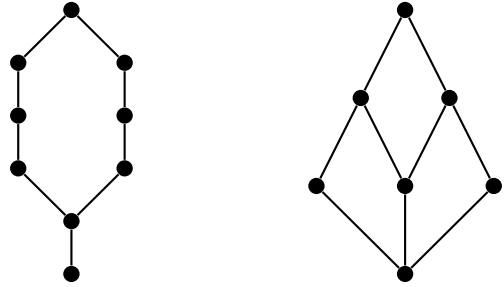
\begin{figure}[ht]
    \centering
	\begin{tikzpicture}  
	[scale=0.7,auto=center,every node/.style={circle,scale=0.8, fill=black, inner sep=2.7pt}] 
	\tikzstyle{edges} = [thick];
	
	\node[] (a1) at (0,0) {};  
	\node[] (a2) at (0,1)  {};  
	\node[] (a3) at (-1,2) {};
	\node[] (a4) at (1,2) {};
	\node[] (a5) at (-1,3)  {};  
	\node[] (a6) at (1,3)  {};  
	\node[] (a7) at (-1,4)  {};  
	\node[] (a8) at (1,4)  {};  
	\node[] (a9) at (0,5) {};
	
	\draw[edges] (a1) -- (a2);  
	\draw[edges] (a2) -- (a3);  
	\draw[edges] (a2) -- (a4);  
	\draw[edges] (a3) -- (a5);
	\draw[edges] (a4) -- (a6);
	\draw[edges] (a5) -- (a7);
	\draw[edges] (a6) -- (a8);
	\draw[edges] (a7) -- (a9);
	\draw[edges] (a8) -- (a9);
	\end{tikzpicture}\qquad\qquad\qquad\begin{tikzpicture}  
	[scale=0.7,auto=center,every node/.style={circle,scale=0.8, fill=black, inner sep=2.7pt}] 
	\tikzstyle{edges} = [thick];
	
	\node[] (a1) at (0,0) {};  
	\node[] (a3) at (0,5/3)  {};  
	\node[] (a2) at (-5/3,5/3) {};
	\node[] (a4) at (5/3,5/3) {};
	\node[] (a5) at (-5/6,10/3)  {};  
	\node[] (a6) at (5/6,10/3)  {};  
	\node[] (a7) at (0,5)  {};
	
	\draw[edges] (a1) -- (a2);  
	\draw[edges] (a1) -- (a3);  
	\draw[edges] (a1) -- (a4);  
	\draw[edges] (a2) -- (a5);
	\draw[edges] (a3) -- (a5);
	\draw[edges] (a3) -- (a6);
	\draw[edges] (a4) -- (a6);
	\draw[edges] (a5) -- (a7);
	\draw[edges] (a6) -- (a7);
	\end{tikzpicture}\caption{Two posets $P$ and $Q$.}\label{fig:counterexample-generalposet}
\end{figure}

    On the other hand, in \cite[p.~535]{feichtner-yuzvinsky}, Feichtner and Yuzvinsky considered the atomic lattice $Q$ depicted on the right in Figure \ref{fig:counterexample-generalposet}. They computed the Hilbert series of the Chow ring $D(Q,\mathcal{G}_{\max})$ with respect to the maximal building set of $Q$, and they showed that it equals $1+3x$. Notice this is not even palindromic.
\end{remark}


\begin{remark}
    The reduced characteristic polynomial is a Tutte--Grothendieck invariant of a matroid in the sense of \cite{brylawski-oxley} (essentially, the Tutte polynomial of the matroid determines it). A natural question is whether the Hilbert series of the Chow ring has the same property. As we have seen, the map $\M\mapsto \uH_{\M}(x)$ is (up to a sign) the inverse of the map $\M\mapsto \overline{\chi}_{\M}(x)$ in the incidence algebra of the lattice of flats, so this has some plausibility. In spite of that, we can find two matroids $\M_1$ and $\M_2$ of rank $4$ on $7$ elements having the same Tutte polynomial but whose Chow rings have different Hilbert series. Precisely, consider the matroids $\M_1^*$ and $\M_2^*$ depicted in Figure \ref{fig:matroids-tutte-counterexample} (the reason for depicting the duals instead of the original matroids is that they have rank $3$):
    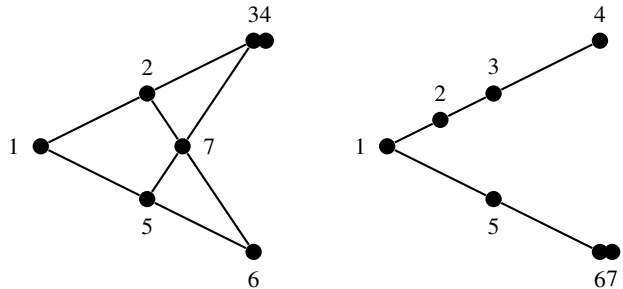
\begin{figure}[ht]
    \centering
	\begin{tikzpicture}  
	[scale=0.7,auto=center,every node/.style={circle,scale=0.8, fill=black, inner sep=2.7pt}] 
	\tikzstyle{edges} = [thick];
	
	\node[label=left:$1$] (a1) at (0,0) {};  
	\node[label=above:$2$] (a2) at (4/2,2/2)  {};  
	\node[label=above:$3$] (a3) at (8/2,4/2) {};
	\node[label=above:$4$] (a4) at (4.23,2) {};
	\node[label=below:$5$] (a5) at (4/2,-2/2)  {};  
	\node[label=below:$6$] (a6) at (8/2,-4/2)  {};    
	\node[label=right:$7$] (a7) at (5.33/2,0) {};
	
	\draw[edges] (a1) -- (a2);  
	\draw[edges] (a2) -- (a3);  
	\draw[edges] (a1) -- (a5);  
	\draw[edges] (a5) -- (a6);
	\draw[edges] (a7) -- (a2);
	\draw[edges] (a7) -- (a3);
	\draw[edges] (a7) -- (a5);
	\draw[edges] (a7) -- (a6);
	\end{tikzpicture}\qquad
	\begin{tikzpicture}  
	[scale=0.7,auto=center,every node/.style={circle,scale=0.8, fill=black, inner sep=2.7pt}] 
	\tikzstyle{edges} = [thick];
	
	\node[label=left:$1$] (a1) at (0,0) {};  
	\node[label=above:$2$] (a2) at (2/2,1/2)  {};
	\node[label=above:$3$] (a3) at (4/2,2/2)  {};  
	\node[label=above:$4$] (a4) at (8/2,4/2) {};
	\node[label=below:$5$] (a5) at (4/2,-2/2)  {};  
	\node[label=below:$6$] (a6) at (8/2,-4/2)  {};   
	\node[label=below:$7$] (a7) at (4.23,-4/2)  {};
	\draw[edges] (a1) -- (a2); 
	\draw[edges] (a2) -- (a3);  
	\draw[edges] (a3) -- (a4);  
	\draw[edges] (a1) -- (a5);  
	\draw[edges] (a5) -- (a6);
	\end{tikzpicture} \caption{The duals of the matroids $\M_1$ and $\M_2$}\label{fig:matroids-tutte-counterexample}
\end{figure}

The matroids $\M_1$ and $\M_2$ have the same Tutte polynomial:
    \[ T_{\M_1}(x,y) = T_{\M_2}(x,y) = x^{4} + 3 x^{3} + 2 x^{2} y + x y^{2} + y^{3} + 4 x^{2} + 5 x y + 3 y^{2} + 2 x + 2 y.\]
However, we have
    \begin{align*} 
        \uH_{\M_1}(x) &= x^3 + 30x^2 + 30x + 1,\\
        \uH_{\M_2}(x) &= x^3 + 31x^2 + 31x + 1.
    \end{align*}
Moreover, the same example shows that the Hilbert series of the augmented Chow ring is not a Tutte-Grothendieck invariant, because:
    \begin{align*} 
        \H_{\M_1}(x) &= x^4 + 37x^3 + 98x^2 + 37x + 1,\\
        \H_{\M_2}(x) &= x^4 + 38x^3 + 102x^2 + 38x + 1.
    \end{align*}
We also mention explicitly the fact that neither $\uH_{\M}(x)$ nor $\H_{\M}(x)$ determines the other.
\end{remark}

\subsection{Dominance of uniform matroids}\label{subsec:dominance-uniform}

Whenever $A(x)$ and $B(x)$ are polynomials with real coefficients, we will write $A(x) \preceq B(x)$ if the difference $B(x) - A(x)$ has nonnegative coefficients. In other words, $A(x)\preceq B(x)$ is equivalent to stating that $A(x)$ is coefficient-wise smaller than $B(x)$. An intriguing conjecture in the theory of Kazhdan--Lusztig polynomials of matroids, attributed to Gedeon (unpublished) and in the equivariant case to Proudfoot (see \cite[Conjecture~1.1]{lee-nasr-radcliffe} and \cite[Conjecture~1.6]{karn-proudfoot-nasr-vecchi}), asserts the following.

\begin{conjecture}
    Let $\M$ be a loopless matroid of rank $k$ on a ground set of size $n$. The following inequalities hold:
    \begin{align*}
        P_{\M}(x) &\preceq P_{\U_{k,n}}(x),\\
        Z_{\M}(x) &\preceq Z_{\U_{k,n}}(x).
    \end{align*}
    In other words, uniform matroids maximize coefficient-wise the Kazhdan--Lusztig and $Z$-polynomials among all matroids with fixed rank and size.
\end{conjecture}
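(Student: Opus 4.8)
The plan is to prove the two inequalities by a single induction on $n=|E|$, after reducing the $Z$-inequality to the $P$-inequality. \textbf{Step 1 (reduction to $P$).} Suppose $P_{\N}(x)\preceq P_{\U_{k',m}}(x)$ has been established for every loopless matroid $\N$ of rank $k'$ on at most $n$ elements. Using the convolution $Z_{\M}(x)=\sum_{F\in\mathcal{L}(\M)}x^{\rk(F)}P_{\M/F}(x)$ from Theorem~\ref{thm:definition-kl-and-zeta} and grouping by rank, $Z_{\M}(x)=\sum_{r=0}^{k}\bigl(\sum_{F:\rk(F)=r}P_{\M/F}(x)\bigr)x^{r}$, while for $\U_{k,n}$ the flats of rank $r<k$ are exactly the $r$-subsets (with contraction $\U_{k-r,n-r}$) and the only rank-$k$ flat is $E$, so $Z_{\U_{k,n}}(x)=\sum_{r=0}^{k-1}\binom{n}{r}P_{\U_{k-r,n-r}}(x)\,x^{r}+x^{k}$. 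Thus it suffices to show $\sum_{F:\rk(F)=r}P_{\M/F}(x)\preceq\binom{n}{r}P_{\U_{k-r,n-r}}(x)$ for each $r$. Distinct rank-$r$ flats of $\M$ have disjoint sets of bases among the independent $r$-subsets of $E$ (a rank-$r$ flat is the closure of any of its bases), so there are at most $\binom{n}{r}$ of them; and for $r\ge 1$ each $\M/F$ is loopless of rank $k-r$ on $n-|F|\le n-r<n$ elements, whence $P_{\M/F}\preceq P_{\U_{k-r,\,n-|F|}}\preceq P_{\U_{k-r,n-r}}$ by induction together with monotonicity of $P_{\U_{k',m}}$ in $m$ (readable from the explicit formulas, cf.\ \cite{gao-uniform}). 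The term $r=0$ is precisely $P_{\M}\preceq P_{\U_{k,n}}$. The same argument, applied to uniform matroids, gives $Z_{\U_{k,m}}\preceq Z_{\U_{k,m+1}}$ from its $P$-analogue, closing the monotonicity input.

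\textbf{Step 2 ($P$-dominance).} Since $P_{\M}$ depends only on $\mathcal{L}(\M)=\mathcal{L}(\mathrm{si}(\M))$ and $\mathrm{si}(\M)$ has rank $k$ on $n'\le n$ elements, $P_{\M}=P_{\mathrm{si}(\M)}\preceq P_{\U_{k,n'}}\preceq P_{\U_{k,n}}$ by monotonicity, so we may assume $\M$ is simple. The base case $n=k$ is $\M\cong\U_{k,k}$, an equality. For $n>k$ choose a non-coloop $i\in E$ and apply the single-element deletion formula of Braden--Vysogorets \cite{braden-vysogorets} (the non-equivariant specialization of the equivariant deletion formula stated in the introduction) to both $\M$ and $\U_{k,n}$:
\[ P_{\M}(x)=P_{\M\smallsetminus\{i\}}(x)-x\,P_{\M/\{i\}}(x)+\Sigma_{\M}(x), \]
where $\Sigma_{\M}(x):=\sum_{F\in\mathscr{S}_i(\M)}x^{(k-\rk(F))/2}\,\tau(\M/(F\cup\{i\}))\,P_{\M|_F}(x)$, and likewise $P_{\U_{k,n}}(x)=P_{\U_{k,n-1}}(x)-x\,P_{\U_{k-1,n-1}}(x)+\Sigma_{\U_{k,n}}(x)$; here $\mathscr{S}_i(\U_{k,n})$ is the set of subsets of $E\smallsetminus\{i\}$ of size $\le k-2$, so $\Sigma_{\U_{k,n}}(x)=\sum_{j\le k-2}\binom{n-1}{j}x^{(k-j)/2}\tau(\U_{k-j-1,n-j-1})$ (using $P_{\U_{j,j}}=1$). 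Since $\M$ is simple, $\M\smallsetminus\{i\}$ and $\M/\{i\}$ are loopless of ranks $k$ and $k-1$ on $n-1$ elements, so by induction $P_{\M\smallsetminus\{i\}}\preceq P_{\U_{k,n-1}}$ and $P_{\M/\{i\}}\preceq P_{\U_{k-1,n-1}}$; consequently $P_{\U_{k,n}}-P_{\M}\succeq 0$ would follow from
\[ \bigl(P_{\U_{k,n-1}}-P_{\M\smallsetminus\{i\}}\bigr)+\bigl(\Sigma_{\U_{k,n}}-\Sigma_{\M}\bigr)\;\succeq\; x\,\bigl(P_{\U_{k-1,n-1}}-P_{\M/\{i\}}\bigr). \]

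\textbf{The main obstacle} is exactly this last inequality, and it is where the strategy departs from that of the Chow-theoretic Theorem~\ref{thm:dominance-uniform-main}. In the Chow setting the governing recursion (Corollary~\ref{coro:recursion-hilbert-chow}, from the semi-small decomposition) is \emph{manifestly positive}, so the same induction proceeds summand by summand; here the term $-x\,P_{\M/\{i\}}(x)$ has the ``wrong sign'', and one is forced to subtract the nonnegative quantity $x(P_{\U_{k-1,n-1}}-P_{\M/\{i\}})$ and argue that $(P_{\U_{k,n-1}}-P_{\M\smallsetminus\{i\}})+(\Sigma_{\U_{k,n}}-\Sigma_{\M})$ compensates for it. A second, intertwined difficulty sits inside $\Sigma_{\U_{k,n}}-\Sigma_{\M}$: for a low-rank flat $F$ of a general matroid the restriction $\M|_F$ can be very far from Boolean, so $P_{\M|_F}$ need not be dominated by $P_{\U_{k,n}|_F}=P_{\U_{\rk(F),\rk(F)}}=1$, and one must instead exploit a top-heaviness-type constraint preventing $\M$ from carrying too many large low-rank flats. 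Both difficulties are controllable for paving matroids --- the Kazhdan--Lusztig case of the conjecture is \cite[Theorem~1.5]{ferroni-nasr-vecchi} --- and one is tempted to combine this with the valuativeness of $P$ and $Z$ under matroid subdivisions and relaxations of stressed subsets; but a general matroid cannot be relaxed all the way to $\U_{k,n}$, and $Z$-dominance does not seem to yield $P$-dominance in return (the flat-count slack in Step 1 is precisely what the reduction consumes). Overcoming the sign of the Braden--Vysogorets recursion therefore appears to require a genuinely new ingredient, and this is the crux left open here.
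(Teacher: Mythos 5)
The statement you are trying to prove is stated in the paper only as a conjecture (Gedeon's conjecture); the paper offers no proof of it, and explicitly notes that in the Kazhdan--Lusztig setting it is known only for paving matroids \cite[Theorem~1.5]{ferroni-nasr-vecchi}. What the paper does prove is the Chow-theoretic analogue (Theorem~\ref{thm:dominance-uniform-main}, via Theorem~\ref{thm:uniform-dominates}), and the mechanism there is exactly the one you correctly observe is unavailable to you: the formulas of Propositions~\ref{prop:hilbert-chow} and \ref{prop:hilbert-augmented-chow} express $\uH_{\M}(x)$ and $\H_{\M}(x)$ as sums over chains of flats of polynomials with nonnegative coefficients depending only on the rank gaps, so the rank-preserving injection of chains of Lemma~\ref{lemma:chains-flats-injection-uniform} gives dominance summand by summand. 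Neither $P_{\M}(x)$ nor $Z_{\M}(x)$ is known to admit such a manifestly positive, chain-indexed expansion, which is why your attempt has to fall back on the Braden--Vysogorets deletion recursion.

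Your proposal is therefore not a proof, and you say so yourself: the inequality
\[
\bigl(P_{\U_{k,n-1}}-P_{\M\smallsetminus\{i\}}\bigr)+\bigl(\Sigma_{\U_{k,n}}-\Sigma_{\M}\bigr)\;\succeq\; x\,\bigl(P_{\U_{k-1,n-1}}-P_{\M/\{i\}}\bigr)
\]
at the end of Step~2 is precisely the content of the conjecture in disguise, and nothing in the induction supplies it; the negative term $-x\,P_{\M/\{i\}}(x)$ destroys the summand-by-summand comparison that makes the Chow argument work. Two further points in Step~1 would also need justification even if Step~2 were resolved: the monotonicity $P_{\U_{k',m}}(x)\preceq P_{\U_{k',m+1}}(x)$ in $m$ is asserted but not proved (it is plausible from the explicit formulas in \cite{gao-uniform}, but it is an extra lemma, not a citation-free observation), and note that your reduction makes the $Z$-dominance depend on $P$-dominance for all ranks $k-r$, so the induction must carry both families simultaneously. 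As it stands, the proposal is a reasonable map of the obstructions --- consistent with the paper's decision to leave this as a conjecture --- but it does not establish either inequality.
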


This motivates us to formulate an analogous conjecture for $\uH_{\M}(x)$ and $\H_{\M}(x)$. In fact, in this alternative setting we will be able to prove it, and that constitutes precisely the content of Theorem~\ref{thm:dominance-uniform-main}. The main ingredients that make the proof possible are the formulas obtained in Propositions \ref{prop:hilbert-chow} and \ref{prop:hilbert-augmented-chow}. Before stating the main result of this subsection, we start with a useful combinatorial lemma.

\begin{lemma}\label{lemma:chains-flats-injection-uniform}
    Let $\M$ be a loopless matroid on a ground set $E$ having size $n$ and rank $k$. To each chain $F_0\subsetneq \cdots \subsetneq F_m$ of flats of $\M$ we can associate injectively a chain $G_0\subsetneq \cdots \subsetneq G_m$ of flats of $\U_{k,n}$ in such a way that $\rk_{\M}(F_j) = \rk_{\U_{k,n}}(G_j)$ for each $j=0,\ldots,m$.
\end{lemma}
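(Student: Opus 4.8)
The plan is to make injectivity automatic by arranging that the original chain $F_0\subsetneq\cdots\subsetneq F_m$ can be \emph{recovered} from its image. Recall first that the flats of $\U_{k,n}$ on the ground set $E=[n]$ are exactly the subsets of $E$ of size at most $k-1$ together with $E$ itself; a subset of size $r\le k-1$ is a flat of rank $r$, and $E$ is the unique flat of rank $k$. So to realize prescribed ranks $r_0<\cdots<r_m$ (with $r_j:=\rk_{\M}(F_j)$) I just need nested subsets of $E$ of those sizes, with the caveat that if $r_m=k$ the top one is forced to be $E$ — and note that in this case $F_m=E$ as well, since in a loopless matroid the only flat of full rank is the whole ground set.

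First I would build, for a given chain $F_0\subsetneq\cdots\subsetneq F_m$, a strictly increasing chain of independent sets $I_0\subsetneq I_1\subsetneq\cdots\subsetneq I_m$ of $\M$ with $I_j$ a basis of $\M|_{F_j}$; in particular $I_j\subseteq F_j$ and $|I_j|=r_j$. This is routine: since $F_{j-1}\subseteq F_j$, the set $I_{j-1}$ is independent in $\M|_{F_j}$ and extends to a basis of it, and it extends properly because $r_j>r_{j-1}$. Fixing once and for all a linear order on $E$ and taking $I_j$ at each step to be the lexicographically least basis of $\M|_{F_j}$ containing $I_{j-1}$ makes this a well-defined function of the chain. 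I would then set $G_j:=I_j$ when $F_m\neq E$, and $G_j:=I_j$ for $j<m$ together with $G_m:=E$ when $F_m=E$. When $F_m\neq E$ every $F_j$ is a proper flat, so $r_j\le k-1$ and each $G_j$ is a flat of $\U_{k,n}$ of rank $r_j$; when $F_m=E$ the same holds for $j<m$, while $G_m=E$ is the flat of $\U_{k,n}$ of rank $k=r_m$. The $G_j$ strictly increase — their sizes do, and in the second case $|G_{m-1}|=r_{m-1}\le k-1<n=|G_m|$ — so $G_0\subsetneq\cdots\subsetneq G_m$ is a chain of flats of $\U_{k,n}$ with $\rk_{\U_{k,n}}(G_j)=r_j=\rk_{\M}(F_j)$, as required.

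For injectivity I would observe that each $F_j$ is determined by $G_j$: when $G_j=I_j$ is a basis of $\M|_{F_j}$ it has $\M$-rank $r_j$ and lies in $F_j$, which also has $\M$-rank $r_j$, so $\cl_{\M}(G_j)=F_j$ because $F_j$ is a flat; and when $G_j=E=F_j$ this is trivial. Hence $(G_0\subsetneq\cdots\subsetneq G_m)\mapsto(\cl_{\M}(G_0)\subsetneq\cdots\subsetneq\cl_{\M}(G_m))$ is a left inverse of the assignment, which is therefore injective. None of this is genuinely difficult; the only point requiring care is the bookkeeping at the top of the chain — choosing the $G_j$ compatibly all the way up while still landing inside the flats of $\U_{k,n}$, which forces the special treatment of the case $F_m=E$ since $\U_{k,n}$ has no proper flat of rank $k$.
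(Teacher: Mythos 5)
Your argument is correct and is essentially the paper's own proof: both build a nested sequence of lexicographically least independent sets $I_j$ spanning the flats $F_j$, pass to flats of $\U_{k,n}$ (your explicit case split at the top of the chain is exactly what taking the closure of $I_j$ in $\U_{k,n}$ does in the paper), and recover injectivity by taking closures in $\M$. No substantive difference in approach or content.
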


\begin{proof}
    Let us assume that the ground set of $\M$ is the set of integers $E=\{1,\ldots,n\}$; the natural total order of the ground set induces a total order for the subsets of $E=[n]$ with fixed cardinality given by comparing lexicographically any pair of sets. Fix a chain of flats $F_0\subsetneq \cdots \subsetneq F_m$ of $\M$. Let us call $\rk_{\M}(F_j) = r_j$ for each $j=0,\ldots,m$. Among all the independent subsets of $F_0$ that have rank $r_0$, consider the lexicographically minimum set $I_0$. Since $I_0$ is independent in $\M$, it has cardinality at most $k$, and hence it is independent in $\U_{k,n}$ as well. We define the flat $G_0$ as the closure of $I_0$ in $\U_{k,n}$. Notice that 
        \[\rk_{\U_{k,n}}(G_0) = \rk_{\U_{k,n}}(I_0) = |I_0| = \rk_{\M}(I_0) = \rk_{\M}(F_0),\]
    as $G_0=E$ or $G_0=I_0$ according to whether $F_0=E$ or $F_0\subsetneq E$. 
    Assume we have already constructed $G_0\subsetneq \cdots \subsetneq G_s$ for $s\geq 0$. To construct the flat $G_{s+1}$ we proceed as follows. First, among all the independent sets of $\M$ contained in $F_{s+1}$ that have rank $r_{s+1}$ and contain $I_s$, consider the lexicographically minimum set $I_{s+1}$, and now define $G_{s+1}$ as the closure of $I_{s+1}$ in $\U_{k,n}$. Since $I_{s+1}$ was independent in $\M$ it is independent in $\U_{k,n}$ as well, and hence $\rk_{\M}(G_{s+1}) = \rk_{\M}(I_{s+1}) = |I_{s+1}| = r_{s+1}$. Since $I_{s+1}\supsetneq I_s$, the monotonicity of the closure operator in $\M$ guarantees that $G_{s+1}\supsetneq G_s$. Observe that the whole construction is injective, because each flat $F_i$ of the original chain in $\M$ can be recovered by taking the closure of $G_i$ in $\M$. 
\end{proof}

\begin{theorem}\label{thm:uniform-dominates}
    Let $\M$ be a loopless matroid of rank $k$ on a ground set of size $n$. The following inequalities hold:
    \begin{align*}
        \uH_{\M}(x) &\preceq \uH_{\U_{k,n}}(x),\\
        \H_{\M}(x) &\preceq \H_{\U_{k,n}}(x).
    \end{align*}
    In other words, uniform matroids maximize coefficient-wise the Hilbert series of Chow rings and augmented Chow rings among all matroids with fixed rank and size.
\end{theorem}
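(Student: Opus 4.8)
The plan is to combine the chain-of-flats formulas from Propositions~\ref{prop:hilbert-chow} and \ref{prop:hilbert-augmented-chow} with the rank-preserving injection of chains constructed in Lemma~\ref{lemma:chains-flats-injection-uniform}. The crucial observation is that in both formulas, each summand is a polynomial with \emph{nonnegative} coefficients depending only on the sequence of ranks of the flats in the chain, not on the flats themselves. Indeed, for any integer $r \geq 1$ one has $\frac{x(1-x^{r-1})}{1-x} = x + x^2 + \cdots + x^{r-1}$ (interpreted as $0$ when $r=1$) and $\frac{x(1-x^{r})}{1-x} = x + x^2 + \cdots + x^{r}$, so every factor occurring in these products is coefficient-wise nonnegative.

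For the Chow ring I would fix $\M$ of rank $k$ on $n$ elements and apply Proposition~\ref{prop:hilbert-chow}, writing $\uH_{\M}(x)$ as a sum over chains $\varnothing = F_0 \subsetneq \cdots \subsetneq F_m$ in $\mathcal{L}(\M)$. Since $\varnothing$ is the unique flat of rank $0$, the injection of Lemma~\ref{lemma:chains-flats-injection-uniform}, being rank-preserving, sends such a chain to a chain $\varnothing = G_0 \subsetneq \cdots \subsetneq G_m$ in $\mathcal{L}(\U_{k,n})$ with $\rk(G_j) = \rk(F_j)$ for all $j$. As the summand attached to a chain depends only on these ranks, the term contributed by $(F_j)_j$ to $\uH_{\M}(x)$ equals the term contributed by $(G_j)_j$ to $\uH_{\U_{k,n}}(x)$. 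Injectivity then shows that $\uH_{\U_{k,n}}(x) - \uH_{\M}(x)$ is a sum of the (nonnegative) terms indexed by chains of $\U_{k,n}$ outside the image, hence has nonnegative coefficients; that is, $\uH_{\M}(x) \preceq \uH_{\U_{k,n}}(x)$.

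For the augmented Chow ring the argument is identical, now using Proposition~\ref{prop:hilbert-augmented-chow}: the sum ranges over all nonempty chains $F_0 \subsetneq \cdots \subsetneq F_m$ (no longer required to begin at $\varnothing$), the additional factor $\frac{x(1-x^{\rk(F_0)})}{1-x}$ again depends only on $\rk(F_0)$ and is coefficient-wise nonnegative, the constant term $1$ agrees on both sides, and Lemma~\ref{lemma:chains-flats-injection-uniform} applied to arbitrary chains supplies the injection needed to conclude $\H_{\M}(x) \preceq \H_{\U_{k,n}}(x)$.

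The substantive work has already been carried out in Lemma~\ref{lemma:chains-flats-injection-uniform}, so no serious obstacle remains; the only things to state carefully are the bookkeeping points: that restricting the injection to chains starting at $\varnothing$ still lands among such chains (immediate, since rank $0$ forces the flat to be $\varnothing$), that distinct chains produce distinct terms with the correct multiplicities (immediate from injectivity), and that the leftover terms of $\uH_{\U_{k,n}}(x)$ and $\H_{\U_{k,n}}(x)$ are genuinely nonnegative (immediate from the geometric-series rewriting above).
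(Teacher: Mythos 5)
Your proposal is correct and follows essentially the same route as the paper: it applies the chain-of-flats formulas of Propositions~\ref{prop:hilbert-chow} and \ref{prop:hilbert-augmented-chow}, whose summands are nonnegative and depend only on the ranks in the chain, and then uses the rank-preserving injection of Lemma~\ref{lemma:chains-flats-injection-uniform} to match each term of $\uH_{\M}(x)$ (resp.\ $\H_{\M}(x)$) with a term of $\uH_{\U_{k,n}}(x)$ (resp.\ $\H_{\U_{k,n}}(x)$). The extra bookkeeping remarks you include (chains starting at $\varnothing$ stay of that form, injectivity gives the correct multiplicities) are exactly the points the paper leaves implicit.
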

 
\begin{proof}
    Consider $\uH_{\M}(x)$ and $\H_{\M}(x)$. The formulas of Proposition~\ref{prop:hilbert-chow} and Proposition~\ref{prop:hilbert-augmented-chow} express them as sums over a set of certain chain of flats of $\M$ a polynomial with nonnegative coefficients. Using the map of Lemma~\ref{lemma:chains-flats-injection-uniform} we can associate injectively a chain of flats in $\U_{k,n}$ in which the flats have the same ranks correspondingly. In other words, each summand appearing in the expressions of $\uH_{\M}(x)$ (resp. $\H_{\M}(x)$) appears in $\uH_{\U_{k,n}}(x)$ too (resp. in $\H_{\U_{k,n}}(x)$). This proves the desired inequalities.   
\end{proof}

The following generalization of Lemma~\ref{lemma:chains-flats-injection-uniform}, conjectured in a previous version of this paper, is proved in \cite{elias-miyata-proudfoot-vecchi}.
\begin{conjecture}[Confirmed in \cite{elias-miyata-proudfoot-vecchi}]\label{conj:weak-maps-monotonicity}
    Consider two matroids $\M$ and $\N$ on the same ground set, and assume that there exists a rank-preserving weak map $\N\to \M$.\footnote{By asserting that there is a rank-preserving weak map $\N\to \M$, we mean that $\M$ and $\N$ are matroids on the same ground set, having the same rank, and with the property that all the independent sets of $\M$ are independent in $\N$.} Denote by $\Delta(\mathcal{L}(\M))$ and $\Delta(\mathcal{L}(\N))$ the set of all the chains of flats in $\M$ and $\N$, respectively. There exists a map $\varphi\colon\Delta(\mathcal{L}(\M))\to \Delta(\mathcal{L}(\N))$ such that:
    \begin{enumerate}[(i)]
        \item $\varphi$ is injective.
        \item $\varphi$ maps the only empty chain of flats of $\M$ to the only empty chain of flats of $\N$.
        \item For each $m\geq 0$, the image of a chain $F_0\subsetneq\cdots\subsetneq F_m$ is a chain $G_0\subsetneq\cdots\subsetneq G_m$ with the property that $\rk_{\M}(F_i)=\rk_{\N}(G_i)$ for each $0\leq i\leq m$. 
    \end{enumerate}
\end{conjecture}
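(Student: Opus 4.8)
The plan is to build $\varphi$ by hand. Fix once and for all a linear order on the common ground set $E$. Given a chain of flats $F_0 \subsetneq \cdots \subsetneq F_m$ of $\M$, set $r_j := \rk_\M(F_j)$, let $I_{-1} := \varnothing$, and for $j = 0, \ldots, m$ let $I_j$ be the lexicographically least basis of the restriction $\M|_{F_j}$ that contains $I_{j-1}$; this exists because $I_{j-1} \subseteq F_{j-1} \subseteq F_j$ is independent in $\M$, hence in $\M|_{F_j}$, so it extends to a basis, and one takes the lexicographically smallest such extension (equivalently, the output of the greedy algorithm in $\M|_{F_j}$ with $I_{j-1}$ preselected). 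Now put $G_j := \cl_\N(I_j)$ and declare $\varphi(F_0 \subsetneq \cdots \subsetneq F_m) := (G_0 \subsetneq \cdots \subsetneq G_m)$, sending the empty chain to the empty chain. (Loops cause no trouble: any loop of $\N$ is a loop of $\M$, and loops lie in every flat, so we may assume both matroids loopless.)

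The key step I would then establish is the inclusion $G_j \subseteq F_j$. Suppose $e \in \cl_\N(I_j) \smallsetminus I_j$; then $I_j \cup \{e\}$ is dependent in $\N$, and since every independent set of $\M$ is independent in $\N$, a set dependent in $\N$ is dependent in $\M$, so $I_j \cup \{e\}$ is dependent in $\M$. As $I_j$ is independent in $\M$, this forces $e \in \cl_\M(I_j) = F_j$, the last equality because $I_j$ is a basis of $\M|_{F_j}$. Everything else follows quickly. Since $I_j$ is independent in $\N$ we get $\rk_\N(G_j) = |I_j| = r_j$; together with $I_{j-1} \subseteq I_j$ (so $G_{j-1} \subseteq G_j$) and $r_0 < r_1 < \cdots < r_m$, this shows $G_0 \subsetneq G_1 \subsetneq \cdots \subsetneq G_m$ is a chain of flats of $\N$ with the same rank sequence as $F_\bullet$, establishing conditions (ii) and (iii). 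Moreover $I_j \subseteq G_j \subseteq F_j$ yields $F_j = \cl_\M(I_j) \subseteq \cl_\M(G_j) \subseteq \cl_\M(F_j) = F_j$, so $\cl_\M(G_j) = F_j$. Hence $F_\bullet$ can be recovered from $\varphi(F_\bullet) = G_\bullet$ by applying $\cl_\M$ termwise, and $\varphi$ is therefore injective, which is condition (i).

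The step I expect to demand the most care is the well-definedness of $\varphi$: one must fix the auxiliary chain $I_{-1} \subseteq I_0 \subseteq \cdots \subseteq I_m$ by a recipe depending solely on $F_\bullet$ and the chosen order on $E$, which is precisely the reason for insisting on lexicographically least compatible bases. It is worth noting, however, that injectivity is indifferent to this choice --- the recovery formula $F_j = \cl_\M(G_j)$ never mentions $I_\bullet$ --- so \emph{any} coherent choice of a nested family of $\M$-bases produces an injective $\varphi$. As sanity checks one would verify the degenerate cases ($F_0 = \varnothing$ forces $I_0 = \varnothing$ and $G_0 = \varnothing$; $F_m = E$ forces $G_m = E$) and confirm that for $\N = \U_{k,n}$ this $\varphi$ reduces to the map of Lemma~\ref{lemma:chains-flats-injection-uniform}, since there $\cl_{\U_{k,n}}(I_j) = I_j$ whenever $r_j < k$. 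I further expect that the whole argument uses only the containment $\Ind(\M) \subseteq \Ind(\N)$ and never the equality of ranks, so the same proof yields the monotonicity of chains of flats along any weak map on a fixed ground set; taking $m = 0$ recovers the classical monotonicity of the Whitney numbers of the second kind.
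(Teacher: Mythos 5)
Your argument is correct, and it is worth being explicit about its status relative to the paper: the paper does \emph{not} prove Conjecture~\ref{conj:weak-maps-monotonicity}. It proves only the special case $\N\cong\U_{k,n}$ (Lemma~\ref{lemma:chains-flats-injection-uniform}), remarks that a ``straightforward modification'' of that proof appears to work in general but that ``care is needed, especially because of the injectivity requirement,'' and defers the general statement to the forthcoming work of Miyata--Proudfoot--Vecchi. Your construction is exactly that modification---nested lexicographically least bases $I_0\subseteq\cdots\subseteq I_m$ of the restrictions $\M|_{F_j}$, followed by $G_j:=\cl_{\N}(I_j)$---and the one genuinely new ingredient you supply, which is precisely what dissolves the injectivity worry, is the containment $G_j\subseteq F_j$: if $e\in\cl_{\N}(I_j)\smallsetminus I_j$, then $I_j\cup\{e\}$ is dependent in $\N$, hence dependent in $\M$ (the only place where $\Ind(\M)\subseteq\Ind(\N)$ is used), hence $e\in\cl_{\M}(I_j)=F_j$. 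From $I_j\subseteq G_j\subseteq F_j$ the recovery $\cl_{\M}(G_j)=F_j$ is immediate, and well-definedness, strictness of the image chain, the rank condition $\rk_{\N}(G_j)=|I_j|=\rk_{\M}(F_j)$, and injectivity all follow as you state; I checked each step and see no gap. In the uniform case this collapses to the paper's lemma, since there $\cl_{\U_{k,n}}(I_j)$ is $I_j$ or $E$.

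Two minor remarks. The parenthetical reduction to loopless matroids is both unnecessary (nothing in the construction uses looplessness) and not quite justified as written: loops of $\N$ are indeed loops of $\M$, but $\M$ may have loops that are not loops of $\N$, so ``we may assume both matroids loopless'' does not follow from the stated reason---fortunately the argument goes through verbatim with loops present, so this is cosmetic. Second, as you note, neither the rank-preserving hypothesis nor equality of ranks is used, so your proof establishes the stronger statement for an arbitrary weak map on a fixed ground set; combined with the argument in the proof of Theorem~\ref{thm:uniform-dominates}, it yields the coefficient-wise monotonicity of $\uH_{\M}(x)$ and $\H_{\M}(x)$ under weak maps, which is the application the paper had in mind when posing the conjecture.
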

Replacing Lemma~\ref{lemma:chains-flats-injection-uniform} with Conjecture \ref{conj:weak-maps-monotonicity} in the proof of Theorem \ref{thm:uniform-dominates} shows that Hilbert series of Chow rings and augmented Chow rings are both monotonic under weak maps. Observe that it is not true in general that there is an order-preserving map from the family of flats of $\N$ to the family of flats of $\M$ (see the example and the digression in \cite[p. 259]{lucas}).
An appealing feature of this line of attack is that, in contrast to other techniques to prove that a valuation is monotone under weak maps, the content of Conjecture~\ref{conj:weak-maps-monotonicity} is a purely combinatorial statement.

\section{The intersection cohomology module}\label{sec:four}

\subsection{Kazhdan--Lusztig and \texorpdfstring{$Z$}{Z}-polynomials}

To any Coxeter group $W$ one can associate its Bruhat poset. The classical Kazhdan--Lusztig theory \cite{kazhdan-lusztig} associates to each interval in this poset a polynomial encoding fundamental representation-theoretic information. A more general procedure that attaches polynomials to intervals in posets (under certain assumptions) was devised by Stanley \cite{stanley-local}, thus giving name to the Kazhdan--Lusztig--Stanley polynomials of posets \cite{proudfoot-kls}. In particular, if the poset is the lattice of flats of a loopless matroid $\M$, one can define such polynomials.

Although the following families of polynomials were introduced with a slightly different statement in \cite{elias-proudfoot-wakefield} and \cite{proudfoot-xu-young}, by following \cite[Theorem~2.2]{braden-vysogorets} we define:

\begin{definition}
    There is a unique way of assigning a polynomial to each loopless matroid $\M$, say $\M \mapsto P_{\M}(x)\in \mathbb{Z}[x]$ in such a way that the following conditions hold:
    \begin{enumerate}[(i)]
        \item If $\rk(\M) = 0$, then $P_{\M}(x) = 1$.
        \item If $\rk(\M) > 0$, then $\deg P_{\M}(x) < \frac{\rk(\M)}{2}$.
        \item The polynomial $Z_{\M}(x)$ defined by
            \[ Z_{\M}(x) = \sum_{F\in\mathcal{L}(\M)} x^{\rk(F)}\cdot P_{\M/F}(x),\]
        is palindromic and has degree $\rk(\M)$.
    \end{enumerate}
\end{definition}

The polynomials $P_{\M}(x)$ and $Z_{\M}(x)$ arising from the above definition are called, respectively, the \textit{Kazhdan--Lusztig polynomial} and the \textit{$Z$-polynomial} of the matroid $\M$. In analogy to the case of $\M\mapsto \uH_{\M}(x)$ and $\M\mapsto \H_{\M}(x)$, we can extend this definition to also cover matroids with loops. We define $P_{\M}(x):=0$ and $Z_{\M}(x):= Z_{\M\smallsetminus\{\text{loops}\}}(x)$ whenever $\M$ has loops.

A fundamental fact, which also resembles the case of Kazhdan--Lusztig polynomials of intervals in a Bruhat poset \cite{elias-williamson}, is that the polynomials $P_{\M}(x)$ always have nonnegative coefficients. In fact, the following is the second main result of \cite{braden-huh-matherne-proudfoot-wang}.

\begin{theorem}[{\cite[Theorem~1.2]{braden-huh-matherne-proudfoot-wang}}]
    For every matroid $\M$, we have the following:
    \begin{enumerate}[\normalfont(i)]
        \item The polynomial $P_{\M}(x)$ has nonnegative coefficients.
        \item The polynomial $Z_{\M}(x)$ has unimodal coefficients.
    \end{enumerate}
\end{theorem}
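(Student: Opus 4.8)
The plan is to realize $P_{\M}(x)$ and $Z_{\M}(x)$ as Hilbert--Poincar\'e series of genuine graded vector spaces, so that nonnegativity becomes a triviality, and then to deduce the unimodality of $Z_{\M}(x)$ from a hard Lefschetz theorem. First I would construct, for every loopless matroid $\M$, the \emph{intersection cohomology module} $\IH(\M)$: a graded module over the graded M\"obius algebra $\H(\M)$, together with its stalk $\IH(\M)_{\varnothing}$ at the empty flat (which depends on the module structure, see Definition~\ref{def:ih}). The construction is recursive in $\rk(\M)$ --- $\IH(\M)$ is carved out as the indecomposable summand ``starting in degree $0$'' of an auxiliary module assembled from the intersection cohomologies of proper localizations and contractions --- and the first real task is to prove that it is well defined, i.e. independent of the choices made along the way, and that it is pure. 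Granting this, one checks that $\Hilb(\IH(\M)_{\varnothing},x)$ satisfies the characteristic-polynomial recursion of Theorem~\ref{thm:kazhdan-lusztig}, while a filtration on $\IH(\M)$ with associated graded $\bigoplus_{F\in\mathcal{L}(\M)}\IH(\M/F)_{\varnothing}[-\rk(F)]$ --- whose existence is a formal property of pure modules --- yields
\[ \Hilb(\IH(\M),x) = \sum_{F\in\mathcal{L}(\M)} x^{\rk(F)}\,\Hilb(\IH(\M/F)_{\varnothing},x). \]
By the uniqueness in Theorem~\ref{thm:definition-kl-and-zeta}, we conclude $P_{\M}(x)=\Hilb(\IH(\M)_{\varnothing},x)$ and $Z_{\M}(x)=\Hilb(\IH(\M),x)$, and part~(i) follows at once since graded dimensions are nonnegative.

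For part~(ii), we already know $Z_{\M}(x)$ is palindromic of degree $\rk(\M)$, so it suffices to show its coefficients are nondecreasing up to the middle degree. I would prove the \textbf{hard Lefschetz theorem} for $\IH(\M)$: there exists $\ell\in\H^1(\M)$ such that $\ell^{\,\rk(\M)-2i}\colon\IH^i(\M)\to\IH^{\rk(\M)-i}(\M)$ is an isomorphism for all $i\le\rk(\M)/2$. An elementary argument (injectivity of multiplication by $\ell$ below the middle degree) then gives $\dim\IH^{i-1}(\M)\le\dim\IH^{i}(\M)$ whenever $2i\le\rk(\M)$, which combined with palindromicity is exactly the unimodality of $Z_{\M}(x)$. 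Hard Lefschetz is one piece of the \emph{K\"ahler package} for $\IH(\M)$ (Poincar\'e duality, hard Lefschetz, Hodge--Riemann bilinear relations); I would establish all three together by induction on $\rk(\M)$, using a semismall decomposition expressing $\IH(\M)$ in terms of $\IH(\M\smallsetminus\{i\})$ and intersection cohomologies of strictly smaller matroids (the singular analogue of Theorem~\ref{thm:semismall-decompositions}), propagating the package across this decomposition, and invoking a limiting argument to pass from special Lefschetz elements to arbitrary ones.

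The main obstacle is precisely running this induction: hard Lefschetz at rank $k$ is inaccessible on its own, and one is forced to transport the full K\"ahler package through the semismall decomposition, with the Hodge--Riemann relations carrying the weight --- they are what degenerates correctly under limits of Lefschetz classes and what is inherited additively from the summands. Two subsidiary difficulties deserve mention: showing that $\IH(\M)$ is genuinely well defined and indecomposable independently of the recursion's choices, and verifying that the filtration computing $\Hilb(\IH(\M),x)$ has the claimed associated graded, both of which rest on structural facts about pure modules over the graded M\"obius algebra.
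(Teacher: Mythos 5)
Your proposal is correct and follows essentially the same route the paper points to: it does not prove this result itself but cites Braden--Huh--Matherne--Proudfoot--Wang, explaining that the coefficients are graded dimensions of $\IH(\M)$ and its stalk $\IH(\M)_{\varnothing}$ (so nonnegativity is immediate) and that hard Lefschetz for $\IH(\M)$, obtained as part of the K\"ahler package via the semismall-type induction with the Hodge--Riemann relations, yields unimodality of $Z_{\M}(x)$. This is exactly the strategy you outline, so there is nothing to add beyond noting that the heavy lifting (well-definedness of $\IH(\M)$ and the inductive K\"ahler package) is the content of the cited work rather than of this paper.
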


Both of these statements follow from interpreting the coefficients of the Kazhdan--Lusztig and $Z$-polynomials as the graded dimensions of certain modules. In particular, for the $Z$-polynomial, as we will explain in the next subsection, the module in question is called the ``intersection cohomology'' of the matroid $\M$. Again, the validity of the hard Lefschetz property for this module is what guarantees the unimodality of the coefficients of the $Z$-polynomial, which we will later extend to the stronger property of being $\gamma$-positive.

\subsection{Intersection cohomology of matroids}\label{subsec:ih}

The \textit{graded M\"obius algebra} of $\M$ is the graded vector space
\[
\H(\M) = \bigoplus_{F \in \cL(\M)} \Q y_F,
\]
where $y_F$ is placed in degree $\rk(F)$.  It is made into a graded algebra via the multiplication
\[
y_F \cdot y_G = \begin{cases}y_{F \vee G} & \text{if $\rk(F) + \rk(G) = \rk(F \vee G)$},\\ 0 & \text{if $\rk(F) + \rk(G) > \rk (F \vee G)$}.\end{cases} 
\]

We note that $\H(\M)$ is a graded subalgebra of $\CH(\M)$ \cite[Proposition~2.18]{semismall}; thus, we may view $\CH(\M)$ as a graded $\H(\M)$-module.

\begin{definition}\label{def:ih}
    Let $\M$ be a matroid. The \textit{intersection cohomology module} of $\M$, denoted by $\IH(\M)$, is the unique (up to isomorphism) indecomposable graded $\H(\M)$-module direct summand of $\CH(\M)$ that contains $\H(\M)$.\footnote{In \cite[Definition 3.2]{braden-huh-matherne-proudfoot-wang}, a construction of $\IH(\M)$ is given as an explicit $\H(\M)$-submodule of $\CH(\M)$.  However, for the purposes of this paper, it will be sufficient to only know $\IH(\M)$ up to isomorphism because we are mainly concerned with its Poincar\'e polynomial.}  The \textit{stalk at the empty flat}\footnote{We point to \cite[Section~5]{braden-huh-matherne-proudfoot-wang} for more about the terminology ``stalk" in this context.} of $\IH(\M)$ is the graded vector space $\IH(\M)_\varnothing := \IH(\M) \otimes_{\H(\M)} \Q$, where $\Q$ is the one-dimensional graded $\H(\M)$-module placed in degree zero.
\end{definition}

\begin{theorem}[{\cite[Theorem~1.9]{braden-huh-matherne-proudfoot-wang}}]Let $\M$ be a loopless matroid.
\begin{itemize}
    \item The Kazhdan--Lusztig polynomial of $\M$ coincides with the Hilbert series of the stalk at the empty flat of $\IH(\M)$.  In other words,
        \[P_{\M}(x) = \sum_{j\ge0} \dim_{\Q}(\IH^j(\M)_\varnothing) \, x^j.\]
    
    \item The $Z$-polynomial of $\M$ coincides with the Hilbert series of the intersection cohomology module of $\M$. In other words,
    \[Z_{\M}(x) = \sum_{j\ge0} \dim_{\Q}(\IH^j(\M)) \, x^j.\]
\end{itemize}
\end{theorem}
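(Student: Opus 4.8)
The plan is to verify that the two polynomials
\[ p_{\M}(x) := \sum_{j\ge 0}\dim_{\Q}\!\big(\IH^{j}(\M)_{\varnothing}\big)\,x^{j}, \qquad z_{\M}(x) := \sum_{j\ge 0}\dim_{\Q}\!\big(\IH^{j}(\M)\big)\,x^{j} \]
satisfy the three properties that characterize $P_{\M}(x)$ and $Z_{\M}(x)$ (as in Theorem~\ref{thm:definition-kl-and-zeta} and the definition opening this section); by the uniqueness in that characterization this forces $p_{\M}=P_{\M}$ and $z_{\M}=Z_{\M}$, and then nonnegativity of the coefficients of $p_{\M}$ and unimodality of $z_{\M}$ follow at once from their being graded dimensions (the latter using hard Lefschetz on $\IH(\M)$). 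Property~(i) is trivial: for a loopless matroid of rank $0$ one has $\M\cong\U_{0,0}$, so $\H(\M)=\CH(\M)=\Q$ is concentrated in degree $0$, whence $\IH(\M)=\IH(\M)_{\varnothing}=\Q$ and $p_{\M}(x)=1$.

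For the convolution in property~(iii), the key input is a filtration of $\IH(\M)$ by graded $\H(\M)$-submodules indexed by the flats of $\M$, whose associated graded is isomorphic, as a graded vector space, to $\bigoplus_{F\in\mathcal{L}(\M)}\IH(\M)_{F}$, where $\IH(\M)_{F}$ denotes the stalk of $\IH(\M)$ at $F$ arising from the $\H(\M)$-module structure. One then identifies each stalk as $\IH(\M)_{F}\cong\IH(\M/F)_{\varnothing}[-\rk(F)]$, in exact analogy with the isomorphism of the Chow-ring stalk at $F$ with $\uCH(\M/F)$ used in the remark following Theorem~\ref{thm:convolution-reduced-char-poly}; this identification is local on the lattice of flats and rests on the structural results of \cite{braden-huh-matherne-proudfoot-wang} (their Proposition~2.15 and Lemma~5.7, applied to $\IH$ in place of $\CH$). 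Passing to Hilbert series gives $z_{\M}(x)=\sum_{F\in\mathcal{L}(\M)}x^{\rk(F)}\,p_{\M/F}(x)$, which is the convolution in~(iii). Only the module structure and general facts about filtrations of pure $\H(\M)$-modules are used here, so this step is essentially formal once the local identification of stalks is known.

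What remains — palindromicity and the degree conditions in~(ii) and~(iii) — is where the genuine difficulty lies, and I expect it to be the main obstacle. Palindromicity of $z_{\M}(x)$ with $\deg z_{\M}=\rk(\M)$ is the Hilbert-series shadow of Poincar\'e duality for $\IH(\M)$, part of the K\"ahler package established for $\IH(\M)$ in \cite{braden-huh-matherne-proudfoot-wang}; that proof runs by induction on the size of the ground set, using the semi-small decompositions of $\CH(\M)$ (in the spirit of Theorem~\ref{thm:semismall-decompositions}) together with a parallel deletion induction for $\IH$. The degree bound $\deg p_{\M}(x)<\tfrac12\rk(\M)$ expresses the ``smallness'' of the stalk $\IH(\M)_{\varnothing}$; it cannot be deduced from the $\H(\M)$-module structure alone, since the analogous stalk for a general bounded graded lattice need not satisfy it, and one must exploit the indecomposability of $\IH(\M)$ together with Poincar\'e duality, so that the complement of $\IH(\M)$ inside $\CH(\M)$ absorbs everything in the stalk of degree $\ge\tfrac12\rk(\M)$ beyond the embedded copy of $\H(\M)$. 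Making this precise requires the full inductive apparatus of \cite{braden-huh-matherne-proudfoot-wang} (Poincar\'e duality and hard Lefschetz for $\IH$ and for the Chow rings of all minors of $\M$), which is the technical heart of that work. With properties~(i)--(iii) in hand, the uniqueness in Theorem~\ref{thm:definition-kl-and-zeta} completes the proof.
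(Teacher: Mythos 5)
The statement you are proving is not proved in this paper at all: it is quoted verbatim, with citation, from Braden--Huh--Matherne--Proudfoot--Wang \cite[Theorem~1.9]{braden-huh-matherne-proudfoot-wang}, so there is no internal proof to compare against. Judged on its own terms, your proposal correctly identifies the standard route: reduce to the uniqueness characterization of $P_{\M}$ and $Z_{\M}$ (Theorem~\ref{thm:definition-kl-and-zeta}), handle rank $0$ trivially, and obtain the convolution $z_{\M}(x)=\sum_{F}x^{\rk(F)}p_{\M/F}(x)$ from a filtration of $\IH(\M)$ by $\H(\M)$-submodules whose associated graded is the sum of stalks, together with the identification $\IH(\M)_F\cong\IH(\M/F)_{\varnothing}[-\rk(F)]$. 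This is indeed how the convolution is obtained in the cited work (compare the remark after Proposition~\ref{prop:recursion-H-in-terms-uH}, where the same mechanism is described for $\CH$).

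However, as a proof the proposal has a genuine gap, which you yourself flag: the two inputs that actually carry the theorem --- the degree bound $\deg p_{\M}<\tfrac12\rk(\M)$ (vanishing of $\IH(\M)_{\varnothing}$ in degrees $\ge\tfrac12\rk(\M)$) and the palindromicity of $z_{\M}$ with center $\tfrac12\rk(\M)$ (Poincar\'e duality for $\IH(\M)$) --- are not established but deferred to ``the full inductive apparatus'' of \cite{braden-huh-matherne-proudfoot-wang}. These are not side conditions; they are the theorem. Your heuristic for the degree bound (indecomposability plus duality forcing the complement of $\IH(\M)$ in $\CH(\M)$ to absorb the high-degree part of the stalk) is plausible but not an argument, and it is not how the cited induction actually runs (there the stalk vanishing, purity of the filtration you invoke, Poincar\'e duality, and hard Lefschetz for $\IH$ of all minors are proved simultaneously; in particular even the purity needed for your filtration step is part of that induction rather than a formal consequence of the $\H(\M)$-module structure, so citing the Chow-ring statements ``applied to $\IH$ in place of $\CH$'' is not automatic). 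So the proposal is an accurate road map that reduces the statement to the main results of \cite{braden-huh-matherne-proudfoot-wang}, but it does not constitute an independent proof; if the intent is simply to cite those results, that matches what this paper does by quoting the theorem outright.
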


\begin{remark}\label{remark:stalk-empty}
   The similarity between the recurrence linking $P_{\M}(x)$ to $Z_{\M}(x)$  with the one linking $\uH_{\M}(x)$ to $\H_{\M}(x)$ is also hinted by the fact that $\CH(\M)_{\varnothing} := \CH(\M)\otimes_{\H(\M)}\mathbb{Q}$, and that the latter is isomorphic to $\uCH(\M)$ (see \cite[Remark~1.4]{semismall}). In other words, in terms of stalks \cite[Section~5]{braden-huh-matherne-proudfoot-wang}, the Chow ring $\uCH(\M)$ is the stalk at the empty flat of the augmented Chow ring $\CH(\M)$.
\end{remark}

\subsection{\texorpdfstring{$\gamma$}{gamma}-positivity}

Now our main goal is to prove that the $Z$-polynomial of a matroid is always a $\gamma$-positive polynomial. This was conjectured in \cite[Conjecture 5.6]{ferroni-nasr-vecchi}, and known to be valid in a number of cases, see \cite[Theorem~1.9]{ferroni-nasr-vecchi}. 

In contrast to the Hilbert series of both the Chow ring and the augmented Chow ring, we now lack a version of the semi-small decompositions of Theorem~\ref{thm:semismall-decompositions} for the intersection cohomology module. However, we can use the following result of Braden and Vysogorets, which can be stated only using the basic combinatorial theory of the Kazhdan--Lusztig and $Z$-polynomials of matroids. Before formulating it, we introduce the following notation:
    \[ \tau(\M) := 
    \begin{cases}
        [x^{\frac{\rk(\M)-1}{2}}] P_{\M}(x) & \text{if $\rk(\M)$ is odd,}\\
        0 & \text{if $\rk(\M)$ is even.}
    \end{cases}
    \]

\begin{theorem}[{\cite[Theorem~2.8]{braden-vysogorets}}]
    Let $\M$ be a loopless matroid of rank $k$, and let $i\in E$ be an element of the ground set that is not a coloop. Then,
    \begin{align*}
    P_{\M}(x) &= P_{\M\smallsetminus\{i\}}(x) - xP_{\M/\{i\}}(x) + \sum_{F\in\mathscr{S}_i} \tau\left(\M/{(F\cup\{i\})}\right)\, x^{\frac{k - \rk(F)}{2}}\, P_{\M|_F}(x),\\
     Z_{\M}(x) &= Z_{\M\smallsetminus\{i\}}(x) + \sum_{F\in\mathscr{S}_i} \tau\left(\M/{(F\cup\{i\})}\right)\, x^{\frac{k - \rk(F)}{2}}\, Z_{\M|_F}(x).
    \end{align*}
\end{theorem}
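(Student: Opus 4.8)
The plan is to follow Braden and Vysogorets and argue inside the incidence algebra $\cI(\mathcal{L}(\M))$ of the lattice of flats, in the same spirit as the convolution proof of Proudfoot, Xu, and Young behind Theorem~\ref{thm:definition-kl-and-zeta}. Fix $\M$ loopless of rank $k$ and $i\in E$ not a coloop, and set $\M'=\M\smallsetminus\{i\}$, which again has rank $k$. The two structural inputs are: first, a classification of the flats of $\M$ relative to those of $\M'$ --- a flat $F$ of $\M$ with $i\notin F$ is the same datum as a flat of $\M'$ whose $\M$-closure omits $i$, with order and rank preserved; the remaining flats of $\M$ have the shape $G\cup\{i\}$ for appropriate flats $G$ of $\M'$; and $\mathscr{S}_i$ consists exactly of the flats $F$ of $\M'$ for which \emph{both} $F$ and $F\cup\{i\}$ are flats of $\M$ --- and second, the deletion--contraction identity $\chi_{\M}(x)=\chi_{\M'}(x)-\chi_{\M/\{i\}}(x)$, valid since $i$ is neither a loop nor a coloop, together with its relative versions on the intervals $[\varnothing,F]$ of $\mathcal{L}(\M)$. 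I would recall that $P$ is the Kazhdan--Lusztig--Stanley polynomial of $(\mathcal{L}(\M),\chi)$, so that the recursion $x^{k}P_{\M}(x^{-1})=\sum_{F}\chi_{\M|_F}(x)\,P_{\M/F}(x)$ holds, and that $Z=\zeta\cdot P$ in $\cI(\mathcal{L}(\M))$ with $\zeta_{[F,G]}=x^{\rk G-\rk F}$.

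By the uniqueness in the definition of $P$, it then suffices to exhibit a polynomial $Q$ of degree $<k/2$ such that $Q+\sum_{\varnothing\neq F\in\mathcal{L}(\M)}x^{\rk F}P_{\M/F}(x)$ is palindromic of degree $k$, and to identify $Q$ with the stated right-hand side of the $P$-formula; the $Z$-identity then reads $Z_{\M}=Q+\sum_{\varnothing\neq F}x^{\rk F}P_{\M/F}(x)$ and is obtained along the way. Substituting the two inputs into $x^{k}P_{\M}(x^{-1})=\sum_{F}\chi_{\M|_F}(x)P_{\M/F}(x)$ and reorganizing --- grouping the terms by the position of $i$ relative to each flat, and using deletion--contraction to rewrite the characteristic polynomials --- one isolates the contributions that reassemble the recursion for $\M'$, yielding $P_{\M'}(x)$ and, after matching palindromic parts via a Stapledon-type decomposition as in Lemma~\ref{lemma:stapledon2}, $Z_{\M'}(x)$, and is left with a remainder supported on the flats $F\cup\{i\}$ with $F\in\mathscr{S}_i$. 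In this remainder the contraction $\M/(F\cup\{i\})$ appears, and only its middle coefficient survives the palindromization step: that is precisely the origin of the factor $\tau(\M/(F\cup\{i\}))$ and of the exponent $x^{(k-\rk F)/2}$, the latter being an integer exactly when $\rk(\M/(F\cup\{i\}))=k-\rk F-1$ is odd, i.e.\ exactly when $\tau$ is nonzero. (Alternatively, the same checks can be organized as a single induction on $\size{E}$, verifying the defining conditions of Theorem~\ref{thm:definition-kl-and-zeta} for the candidate polynomials directly.)

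The step that needs real care, and which I expect to be the main obstacle, is the degree bookkeeping around the $P$-formula. The term $-xP_{\M/\{i\}}(x)$ can a priori reach degree $k/2$ when $k$ is even, its top coefficient being $[x^{(k-2)/2}]P_{\M/\{i\}}=\tau(\M/\{i\})$, which would violate $\deg P_{\M}<k/2$; the resolution is that when $\{i\}$ is a flat of $\M$ the $F=\varnothing$ summand of $\sum_{F\in\mathscr{S}_i}\tau(\M/(F\cup\{i\}))\,x^{(k-\rk F)/2}\,P_{\M|_F}(x)$ equals exactly $\tau(\M/\{i\})\,x^{k/2}$ and cancels it, whereas if $\{i\}$ is not a flat then $\M/\{i\}$ acquires a loop, $P_{\M/\{i\}}=0$, and there is nothing to cancel; all the other summands have degree $<k/2$ since $\deg P_{\M|_F}<\tfrac{1}{2}\rk F$ for $F\neq\varnothing$. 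Dually, the candidate $\widetilde{Z}_{\M}$ for the $Z$-formula is manifestly symmetric with center $k/2$: here one uses $\rk(\M\smallsetminus\{i\})=k$, so that $Z_{\M\smallsetminus\{i\}}$ contributes the degree-$k$ term, whose coefficient equals its constant term $Z_{\M\smallsetminus\{i\}}(0)=1$, while each $x^{(k-\rk F)/2}Z_{\M|_F}(x)$ has center $k/2$ and degree at most $k-1$. Once these facts are in place the candidates satisfy every defining condition of $P$ and $Z$, and the uniqueness statements force both displayed identities.
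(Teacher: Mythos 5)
Your degree and centre-of-symmetry bookkeeping is accurate (including the cancellation of the possible degree-$\frac{k}{2}$ term of $-xP_{\M/\{i\}}(x)$ against the $F=\varnothing$ summand of the sum when $\{i\}$ is a flat, and the vanishing of $P_{\M/\{i\}}$ when it is not), and "produce a candidate and invoke uniqueness" is a sound strategy in principle. But there is a genuine gap exactly where the theorem has its content: the claim that, after isolating the part that reassembles $\M\smallsetminus\{i\}$, the remainder equals $\sum_{F\in\mathscr{S}_i}\tau(\M/(F\cup\{i\}))\,x^{(k-\rk F)/2}\,Z_{\M|_F}(x)$ --- your phrase "only its middle coefficient survives the palindromization step" --- is asserted, never argued. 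Palindromicity of $Z_{\M}$ plus the degree bound on $P_{\M}$ determines $P_{\M}$ from its contractions (that is the uniqueness in Theorem~\ref{thm:definition-kl-and-zeta}, via Lemma~\ref{lemma:stapledon2}), but it says nothing about how the difference $Z_{\M}-Z_{\M\smallsetminus\{i\}}$ decomposes into palindromic pieces indexed by $\mathscr{S}_i$, nor why the coefficients are the single integers $\tau(\M/(F\cup\{i\}))$: every candidate summand is centred at $k/2$, so one application of the symmetric decomposition at the top cannot isolate them. Moreover, "reassembling the recursion for $\M\smallsetminus\{i\}$" from the recursion for $\M$ forces you to compare $P_{(\M\smallsetminus\{i\})/G}$ with Kazhdan--Lusztig polynomials of minors of $\M$, which is itself an instance of the deletion formula for smaller matroids; so the argument must be an induction whose inductive step is precisely the missing identity, and the parenthetical "organize as a single induction on $|E|$" does not supply it.

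For context, the paper does not reprove this statement; it cites Braden--Vysogorets, and what plays the role of a proof in the paper is the equivariant generalization (Theorem~\ref{thm:equivariant-deletion-formula}), whose argument mirrors theirs: one works in a module with basis indexed by $\mathcal{L}(\M)$, introduces the self-dual elements $\upzeta[F]$, the deletion map $\Delta$ to the module of $\M\smallsetminus\{i\}$, and shows --- by a flat-by-flat self-duality and degree-bound argument, i.e.\ a per-coordinate uniqueness statement, not a single Stapledon decomposition of one polynomial --- that $\Delta(\upzeta[E])=\upzeta[E\smallsetminus\{i\}]+\sum_{F\in\mathscr{S}_i}\tau(\M/(F\cup\{i\}))\,\upzeta[F]$; extracting the coefficient of $[\varnothing]$ gives the $P$-formula and applying the evaluation map $\Phi$ gives the $Z$-formula. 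You could in principle translate that into incidence-algebra language, but then you must reproduce this per-flat argument; as written, your proof presupposes the shape of the answer at the decisive step.
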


Recall that the family $\mathscr{S}_i$ of flats was introduced in equation \eqref{eq:family_S_i}. Observe that a~priori this result does not witness any decompositions at the level of (stalks of) intersection cohomology modules. However, this provides a counterpart for Corollary~\ref{coro:recursion-hilbert-chow} that suffices for our purposes. Also, note that for $F\in \mathscr{S}_i$, we have $\tau(\M/(F\cup\{i\}))=0$ whenever $k-\rk(F)$ is odd.

\begin{theorem}\label{thm:z-is-gamma-positive}
    For every matroid $\M$, the polynomial $Z_{\M}(x)$ is $\gamma$-positive.
\end{theorem}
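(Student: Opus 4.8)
The plan is to induct on the size of the ground set of $\M$, mimicking the argument used for Theorem~\ref{thm:gamma-positivity-uH-H}, but now using the Braden--Vysogorets deletion recursion in place of the semi-small decomposition. For the base case we may take $\M \cong \U_{1,1}$ (or more simply the empty matroid), where $Z_{\M}(x)$ is $1$ or $x+1$ and hence trivially $\gamma$-positive. For the inductive step, fix $\M$ with $|E| = n$ and suppose the result holds for all matroids on fewer than $n$ elements. If every element of $E$ is a coloop, then $\M \cong \U_{n,n}$ is Boolean; in that case one computes directly that $Z_{\U_{n,n}}(x) = \widetilde{A}_n(x)$, the binomial Eulerian polynomial, whose $\gamma$-positivity is classical (cited in Section~\ref{sec:two}). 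Alternatively, one can avoid invoking that fact by noting $Z_{\M \oplus \U_{1,1}}(x)$ satisfies a coloop-deletion recursion analogous to Remark~\ref{remark:with-coloop} and arguing inductively; either route closes the Boolean case.

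Now assume $\M$ has an element $i \in E$ that is not a coloop. Apply the second identity of the Braden--Vysogorets theorem:
\[
Z_{\M}(x) = Z_{\M\smallsetminus\{i\}}(x) + \sum_{F\in\mathscr{S}_i} \tau\!\left(\M/{(F\cup\{i\})}\right)\, x^{\frac{k - \rk(F)}{2}}\, Z_{\M|_F}(x).
\]
I would first check that all three ``moving parts'' on the right are palindromic with the \emph{same} center of symmetry as $Z_{\M}(x)$, namely $k/2$: the polynomial $Z_{\M\smallsetminus\{i\}}(x)$ has degree $k$ since $i$ is not a coloop (so $\rk(\M\smallsetminus\{i\}) = k$) and is palindromic by definition; and for $F \in \mathscr{S}_i$ with $\tau(\M/(F\cup\{i\})) \neq 0$, the polynomial $Z_{\M|_F}(x)$ is palindromic of degree $\rk(F)$, so $x^{(k-\rk(F))/2} Z_{\M|_F}(x)$ is palindromic with center $\tfrac{k-\rk(F)}{2} + \tfrac{\rk(F)}{2} = \tfrac{k}{2}$ (here one uses that $\tau$ vanishes unless $k - \rk(F)$ is even, so the exponent is a genuine nonnegative integer). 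Consequently Lemma~\ref{lemma:properties-gamma}(ii) and (iv) apply, and passing to $\gamma$-polynomials termwise gives
\[
\gamma(Z_{\M},x) = \gamma(Z_{\M\smallsetminus\{i\}},x) + \sum_{F\in\mathscr{S}_i} \tau\!\left(\M/{(F\cup\{i\})}\right)\, x^{\frac{k - \rk(F)}{2}}\, \gamma(Z_{\M|_F},x).
\]
By the inductive hypothesis $\gamma(Z_{\M\smallsetminus\{i\}},x)$ and each $\gamma(Z_{\M|_F},x)$ have nonnegative coefficients; and $\tau(\M/(F\cup\{i\}))$ is a coefficient of a Kazhdan--Lusztig polynomial, hence nonnegative by the theorem of Braden--Huh--Matherne--Proudfoot--Wang. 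Therefore $\gamma(Z_{\M},x)$ is a nonnegative combination of nonnegative-coefficient polynomials, completing the induction.

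The main obstacle — and the one genuinely new input beyond the formal bookkeeping — is the reliance on the nonnegativity of $\tau(\M/(F\cup\{i\}))$, i.e. on the deep positivity theorem for matroid Kazhdan--Lusztig polynomials; without it the recursion would involve coefficients of indeterminate sign and the induction would collapse, exactly as the excerpt warns. A secondary point requiring care is the degree/center-of-symmetry bookkeeping: one must confirm $\rk(\M\smallsetminus\{i\}) = k$ (which is where ``$i$ is not a coloop'' is used) and that $Z_{\M\smallsetminus\{i\}}(x)$ really has degree $k$ rather than something smaller, so that its $\gamma$-expansion is taken with respect to the correct center $k/2$; this is immediate from property (iii) in the definition of the $Z$-polynomial applied to $\M\smallsetminus\{i\}$. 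Everything else is a routine application of Lemma~\ref{lemma:properties-gamma}.
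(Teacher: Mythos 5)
Your argument is the same as the paper's: induction on the ground set, the Braden--Vysogorets deletion identity for $Z$, passage to $\gamma$-polynomials via Lemma~\ref{lemma:properties-gamma}, and the crucial input that $\tau(\M/(F\cup\{i\}))\geq 0$, which rests on the nonnegativity of Kazhdan--Lusztig coefficients; your extra bookkeeping (that every term is palindromic with center $k/2$, using $\rk(\M\smallsetminus\{i\})=k$ and the vanishing of $\tau$ when $k-\rk(F)$ is odd) is correct and is exactly what justifies applying Lemma~\ref{lemma:properties-gamma}(iv) termwise.

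One factual slip in the base case: for the Boolean matroid you assert $Z_{\U_{n,n}}(x)=\widetilde{A}_n(x)$, the binomial Eulerian polynomial. That identification belongs to the augmented Chow setting, where $\H_{\U_{n,n}}(x)=\widetilde{A}_n(x)$; the $Z$-polynomial of the Boolean matroid is simply $(x+1)^n$, since every contraction of $\U_{n,n}$ is Boolean and has Kazhdan--Lusztig polynomial $1$, so $Z_{\U_{n,n}}(x)=\sum_{j=0}^n\binom{n}{j}x^j$. The conclusion you need survives -- indeed it becomes trivial, as $\gamma\bigl((x+1)^n,x\bigr)=1$ -- so this is an easy repair rather than a structural gap, but as written the base case invokes a nontrivial classical theorem to justify a false identity. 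Your alternative suggestion of a coloop-deletion recursion is unnecessary once this is corrected; the rest of the argument stands as in the paper.
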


\begin{proof}
    It suffices to prove the statement only for loopless matroids. We proceed by induction on the size of the ground set of $\M$. If the matroid $\M$ is empty, the rank of $\M$ is zero and thus the $Z$-polynomial is $Z_{\M}(x) = 1$, and the associated $\gamma$-polynomial is $\gamma(Z_{\M},x) = 1$.
    
    Assuming that we have proved the validity of the statement for all matroids with ground sets of cardinality at most $n-1$, let us consider a matroid $\M$ having ground set of cardinality $n$. If $\M$ is a Boolean matroid, i.e. $\M \cong \U_{n,n}$ for some $n\geq 1$, then $Z_{\M}(x) = (x+1)^n$. In this case, one obtains $\gamma(Z_{\M}, x) = 1$, which has nonnegative coefficients.
    
    If $\M$ is not Boolean, then there is at least one element $i\in E$ that is not a coloop. Using the result by Braden and Vysogorets, we obtain the following recurrence for the $\gamma$-polynomial:
    \[ \gamma(Z_{\M},x) = \gamma(Z_{\M\smallsetminus\{i\}},x) + \sum_{F\in\mathscr{S}_i} \tau\left(\M/{(F\cup\{i\})}\right)\, x^{\frac{k - \rk(F)}{2}}\, \gamma(Z_{\M|_F},x).\]
    The induction hypothesis guarantees that each of the summands on the right-hand side has nonnegative coefficients. The proof is complete.
\end{proof}

\begin{remark}
    We stress the fact that we are using that the $\tau$-invariant is always a nonnegative integer. This fact is highly non-trivial and follows from the nonnegativity of the coefficients of the Kazhdan--Lusztig polynomials of matroids. We know of no proof of the nonnegativity of $\tau(\M)$ that does not rely on that.
\end{remark}

\begin{remark}
    Although the Kazhdan--Lusztig polynomial is not palindromic in general, a reasonable question that the reader might ask is whether it is \emph{non-symmetric $\gamma$-positive} in the sense of \cite[Section 5.1]{athanasiadis-gamma-positivity}. In particular, one could ask whether $P_{\M}(x)$ is always right or left $\gamma$-positive. Unfortunately it is not the case. As Athanasiadis points out, being right or left $\gamma$-positive implies unimodality, and the peak of the coefficients is attained in the middle terms. However, observe that
    \[ P_{\U_{15,16}}(x) = 1430x^7 + 32032x^6 + \boxed{91728}x^5 + 76440x^4 + 23100x^3 + 2640x^2 + 104x + 1\]
    and the peak is not in the middle terms (which correspond to degrees $3$ and $4$). In fact, experimentation suggests that the peak of $P_{\U_{k,n}}(x)$ is always attained approximately at the coefficient of degree $\lfloor\frac{k}{3}\rfloor$.
\end{remark}

\begin{remark}
    It is natural to ask whether the equivariant $Z$-polynomial (see \cite[Section~6]{proudfoot-xu-young}) is ``equivariant $\gamma$-positive'', in the sense of \cite[Section~5.2]{athanasiadis-gamma-positivity}. The answer to this question is negative. In fact, it is not difficult to find explicit counterexamples. One such instance is the matroid $\U_{2,2}$ with the action induced by its full automorphism group. 
\end{remark}

\section{Concluding remarks}\label{sec:five}

The purpose of this section is to discuss several problems, results, and remarks regarding the real-rootedness of the polynomials we addressed in this article. 

\subsection{Haglund--Zhang polynomials}\label{subsec:haglund-zhang}

In this subsection, we will establish the real-rootedness of $\H_\M(x)$ when $\M = \U_{k,n}$ is an arbitrary uniform matroid, therefore proving Theorem~\ref{thm:intro-hz}. We will show that $\H_{\U_{k,n}}(x)$ is an example of a class of polynomials introduced and proven to be real-rooted by Haglund and Zhang in \cite{haglund-zhang}.

To any sequence $\mathbf{s} = (s_1,\ldots,s_n) \in \Z_{>0}^n$, Haglund and Zhang associate a generalized binomial Eulerian polynomial $\widetilde{E}_n^{\mathbf{s}}(x)$ in the following way.  First, define the set 
    \[
    \cI_n^{\mathbf{s}} = \{\mathbf{e} = (e_1,\ldots, e_n) \in \Z^n : 0 \le e_i < s_i \text{ and } 0 \le i \le n\},
    \]
where we set $e_0 = e_{n+1} = 0$ and $s_0 = s_{n+1} = 1$.  Furthermore, we say that $i \in [0,n]$ is an ascent of $\mathbf{e} \in \cI_n^{\mathbf{s}}$ if $\frac{e_i}{s_i} < \frac{e_{i+1}}{s_{i+1}}$, and that it is a collision if $\frac{e_i}{s_i} = \frac{e_{i+1}}{s_{i+1}}$.  We write $\mathrm{asc}(\mathbf{e})$ and $\mathrm{col}(\mathbf{e})$ for the number of ascents and collisions of $\mathbf{e}$, respectively.  Now define the polynomial
\[
\widetilde{E}_n^{\mathbf{s}}(x) := \sum_{\mathbf{e} \in \cI_n^{\mathbf{s}}} (1+x)^{\mathrm{col}(\mathbf{e})}x^{\mathrm{asc}(\mathbf{e})}.
\]

The main result of Haglund and Zhang \cite[Theorem~1.1]{haglund-zhang} proves that all such polynomials are real-rooted.

\begin{theorem}\label{thm:hz}
    Let $\mathbf{s} = (s_1,\ldots,s_n) \in \Z_{>0}^n$.  Then $\widetilde{E}_n^{\mathbf{s}}(x)$ is real-rooted.
\end{theorem}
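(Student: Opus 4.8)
The natural route is through the theory of \emph{interlacing families} of polynomials, adapting the argument Savage and Visontai used for the $\mathbf{s}$-Eulerian polynomials so as to also track the collision statistic. Two facts underpin everything. \textbf{(A)} If $f_1,\dots,f_m$ are real-rooted polynomials with nonnegative leading coefficients that possess a common interlacer (equivalently, every nonnegative combination $\sum_i c_i f_i$ is real-rooted), then $\sum_i f_i$ is real-rooted; such a family is called \emph{mutually interlacing}. \textbf{(B)} A linear map sending a column vector of polynomials to $M(x)$ times that vector preserves mutual interlacing provided $M(x)$ has polynomial entries with nonnegative coefficients and each column of $M(x)$ has a \emph{staircase} shape — after reordering rows, the entries run $1,1,\dots,1,\ (\text{a transitional entry}),\ x,x,\dots,x$, where the transitional entry is $1$, $x$, or $1+x$. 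Part~\textbf{(A)} is standard (Chudnovsky--Seymour, Fisk, Br\"and\'en); part~\textbf{(B)} with transitional entries in $\{0,1,x\}$ is the Savage--Visontai lemma, and the extension allowing the transitional entry $1+x$ is the piece that requires genuine (if elementary) new verification — see the final paragraph.

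Next I would introduce the refinement. Fixing $\mathbf{s}=(s_1,\dots,s_n)$ and writing $\mathbf{s}'=(s_1,\dots,s_{n-1})$, define for $0\le i<s_n$ the polynomial
\[
\widetilde{E}_{n,i}^{\mathbf{s}}(x)\;=\;\sum_{\substack{\mathbf{e}\in\cI_n^{\mathbf{s}}\\ e_n=i}}(1+x)^{\col(\mathbf{e})}x^{\asc(\mathbf{e})},
\]
so that $\widetilde{E}_n^{\mathbf{s}}(x)=\sum_{i=0}^{s_n-1}\widetilde{E}_{n,i}^{\mathbf{s}}(x)$. Splitting off the last coordinate of $\mathbf{e}$, the statistics of $\mathbf{e}$ agree with those of its truncation $\mathbf{e}'=(e_1,\dots,e_{n-1})\in\cI_{n-1}^{\mathbf{s}'}$ except at positions $n-1$ and $n$: position $n$ depends only on $i$ and contributes the common factor $1$ if $i>0$ and $1+x$ if $i=0$, while position $n-1$ compares $e_{n-1}/s_{n-1}$ with $i/s_n$, contributing $1$, $x$, or $1+x$ according as $>$, $<$, or $=$. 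Collecting terms by the value $j=e_{n-1}$ and absorbing the one-variable endpoint factors, one obtains a recursion of the shape
\[
\bigl(\widetilde{E}_{n,i}^{\mathbf{s}}(x)\bigr)_{0\le i<s_n}\;=\;M_n(x)\cdot\bigl(\widetilde{E}_{n-1,j}^{\mathbf{s}'}(x)\bigr)_{0\le j<s_{n-1}},
\]
where the $(i,j)$-entry of $M_n(x)$ records the comparison of $i/s_n$ with $j/s_{n-1}$ and hence equals $1$, $1+x$, or $x$. For each fixed $j$, as $i$ grows the comparison $i/s_n$ versus $j/s_{n-1}$ passes from ``$<$'' through at most one ``$=$'' to ``$>$'', so every column of $M_n(x)$ is a staircase of exactly the type in~\textbf{(B)}. (The endpoint factors — the $1+x$ for $i=0$, and the diagonal factors relating each $\widetilde{E}_{n-1,j}^{\mathbf{s}'}$ to its own ``terminal-position-free'' version — only multiply entrywise by $x$ or $1+x$, operations that keep a family mutually interlacing.)

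With~\textbf{(A)} and~\textbf{(B)} in hand the theorem follows by induction on $n$. The base case $n=0$ is the one-element family $\{1\}$, which is trivially mutually interlacing. For the inductive step: by hypothesis $(\widetilde{E}_{n-1,j}^{\mathbf{s}'})_j$ is mutually interlacing; applying~\textbf{(B)} to $M_n(x)$ shows $(\widetilde{E}_{n,i}^{\mathbf{s}})_i$ is mutually interlacing; and then~\textbf{(A)} gives that $\widetilde{E}_n^{\mathbf{s}}(x)=\sum_i\widetilde{E}_{n,i}^{\mathbf{s}}(x)$ is real-rooted, while the refined family is passed along to the next step.

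The main obstacle is exactly the collision weight $(1+x)^{\col}$, which forces some entries of $M_n(x)$ (and the endpoint normalizations) to be $1+x$ rather than $0$, $1$, or $x$, so the textbook Savage--Visontai preservation lemma does not apply verbatim. The crux is to prove version~\textbf{(B)} with transitional entries $1+x$: concretely, if $g$ and $h$ are consecutive members of a mutually interlacing family with $g\prec h$, one must check that $xg+(1+x)h$ and $(1+x)g+h$ slot into the resulting chain in the correct position. This reduces to the elementary observations that $(1+x)h$ interlaces both $h$ and $xh$, and that a sum of mutually interlacing polynomials interlaces each of its summands — but assembling these into the full preservation statement, and confirming that the $i=0$ row (where the new terminal position is itself a collision, so an extra $1+x$ appears) does not derail the chain, is the part that needs careful bookkeeping. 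As an alternative worth attempting, one can instead try to show directly that expanding $(1+x)^{\col(\mathbf{e})}$ over subsets of collision positions realizes $\widetilde{E}_n^{\mathbf{s}}$ as an $\mathbf{s}$-Eulerian polynomial $E_N^{\mathbf{S}}$ for a suitably enlarged $(N,\mathbf{S})$, which would reduce the statement to the Savage--Visontai theorem; whether the boundary conditions $e_0=e_{n+1}=0$ cooperate with such a reduction is the point to check.
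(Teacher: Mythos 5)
The paper itself contains no proof of Theorem~\ref{thm:hz}: it is quoted from Haglund and Zhang \cite[Theorem~1.1]{haglund-zhang}, so there is no internal argument to compare against. Your outline is, in substance, a reconstruction of the strategy of that cited source: refine $\widetilde{E}_n^{\mathbf{s}}(x)$ by the value of the last coordinate, write a transfer recursion whose columns have the Savage--Visontai staircase shape, and propagate an interlacing family by induction on $n$. So the route is the right one; the problem is that, as written, it is a plan rather than a proof, and the two places you flag as ``needing verification'' are exactly where the mathematical content lies.

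Concretely, there are two gaps. First, the preservation lemma~(B) with transitional entries $1+x$ is the technical heart of the whole theorem and you only assert it; the observations that $(1+x)h$ interlaces $h$ and $xh$, and that a sum of mutually interlacing polynomials interlaces each summand, do not by themselves yield that the image family is again mutually interlacing \emph{in the correct order}, and without that the induction cannot be run. (Note also that in~(A) you conflate ``has a common interlacer'' with ``mutually interlacing''; the latter, ordered, property is what the staircase lemma actually needs.) Second, the endpoint bookkeeping as described would fail. Applying a matrix literally to the vector $\bigl(\widetilde{E}_{n-1,j}^{\mathbf{s}'}(x)\bigr)_j$ forces you, in the $j=0$ column, to strip off the terminal collision factor $1+x$ already contained in $\widetilde{E}_{n-1,0}^{\mathbf{s}'}(x)$, which is not a polynomial operation; and the parenthetical claim that multiplying \emph{different} members of a mutually interlacing family entrywise by different factors among $1$, $x$, $1+x$ keeps the family mutually interlacing is false in general (degrees change and the inserted roots at $0$ or $-1$ need not sit compatibly with the neighbors' roots). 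The clean way out is to define the refined polynomials with the terminal comparison at position $n$ omitted, so that the transfer matrix itself has staircase columns with entries in $\{1,\,1+x,\,x\}$, and to account for the weight $1+x$ attached to $e_n=0$ only in the final summation, where a separate one-step interlacing argument gives real-rootedness; this, together with an actual proof of the extended preservation lemma, is what the argument in \cite{haglund-zhang} supplies. Your alternative suggestion---expanding $(1+x)^{\col(\mathbf{e})}$ to reduce to an ordinary $\mathbf{s}$-Eulerian polynomial for an enlarged alphabet---is not known to work precisely because of the boundary condition $e_{n+1}=0$, which is why the extended interlacing lemma is proved instead.
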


We mention explicitly that one of the motivations of Haglund and Zhang to define their polynomials originates in the work of Savage and Visontai \cite{savage-visontai} and Gustafsson and Solus \cite{gustafsson-solus}, in which they define similar real-rooted polynomials which are indexed by vectors of positive integers. 

Based on computational evidence, we first conjectured and then proved that, for convenient choices of the vector $\mathbf{s}$, one can obtain the Hilbert series of the augmented Chow ring of arbitrary uniform matroids.

\begin{theorem}\label{thm:unifrealrooted}
    For $\mathbf{s} = (n - k + 2, n - k + 3, \dots, n)$, we have
    \begin{equation*}
      \widetilde{E}^{\mathbf s}_{k-1}(x) = \H_{\U_{k, n}}(x).
  \end{equation*}
  In particular, Theorem~\ref{thm:intro-hz} holds, i.e. the Hilbert series of the augmented Chow ring of a uniform matroid is always a real-rooted polynomial.
\end{theorem}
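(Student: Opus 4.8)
The plan is to prove the polynomial identity $\widetilde{E}^{\mathbf{s}}_{k-1}(x) = \H_{\U_{k,n}}(x)$ directly, and then deduce Theorem~\ref{thm:intro-hz} from Theorem~\ref{thm:hz}. The input from the rest of the paper is the closed formula of Theorem~\ref{thm:uniform-formulas-main} (equivalently Theorem~\ref{thm:uniform-formulas-body}),
\[
\H_{\U_{k,n}}(x) = 1 + x\sum_{j=0}^{k-1}\binom{n}{j}\,A_j(x)\,(1 + x + \cdots + x^{k-1-j}),
\]
so the task reduces to expanding the Haglund--Zhang polynomial for the specific vector $\mathbf{s} = (n-k+2,n-k+3,\dots,n)$, i.e. $s_i = n-k+1+i$ for $1\le i\le k-1$, and matching it with the right-hand side above.

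First I would unwind the definition of $\widetilde{E}^{\mathbf{s}}_{k-1}(x)$: to each inversion sequence $\mathbf{e}\in\cI^{\mathbf{s}}_{k-1}$ attach the sequence of fractions $f_0 = 0,\ f_i = e_i/s_i \in [0,1),\ f_k = 0$, and decompose the set of positions $\{0,1,\dots,k-1\}$ according to the zero set $\{i : f_i = 0\}$. Positions both of whose neighbours are zero are exactly the collisions between two zeros and contribute a factor $(1+x)$ each; the positions with $e_i\neq 0$ group into maximal runs (``excursions'') of consecutive positions, each opened by an ascent (a $0\to{+}$ step, contributing $x$) and closed by a descent (a ${+}\to 0$ step, contributing $1$). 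The key point is that inside an excursion the relevant moduli $s_i$ are \emph{consecutive integers}; comparing $e_i/s_i$ with $e_{i+1}/s_{i+1}$ for consecutive $s_i$, a standardization argument identifies the weighted count (by internal ascents and internal collisions, with the $(1+x)^{\mathrm{col}}$ factor) of the local inversion subsequences with Eulerian statistics. Summing over where the excursions sit among the $k-1$ available positions produces the ``slack'' factors $1 + x + \cdots + x^{k-1-j}$ together with the remaining $(1+x)$'s from double-zero positions, and summing over the admissible values of the entries produces the factors $\binom{n}{j}$ and, after the position-dependent counts telescope, the polynomials $A_j(x)$.

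Reassembling these contributions, I would check that the resulting expression collapses exactly to the displayed closed form for $\H_{\U_{k,n}}(x)$. An alternative, possibly cleaner, route is to show that both sides obey the same recursion in $k$: deleting the first coordinate of $\mathbf{s}$ yields precisely the vector attached to $\U_{k-1,n}$, so conditioning on whether $e_1 = 0$ exhibits a factor $(1+x)$ times the $(k-1,n)$ instance, while the complementary term (where $e_1\neq 0$ opens an excursion) is evaluated by the same standardization; one then matches this against the recursion $\H_{\U_{k,n}}(x) = \H_{\U_{k-1,n}}(x) + \sum_{j=0}^{k-1}\binom{n}{j}A_j(x)\,x^{k-j}$ obtained by telescoping the closed formula, with base case $\H_{\U_{1,n}}(x) = 1+x = \widetilde{E}^{()}_0(x)$.

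The main obstacle is the combinatorial bookkeeping in the middle step: one must correctly track the interaction between (i) the placement of the excursions among the $k-1$ positions, which simultaneously governs the exponent $k-1-j$ of unused slack and the number of $(1+x)$ factors coming from double-zero positions, (ii) the internal ascent/collision statistics of each excursion, which must be shown to assemble into $A_j(x)$ once the position-dependent factors $s_i-1$ are summed over all admissible excursion placements, and (iii) the count of admissible entry values, which must yield $\binom{n}{j}$ rather than a raw product of consecutive integers. Distributing the $(1+x)^{\mathrm{col}}$ weight correctly between ``within-excursion'' collisions and ``between-zeros'' collisions is the delicate point; once the identity $\widetilde{E}^{\mathbf{s}}_{k-1}(x) = \H_{\U_{k,n}}(x)$ is established, Theorem~\ref{thm:hz} immediately gives real-rootedness.
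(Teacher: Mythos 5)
There is a genuine gap in Route~2, and Route~1 is only a high-level sketch whose ``main obstacle'' you identify but do not resolve, so the proposal does not constitute a proof.

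The arithmetic in Route~2 does not close. When $e_1 = 0$, position $0$ is automatically a collision (since $e_0/s_0 = 0 = e_1/s_1$), so the $e_1=0$ slice contributes $(1+x)\,\widetilde{E}^{\mathbf{s}'}_{k-2}(x) = (1+x)\,\H_{\U_{k-1,n}}(x)$, where $\mathbf{s}' = (n-k+3,\dots,n)$; you correctly note the factor $(1+x)$ here. But you then claim to ``match this against'' the telescoped recursion $\H_{\U_{k,n}}(x) = \H_{\U_{k-1,n}}(x) + \sum_{j=0}^{k-1}\binom{n}{j}A_j(x)\,x^{k-j}$, in which the lower-index term carries \emph{no} factor $(1+x)$. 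These two decompositions are incompatible: for them both to be correct, the $e_1\neq 0$ term $T$ would have to equal $\sum_j\binom{n}{j}A_j(x)\,x^{k-j} - x\,\H_{\U_{k-1,n}}(x)$, a quantity with a subtracted piece that has no obvious combinatorial meaning as a sum over inversion sequences. A small numerical check confirms the mismatch: for $(k,n)=(3,4)$ one computes $T = \widetilde{E}^{(3,4)}_{2}(x) - (1+x)\widetilde{E}^{(4)}_{1}(x) = 5x+5x^2$, whereas $\sum_{j=0}^{2}\binom{4}{j}A_j(x)\,x^{3-j} = x^3+10x^2+6x$. So the ``match'' as stated fails, and the complementary term cannot be disposed of by the sketch given.

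The paper's proof takes a genuinely different route. It does not condition on $e_1$; instead it passes through Haglund--Zhang's bijection between $\cI_n^{(2,3,\dots,n)}$ and $\mathfrak{S}_n$, notes that the subset $\cI_{k-1}^{\mathbf{s}}$ (embedded by prepending zeros) corresponds to permutations with a decreasing tail $\sigma_k < \sigma_{k+1} < \cdots < \sigma_n$, and then conditions on the \emph{position of the value $1$} in such a permutation. This yields a recursion that decreases \emph{both} $k$ and $n$ simultaneously,
\[
E_{k,n}(x) = E_{k-1,n-1}(x) + x\sum_{j=0}^{k-1}\binom{n-1}{j}A_j(x)\,E_{k-1-j,n-1-j}(x),
\]
whose verification against the closed form then rests on the quadratic Eulerian recurrence $A_{n+1}(x) = A_n(x) + x\sum_{j=0}^{n-1}\binom{n}{j}A_j(x)A_{n-j}(x)$. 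You should either adopt this permutation-model conditioning, or, if you want to pursue the direct excursion decomposition of Route~1, actually carry out the bookkeeping you flag in item (ii) -- in particular the assembly of $A_j(x)$ from the internal ascent/collision statistics across all excursion placements -- because as written that is precisely where the proof would live.
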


Observe that the preceding statement is an extension of the real-rootedness of the binomial Eulerian polynomials. When the uniform matroid is the Boolean matroid $\U_{n,n}$ by taking $\mathbf{s}=(2,3,\ldots,n)$ one has $\widetilde{E}^{\mathbf{s}}_{n-1}(x) = \widetilde{A}_n(x) = \H_{\U_{n,n}}(x)$. This particular case was precisely the content of another result of Haglund and Zhang \cite[Theorem~3.1]{haglund-zhang}. The proof of Theorem~\ref{thm:unifrealrooted} is postponed to Appendix \ref{appendixb} because it involves many intricate calculations.

\begin{remark}
   Given that the polynomials $\H_{\U_{k, n}}(x)$ are related to 
   the generalized binomial Eulerian polynomials
   studied in \cite{haglund-zhang}, it is natural to ask whether the Hilbert--Poincar\'e polynomials
   of augmented Chow rings of arbitrary matroids arise in this way. The answer is no: if $\M=\U_{3, 4} \oplus \U_{1, 1}$, we can compute $\H_\M(x) = 1 + 23x + 55x^2 + 23x^3 + x^4$, and an exhaustive computer
   search shows that there is no $\mathbf{s} \in \Z_{>0}^{\rk(\M) - 1}$ with $\widetilde{E}^{\mathbf s}_{\rk(\M) - 1}(x) = \H_{\M}(x)$.
\end{remark}

\begin{remark}
   We do not know of any analogues of Theorem~\ref{thm:unifrealrooted} for the usual Chow ring. Recall that $\uH_{\U_{n-1,n}}(x) = \frac{1}{x} d_n(x)$, and hence it is reasonable to search among known generalizations of the derangement polynomials. Although the work of Gustafsson and Solus \cite{gustafsson-solus} provides one such generalization via Ehrhart local $h^*$-vectors, we were not able to produce $\uH_{\U_{n-2,n}}(x)$ as a particular case of their polynomials.
   In general, the polynomials $\uH_{\U_{n-1,n}}(x)$ do not arise as instances of the polynomials
   studied in \cite{haglund-zhang}: if $\M = \U_{5,6}$, then $\uH_{\M}(x) = 1 + 51x + 161x^2 + 51x^3 + x^4$, and
   an exhaustive computer search shows that there is no $\mathbf{s} \in \Z_{>0}^{\rk(\M) - 2}$ with $\widetilde{E}^{\mathbf s}_{\rk(\M) - 2}(x) = \uH_{\M}(x)$.
\end{remark}

\subsection{Braid matroids}

The \emph{braid matroid} $\mathsf{K}_n$ is defined as the graphic matroid associated to a complete graph on $n$ vertices. Equivalently, this is the matroid associated to the Coxeter arrangement of type $\mathrm{A}_{n-1}$. Since every flat of $\mathsf{K}_n$ is a complete subgraph or a vertex disjoint union of complete graphs, one may conclude that the flats of $\mathsf{K}_n$ are in bijection with the partitions of the set $[n]$. Hence, the lattice $\mathcal{L}(\mathsf{K}_n)$ is also sometimes referred to as the \emph{partition lattice} of $n$.

One of the most intriguing problems in the Kazhdan--Lusztig theory of matroids has been that of understanding combinatorially the Kazhdan--Lusztig polynomial and the $Z$-polynomial of $\mathsf{K}_n$. Recently, Ferroni and Larson \cite{ferroni-larson} found a strikingly concrete description of the coefficients of $P_{\mathsf{K}_n}(x)$ and $Z_{\mathsf{K}_n}(x)$. These coefficients enumerate labelled matroids that arise from direct sums of series-parallel networks. It is tempting to ask whether a counterpart of that beautiful description exists for the Chow and augmented Chow polynomials of $\mathsf{K}_n$.

\begin{problem}
    Study the Chow polynomial and the augmented Chow polynomial of braid matroids.
\end{problem}

More specifically, it would be of interest to produce recursions, explicit formulas, combinatorial interpretations of the coefficients, and a proof of the real-rootedness for the polynomials $\uH_{\mathsf{K}_n}(x)$ and $\H_{\mathsf{K}_n}(x)$.

We observe that a related object, the Chow ring of the lattice of flats of $\mathsf{K}_n$ using the minimal building set (as opposed to the maximal one), i.e. $D(\mathcal{L}(\mathsf{K}_n),\mathcal{G}_{\min})$, cf. \cite[Example 1]{feichtner-yuzvinsky}, yields the (rational) cohomology ring of $\overline{\mathcal{M}}_{0,n-1}$, the Deligne--Mumford compactification
of the moduli space of complex projective lines with $n-1$ marked points. This space and its cohomology are of significant importance in algebraic geometry, field theory, and the theory of operads; for more information about this, we refer the reader, e.g. to \cite{ginzburg-kapranov,kontsevich,etingof-henriques-kamnitzer-rains,dotsenko}.

Turning back towards real-rootedness, we point out that the reduced characteristic polynomial of the braid matroid $\mathsf{K}_n$ satisfies $\overline{\chi}_{\mathsf{K}_n}(x) = (x-2)\cdot (x-3)\cdots (x-n+1)$, and hence is real-rooted. Also, the $h$-polynomial of the Bergman complex of $\mathsf{K}_n$, i.e. $h_{\Delta(\widehat{\mathcal{L}}(\mathsf{K}_n))}(x)$ (see Appendix \ref{appendix} for a more precise definition) is a real-rooted polynomial \cite[Proposition~4.2]{athanasiadis-kalampogia}. 

In \cite[p.~527]{feichtner-yuzvinsky} Feichtner and Yuzvinsky used Proposition~\ref{prop:hilbert-chow} to slightly simplify the resulting expression for $\uH_{\mathsf{K}_n}(x)$. Their formula can be rewritten as follows in terms of Stirling numbers of the second kind $\tstirlingtwo{a}{b}$, i.e. the number of partitions of the set $[a]$ into $b$ blocks.

\begin{proposition}
    The Hilbert series of the Chow ring of the braid matroid $\mathsf{K}_n$ is given by:
    \[ \uH_{\mathsf{K}_n}(x) = \sum_{R\subseteq [n]} \prod_{i=1}^{|R|} \frac{x(1-x^{r_i-r_{i-1}-1})}{1-x} \stirlingtwo{n-r_{i-1}}{n-r_i}.\]
    In the above sum, $r_0:=0$ and the elements of $R$ in increasing order are $r_1 < r_2 < \cdots$.
\end{proposition}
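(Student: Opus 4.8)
The plan is to apply the chain-of-flats formula of Proposition~\ref{prop:hilbert-chow} directly to $\M = \mathsf{K}_n$ and then reorganize the resulting sum according to the sequence of ranks that occurs along a chain. The starting point is the standard identification of $\mathcal{L}(\mathsf{K}_n)$ with the partition lattice of $[n]$: a flat of $\mathsf{K}_n$ is the cycle matroid of a disjoint union of complete graphs on the blocks of a set partition $\pi$, its rank is $n$ minus the number of blocks of $\pi$, and for a rank-$r$ flat $F$ one has $\mathcal{L}(\mathsf{K}_n/F) \cong \mathcal{L}(\mathsf{K}_{n-r})$ (the partition lattice on the set of blocks), so by the interval identification recalled in the background section, $[F,\widehat{1}]$ in $\mathcal{L}(\mathsf{K}_n)$ is isomorphic to $\mathcal{L}(\mathsf{K}_{n-r})$. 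In particular $\rk(\mathsf{K}_n) = n-1$.

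First I would fix a ``rank profile'' $R = \{r_1 < r_2 < \cdots < r_m\} \subseteq \{1,\ldots,n-1\}$ (and $R = \varnothing$ for $m=0$), set $r_0 = 0$, and count the chains $\varnothing = F_0 \subsetneq F_1 \subsetneq \cdots \subsetneq F_m$ in $\mathcal{L}(\mathsf{K}_n)$ with $\rk(F_i) = r_i$. Writing $\pi_i$ for the partition corresponding to $F_i$, the passage from $\pi_{i-1}$ to $\pi_i$ amounts to choosing a coarsening of $\pi_{i-1}$ with exactly $n-r_i$ blocks; by the self-similarity of the partition lattice noted above, the number of such coarsenings depends only on the number $n - r_{i-1}$ of blocks of $\pi_{i-1}$ and equals $\stirlingtwo{n-r_{i-1}}{n-r_i}$, the number of set partitions of an $(n-r_{i-1})$-element set into $n-r_i$ parts. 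Hence the number of chains with rank profile $R$ is exactly $\prod_{i=1}^m \stirlingtwo{n-r_{i-1}}{n-r_i}$, where for $i=1$ this first factor is $\stirlingtwo{n}{n-r_1}$.

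Next, I would observe that the summand attached to a chain in Proposition~\ref{prop:hilbert-chow}, namely $\prod_{i=1}^m \frac{x(1-x^{\rk(F_i)-\rk(F_{i-1})-1})}{1-x}$, depends only on $R$. Summing over all chains, grouping by rank profile, and multiplying the common summand by the count from the previous paragraph gives
\[
\uH_{\mathsf{K}_n}(x) = \sum_{R = \{r_1 < \cdots < r_m\} \subseteq \{1,\ldots,n-1\}} \prod_{i=1}^m \frac{x\bigl(1-x^{r_i - r_{i-1}-1}\bigr)}{1-x}\, \stirlingtwo{n-r_{i-1}}{n-r_i},
\]
the empty profile contributing the empty product $1$. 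Finally I would enlarge the index set to all $R \subseteq [n]$: if $n \in R$, the corresponding factor is $\stirlingtwo{n-r_{i-1}}{0}$ with $n - r_{i-1} \geq 1$, which vanishes, so these extra terms contribute nothing and the stated formula follows.

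The argument carries no real obstacle; the one step that needs care is the counting, i.e. justifying that the number of coarsenings of a fixed $(n-r_{i-1})$-block partition into $n-r_i$ blocks is genuinely $\stirlingtwo{n-r_{i-1}}{n-r_i}$ \emph{independently of which partition one starts from}. This is precisely the statement that the upper interval of a rank-$r$ flat of $\mathsf{K}_n$ is isomorphic to $\mathcal{L}(\mathsf{K}_{n-r})$, a standard fact about the partition lattice that I would cite (e.g. from Stanley's book) rather than reprove. A minor bookkeeping remark suffices for the degenerate terms: chains with two consecutive ranks differing by $1$ contribute $0$ through the factor $1 - x^{0}$, and profiles containing $n$ contribute $0$ through a vanishing Stirling number, so both are handled automatically. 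This recovers, and recasts in terms of Stirling numbers of the second kind, the expression for $\uH_{\mathsf{K}_n}(x)$ obtained by Feichtner and Yuzvinsky from Proposition~\ref{prop:hilbert-chow}.
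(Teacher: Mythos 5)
Your proof is correct and follows essentially the same route as the paper, which (following Feichtner and Yuzvinsky) obtains the formula by applying the chain-of-flats expression of Proposition~\ref{prop:hilbert-chow} to the partition lattice $\mathcal{L}(\mathsf{K}_n)$ and grouping chains by their rank profiles, with the Stirling numbers $\stirlingtwo{n-r_{i-1}}{n-r_i}$ counting the chains with a given profile. Your bookkeeping remarks (vanishing of terms with $n\in R$ or with consecutive ranks) are exactly the points needed to match the stated index set, so nothing is missing.
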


Using this formula, we have verified that $\uH_{\mathsf{K}_n}(x)$ is real-rooted for all $n\leq 30$. 

\subsection{More observations regarding real-rootedness}

One of the standard techniques to prove that a family of polynomials is real-rooted is that of \emph{interlacing sequences}. We refer to \cite[Section 7]{branden} for more details. In particular, a phenomenon observed in this setting that we pose here as a conjecture is the following.

\begin{conjecture}
    For every matroid $\M$, the polynomial $\uH_{\M}(x)$ interlaces $\H_{\M}(x)$.
\end{conjecture}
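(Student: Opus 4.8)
The conjecture asserts that for every matroid $\M$, the polynomial $\uH_{\M}(x)$ interlaces $\H_{\M}(x)$. Since this is stated as a conjecture in the paper (not a theorem), I'll describe the most natural attack.

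\medskip

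\textbf{The plan is to} use the theory of interlacing sequences of polynomials, as surveyed in \cite[Section 7]{branden}. Recall that a nonnegative real-rooted polynomial $f(x)$ interlaces a nonnegative real-rooted polynomial $g(x)$ (with $\deg g = \deg f + 1$) if their roots alternate appropriately; a standard criterion is that $f$ interlaces $g$ if and only if the pair $(f,g)$ is a so-called \emph{compatible pair}, equivalently $g(x) = a(x) f(x) + b$ for suitable data, or more usefully, that $af + bg$ is real-rooted for all $a, b \ge 0$ (the Wronskian-type criterion). The most promising route exploits Proposition~\ref{prop:recursion-H-in-terms-uH}, which gives
\[
\H_{\M}(x) = \sum_{F \in \mathcal{L}(\M)} x^{\rk(F)}\, \uH_{\M/F}(x) = \uH_{\M}(x) + \sum_{\substack{F \in \mathcal{L}(\M)\\ F \neq \varnothing}} x^{\rk(F)}\, \uH_{\M/F}(x),
\]
so that $\H_{\M}(x) - \uH_{\M}(x)$ is manifestly a nonnegative combination of polynomials of the form $x^{\rk(F)} \uH_{\M/F}(x)$. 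First I would set up an induction on the size of the ground set: by the interlacing-preservation lemmas (sums of polynomials that interlace a common polynomial again interlace it, and multiplication by $x$ behaves predictably), one would want to show that each summand $x^{\rk(F)} \uH_{\M/F}(x)$, together with $\uH_{\M}(x)$, forms an interlacing-compatible system.

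\medskip

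\textbf{The key steps}, in order, would be: (1) establish the base cases — for $\M \cong \U_{1,1}$ one has $\uH_{\M}(x) = 1$ and $\H_{\M}(x) = 1+x$, and for Boolean matroids $\uH_{\U_{n,n}}(x) = A_n(x)$ and $\H_{\U_{n,n}}(x) = \widetilde{A}_n(x)$, where the interlacing $A_n(x) \preceq\!\!\!\prec \widetilde{A}_n(x)$ should follow from \cite{haglund-zhang} or \cite{branden-jochemko} together with the explicit identity $\widetilde{A}_n(x) = 1 + x\sum_j \binom{n}{j} A_{n-j}(x)$ in Remark~\ref{remark:alternative-convolution}; (2) strengthen the inductive hypothesis to a statement about a whole interlacing family — rather than proving $\uH_{\M}$ interlaces $\H_{\M}$ in isolation, prove simultaneously that the family $\{\uH_{\M/F}(x)\}$ indexed by flats $F$, suitably shifted by powers of $x$, is a sequence with the common-interlacing property; (3) use Corollary~\ref{coro:recursion-hilbert-chow} (the semi-small recursion) in the inductive step, since there $\uH_{\M}(x) = \uH_{\M \smallsetminus \{i\}}(x) + x\sum_{F \in \underline{\mathscr{S}}_i} \uH_{\M/(F\cup\{i\})}(x)\uH_{\M|_F}(x)$ expresses $\uH_{\M}$ as a sum of products, and analogously for $\H_{\M}$ — one then needs that products of interlacing pairs interlace (which requires real-rootedness of all factors, currently only conjectural via Conjecture~\ref{conj:real-rootedness}); (4) invoke Lemma~\ref{lemma:properties-gamma}-style bookkeeping and the closure of interlacing under nonnegative linear combinations to conclude.

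\medskip

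\textbf{The hard part} will be that the whole approach is bootstrapped on Conjecture~\ref{conj:real-rootedness}: interlacing is only defined for real-rooted polynomials, so one cannot even phrase the inductive step about products $\uH_{\M/(F\cup\{i\})}(x) \cdot \H_{\M|_F}(x)$ without knowing each factor is real-rooted. Thus a fully unconditional proof seems out of reach with present techniques; a realistic target would be a \emph{conditional} theorem — ``if Conjecture~\ref{conj:real-rootedness} holds, then $\uH_{\M}$ interlaces $\H_{\M}$'' — or an unconditional proof for the classes where real-rootedness is already known (uniform matroids via Theorem~\ref{thm:intro-hz}, sparse paving matroids up to $40$ elements, and paving matroids via Theorem~\ref{thm:paving-intro} combined with interlacing-preservation under the subtraction formula there). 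Even in the uniform case, the subtlety is that Theorem~\ref{thm:intro-hz} gives real-rootedness of $\H_{\U_{k,n}}(x)$ but says nothing about $\uH_{\U_{k,n}}(x)$ except through $\uH_{\U_{n-1,n}}(x) = \frac{1}{x}d_n(x)$; so one would need a separate real-rootedness input for $\uH_{\U_{k,n}}(x)$, and then verify the compatibility of the two Haglund–Zhang-type constructions — a delicate comparison of the $\mathbf{s}$-vectors. I expect the genuine obstacle is precisely this lack of a self-contained real-rootedness statement to stand on, which is why the interlacing statement is posed as a conjecture rather than proved.
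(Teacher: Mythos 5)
This statement is posed in the paper as an open conjecture (Section~5.3): the paper offers no proof, only the observation that the degrees match ($\deg\uH_{\M}=\rk(\M)-1$, $\deg\H_{\M}=\rk(\M)$) and that no analogous statement can even be formulated for $P_{\M}$ and $Z_{\M}$. You correctly recognize this, and your text is a plan of attack rather than a proof, so there is nothing in the paper to measure it against; what can be assessed is whether the plan would plausibly close the gap, and as written it would not, for the reason you yourself flag plus two more concrete ones.

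First, the whole scheme is conditional on Conjecture~\ref{conj:real-rootedness}, so at best it could yield a conditional statement; but even granting real-rootedness of all $\uH$ and $\H$ of minors, the inductive step via Corollary~\ref{coro:recursion-hilbert-chow} is not a routine application of interlacing-preservation. Interlacing is not preserved under arbitrary nonnegative sums: one needs the summands to form an interlacing (compatible) sequence, i.e. a common-interleaver structure relating $\uH_{\M\smallsetminus\{i\}}$, the products $\uH_{\M/(F\cup\{i\})}\cdot\uH_{\M|_F}$ (resp.\ $\uH_{\M/(F\cup\{i\})}\cdot\H_{\M|_F}$), and the target pair $(\uH_{\M},\H_{\M})$; your step (2) gestures at this but gives no candidate common interleaver and no mechanism for why products indexed by different flats $F$ should pairwise interlace. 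Second, the fallback targets are also shakier than stated: for paving matroids, Theorem~\ref{thm:paving-intro} expresses $\uH_{\M}$ and $\H_{\M}$ as \emph{differences} of uniform-matroid evaluations, and neither real-rootedness nor interlacing is preserved under subtraction, so "interlacing-preservation under the subtraction formula" is not an available tool; and for uniform matroids, as you note, the paper does not establish real-rootedness of $\uH_{\U_{k,n}}$ at all (Theorem~\ref{thm:uniform-formulas-main} writes it as a positive sum of real-rooted terms, which is insufficient), so even the "known" class is not actually covered. In short, your diagnosis of the obstruction is accurate, but the proposal contains no new idea that would overcome it; it reproduces the reasons the statement remains a conjecture.
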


Notice that it is not possible to formulate an analogous conjecture in the Kazhdan--Lusztig setting since the counterpart polynomials, i.e. $P_{\M}(x)$ and $Z_{\M}(x)$, have degrees at most $\lfloor\frac{\rk(\M)-1}{2}\rfloor$ and exactly $\rk(\M)$ respectively.

Going back to a further technique, first proved by Aissen, Schoenberg, and Whitney \cite{aissen-schoenberg-whitney}, to establish the real-rootedness of a degree $d$ polynomial $p(x):=\sum_{j=0}^d p_j\, x^j$, one may consider the \emph{Toeplitz matrix}, $P:=(p_{j-i})_{0\leq i,j\leq d}$, where by definition $p_i:=0$ if $i<0$. It is known by the main result of \cite{aissen-schoenberg-whitney} that all the roots of $p(x)$ are real and negative if and only if $P$ is a totally-positive matrix, i.e. all the minors of $P$ are nonnegative. In the case the polynomial $p(x)$ is $\uH_{\M}(x)$, $\H_{\M}(x)$, or $Z_{\M}(x)$, the following are natural questions that one could ask:
\begin{itemize}
    \item Is it possible to provide a combinatorial interpretation for the minors of the Toeplitz matrices?
    \item Since in $\uCH(\M)$, $\CH(\M)$, and $\IH(\M)$ the Hodge--Riemann relations hold in arbitrary degree, can one relate them with the minors of the Toeplitz matrices of $\uH_{\M}(x)$, $\H_{\M}(x)$, or $Z_{\M}(x)$?
\end{itemize}

A positive answer to the second of these questions would provide a combinatorial application of the Hodge--Riemann relations in degree greater than one; this is a question posed by Huh in \cite{huh-icm18}.

\subsection{Chow rings and Koszulness}

Let $A$ be a graded algebra over $\mathbb{Q}$ and consider a minimal free graded $A$-resolution of $\mathbb{Q}$,
    \[ \cdots \stackrel{\phi_3}{\longrightarrow} A^{b_2}\stackrel{\phi_2}{\longrightarrow} A^{b_1}\stackrel{\phi_1}{\longrightarrow} A \longrightarrow \mathbb{Q}. \]
In other words, all the nonzero entries of the matrices $\phi_i$ are homogeneous and have positive degree. Given that $\phi_i \otimes \mathbb{Q}$ is identically zero in a minimal resolution, we have
\[ \operatorname{Tor}_i^A(\mathbb{Q},\mathbb{Q}) \cong A^{b_i} \otimes\mathbb{Q} \cong \mathbb{Q}^{b_i} \cong \operatorname{Ext}_A^i(\mathbb{Q},\mathbb{Q}).\]
The \emph{Poincar\'e series} of $A$, denoted $\operatorname{Poin}(A,x)$, is the generating function of the graded dimensions of $\operatorname{Tor}_i^A(\mathbb{Q},\mathbb{Q})$. In other words,
    \[ \Poin(A,x) = \sum_{i=0}^{\infty} \dim_{\mathbb{Q}} \Tor_i^A(\mathbb{Q},\mathbb{Q})\, x^i = \sum_{i=0}^{\infty} \dim_{\mathbb{Q}} \Ext_A^i(\mathbb{Q},\mathbb{Q})\, x^i.\] 

Shifting the degrees such that all $\phi_i$ become maps of degree zero, we produce a grading on $\Tor_i^A(\mathbb{Q},\mathbb{Q}) = \bigoplus_{j\geq 0} \left(\Tor_i^A(\mathbb{Q},\mathbb{Q})\right)^j$; analogously, we conclude that there is a grading $\Ext^i_A(\mathbb{Q},\mathbb{Q}) = \bigoplus_{j\geq 0} \left(\Ext^A_i(\mathbb{Q},\mathbb{Q})\right)^j$.

We say that $A$ is a \emph{Koszul algebra} if $\left(\Tor_i^A(\mathbb{Q},\mathbb{Q})\right)^{j} = 0$ for each $i\neq j$ or, equivalently, if $\left(\Ext^i_A(\mathbb{Q},\mathbb{Q})\right)^{j} = 0$ for each $i\neq j$. A basic reference on Koszulness is the survey \cite{froberg} by Fr\"oberg, whose notation and terminology we follow.

In \cite[Conjecture~2]{dotsenko} Dotsenko conjectured that the Chow ring $\uCH(\M)$ is a Koszul algebra for every matroid $\M$. This, along with its version for augmented Chow rings, was recently proved by Mastroeni and McCullough.

\begin{theorem}[\cite{mastroeni-mccullough}]
    The Chow ring and the augmented Chow ring of a loopless matroid $\M$ are Koszul.
\end{theorem}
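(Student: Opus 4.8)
The plan is to prove the stronger assertion that each of these rings is \emph{$G$-quadratic}, i.e.\ that for a suitable choice of coordinates and term order the defining ideal admits a Gr\"obner basis consisting of quadrics. This suffices because of the standard chain of implications ``$G$-quadratic $\Rightarrow$ $\mathrm{LG}$-quadratic $\Rightarrow$ Koszul'', together with Fr\"oberg's theorem that a quotient of a polynomial ring by a monomial ideal generated in degree $2$ is Koszul, and the fact that $S/\mathrm{in}(\mathcal{I})$ Koszul forces $S/\mathcal{I}$ Koszul. As a first reduction one notes that both algebras do have \emph{quadratic} presentations: in $\CH(\M)$ the linear relations $y_i=\sum_{F\not\ni i}x_F$ let one eliminate the variables $y_i$, after which the remaining generators of $J$ become quadrics in the $x_F$; in $\uCH(\M)$ the linear relations in $J$ let one realize the ring as a quotient of a smaller polynomial ring by the quadratic ideal obtained by substituting into the incomparability monomials $x_{F_1}x_{F_2}$. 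Since, moreover, both rings are instances of the Feichtner--Yuzvinsky construction $D(\mathcal{L},\mathcal{G})$ -- the Chow ring being the maximal building set, and the augmented Chow ring being the building set $\mathcal{G}_{\aug}$ on the lattice of flats of the free coextension -- it is natural to aim for a uniform statement about $D(\mathcal{L},\mathcal{G})$ when $\mathcal{L}$ is geometric.

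For the Chow ring I would fix a simple matroid $\M$ (harmless, since $\uCH$ depends only on $\mathcal{L}(\M)$), choose a linear extension $\prec$ of the poset of proper nonempty flats, and work with the degree-reverse-lexicographic order refining $\prec$. The candidate reduced Gr\"obner basis then consists of the monomials $x_Fx_G$ with $F,G$ incomparable, together with, for each covering configuration, one further quadratic element obtained by rewriting a generator of $J$, chosen so that its leading term is a quadratic monomial. The heart of the matter is Buchberger's criterion: every S-polynomial among these quadrics must reduce to zero. This is a finite but delicate case analysis over triples of flats, and it is exactly here that one must use that $\mathcal{L}(\M)$ is a \emph{geometric} lattice -- concretely, the submodular inequality $\rk(F\vee G)+\rk(F\wedge G)\le \rk(F)+\rk(G)$ is what makes the reductions close up. That some such hypothesis is essential is already visible from the non-palindromic example $D(Q,\mathcal{G}_{\max})$ of Feichtner--Yuzvinsky recalled in Section~\ref{subsec:gamma-positivity}: for a general atomic lattice the Chow ring need not even be Gorenstein. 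An alternative and often cleaner route to the same end is to exhibit a \emph{Koszul filtration}: the family of ideals $(x_F : F\in\mathcal{P})$, as $\mathcal{P}$ ranges over the order filters of $\mathcal{L}(\M)\smallsetminus\{\varnothing,E\}$, is the natural candidate, and one must check that for each such $\mathcal{P}$ and each maximal flat $F_0\in\mathcal{P}$ the colon ideal $\big((x_F : F\in\mathcal{P}\smallsetminus\{F_0\}) : x_{F_0}\big)$ is again generated by a set of the $x_F$ (those comparable to $F_0$, plus a controlled correction coming from the linear relations) -- again a statement that rests on semimodularity.

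The augmented case is then handled in parallel. Either one runs the Gr\"obner-basis argument with the presentation in the variables $x_F$ ($F\neq E$) and $y_i$: here $J$ is already a quadratic \emph{monomial} ideal (the incomparability monomials and the monomials $y_ix_F$ with $i\notin F$), so $\mathbb{Q}[x_F,y_i]/J$ is Koszul outright by Fr\"oberg, and one only needs a term order in which the linear relations $y_i-\sum_{F\not\ni i}x_F$ contribute single-variable leading terms while keeping the combined Gr\"obner basis quadratic. Or one invokes the identification of $\CH(\M)$ with $D\big(\mathcal{L}(\M'),\mathcal{G}_{\aug}\big)$ for the free coextension $\M'$ and applies whatever uniform $D(\mathcal{L},\mathcal{G})$ statement was proved above. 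One can also use the isomorphism $\uCH(\M)\cong\CH(\M)\otimes_{\H(\M)}\mathbb{Q}=\CH(\M)/(y_1,\dots,y_n)$ from Remark~\ref{remark:stalk-empty} to transfer information between the two cases; but since $\CH(\M)$ is Artinian the $y_i$ are not a regular sequence, so this transfer requires the quotient to have a linear resolution over $\CH(\M)$ rather than a naive ``modulo a regular sequence'' argument.

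The main obstacle, in every one of these routes, is the same: proving that the chosen candidate really is a Gr\"obner basis (equivalently, that the colon ideals in the candidate Koszul filtration stay inside the family). This is the combinatorial core where the geometry of $\mathcal{L}(\M)$ is used in an essential way, and where the bulk of the work of \cite{mastroeni-mccullough} lies; everything else -- the reductions to $G$- and $\mathrm{LG}$-quadraticity, Fr\"oberg's theorem, and the passage between $\uCH(\M)$ and $\CH(\M)$ -- is formal.
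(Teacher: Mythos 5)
This statement is not proved in the paper at all: it is quoted from \cite{mastroeni-mccullough}, so the only meaningful comparison is with that work, and against that standard your text is a research plan rather than a proof. Every route you sketch stops exactly where the real work begins, and you say so yourself: the candidate quadratic Gr\"obner basis is never shown to satisfy Buchberger's criterion, and the candidate Koszul filtration by order-filter ideals is never shown to have colon ideals that stay inside the family. Appealing to submodularity of the rank function as the thing that ``makes the reductions close up'' is an assertion, not an argument, and deferring ``the combinatorial core'' to \cite{mastroeni-mccullough} means the theorem has not been proved.

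Beyond being incomplete, your primary route is aimed at a target that may not be attainable. $G$-quadraticity is strictly stronger than Koszulness, and for Chow rings of matroids it is not known to hold: the Gr\"obner bases that are actually available (the Feichtner--Yuzvinsky basis used in Propositions~\ref{prop:hilbert-chow} and \ref{prop:hilbert-augmented-chow}, and the basis of \cite{backman-eur-simpson}) contain generators of degree up to the corank of a flat, not quadrics, and Mastroeni and McCullough explicitly do not prove $G$-quadraticity; they establish Koszulness by constructing Koszul filtrations, i.e.\ by exactly the kind of colon-ideal analysis you list only as an unverified alternative. The same issue undermines the augmented case: the fact that $J$ alone is a quadratic monomial ideal gives Koszulness of $\mathbb{Q}[x_F,y_i]/J$ by Fr\"oberg, but the ring in question is the quotient by $I+J$, and your hope for ``a term order in which the linear relations contribute single-variable leading terms while keeping the combined Gr\"obner basis quadratic'' is precisely the open point, since eliminating the $y_i$ turns the monomial quadrics into non-monomial ones whose S-polynomials need not reduce to zero. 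So the proposal correctly assembles the standard implications ($G$-quadratic $\Rightarrow$ Koszul, Fr\"oberg, passage to initial ideals) but supplies no proof of the key step, and its main strategy would need to overcome an obstacle that the actual proof deliberately avoids.
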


Unsurprisingly, the property of being Koszul imposes heavy restrictions on the Hilbert series of the graded algebra $A$. In fact, the following identity holds:
    \[ \Hilb(A, x) \cdot \Poin(A,-x) = 1.\]

Particularizing this property for both Chow rings and augmented Chow rings yields that for every matroid $\M$,
    \[ \frac{1}{\uH_{\M}(-x)}\enspace \text{ and }\enspace \frac{1}{\H_{\M}(-x)}\enspace \text{ are series with nonnegative coefficients}.\]
As a consequence, one concludes the validity of several inequalities satisfied by the coefficients of $\H_{\M}(x)$ and $\uH_{\M}(x)$. For example, for a matroid $\M$ of rank $5$, the Hilbert series of its Chow ring has the form $\uH_{\M}(x) = 1+ax+bx^2+ax^3+x^4$ with $a\leq b$; a straightforward computation reveals that in this case:
    \[ \frac{1}{\uH_{\M}(-x)} = 1 + ax + (a^2-b) x^2 + (a^3-2ab+a) x^3 + \cdots\]
In particular, the nonnegativity of the quadratic term yields $a^2\geq b$, which proves that $\uH_{\M}(x)$ has log-concave coefficients; this property also supports the real-rootedness conjectures because it is well-known that being real-rooted is a stronger property than having log-concave coefficients.

In the monograph by Polishchuk and Positselski \cite[Chapter~7]{polishchuk-positselski}, one finds the following result, which essentially states a list of inequalities that the graded dimensions of a Koszul algebra always satisfy. 

\begin{theorem}[{\cite[Chapter~7, Theorem~2.1]{polishchuk-positselski}}]\label{thm:polischuk}
    Let $A=\bigoplus_{i\geq 0} A^i$ be a Koszul algebra, and let $a_i = \dim_{\mathbb{Q}} A^i$ for each $i\geq 0$. Consider the infinite matrix $X = (a_{j-i})_{i,j=0}^{\infty}$ with the convention that $a_i = 0$ whenever $i<0$. For each finite list $I=(i_1,\ldots,i_s)$ of positive integers, the $s\times s$ square submatrix $X_I$ of $X$ obtained by choosing the entries on the rows $(0,i_1,i_1+i_2,\ldots, i_{1}+\cdots+i_{s-1})$ and the columns $(i_1,i_1+i_2,\ldots,i_1+\cdots+i_s)$ satisfies:
        \[ \det(X_I) \geq 0.\]
\end{theorem}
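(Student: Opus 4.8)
The plan is to derive these determinantal inequalities from the Koszul duality between $A$ and its Koszul dual $A^! = \Ext_A^\bullet(\mathbb{Q},\mathbb{Q})$, together with the fact that the graded dimensions of $A^!$ are themselves nonnegative. The starting point is the identity $\Hilb(A,x)\cdot\Poin(A,-x)=1$ recalled in the excerpt, which, writing $p_i = \dim_{\mathbb{Q}}\Ext_A^i(\mathbb{Q},\mathbb{Q})$ and using Koszulness to conclude that the $i$-th Ext group lives entirely in internal degree $i$, says precisely that the generating functions $a(x)=\sum a_i x^i$ and $p(x)=\sum p_i x^i$ satisfy $a(-x)\cdot p(x)=1$, equivalently $a(x)p(-x)=1$. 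Thus the Toeplitz matrix $X=(a_{j-i})$ and the Toeplitz matrix $Y=\big((-1)^{j-i}p_{j-i}\big)$ are two-sided inverses of each other as (lower-triangular, unipotent) infinite matrices.

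With this in hand, the key step is a purely linear-algebraic one: the minors $\det(X_I)$ that appear in the statement are, up to sign, exactly the \emph{complementary} minors of the inverse matrix $Y$, by the classical Jacobi complementary-minor identity. More precisely, for an infinite unipotent lower-triangular matrix, a minor of $X$ on row set $R$ and column set $C$ equals $\pm$ the minor of $Y=X^{-1}$ on the complementary row and column sets (within a large enough finite truncation, which is harmless since everything is lower-triangular unipotent). Tracking the indexing scheme in the statement — rows $(i_0, i_0+i_1, \dots)$ and columns $(i_1, i_1+i_2,\dots)$ — one checks that the complementary minor of $Y$ is again a Toeplitz-type minor of the matrix $\big(p_{j-i}\big)$, and the signs $(-1)^{j-i}$ combine with the Jacobi sign to produce an overall $+$. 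So the claim $\det(X_I)\ge 0$ is equivalent to the nonnegativity of the corresponding minor of the Toeplitz matrix built from the $p_i$.

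It then remains to show that \emph{those} minors of the $p$-Toeplitz matrix are nonnegative. Here one uses that $p_i = \dim_{\mathbb{Q}}(A^!)^i$ and interprets the relevant minor via the Cauchy--Binet expansion, or more directly via a generating-function identity: the minor in question counts, with the correct sign, the dimension of a multigraded piece of a tensor/exterior construction on $A^!$ — concretely, it equals the dimension of a component of $\bigotimes$ of copies of $A^!$ after an inclusion–exclusion, which is manifestly $\ge 0$. Equivalently, and perhaps more cleanly, one invokes the fact that $A^!$ is itself a Koszul algebra (Koszul duality is an involution) and runs the same argument one step further, or one cites the known statement that the Hilbert series of a Koszul algebra, viewed through its Toeplitz matrix, has all these minors nonnegative because the dual algebra supplies a $\mathbb{Z}_{\ge 0}$-valued ``Cauchy-type'' expansion. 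I would present this last combinatorial positivity as the heart of the argument and simply quote \cite[Chapter~7, Theorem~2.1]{polishchuk-positselski} for the details of the multigraded bookkeeping.

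The main obstacle I anticipate is bookkeeping rather than conceptual: getting the index sets in the Jacobi complementary-minor identity to line up exactly with the shifted row/column pattern $(i_0, i_0+i_1,\dots)$ versus $(i_1, i_1+i_2,\dots)$, and verifying that all the signs — from the alternating entries $(-1)^{j-i}p_{j-i}$, from Jacobi's identity, and from the passage to a finite truncation — cancel to leave a genuine inequality $\det(X_I)\ge 0$ with no stray minus sign. Once the sign accounting is done, the positivity of the dual minors is either immediate from Koszul duality or a direct citation, so I would spend most of the write-up on the linear algebra and defer the rest to \cite{polishchuk-positselski}.
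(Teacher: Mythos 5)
First, note that the paper does not prove this statement at all: it is quoted verbatim, with attribution, from \cite[Chapter~7, Theorem~2.1]{polishchuk-positselski}, so there is no ``paper proof'' to match; the only honest options are to cite that book or to give a genuinely independent argument. Your proposal does neither, because at the decisive step it is circular. The first half of your plan is fine and even classical: since Koszulness forces $\Ext^i_A(\mathbb{Q},\mathbb{Q})$ to sit in internal degree $i$, the identity $\Hilb(A,x)\cdot\Poin(A,-x)=1$ says that the sequences $(a_i)$ and $(p_i)$, $p_i=\dim_{\mathbb{Q}}(A^!)^i$, are related like complete homogeneous and elementary symmetric functions, and Jacobi's complementary-minor identity (equivalently, Jacobi--Trudi duality) does convert the minors $\det(X_I)$ into minors of the same structured type for the Toeplitz matrix of the $p_i$. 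But that is exactly where the argument stops being a reduction: $A^!$ is again a Koszul algebra with graded dimensions $p_i$, so the statement you now need is \emph{the same theorem} applied to $A^!$, and ``running the same argument one step further'' only returns you to $A$ (Koszul duality is an involution), while ``citing the known statement'' means citing the theorem you are trying to prove.

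The remaining substantive claim --- that the dual minor ``counts, with the correct sign, the dimension of a multigraded piece of a tensor/exterior construction on $A^!$ after an inclusion--exclusion, which is manifestly $\ge 0$'' --- is precisely the nontrivial content of the theorem and is asserted, not proved: an inclusion--exclusion expression is never manifestly nonnegative, and the whole point of Polishchuk--Positselski's argument is to exhibit an actual vector space (built from exactness/distributivity properties of Koszul-type complexes for $A$ and $A^!$) whose dimension the determinant computes. Without that construction your write-up proves nothing beyond the elementary equivalence between the inequality for $A$ and the inequality for $A^!$. If you want to keep this as a quoted background result, do what the paper does and simply cite \cite{polishchuk-positselski}; if you want an independent proof, the missing ingredient is the identification of $\det(X_I)$ with the dimension of a concrete (sub)quotient, and your current sketch does not supply it.
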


Notice that, in the case of $\uH_{\M}(x)$ and $\H_{\M}(x)$, the above result asserts that \emph{certain} minors of their Toeplitz matrices are nonnegative, whereas Conjecture~\ref{conj:real-rootedness} is equivalent to stating that \emph{all of them} are nonnegative (again, via the main result of \cite{aissen-schoenberg-whitney}). Observe that, as Polishchuk and Positselski mention in \cite[p.~137]{polishchuk-positselski}, the Koszulness of $A$ in general is not strong enough to conclude the total positivity of the matrix $X$ in Theorem~\ref{thm:polischuk}.

Also, in \cite[Section~4]{reiner-welker} Reiner and Welker make several general observations that yield the real-rootedness of $\uH_{\M}(x)$ and $\H_{\M}(x)$ in a number of cases. For example, as a consequence of \cite[Proposition~4.13]{reiner-welker}, being Koszul guarantees that $\H_{\M}(x)$ and $\uH_{\M}(x)$ have at least one real root. Additionally, since both the augmented Chow ring and the Chow ring are Koszul, Artinian, Gorenstein algebras, and their Hilbert series are $\gamma$-positive polynomials (in particular, the notion of \emph{Charney--Davis quantity} defined in \cite{reiner-welker} yields a nonnegative number), by using \cite[Corollary~4.3 and Corollary~4.14]{reiner-welker} we conclude the following.

\begin{theorem}
    Let $\M$ be a matroid of rank $k\leq 5$. Then $\uH_{\M}(x)$ is real-rooted. If $k\leq 4$, then $\H_{\M}(x)$ is real-rooted.
\end{theorem}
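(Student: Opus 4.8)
The plan is to feed the structural properties of $\uCH(\M)$ and $\CH(\M)$ into the machinery of Reiner and Welker \cite{reiner-welker} connecting Koszulness, the Charney--Davis quantity, and real-rootedness of Hilbert series. First I would record the four relevant facts: $\uCH(\M)$ and $\CH(\M)$ are (a) finite-dimensional and generated in degree one, hence standard graded and Artinian; (b) Gorenstein, since they satisfy Poincar\'e duality by \cite{adiprasito-huh-katz} and \cite{semismall}; (c) Koszul, by Mastroeni--McCullough \cite{mastroeni-mccullough}; and (d) have $\gamma$-positive Hilbert series, by Theorem~\ref{thm:main-gamma-positivity}, so in particular the top $\gamma$-coefficient---which is the Charney--Davis quantity of the palindromic polynomial $\uH_\M(x)$, resp. $\H_\M(x)$, in the sense of \cite{reiner-welker}---is nonnegative. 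Since the socle degree of $\uCH(\M)$ is $\rk(\M)-1$ and that of $\CH(\M)$ is $\rk(\M)$, the hypotheses $k\le 5$ for $\uH$ and $k\le 4$ for $\H$ are exactly the regime of socle degree at most $4$.

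I would then split on the socle degree $d$. For $d\le 3$---which covers $\uH_\M(x)$ when $k\le 4$ and $\H_\M(x)$ when $k\le 3$---nothing beyond $\gamma$-positivity is needed: a palindromic polynomial of degree at most $3$ with constant term $1$ has a $\gamma$-polynomial of degree at most $1$, which is automatically real-rooted, so the polynomial is real-rooted by the equivalence, recalled in Section~\ref{sec:two}, between real-rootedness of a palindromic polynomial and of its $\gamma$-polynomial. (Concretely: $1+ax+x^2$ is real-rooted iff $\gamma_1=a-2\ge 0$, and $1+ax+ax^2+x^3=(1+x)\bigl(1+(a-1)x+x^2\bigr)$ is real-rooted iff $a-1\ge 2$, i.e. iff $\gamma_1 = a-3\ge 0$.) The two remaining cases---$\uH_\M(x)$ for $k=5$ and $\H_\M(x)$ for $k=4$---are exactly socle degree $4$, and here I would invoke \cite[Corollary~4.3 and Corollary~4.14]{reiner-welker}, which together cover the standard graded Artinian Gorenstein Koszul $\mathbb{Q}$-algebras whose Hilbert series has degree $4$ and nonnegative Charney--Davis quantity, and conclude that such a Hilbert series is real-rooted. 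Applying this to $\uCH(\M)$ and to $\CH(\M)$ finishes the argument.

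The entire difficulty is concentrated in the socle-degree-$4$ case, where $\gamma$-positivity by itself does \emph{not} suffice. Writing $\uH_\M(x)=1+ax+bx^2+ax^3+x^4$ one has $\gamma_1=a-4$ and $\gamma_2=b-2a+2$, and real-rootedness is equivalent to $\gamma$-positivity ($\gamma_1,\gamma_2\ge0$) \emph{together with} the discriminant inequality $\gamma_1^2\ge 4\gamma_2$ for the degree-two polynomial $\gamma(\uH_\M,x)$. This last inequality is the crux, and it is not formal; the input that supplies it---and the reason both \cite[Corollary~4.3]{reiner-welker} and \cite[Corollary~4.14]{reiner-welker} are invoked---is the Koszul duality identity $\Hilb(\uCH(\M),x)\cdot\Poin(\uCH(\M),-x)=1$, equivalently the nonnegativity of all coefficients of $1/\uH_\M(-x)$, and likewise for $\CH(\M)$. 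Thus the main obstacle is to verify that the precise hypotheses of those two corollaries match our situation---in particular that the normalization of the Charney--Davis quantity there agrees with the top $\gamma$-coefficient we use, and that the degree conventions line up---after which the conclusion is immediate and no new computation is required.
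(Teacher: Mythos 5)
Your argument is correct and follows the paper's proof in all essentials: identify $\uCH(\M)$ and $\CH(\M)$ as standard graded Artinian Gorenstein Koszul algebras with $\gamma$-positive Hilbert series (so nonnegative Charney--Davis quantity), and invoke \cite[Corollary~4.3 and Corollary~4.14]{reiner-welker} to conclude real-rootedness in socle degree at most~$4$. The explicit split you make between the socle-degree-$\le 3$ cases (where $\gamma$-positivity already forces real-rootedness) and the socle-degree-$4$ case (where Koszulness is genuinely needed for the discriminant inequality $\gamma_1^2\ge 4\gamma_2$) is a helpful unpacking of the same citation the paper leans on, not a different route.
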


\begin{remark}
   It is natural to also ask if the graded M\"obius algebra is Koszul. The relevance of this is clear, as the Hilbert--Poincar\'e series of $\H(\M)$ has as coefficients the number of flats of each rank of $\M$. In other words,
        \[ \Hilb(\H(\M), x) = \sum_{F\in \mathcal{L}(\M)} x^{\rk(F)} = \sum_{j=0}^k W_k\, x^k,\]
    where $W_j = |\{F\in\mathcal{L}(\M): \rk(F) = j\}|$ are the Whitney numbers of the second kind. Although \cite[Theorem~1.1]{braden-huh-matherne-proudfoot-wang} proves that these coefficients are top-heavy, i.e. $W_i \leq W_j$ whenever $i\leq j \leq k-i$, an outstanding conjecture posed by Rota \cite{rota} asserts that they are unimodal. The Koszulness of $\H(\M)$ a priori would provide interesting inequalities between the numbers $W_i$. Unfortunately, it does not hold in general; for example, the uniform matroid $\U_{3,5}$ satisfies
        \[ \Hilb(\H(\M), x) = 1 + 5x + 10x^2 + x^3.\]
    However,
        \[ \frac{1}{\Hilb(\H(\M),-x)} = 1 + 5x + 15x^2 + 26x^3 - 15x^4 - 320x^5 - \cdots,\]
    which yields the impossibility for Koszulness since there are negative coefficients.
\end{remark}

\appendix

\section{}\label{appendix}

In this appendix we show how Corollary~\ref{corollary:uH-uniform} can be used to prove the conjecture posed by Hameister, Rao, and Simpson in \cite[Conjecture 6.2]{hameister-rao-simpson}.

For a matroid $\M$ we denote by $\Delta(\widehat{\mathcal{L}}(\M))$ the order complex of the proper part of the lattice of flats of $\M$; this complex is also known as the \emph{Bergman complex} and has been studied for example in \cite{ardila-klivans}. This is a simplicial complex whose simplices correspond to chains of proper nonempty flats of $\M$. One can consider its \emph{$f$-polynomial}, defined by $f_{\Delta(\widehat{\mathcal{L}}(\M))}(x) := \sum_{i=0}^d f_{i-1}\,x^{d-i}$ where each $f_i$ counts the number of $i$-dimensional faces and $d:=\dim \Delta(\widehat{\mathcal{L}}(\M))=\rk(\M)-2$. Both the $f$- and the $h$-polynomial, which is defined by $h_{\Delta(\widehat{\mathcal{L}}(\M))}(x) := f_{\Delta(\widehat{\mathcal{L}}(\M))}(x-1)$, have nonnegative coefficients (the first is clear; for the second we refer to \cite[Section 7.6]{bjorner-matroids}); each of them is conjectured to have only real roots \cite[Conjecture~1.2]{athanasiadis-kalampogia}.

Hameister, Rao, and Simpson observed that the following equality holds for several small cases of $k$ and $n$:
    \[ h_{\Delta(\widehat{\mathcal{L}}(\U_{k,n}))}(x) = \sum_{i=1}^k \binom{n-i-1}{k-i}\, \uH_{\U_{i,n}}(x),\]
and conjectured that this holds for all $k$ and $n$.

The work of Brenti and Welker \cite[Theorem~1]{brenti-welker} provides an explicit formula for the polynomials on the left-hand side in terms of Eulerian polynomials,\footnote{Using Brenti and Welker's terminology, the displayed formula is explained by considering the simplicial complex given by the $(k-1)$-skeleton of an $n$-simplex, and using that the barycentric subdivision yields the order complex of the lattice of flats of the matroid $\U_{k,n}$ without the top element.} concretely,
    \[ h_{\Delta(\widehat{\mathcal{L}}(\U_{k,n}))}(x) = \sum_{j=0}^{k-1} \binom{n}{j} A_{j}(x)\, (x-1)^{k-1-j}.\]
By applying the principle of inclusion-exclusion and the preceding formula, the conjecture of Hameister, Rao, and Simpson is asserting that 
    \begin{align}
        \uH_{\U_{k,n}}(x) &= \sum_{i=1}^k (-1)^{k-i} \binom{n-i-1}{k-i}\, h_{\Delta(\widehat{\mathcal{L}}(\U_{i,n}))}(x)\nonumber\\
        &= \sum_{i=1}^{k} \sum_{j=0}^{i-1} (-1)^{k-i} \binom{n-i-1}{k-i}  \binom{n}{j} A_{j}(x)\, (x-1)^{i-1-j}.\label{eq:hameister-rao-simspon-to-prove}
    \end{align}
    
In what follows, we show how one can manipulate the right-hand side of Corollary~\ref{corollary:uH-uniform} to prove this equality.

\begin{lemma}\label{reduced-char-poly}
    The reduced characteristic polynomial of the uniform matroid $\U_{k,n}$ is
    \[ \overline{\chi}_{\U_{k,n}}(x) = \sum_{j=0}^{k-1} (-1)^j \binom{n-1}{j}\, x^{k-1-j}. \]
\end{lemma}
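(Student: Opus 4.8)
The plan is to compute $\chi_{\U_{k,n}}(x)$ directly from the definition and then divide by $x-1$. First I would recall that the lattice of flats $\mathcal{L}(\U_{k,n})$ consists of all subsets of $[n]$ of size $\le k-1$ together with the top flat $[n]$, and that a flat $F$ of size $j \le k-1$ has rank $j$. The M\"obius function of the Boolean interval $[\varnothing, F]$ is $\mu(\varnothing, F) = (-1)^j$. Substituting into \eqref{eq:definition-char-poly}, the contribution of the proper flats gives
\[
\sum_{j=0}^{k-1} \binom{n}{j} (-1)^j x^{k-j},
\]
and one must separately handle the top flat $[n]$, whose M\"obius value is $\mu(\varnothing,[n]) = -\sum_{G \subsetneq [n]} \mu(\varnothing,G) = -\sum_{j=0}^{k-1}\binom{n}{j}(-1)^j = (-1)^k\binom{n-1}{k-1}$ by the standard partial-alternating-sum identity for binomial coefficients. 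This contributes $(-1)^k\binom{n-1}{k-1}x^0$.

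Next I would assemble $\chi_{\U_{k,n}}(x)$ and perform the division by $x-1$. Rather than polynomial long division, the cleaner route is to verify the claimed formula by multiplying the proposed answer by $(x-1)$ and checking it equals $\chi_{\U_{k,n}}(x)$. That is, I would show
\[
(x-1)\sum_{j=0}^{k-1} (-1)^j \binom{n-1}{j}\, x^{k-1-j} = \sum_{j=0}^{k}(-1)^j\left(\binom{n}{j} - [j=k]\binom{n}{k} + \cdots\right) x^{k-j};
\]
concretely, the coefficient of $x^{k-j}$ on the left is $(-1)^j\binom{n-1}{j} + (-1)^{j}\binom{n-1}{j-1} = (-1)^j\binom{n}{j}$ for $1 \le j \le k-1$ by Pascal's rule, while the $x^k$ coefficient is $\binom{n-1}{0} = 1 = \binom{n}{0}$ and the $x^0$ coefficient is $-(-1)^{k-1}\binom{n-1}{k-1} = (-1)^k\binom{n-1}{k-1}$. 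This matches the computation of $\chi_{\U_{k,n}}(x)$ above term by term, so the quotient is as claimed.

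There is essentially no serious obstacle here; the only point requiring a little care is the bookkeeping at the top flat, i.e.\ confirming that $\mu(\varnothing,[n]) = (-1)^k\binom{n-1}{k-1}$ and that this is exactly the constant term forced by the $(x-1)$ factorization. One should also note the edge cases: for $k=0$ the matroid is empty and $\overline{\chi}_{\U_{0,0}}(x) = -1$ by the paper's convention (consistent with the empty sum on the right being $0$ only if one adopts the stated special definition — so this case is excluded from the formula, which assumes $k \ge 1$), and for $n=k$ the formula should reduce to $\overline{\chi}_{\U_{n,n}}(x) = (x-1)^{n-1}$, which follows from $\binom{n-1}{j}$ being the ordinary binomial coefficients. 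A sanity check against the displayed low-rank data (e.g.\ $\overline{\chi}_{\U_{k,n}}(1) = 0$ for $n > k \ge 1$, since $\sum_j (-1)^j\binom{n-1}{j} = 0$) confirms the formula, completing the proof.
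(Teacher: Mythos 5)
Your proposal is correct. The paper states this lemma without proof, treating it as a classical fact about uniform matroids, so there is no in-paper argument to compare against; your direct computation is a perfectly standard way to fill it in. The key steps all check out: proper flats of $\U_{k,n}$ are exactly the subsets of size at most $k-1$, each with Boolean M\"obius value $(-1)^j$; the top flat gets $\mu(\varnothing,[n]) = -\sum_{j=0}^{k-1}(-1)^j\binom{n}{j} = (-1)^k\binom{n-1}{k-1}$ by the partial alternating sum identity; and multiplying the claimed formula by $x-1$ and matching coefficients via Pascal's rule recovers $\chi_{\U_{k,n}}(x)$ term by term. The only blemish is the garbled intermediate display containing the ``$[j=k]\binom{n}{k}+\cdots$'' expression, which is not a meaningful identity as written; but since you immediately follow it with the explicit coefficient-by-coefficient verification (constant term $(-1)^k\binom{n-1}{k-1}$, leading term $1$, middle terms $(-1)^j\binom{n}{j}$), the argument stands, and your edge-case remarks ($k\geq 1$ assumed, Boolean case reducing to $(x-1)^{n-1}$) are consistent with the paper's conventions.
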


\begin{lemma}\label{lemma:riordan}
    The following identity of binomial coefficients holds:
    \[ \binom{n+p+q+1}{n} = \sum_{j=0}^n \binom{p+j}{j}\binom{q+n-j}{n-j}.\]
\end{lemma}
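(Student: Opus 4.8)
The plan is to prove this identity purely formally, by comparing coefficients in a product of negative binomial series. First I would recall that for any nonnegative integer $a$ one has the formal power series identity
\[ \frac{1}{(1-x)^{a+1}} = \sum_{j\geq 0} \binom{a+j}{j}\, x^j,\]
which is the expansion $\binom{-(a+1)}{j} = (-1)^j\binom{a+j}{j}$ of the generalized binomial theorem. Applying this with $a=p$ and with $a=q$ and multiplying, I obtain
\[ \frac{1}{(1-x)^{p+1}}\cdot\frac{1}{(1-x)^{q+1}} = \frac{1}{(1-x)^{p+q+2}}.\]

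Next I would extract the coefficient of $x^n$ on both sides. On the left, the Cauchy product of $\sum_j \binom{p+j}{j}x^j$ with $\sum_m \binom{q+m}{m}x^m$ has $x^n$-coefficient equal to $\sum_{j=0}^n \binom{p+j}{j}\binom{q+n-j}{n-j}$. On the right, the same negative binomial expansion with $a=p+q+1$ gives $x^n$-coefficient $\binom{(p+q+1)+n}{n} = \binom{n+p+q+1}{n}$. Equating the two expressions for the coefficient of $x^n$ yields precisely the asserted identity.

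I do not expect any genuine obstacle here: the only points to be careful about are to work throughout with formal power series so that no convergence question arises, and to record the elementary fact $\binom{-(a+1)}{j} = (-1)^j\binom{a+j}{j}$ that underlies the expansion. If a combinatorial argument is preferred instead, one can observe that $\binom{n+p+q+1}{n}$ counts the ways to distribute $n$ indistinguishable balls among $p+q+2$ boxes, and splitting each such distribution according to the number $j$ of balls placed in the first $p+1$ boxes gives $\sum_{j=0}^n\binom{p+j}{j}\binom{q+(n-j)}{n-j}$, again establishing the identity.
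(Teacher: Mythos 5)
Your proof is correct. Note that the paper does not actually prove this lemma: it merely cites Riordan's book and remarks that one can also argue by induction on $n$ (or on $p,q$). Your generating-function argument---expanding $(1-x)^{-(p+1)}$ and $(1-x)^{-(q+1)}$ as negative binomial series, multiplying, and comparing the coefficient of $x^n$ with that of $(1-x)^{-(p+q+2)}$---is a complete, self-contained verification, and both the Cauchy-product computation and the coefficient $\binom{n+p+q+1}{n}$ on the right-hand side check out. Your alternative stars-and-bars argument is also sound: $\binom{n+p+q+1}{n}$ counts distributions of $n$ indistinguishable balls into $p+q+2$ boxes, and conditioning on the number $j$ of balls in the first $p+1$ boxes gives exactly $\binom{p+j}{j}\binom{q+n-j}{n-j}$. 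Either version buys something the paper's treatment does not, namely an explicit proof rather than a reference; the generating-function route generalizes readily to other convolution identities of this type, while the bijective route explains combinatorially why the identity holds.
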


A proof of the above identity can be found in Riordan's book \cite[p.~148]{riordan}; alternatively one can prove it just by induction. We are ready to prove the main result of this appendix.

\begin{theorem}
    The following identity holds:
    \[ h_{\Delta(\widehat{\mathcal{L}}(\U_{k,n}))}(x) = \sum_{i=1}^k \binom{n-i-1}{k-i}\, \uH_{\U_{i,n}}(x).\]
\end{theorem}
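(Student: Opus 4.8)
The plan is to reduce everything, via the reformulation \eqref{eq:hameister-rao-simspon-to-prove} already established above, to a single instance of Lemma~\ref{lemma:riordan}; thus the goal becomes to prove
\[
\uH_{\U_{k,n}}(x) = \sum_{i=1}^{k}\sum_{j=0}^{i-1}(-1)^{k-i}\binom{n-i-1}{k-i}\binom{n}{j}A_{j}(x)\,(x-1)^{i-1-j}.
\]
First I would apply Corollary~\ref{corollary:uH-uniform} to rewrite the left-hand side as $\sum_{j=0}^{k-1}\binom{n}{j}A_{j}(x)\,\overline{\chi}_{\U_{k-j,n-j}}(x)$, and interchange the order of summation on the right-hand side so that $j$ runs on the outside. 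Matching the term carrying $\binom{n}{j}A_{j}(x)$ on each side (there is no linear-independence subtlety here, since we are building the identity rather than deducing it term by term), it then suffices to prove, for every $j$ with $0\le j\le k-1$, the polynomial identity
\[
\overline{\chi}_{\U_{k-j,\,n-j}}(x) = \sum_{i=j+1}^{k}(-1)^{k-i}\binom{n-i-1}{k-i}\,(x-1)^{i-1-j}.
\]

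Next I would pass to cleaner variables: set $K:=k-j\ge 1$, $N:=n-j$, and $r:=i-j$, so the identity becomes $\overline{\chi}_{\U_{K,N}}(x)=\sum_{r=1}^{K}(-1)^{K-r}\binom{N-r-1}{K-r}(x-1)^{r-1}$. Expanding $(x-1)^{r-1}$ by the binomial theorem and collecting powers of $x$, the crucial observation is that the sign $(-1)^{K-r}\cdot(-1)^{r-1-t}=(-1)^{K-1-t}$ attached to $x^{t}$ does not depend on $r$; hence the coefficient of $x^{t}$ on the right-hand side is $(-1)^{K-1-t}\sum_{r=t+1}^{K}\binom{N-r-1}{K-r}\binom{r-1}{t}$. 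On the other hand, Lemma~\ref{reduced-char-poly} gives that the coefficient of $x^{t}$ in $\overline{\chi}_{\U_{K,N}}(x)$ equals $(-1)^{K-1-t}\binom{N-1}{K-1-t}$. So the theorem comes down to the purely combinatorial identity
\[
\sum_{r=t+1}^{K}\binom{N-r-1}{K-r}\binom{r-1}{t} = \binom{N-1}{K-1-t}, \qquad 0\le t\le K-1.
\]

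Finally I would prove this last identity by the substitution $r=u+t+1$, which turns the left-hand side into $\sum_{u\ge 0}\binom{u+t}{u}\binom{(M-u)+q}{M-u}$ with $M:=K-1-t$ and $q:=n-k-1$; one checks directly that $N-1=M+q+t+1$, so the right-hand side is $\binom{M+q+t+1}{M}$, and the identity is exactly Lemma~\ref{lemma:riordan} with parameters $(M,t,q)$. The steps in order are therefore: (1) invoke the reduction \eqref{eq:hameister-rao-simspon-to-prove}; (2) expand by Corollary~\ref{corollary:uH-uniform} and swap summation order; (3) isolate, for fixed $j$, the identity for $\overline{\chi}_{\U_{k-j,n-j}}$; (4) reindex, expand the $(x-1)$-powers, and match coefficients of $x^{t}$ using Lemma~\ref{reduced-char-poly}; (5) recognize the surviving binomial sum as an instance of Lemma~\ref{lemma:riordan}. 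I expect the only real obstacle to be the bookkeeping in step~(4): keeping the nested summations organized, performing the reindexing so that the binomial identity lands precisely in Riordan's form, and checking the sign cancellation $(-1)^{K-r}(-1)^{r-1-t}=(-1)^{K-1-t}$ that makes the inner sum non-alternating. A minor boundary case worth a sentence is $n=k$, where $q=-1$ and $\binom{n-i-1}{k-i}$ vanishes for $i<k$, so the identity degenerates to $\uH_{\U_{n,n}}(x)=A_{n}(x)=h_{\Delta(\widehat{\mathcal{L}}(\U_{n,n}))}(x)$, consistent with the standard binomial conventions used throughout.
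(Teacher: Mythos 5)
Your proposal is correct and follows essentially the same route as the paper: both begin from the reformulation \eqref{eq:hameister-rao-simspon-to-prove}, expand $\uH_{\U_{k,n}}(x)$ via Corollary~\ref{corollary:uH-uniform} and Lemma~\ref{reduced-char-poly}, and reduce to a binomial identity that is precisely an instance of Lemma~\ref{lemma:riordan}. The one organizational improvement you make is the per-$j$ modularization via the substitution $(K,N,r)=(k-j,n-j,i-j)$, which isolates the clean identity $\overline{\chi}_{\U_{K,N}}(x)=\sum_{r=1}^{K}(-1)^{K-r}\binom{N-r-1}{K-r}(x-1)^{r-1}$ and makes it transparent that the sign $(-1)^{K-1-t}$ factors out of the coefficient of $x^{t}$; the paper carries all three indices $i,j,\ell$ through the whole computation and arrives at the equivalent identity $\sum_{j=\ell+i+1}^{k}\binom{n-j-1}{k-j}\binom{j-i-1}{\ell}=\binom{n-i-1}{k-i-\ell-1}$, which is your identity under $K=k-i$, $N=n-i$, $t=\ell$, $r=j-i$. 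Your boundary remark for $n=k$ (where $q=-1$ and the sum collapses to a single surviving term) is a sensible addition that the paper leaves implicit in its binomial conventions.
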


\begin{proof}
    As we have indicated before, proving the above equality is equivalent to proving the validity of equation \eqref{eq:hameister-rao-simspon-to-prove}.  Let us denote by $(\star)$ the right-hand side of that equation.
    We can expand the term $(x-1)^{k-1-j}$ to obtain
    \begin{align}
        (\star) &=  \sum_{i=1}^k \sum_{j=0}^{i-1} \sum_{\ell = 0}^{i-j-1} (-1)^{k-i+i-j-1-\ell} \binom{n-i-1}{k-i} \binom{n}{j} \binom{i-1-j}{\ell} A_j(x)\, x^\ell,\nonumber
        \intertext{which after interchanging the first two sums becomes}
        &=  \sum_{j=0}^{k-1} \sum_{i=j+1}^{k} \sum_{\ell = 0}^{i-j-1} (-1)^{k-j-\ell-1} \binom{n-i-1}{k-i} \binom{n}{j} \binom{i-1-j}{\ell} A_j(x)\, x^\ell, \nonumber\intertext{and after interchanging the order of the second and third sum and relabelling,}\
        &=  \sum_{i=0}^{k-1} \sum_{\ell = 0}^{k-i-1} \sum_{j = \ell +i+1}^{k} (-1)^{k-i-\ell-1} \binom{n-j-1}{k-j} \binom{n}{i} \binom{j-1-i}{\ell} A_i(x)\, x^\ell.\label{eq:star-equals-sum}
    \end{align}
    On the other hand, by combining Corollary~\ref{corollary:uH-uniform} and Lemma~\ref{reduced-char-poly}, the Hilbert series of the Chow ring of $\U_{k,n}$ is given by
    \begin{align}
        \uH_{\U_{k,n}}(x) = & \sum_{i=0}^{k-1} \sum_{j=0}^{k-i-1}(-1)^j\binom{n}{i} \binom{n-i-1}{j}A_i(x)\, x^{k-i-j-1},\nonumber \intertext{and after reindexing the second sum with $\ell = k-i-j-1$,}
        = & \sum_{i=0}^{k-1} \sum_{\ell = 0}^{k-i-1}(-1)^{k-i-\ell-1}\binom{n}{i} \binom{n-i-1}{k-i-\ell -1} A_i(x)\, x^\ell.\label{eq:uHukn-equals-sum}
    \end{align}
    By Lemma~\ref{lemma:riordan}, after reparameterizing, we have the following equality:
    \[
    \sum_{j= \ell+i+1}^{k} \binom{n-j-1}{k-j} \binom{j-i-1}{\ell} = \binom{n-i-1}{k-i-\ell-1},
    \]
    which allows us to conclude that the expressions of equations \eqref{eq:star-equals-sum} and \eqref{eq:uHukn-equals-sum} are equal, and hence $(\star) = \uH_{\U_{k,n}}(x)$, and the proof is complete.
\end{proof}

\section{}\label{appendixb}

\noindent Let us fix integers $k$ and $n$ such that $2\leq k \leq n$. We can consider the vector of consecutive integers $\mathbf{s}:=(n-k+2,\ldots,n)\in\mathbb{Z}^{k-1}$ and define the set $\mathcal{I}_{k,n}:=\mathcal{I}_{k-1}^{\mathbf{s}}$. In other words, we have 
    \[\mathcal{I}_{k,n} :=  \left\{(e_1,\ldots,e_{k-1})\in\mathbb{Z}^{k-1} : 0\leq e_i < s_i \text{ for $1\leq i\leq k-1$}\right\},\]
where additionally we use the conventions $e_0=e_k=0$ and $s_0=s_k=1$. 

We are interested in proving that the polynomial
    \begin{equation}\label{eq:def-ekn} 
    E_{k,n}(x) := \sum_{\sigma\in \mathcal{I}_{k,n}} (1+x)^{\col(\sigma)}x^{\asc(\sigma)}
    \end{equation} 
is precisely the Hilbert series of the augmented Chow ring of the uniform matroid $\U_{k,n}$. Observe that even though the vector $\mathbf{s}=(n-k+2,\ldots,n)$ is not well-defined whenever $k=0$ or $k=1$, we can make sense of the definition of the polynomial $E_{k,n}(x)$ for those values of $k$, just by setting $E_{1,n}(x):=x+1$ and $E_{0,n}(x):=1$. In particular, note that $E_{1,n}(x)$ is consistent with equation \eqref{eq:def-ekn} by interpreting that there is only one ``empty'' vector in $\mathcal{I}_{1,n}$ leading to a single collision and no descents after adding the left zero coordinate $e_0=0$ and the right zero coordinate $e_1=0$.

The first step towards proving the result of our interest, consists of showing first that the polynomials $E_{k,n}(x)$ satisfy the following recursion.

\begin{lemma}\label{lemma:recurrence-hz}
    The polynomials $E_{k,n}(x)$ satisfy the following recurrence:
    \[ E_{k,n}(x) = E_{k-1,n-1}(x) + x \sum_{j=0}^{k-1} \binom{n-1}{j}\,  A_{j}(x)\, E_{k-1-j,n-1-j}(x).\]
    This, along with the initial conditions $E_{0,n}(x) = 1$ for all $n\geq 0$, determines them uniquely.
\end{lemma}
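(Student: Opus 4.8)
The plan is to establish the recurrence directly from the combinatorial definition in \eqref{eq:def-ekn} by analyzing how a tuple $\sigma = (e_1,\ldots,e_{k-1}) \in \mathcal{I}_{k,n}$ behaves according to the value of its first coordinate $e_1$, which ranges over $0 \le e_1 < s_1 = n-k+2$. The boundary conventions $e_0 = 0$, $s_0 = 1$ mean that the position $i = 0$ contributes an ascent precisely when $0 = e_0/s_0 < e_1/s_1$, i.e.\ when $e_1 > 0$, and contributes a collision precisely when $e_1 = 0$. First I would split the sum over $\mathcal{I}_{k,n}$ into the part with $e_1 = 0$ and the part with $e_1 \ge 1$.

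For the part with $e_1 = 0$: here position $0$ is a collision and position $1$ is an ascent iff $0 = e_1/s_1 < e_2/s_2$, which is exactly the ascent/collision condition for the shifted tuple $(e_2,\ldots,e_{k-1})$ viewed with its own left boundary zero. So these tuples are in bijection with $\mathcal{I}_{k-1,n-1}$ (the vector $\mathbf{s}$ for $\mathcal{I}_{k-1,n-1}$ is $(n-k+2,\ldots,n-1)$, which matches $(s_2,\ldots,s_{k-1})$), and the extra factor $(1+x)$ coming from the collision at position $0$ must be cancelled — so I would double-check that the collision count is shifted by exactly one, giving the contribution $E_{k-1,n-1}(x)$. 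For the part with $e_1 \ge 1$: position $0$ is now an ascent, contributing a factor $x$; I would then need to understand the contribution of the remaining structure. The key idea, which mirrors the arguments of Haglund and Zhang for the Boolean case, is to decompose by the first ``collision block'' — the maximal initial run of indices on which $e_i/s_i$ is constant after rescaling — and recognize that summing over the possible values within such a block of length $j+1$ produces an Eulerian polynomial $A_j(x)$ together with a binomial coefficient $\binom{n-1}{j}$ counting the choices of which coordinates lie in the block, after which the tail is governed by $E_{k-1-j,n-1-j}(x)$. This is the step I expect to be the main obstacle: carefully matching the statistics $\asc$ and $\col$ under this decomposition, and in particular verifying that the denominators $s_i = n-k+1+i$ interact correctly with the rescaling so that a block of size $j+1$ genuinely contributes $\binom{n-1}{j}A_j(x)$ rather than some other count. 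The cleanest route is probably to invoke directly the recurrence that Haglund and Zhang establish for $\widetilde{E}^{\mathbf{s}}_m(x)$ with an arithmetic-progression $\mathbf{s}$, or to adapt their lemma on ``first ascent/collision block'' verbatim, since the vector $\mathbf{s} = (n-k+2,\ldots,n)$ is an interval of consecutive integers.

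Once the recurrence is in hand, uniqueness is immediate: given $E_{0,n}(x) = 1$ for all $n$, the right-hand side of the recurrence expresses $E_{k,n}(x)$ solely in terms of $E_{k',n'}(x)$ with $k' < k$, so by induction on $k$ the family is determined. I would then (presumably in a subsequent lemma or in the proof of Theorem~\ref{thm:unifrealrooted}) observe that the polynomials $\H_{\U_{k,n}}(x)$ satisfy the \emph{same} recurrence with the same initial conditions — this should follow by combining the formula of Theorem~\ref{thm:uniform-formulas-body} for $\H_{\U_{k,n}}(x)$ (or the deletion recursion of Corollary~\ref{coro:recursion-hilbert-chow} applied to $\U_{k,n}$ with a non-coloop element, noting $\U_{k,n} \smallsetminus \{i\} \cong \U_{k,n-1}$ and that the relevant flats $F \in \mathscr{S}_i$ are exactly the proper flats not containing $i$, whose restrictions $\U_{k,n}|_F$ are Boolean) — and conclude $E_{k,n}(x) = \H_{\U_{k,n}}(x)$, whence Theorem~\ref{thm:hz} yields real-rootedness. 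For the present lemma, though, only the recurrence and its uniqueness need to be proved, so I would keep the focus on the combinatorial block decomposition of $\mathcal{I}_{k,n}$.
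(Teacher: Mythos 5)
Your decomposition by the \emph{first} coordinate $e_1$ does not work as stated, and the one concrete identification you make is false. If $e_1=0$, the truncated tuple $(e_2,\ldots,e_{k-1})$ ranges over the box with side lengths $(s_2,\ldots,s_{k-1})=(n-k+3,\ldots,n)$, which is the $\mathbf{s}$-vector of $\mathcal{I}_{k-1,n}$, \emph{not} of $\mathcal{I}_{k-1,n-1}$, whose vector is $(n-k+2,\ldots,n-1)=(s_1,\ldots,s_{k-2})$; your parenthetical claim that $(n-k+2,\ldots,n-1)$ ``matches $(s_2,\ldots,s_{k-1})$'' is off by one in every entry. Since for consecutive-integer $\mathbf{s}$ the statistics reduce to ``ascent iff $e_i<e_{i+1}$, collision iff $e_i=e_{i+1}=0$'', the $e_1=0$ tuples contribute $(1+x)\,E_{k-1,n}(x)$ rather than $E_{k-1,n-1}(x)$: already for $k=n=3$ they give $(1+x)(x^2+4x+1)=(1+x)E_{2,3}(x)$, whereas $E_{2,2}(x)=x^2+3x+1$. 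So splitting on $e_1$ yields a recursion in which $n$ does not drop, not the one in the lemma. Within your strategy the correct move is to split on the \emph{last} coordinate $e_{k-1}$ (deleting the last entry of $\mathbf{s}$ really does land in $\mathcal{I}_{k-1,n-1}$), and the extra $(1+x)$ should not be ``cancelled'': the block $(1+x)E_{k-1,n-1}(x)$ is exactly the first term of the recurrence together with its $j=0$ summand.

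The remaining case, which you yourself flag as the main obstacle, is only sketched, and the proposed shortcut is not available: Haglund and Zhang do not prove a recurrence of this shape for arithmetic-progression $\mathbf{s}$; their Theorem~3.1 treats only $\mathbf{s}=(2,3,\ldots,n)$, via the bijection $\Theta\colon\mathfrak{S}_n\to\mathcal{I}_n^{(2,\ldots,n)}$ given by inversion tables. The paper's proof adapts precisely that device instead of a block decomposition: it pads elements of $\mathcal{I}_{k,n}$ with zeros to view them in $\mathcal{I}_{n,n}$, identifies their preimages under $\Theta$ as the permutations with $\sigma_n>\sigma_{n-1}>\cdots>\sigma_k$, rewrites $E_{k,n}(x)=\sum(1+x)^{\bad(\sigma)}x^{\des(\sigma)}$ over this set, and then performs casework on the position $j$ of the value $1$ in $\sigma$; this is where $\binom{n-1}{j-1}$ (choice of the prefix), $A_{j-1}(x)$ (descents of a prefix with no bad positions), the single factor $x$ (the descent just before the $1$), and $E_{k-j,n-j}(x)$ (the suffix) come from. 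Your ``first collision block'' idea might be salvageable, but as written the key counting is not carried out, so the recurrence is not established; only your uniqueness argument (induction on $k$) is complete and correct.
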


\begin{proof}
    Since the elements of $\mathbf{s}$ are consecutive integers, a position $i\in [0,k-1]$ is a collision if and only if $e_i=e_{i+1}=0$. Similarly, $i\in [0,k-1]$ is an ascent if and only if $e_i < e_{i+1}$. Notice that each element $\mathbf{e}\in\mathcal{I}_{k,n}$ can be thought of as an element of $\mathcal{I}_{n,n}$ by adding zeros to the left. For instance, $(2,2,3)\in \mathcal{I}_{4,7}$ can be embedded into $\mathcal{I}_{7,7}$ as $(0,0,0,2,2,3)$.
    In particular, following the bijection $\Theta\colon\mathfrak{S}_n\to\mathcal{I}_{n,n}$ of the proof of \cite[Theorem~3.1]{haglund-zhang}, which is defined by $\pi \mapsto (t_{n-1},\ldots,t_1)$ where $t_i=\#\{j>i:\pi_j < \pi_i\}$, we have that the preimage of $\mathcal{I}_{k,n}\hookrightarrow\mathcal{I}_{n,n}$ under $\Theta$ are precisely the permutations $\sigma\in\mathfrak{S}_n$ such that $\sigma_n>\sigma_{n-1}>\cdots>\sigma_{k}$. Let us denote this set of permutations $\mathfrak{S}_{n,k}$, for each $k$ and $n$. In particular, again reasoning as in the proof of \cite[Theorem~3.1]{haglund-zhang}, we obtain
        \[ E_{k,n}(x) = \sum_{\mathbf{e}\in\mathcal{I}_{k,n}} (1+x)^{\col(\mathbf{e})} x^{\asc(\mathbf{e})} = \sum_{\sigma \in \mathfrak{S}_{n,k}} (1+x)^{\operatorname{bad}(\sigma)}x^{\operatorname{des}(\sigma)},\]
    where $\bad(\sigma)=\{ i\in [n] : \sigma_{i-1}<\sigma_i \text{ and } \sigma_i < \sigma_j \text{ for all } j > i\}$, with the convention that $\sigma_0=0$ and $\des(\sigma)=\{i\in [n-1]:\sigma_i>\sigma_{i+1}\}$. 
    
    Notice that if $\sigma=\sigma_1\cdots\sigma_n\in\mathfrak{S}_{n,k}$ has the property that $\sigma_1=1$, then $\overline{\sigma}:=\sigma_2\cdots\sigma_{n}$ can be thought of as an element of $\mathfrak{S}_{n-1,k-1}$, and $\des(\sigma)=\des(\overline{\sigma})$, but $\bad(\sigma)=\bad(\overline{\sigma})+1$. On the other hand, if $\sigma_j=1$ for $j > 1$, then the condition that the last $n-k$ elements of the permutation are in increasing order forces $2\leq j \leq k$. There are $\binom{n-1}{j-1}$ ways of choosing the elements $\sigma_1\cdots\sigma_{j-1}$ and, for every possible choice, this part will not contain any bad elements; at position $j-1$ we have a descent because $\sigma_{j-1}>\sigma_j=1$, and the possible permutations $\sigma_{j+1}\cdots\sigma_n$ are in bijection with the elements of $\mathfrak{S}_{n-j,k-j}$. Everything considered, we have
    \[ E_{k,n}(x) = (1+x)E_{k-1,n-1}(x) + x \sum_{j=2}^{k} \binom{n-1}{j-1}\,  A_{j-1}(x)\, E_{k-j,n-j}(x),\]
    and, after reindexing the sum to be from $j=1$ to $k-1$ and then rearranging, we obtain the desired recursion.
\end{proof}

A proof of the next result can be found, for example, in \cite[Theorem~1.5]{petersen}.

\begin{lemma}\label{lemma:recursion-quadratic-eulerian}
    The Eulerian polynomials satisfy the following recurrence:
    \[ A_{n+1}(x) = A_n(x) +x \sum_{j=0}^{n-1} \binom{n}{j}\, A_j(x)\, A_{n-j}(x).\] 
\end{lemma}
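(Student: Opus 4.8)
The plan is to give a self-contained combinatorial proof; the reference \cite{petersen} cited above obtains the identity by a manipulation of exponential generating functions, and both routes are short. Recall that $A_{m}(x)=\sum_{\sigma\in\mathfrak{S}_m}x^{\des(\sigma)}$ with $A_0(x)=1$, where $\des(\sigma)=\#\{i\in[m-1]:\sigma_i>\sigma_{i+1}\}$.

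First I would classify the permutations $\sigma=\sigma_1\cdots\sigma_{n+1}\in\mathfrak{S}_{n+1}$ according to the position $i$ of the largest letter $n+1$. If $i=n+1$, then position $n$ is an ascent, so $\des(\sigma)=\des(\sigma_1\cdots\sigma_n)$, and deleting the last letter is a bijection with $\mathfrak{S}_n$; this family contributes exactly $A_n(x)$. If instead $1\le i\le n$, then $\sigma_i=n+1>\sigma_{i+1}$ forces position $i$ to be a descent, while (when $i\ge 2$) $\sigma_{i-1}<n+1=\sigma_i$ forces position $i-1$ to be an ascent. Hence $\des(\sigma)=\des(\sigma_1\cdots\sigma_{i-1})+1+\des(\sigma_{i+1}\cdots\sigma_{n+1})$, and the two flanking words are arbitrary linear orders on disjoint subsets of $[n]$ of sizes $i-1$ and $n+1-i$. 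Choosing which $i-1$ letters go to the left ($\binom{n}{i-1}$ ways) and standardizing both words, this case contributes $\binom{n}{i-1}\,x\,A_{i-1}(x)\,A_{n+1-i}(x)$, with the case $i=1$ included using $A_0(x)=1$ for the empty left word. Summing over $i=1,\dots,n$ and substituting $j=i-1$ gives $x\sum_{j=0}^{n-1}\binom{n}{j}A_j(x)A_{n-j}(x)$; adding the $i=n+1$ contribution yields the claimed recurrence.

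There is essentially no obstacle here; the only points requiring care are checking that the factor $x$ accounts for exactly one descent and no double counting at the two junction positions $i-1$ and $i$, and handling the degenerate left block of size $0$ correctly. As an equally short alternative, one may differentiate the exponential generating function $F(t,x)=\sum_{m\ge 0}A_m(x)\,t^m/m!=\tfrac{x-1}{x-e^{(x-1)t}}$, which satisfies $\partial_t F=xF^2-(x-1)F$ (write $w=e^{(x-1)t}$, so $\partial_t F=wF^2$ and $w=x-(x-1)/F$); extracting the coefficient of $t^n/n!$ gives $A_{n+1}(x)=x\sum_{j=0}^{n}\binom{n}{j}A_j(x)A_{n-j}(x)-(x-1)A_n(x)$, and peeling off the $j=n$ term ($\binom{n}{n}A_n(x)A_0(x)=A_n(x)$) rearranges this to the stated identity.
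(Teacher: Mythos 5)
Your combinatorial argument is correct. The paper itself does not give a proof of this lemma; it simply cites Petersen's book (\cite[Theorem~1.5]{petersen}), so any self-contained argument is already more than the paper supplies. Your main route---classifying $\sigma\in\mathfrak{S}_{n+1}$ by the position $i$ of the letter $n+1$, noting that $i=n+1$ contributes $A_n(x)$ while $1\le i\le n$ creates a forced descent at position $i$ and a forced non-descent at position $i-1$ (when $i\ge2$), then choosing the $i-1$ letters of the left block and standardizing both blocks---is a standard and clean bijective proof, and your bookkeeping of the junction positions is accurate: the single new descent at $i$ is exactly what the factor $x$ records, the position $i-1$ contributes nothing, and the degenerate left block for $i=1$ is handled by $A_0(x)=1$. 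The generating-function alternative you sketch is also correct: from $F=\tfrac{x-1}{x-e^{(x-1)t}}$ one gets $\partial_t F = e^{(x-1)t}F^2 = xF^2-(x-1)F$, and extracting $[t^n/n!]$ and peeling off the $j=n$ term gives the stated recurrence. The bijective proof is preferable for self-containment and transparency; the EGF route is shorter if one already has the closed form of $F$ at hand, which is likely closer to how the cited reference handles it.
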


Now we have all the ingredients to prove the main result of this appendix.

\begin{theorem}
    The polynomials $E_{k,n}(x)$ are given by
    \[E_{k,n}(x) = 1 + x\sum_{j=0}^{k-1} \binom{n}{j}\, A_j(x) (1+x+\cdots+x^{k-1-j}).\]
    In particular, they coincide with the Hilbert series of the augmented Chow ring of $\U_{k,n}$.
\end{theorem}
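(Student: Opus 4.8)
The plan is to show that $E_{k,n}(x)$ equals the stated closed form, after which the ``in particular'' clause is immediate from Theorem~\ref{thm:uniform-formulas-body}, which already records that $1 + x\sum_{j=0}^{k-1} \binom{n}{j}\, A_j(x) (1+x+\cdots+x^{k-1-j})$ computes $\H_{\U_{k,n}}(x)$. So write $F_{k,n}(x) := 1 + x\sum_{j=0}^{k-1} \binom{n}{j}\, A_j(x)\, g_{k-1-j}(x)$, where $g_m(x):=1+x+\cdots+x^m$ and, by convention, $g_{-1}(x):=0$; also $F_{0,n}(x)=1$. By Lemma~\ref{lemma:recurrence-hz}, the family $\{E_{k,n}(x)\}$ is the \emph{unique} one satisfying $E_{0,n}(x)=1$ together with the recursion $E_{k,n}(x) = E_{k-1,n-1}(x) + x\sum_{j=0}^{k-1}\binom{n-1}{j}A_j(x)E_{k-1-j,n-1-j}(x)$. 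Thus it suffices to verify that $F_{k,n}(x)$ satisfies the same initial condition --- which is clear --- and the same recursion.

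For the recursion, the plan is a direct expansion reducing everything to the \emph{quadratic Eulerian recursion} of Lemma~\ref{lemma:recursion-quadratic-eulerian}. First I would substitute the formula for $F$ into the right-hand side, strip off the constant $1$, and reduce to an identity purely among the $A_j$, binomial coefficients, and the geometric sums $g_m(x)$. The two ingredients needed are the elementary identity $(1-x)g_m(x)=1-x^{m+1}$ and the closed form $x\sum_{j=0}^m\binom{m}{j}A_j(x)A_{m-j}(x) = A_{m+1}(x)-(1-x)A_m(x)$, valid for all $m\ge 0$: this follows from Lemma~\ref{lemma:recursion-quadratic-eulerian} after accounting for the boundary term $j=m$ (which contributes the extra $xA_m(x)$), with the case $m=0$ checked by hand. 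Using $\binom{n-1}{j}\binom{n-1-j}{i}=\binom{n-1}{i+j}\binom{i+j}{j}$ to re-sum the double sum arising from $x\sum_j\binom{n-1}{j}A_j(x)F_{k-1-j,n-1-j}(x)$ along $m=i+j$, the closed form collapses the inner convolution into a single sum involving $A_{m+1}(x)$ and $(1-x)A_m(x)g_{k-2-m}(x)$; the latter simplifies to $A_m(x)-x^{k-1-m}A_m(x)$ by the geometric identity. Then Pascal's rule $\binom{n}{j}=\binom{n-1}{j}+\binom{n-1}{j-1}$, the splitting $g_{k-1-j}(x)=g_{k-2-j}(x)+x^{k-1-j}$, and routine reindexing telescope all the remaining terms back to $1 + x\sum_{j=0}^{k-1}\binom{n}{j}A_j(x)g_{k-1-j}(x) = F_{k,n}(x)$, which is exactly what is required.

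The only genuine obstacle is bookkeeping: one must keep careful track of three nested summations, their index ranges, and the shifted geometric factors so that every term cancels or combines correctly (in particular handling the $j=k-1$ endpoint where $g_{k-2-j}$ degenerates). There is no conceptual difficulty once the quadratic Eulerian recursion is invoked in its closed form. Finally, having established $E_{k,n}(x)=F_{k,n}(x)$, Theorem~\ref{thm:uniform-formulas-body} gives $F_{k,n}(x)=\H_{\U_{k,n}}(x)$, hence $E_{k,n}(x)=\H_{\U_{k,n}}(x)$; combined with the Haglund--Zhang real-rootedness result (Theorem~\ref{thm:hz}), this proves that $\H_{\U_{k,n}}(x)$ is real-rooted, which is Theorem~\ref{thm:intro-hz}.
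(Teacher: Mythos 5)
Your proposal is correct and follows essentially the same route as the paper: verify that the closed form satisfies the initial condition and the recursion of Lemma~\ref{lemma:recurrence-hz}, using the binomial identity $\binom{n-1}{j}\binom{n-1-j}{i}=\binom{n-1}{i+j}\binom{i+j}{j}$, the quadratic Eulerian recursion of Lemma~\ref{lemma:recursion-quadratic-eulerian} (in the same closed form absorbing the boundary term), the geometric-sum identity, Pascal's rule, and reindexing, then invoke Theorem~\ref{thm:uniform-formulas-body} for the identification with $\H_{\U_{k,n}}(x)$. The remaining work is exactly the bookkeeping you describe, which the paper carries out in the same way.
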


\begin{proof}
    Clearly, the polynomials on the right match the base cases $E_{0,n}(x)=1$ for all $n\geq 0$. Hence, it suffices to show that they satisfy the recurrence of Lemma~\ref{lemma:recurrence-hz}. Let us focus only on the sum appearing in that recursion; later we will multiply by $x$ and add the expression corresponding to $E_{k-1,n-1}(x)$. We have:
    \begin{align*}
        &\sum_{j=0}^{k-1}\binom{n-1}{j}\, A_j(x)\, \left( 1 + x\sum_{i=0}^{k-2-j} \binom{n-1-j}{i}\, A_i(x)\, (1+\cdots+x^{k-2-j-i})\right)\\
        &=\sum_{j=0}^{k-1}\binom{n-1}{j} A_j(x) + x\sum_{j=0}^{k-1}\sum_{i=0}^{k-2-j}\binom{n-1}{j}\binom{n-1-j}{i}\, A_j(x)\,A_i(x)\, (1+\cdots+x^{k-2-j-i}),\intertext{noticing that $\binom{n-1}{j}\binom{n-1-j}{i}=\binom{n-1}{i+j}\binom{i+j}{j}$, and making the change of variables $r=i+j$,}
        &=\sum_{j=0}^{k-1}\binom{n-1}{j} A_j(x) + x\sum_{j=0}^{k-1}\sum_{r=j}^{k-2}\binom{n-1}{r}\binom{r}{j}\, A_j(x)\,A_{r-j}(x)\, (1+\cdots+x^{k-2-r}),\intertext{interchanging the order of summation,}
        &=\sum_{j=0}^{k-1}\binom{n-1}{j} A_j(x) + \sum_{r=0}^{k-2}\binom{n-1}{r}\, (1+\cdots+x^{k-2-r})\,x\sum_{j=0}^{r}\binom{r}{j}\, A_j(x)\,A_{r-j}(x),\intertext{using Lemma~\ref{lemma:recursion-quadratic-eulerian},}
        &=\sum_{j=0}^{k-1}\binom{n-1}{j} A_j(x) + \sum_{r=0}^{k-2}\binom{n-1}{r}\, (1+\cdots+x^{k-2-r})\left(A_{r+1}(x) + (x-1)A_r(x)\right),\intertext{splitting the second sum and using that $(1+\cdots+x^{k-2-r})(x-1)=x^{k-1-r}-1$,}
        &=\sum_{j=0}^{k-1}\binom{n-1}{j} A_j(x) + \sum_{r=0}^{k-2}\binom{n-1}{r}A_{r+1}(x) (1+\cdots+x^{k-2-r})+\sum_{r=0}^{k-2}\binom{n-1}{r}\,(x^{k-1-r}-1)A_r(x),\intertext{cancelling terms in common between the first and the third sums above,}
        &=\binom{n-1}{k-1}A_{k-1}(x) + \sum_{r=0}^{k-2}\binom{n-1}{r}A_{r+1}(x) (1+\cdots+x^{k-2-r})+\sum_{r=0}^{k-2}\binom{n-1}{r}x^{k-1-r}A_r(x),\intertext{grouping the first term and the last sum,}
        &= \sum_{r=0}^{k-2}\binom{n-1}{r}A_{r+1}(x) (1+\cdots+x^{k-2-r})+\sum_{r=0}^{k-1}\binom{n-1}{r}x^{k-1-r}A_r(x),\intertext{reindexing the first sum to start at $r=1$,}
        &=\sum_{r=1}^{k-1}\binom{n-1}{r-1}A_{r}(x) (1+\cdots+x^{k-1-r})+\sum_{r=0}^{k-1}\binom{n-1}{r}x^{k-1-r}A_r(x).
    \end{align*}
    Now, to conclude the proof we multiply this expression by $x$ and add the term corresponding to $E_{k-1,n-1}(x)$ to obtain
    \begin{align*}
        &1+x\sum_{j=0}^{k-2}\binom{n-1}{j}\, A_j(x)\, (1+\cdots+x^{k-2-j})\\ 
        &\enspace + x\left(\sum_{j=1}^{k-1}\binom{n-1}{j-1}A_{j}(x) (1+\cdots+x^{k-1-j})+\sum_{j=0}^{k-1}\binom{n-1}{j}x^{k-1-j}A_j(x)\right), \intertext{in the second summation above (the first of the second line), we can isolate the term corresponding to $x^{k-1-j}$}  
        &=1+x\sum_{j=0}^{k-2}\binom{n-1}{j}\, A_j(x)\, (1+\cdots+x^{k-2-j})\\
        &\; + x\left(\sum_{j=1}^{k-2}\binom{n-1}{j-1}A_{j}(x) (1+\cdots+x^{k-2-j}) +\sum_{j=1}^{k-1}\binom{n-1}{j-1}A_j(x) x^{k-1-j}+\sum_{j=0}^{k-1}\binom{n-1}{j}x^{k-1-j}A_j(x)\right), \intertext{we separate the $j=0$ term from the first and the fourth sum,}
        &=1+x\left((1+\cdots+x^{k-2}) + x^{k-1} \right)\\
        &\enspace + x\left( \sum_{j=1}^{k-2}\binom{n-1}{j}\, A_j(x)\, (1+\cdots+x^{k-2-j}) + \sum_{j=1}^{k-2}\binom{n-1}{j-1}A_{j}(x) (1+\cdots+x^{k-2-j})\right)\\ 
        & \enspace +x \left(\sum_{j=1}^{k-1}\binom{n-1}{j-1}A_j(x) x^{k-1-j}+\sum_{j=1}^{k-1}\binom{n-1}{j}x^{k-1-j}A_j(x)\right), \intertext{we use Pascal's identity $\binom{n-1}{j}+\binom{n-1}{j-1}=\binom{n}{j}$ with the first and the second sum and with the third and the fourth sum,} 
        &= 1+ x\left((1+\cdots+x^{k-1})+ \sum_{j=1}^{k-2} \binom{n}{j} A_j(x) (1+\cdots+x^{k-2-j})+ \sum_{j=1}^{k-1} \binom{n}{j} A_j(x) x^{k-1-j}  \right),\intertext{finally, we can group the two sums, and add the case $j=0$, to obtain the desired expression,}
        &= 1+ x \sum_{j=0}^{k-1} \binom{n}{j} A_j(x) (1+\cdots+x^{k-1-j}).\qedhere
    \end{align*}
\end{proof}

\section*{Acknowledgments}

The authors want to thank Christos Athanasiadis, Tom Braden, Matt Larson, Nicholas Proudfoot, and Botong Wang for several insightful mathematical discussions and for sharing ideas and thoughts that improved this article in many significant ways. They also thank two anonymous referees for very helpful suggestions and comments, and the organizers of ``Geometry meets combinatorics in Bielefeld'' for setting up the conference in which the authors met each other and where this collaboration started.

\bibliographystyle{amsalpha}
\bibliography{bibliography}

\newcommand{\etalchar}[1]{$^{#1}$}
\providecommand{\bysame}{\leavevmode\hbox to3em{\hrulefill}\thinspace}
\providecommand{\MR}{\relax\ifhmode\unskip\space\fi MR }
\providecommand{\MRhref}[2]{%
  \href{http://www.ams.org/mathscinet-getitem?mr=#1}{#2}
}
\providecommand{\href}[2]{#2}
\begin{thebibliography}{BHM{\etalchar{+}}22b}

\bibitem[AHK18]{adiprasito-huh-katz}
Karim Adiprasito, June Huh, and Eric Katz, \emph{Hodge theory for combinatorial geometries}, Ann. of Math. (2) \textbf{188} (2018), no.~2, 381--452. \MR{3862944}

\bibitem[AK06]{ardila-klivans}
Federico Ardila and Caroline~J. Klivans, \emph{The {B}ergman complex of a matroid and phylogenetic trees}, J. Combin. Theory Ser. B \textbf{96} (2006), no.~1, 38--49. \MR{2185977}

\bibitem[AKE23]{athanasiadis-kalampogia}
Christos~A. Athanasiadis and Katerina Kalampogia-Evangelinou, \emph{Chain enumeration, partition lattices and polynomials with only real roots}, Comb. Theory \textbf{3} (2023), no.~1, Paper No. 12, 21. \MR{4565299}

\bibitem[AP20]{amini-piquerez}
Omid {Amini} and Matthieu {Piquerez}, \emph{{Hodge theory for tropical varieties}}, arXiv e-prints (2020), arXiv:2007.07826.

\bibitem[ASW52]{aissen-schoenberg-whitney}
Michael Aissen, I.~J. Schoenberg, and A.~M. Whitney, \emph{On the generating functions of totally positive sequences. {I}}, J. Analyse Math. \textbf{2} (1952), 93--103. \MR{53174}

\bibitem[AT21]{athanasiadis-tzanaki}
Christos~A. Athanasiadis and Eleni Tzanaki, \emph{Symmetric decompositions, triangulations and real-rootedness}, Mathematika \textbf{67} (2021), no.~4, 840--859. \MR{4304414}

\bibitem[Ath18]{athanasiadis-gamma-positivity}
Christos~A. Athanasiadis, \emph{Gamma-positivity in combinatorics and geometry}, S\'{e}m. Lothar. Combin. \textbf{77} ([2016--2018]), Art. B77i, 64. \MR{3878174}

\bibitem[Ath20]{athanasiadis-eulerian}
\bysame, \emph{Binomial {E}ulerian polynomials for colored permutations}, J. Combin. Theory Ser. A \textbf{173} (2020), 105214, 38. \MR{4056091}

\bibitem[BCC{\etalchar{+}}22]{partial-permutohedra}
Roger~E. {Behrend}, Federico {Castillo}, Anastasia {Chavez}, Alexander {Diaz-Lopez}, Laura {Escobar}, Pamela~E. {Harris}, and Erik {Insko}, \emph{{Partial permutohedra}}, arXiv e-prints (2022), arXiv:2207.14253.

\bibitem[BES22]{backman-eur-simpson}
Spencer {Backman}, Christopher {Eur}, and Connor {Simpson}, \emph{{Simplicial generation of Chow rings of matroids}}, J. Eur. Math. Soc. (JEMS) (2022), to appear.

\bibitem[BHM{\etalchar{+}}22a]{semismall}
Tom Braden, June Huh, Jacob~P. Matherne, Nicholas Proudfoot, and Botong Wang, \emph{A semi-small decomposition of the {C}how ring of a matroid}, Adv. Math. \textbf{409} (2022), Paper No. 108646. \MR{4477425}

\bibitem[BHM{\etalchar{+}}22b]{braden-huh-matherne-proudfoot-wang}
Tom {Braden}, June {Huh}, Jacob~P. {Matherne}, Nicholas {Proudfoot}, and Botong {Wang}, \emph{{Singular Hodge theory for combinatorial geometries}}, arXiv e-prints (2022), arXiv:2010.06088.

\bibitem[BJ22]{branden-jochemko}
Petter Br\"{a}nd\'{e}n and Katharina Jochemko, \emph{The {E}ulerian transformation}, Trans. Amer. Math. Soc. \textbf{375} (2022), no.~3, 1917--1931. \MR{4378084}

\bibitem[Bj{\"o}92]{bjorner-matroids}
Anders Bj{\"o}rner, \emph{The homology and shellability of matroids and geometric lattices}, Matroid applications, Encyclopedia Math. Appl., vol.~40, Cambridge Univ. Press, Cambridge, 1992, pp.~226--283. \MR{1165544}

\bibitem[BO92]{brylawski-oxley}
Thomas Brylawski and James Oxley, \emph{The {T}utte polynomial and its applications}, Matroid applications, Encyclopedia Math. Appl., vol.~40, Cambridge Univ. Press, Cambridge, 1992, pp.~123--225. \MR{1165543}

\bibitem[Br{\"a}15]{branden}
Petter Br{\"a}nd{\'e}n, \emph{Unimodality, log-concavity, real-rootedness and beyond}, Handbook of enumerative combinatorics, Discrete Math. Appl. (Boca Raton), CRC Press, Boca Raton, FL, 2015, pp.~437--483. \MR{3409348}

\bibitem[BS21]{branden-solus}
Petter Br\"{a}nd\'{e}n and Liam Solus, \emph{Symmetric decompositions and real-rootedness}, Int. Math. Res. Not. IMRN (2021), no.~10, 7764--7798. \MR{4259159}

\bibitem[BV20]{braden-vysogorets}
Tom Braden and Artem Vysogorets, \emph{Kazhdan-{L}usztig polynomials of matroids under deletion}, Electron. J. Combin. \textbf{27} (2020), no.~1, Paper No. 1.17, 17. \MR{4061059}

\bibitem[BW08]{brenti-welker}
Francesco Brenti and Volkmar Welker, \emph{{$f$}-vectors of barycentric subdivisions}, Math. Z. \textbf{259} (2008), no.~4, 849--865. \MR{2403744}

\bibitem[DCP95]{deconcini-procesi}
Corrado De~Concini and Claudio Procesi, \emph{Wonderful models of subspace arrangements}, Selecta Math. (N.S.) \textbf{1} (1995), no.~3, 459--494. \MR{1366622}

\bibitem[Dot22]{dotsenko}
Vladimir Dotsenko, \emph{Homotopy invariants for {$\overline{\mathcal{M}}_{0,n}$} via {K}oszul duality}, Invent. Math. \textbf{228} (2022), no.~1, 77--106. \MR{4392457}

\bibitem[EHKR10]{etingof-henriques-kamnitzer-rains}
Pavel Etingof, Andr\'{e} Henriques, Joel Kamnitzer, and Eric~M. Rains, \emph{The cohomology ring of the real locus of the moduli space of stable curves of genus 0 with marked points}, Ann. of Math. (2) \textbf{171} (2010), no.~2, 731--777. \MR{2630055}

\bibitem[EHL23]{stellahedral}
Christopher Eur, June Huh, and Matt Larson, \emph{Stellahedral geometry of matroids}, Forum Math. Pi \textbf{11} (2023), Paper No. e24. \MR{4653766}

\bibitem[EMPV24]{elias-miyata-proudfoot-vecchi}
Ben {Elias}, Dane {Miyata}, Nicholas {Proudfoot}, and Lorenzo {Vecchi}, \emph{{Categorical valuative invariants of polyhedra and matroids}}, arXiv e-prints (2024), arXiv:2401.06869.

\bibitem[EPW16]{elias-proudfoot-wakefield}
Ben Elias, Nicholas Proudfoot, and Max Wakefield, \emph{The {K}azhdan-{L}usztig polynomial of a matroid}, Adv. Math. \textbf{299} (2016), 36--70. \MR{3519463}

\bibitem[EW14]{elias-williamson}
Ben Elias and Geordie Williamson, \emph{The {H}odge theory of {S}oergel bimodules}, Ann. of Math. (2) \textbf{180} (2014), no.~3, 1089--1136. \MR{3245013}

\bibitem[FL24]{ferroni-larson}
Luis Ferroni and Matt Larson, \emph{Kazhdan--{L}usztig polynomials of braid matroids}, Comm. Amer. Math. Soc. \textbf{4} (2024), no.~02, 64--79. \MR{4697925}

\bibitem[FNV23]{ferroni-nasr-vecchi}
Luis Ferroni, George~D. Nasr, and Lorenzo Vecchi, \emph{Stressed {H}yperplanes and {K}azhdan--{L}usztig {G}amma-{P}ositivity for {M}atroids}, Int. Math. Res. Not. IMRN (2023), no.~24, 20883--20942. \MR{4681276}

\bibitem[Fr{\"{o}}99]{froberg}
Ralf Fr{\"{o}}berg, \emph{Koszul algebras}, Advances in commutative ring theory ({F}ez, 1997), Lecture Notes in Pure and Appl. Math., vol. 205, Dekker, New York, 1999, pp.~337--350. \MR{1767430}

\bibitem[FS22]{ferroni-schroter}
Luis {Ferroni} and Benjamin {Schr{\"o}ter}, \emph{{Valuative invariants for large classes of matroids}}, arXiv e-prints (2022), arXiv:2208.04893v3.

\bibitem[FY04]{feichtner-yuzvinsky}
Eva~Maria Feichtner and Sergey Yuzvinsky, \emph{Chow rings of toric varieties defined by atomic lattices}, Invent. Math. \textbf{155} (2004), no.~3, 515--536. \MR{2038195}

\bibitem[Gal05]{gal}
Swiatoslaw~R. Gal, \emph{Real root conjecture fails for five- and higher-dimensional spheres}, Discrete Comput. Geom. \textbf{34} (2005), no.~2, 269--284. \MR{2155722}

\bibitem[GK94]{ginzburg-kapranov}
Victor Ginzburg and Mikhail Kapranov, \emph{Koszul duality for operads}, Duke Math. J. \textbf{76} (1994), no.~1, 203--272. \MR{1301191}

\bibitem[GLX{\etalchar{+}}21]{gao-uniform}
Alice L.~L. Gao, Linyuan Lu, Matthew H.~Y. Xie, Arthur L.~B. Yang, and Philip~B. Zhang, \emph{The {K}azhdan-{L}usztig polynomials of uniform matroids}, Adv. in Appl. Math. \textbf{122} (2021), Paper No. 102117, 24. \MR{4160476}

\bibitem[GPY17]{gedeon-proudfoot-young-survey}
Katie Gedeon, Nicholas Proudfoot, and Benjamin Young, \emph{Kazhdan-{L}usztig polynomials of matroids: a survey of results and conjectures}, S\'{e}m. Lothar. Combin. \textbf{78B} (2017), Art. 80, 12. \MR{3678662}

\bibitem[GS20]{gustafsson-solus}
Nils Gustafsson and Liam Solus, \emph{Derangements, {E}hrhart theory, and local {$h$}-polynomials}, Adv. Math. \textbf{369} (2020), 107169, 35. \MR{4091894}

\bibitem[GX21]{gao-xie}
Alice L.~L. Gao and Matthew H.~Y. Xie, \emph{The inverse {K}azhdan-{L}usztig polynomial of a matroid}, J. Combin. Theory Ser. B \textbf{151} (2021), 375--392. \MR{4294228}

\bibitem[Ham17]{hampe}
Simon Hampe, \emph{The intersection ring of matroids}, J. Combin. Theory Ser. B \textbf{122} (2017), 578--614. \MR{3575220}

\bibitem[Han21]{binhan}
Bin Han, \emph{Gamma-positivity of derangement polynomials and binomial {E}ulerian polynomials for colored permutations}, J. Combin. Theory Ser. A \textbf{182} (2021), Paper No. 105459, 22. \MR{4243364}

\bibitem[HRS21]{hameister-rao-simpson}
Thomas Hameister, Sujit Rao, and Connor Simpson, \emph{Chow rings of vector space matroids}, J. Comb. \textbf{12} (2021), no.~1, 55--83. \MR{4195584}

\bibitem[Huh18]{huh-icm18}
June Huh, \emph{Combinatorial applications of the {H}odge-{R}iemann relations}, Proceedings of the {I}nternational {C}ongress of {M}athematicians---{R}io de {J}aneiro 2018. {V}ol. {IV}. {I}nvited lectures, World Sci. Publ., Hackensack, NJ, 2018, pp.~3093--3111. \MR{3966524}

\bibitem[HZ19]{haglund-zhang}
James Haglund and Philip~B. Zhang, \emph{Real-rootedness of variations of {E}ulerian polynomials}, Adv. in Appl. Math. \textbf{109} (2019), 38--54. \MR{3954084}

\bibitem[JKMS19]{juhnke-murai-sieg}
Martina Juhnke-Kubitzke, Satoshi Murai, and Richard Sieg, \emph{Local {$h$}-vectors of quasi-geometric and barycentric subdivisions}, Discrete Comput. Geom. \textbf{61} (2019), no.~2, 364--379. \MR{3903794}

\bibitem[JKU21]{jensen-kutler-usatine}
David Jensen, Max Kutler, and Jeremy Usatine, \emph{The motivic zeta functions of a matroid}, J. Lond. Math. Soc. (2) \textbf{103} (2021), no.~2, 604--632. \MR{4230913}

\bibitem[KL79]{kazhdan-lusztig}
David Kazhdan and George Lusztig, \emph{Representations of {C}oxeter groups and {H}ecke algebras}, Invent. Math. \textbf{53} (1979), no.~2, 165--184. \MR{560412}

\bibitem[KM96]{kontsevich}
M.~Kontsevich and Yuri Manin, \emph{Quantum cohomology of a product}, Invent. Math. \textbf{124} (1996), no.~1-3, 313--339, With an appendix by R. Kaufmann. \MR{1369420}

\bibitem[KNPV23]{karn-proudfoot-nasr-vecchi}
Trevor Karn, George~D. Nasr, Nicholas Proudfoot, and Lorenzo Vecchi, \emph{Equivariant {Kazhdan}-{Lusztig} theory of paving matroids}, Algebr. Comb. \textbf{6} (2023), no.~3, 677--688.

\bibitem[{Lia}22]{liao}
Hsin-Chieh {Liao}, \emph{{Stembridge codes and Chow rings}}, arXiv e-prints (2022), arXiv:2212.05362.

\bibitem[LNR21]{lee-nasr-radcliffe}
Kyungyong Lee, George~D. Nasr, and Jamie Radcliffe, \emph{A {C}ombinatorial {F}ormula for {K}azhdan-{L}usztig {P}olynomials of {S}parse {P}aving {M}atroids}, Electron. J. Combin. \textbf{28} (2021), no.~4, Paper No. 4.44--. \MR{4395926}

\bibitem[Luc75]{lucas}
Dean Lucas, \emph{Weak maps of combinatorial geometries}, Trans. Amer. Math. Soc. \textbf{206} (1975), 247--279. \MR{371693}

\bibitem[MM23]{mastroeni-mccullough}
Matthew {Mastroeni} and Jason {McCullough}, \emph{{Chow rings of matroids are Koszul}}, Math. Ann. \textbf{387} (2023), no.~3--4, 1819--1859.

\bibitem[Oxl11]{oxley}
James Oxley, \emph{Matroid theory}, second ed., Oxford Graduate Texts in Mathematics, vol.~21, Oxford University Press, Oxford, 2011. \MR{2849819}

\bibitem[Pet15]{petersen}
T.~Kyle Petersen, \emph{Eulerian numbers}, Birkh\"{a}user Advanced Texts: Basler Lehrb\"{u}cher. [Birkh\"{a}user Advanced Texts: Basel Textbooks], Birkh\"{a}user/Springer, New York, 2015, With a foreword by Richard Stanley. \MR{3408615}

\bibitem[PP05]{polishchuk-positselski}
Alexander Polishchuk and Leonid Positselski, \emph{Quadratic algebras}, University Lecture Series, vol.~37, American Mathematical Society, Providence, RI, 2005. \MR{2177131}

\bibitem[Pro18]{proudfoot-kls}
Nicholas Proudfoot, \emph{The algebraic geometry of {K}azhdan-{L}usztig-{S}tanley polynomials}, EMS Surv. Math. Sci. \textbf{5} (2018), no.~1-2, 99--127. \MR{3880222}

\bibitem[PRW08]{postnikov-reiner-williams}
Alex Postnikov, Victor Reiner, and Lauren Williams, \emph{Faces of generalized permutohedra}, Doc. Math. \textbf{13} (2008), 207--273. \MR{2520477}

\bibitem[PXY18]{proudfoot-xu-young}
Nicholas Proudfoot, Yuan Xu, and Ben Young, \emph{The {$Z$}-polynomial of a matroid}, Electron. J. Combin. \textbf{25} (2018), no.~1, Paper No. 1.26, 21. \MR{3785005}

\bibitem[Rio79]{riordan}
John Riordan, \emph{Combinatorial identities}, Robert E. Krieger Publishing Co., Huntington, N.Y., 1979, Reprint of the 1968 original. \MR{554488}

\bibitem[Rot71]{rota}
Gian-Carlo Rota, \emph{Combinatorial theory, old and new}, Actes du {C}ongr\`es {I}nternational des {M}ath\'{e}maticiens ({N}ice, 1970), {T}ome 3, 1971, pp.~229--233. \MR{0505646}

\bibitem[RW05]{reiner-welker}
Victor Reiner and Volkmar Welker, \emph{On the {C}harney-{D}avis and {N}eggers-{S}tanley conjectures}, J. Combin. Theory Ser. A \textbf{109} (2005), no.~2, 247--280. \MR{2121026}

\bibitem[Slo18]{oeis}
Neil J.~A. Sloane, \emph{The on-line encyclopedia of integer sequences}, Notices Amer. Math. Soc. \textbf{65} (2018), no.~9, 1062--1074. \MR{3822822}

\bibitem[Sta92]{stanley-local}
Richard~P. Stanley, \emph{Subdivisions and local {$h$}-vectors}, J. Amer. Math. Soc. \textbf{5} (1992), no.~4, 805--851. \MR{1157293}

\bibitem[Sta09]{stapledon}
Alan Stapledon, \emph{Inequalities and {E}hrhart {$\delta$}-vectors}, Trans. Amer. Math. Soc. \textbf{361} (2009), no.~10, 5615--5626. \MR{2515826}

\bibitem[Sta12]{stanley-ec1}
Richard~P. Stanley, \emph{Enumerative combinatorics. {V}olume 1}, second ed., Cambridge Studies in Advanced Mathematics, vol.~49, Cambridge University Press, Cambridge, 2012. \MR{2868112}

\bibitem[Ste21]{stevens-bachelor}
Matthew Stevens, \emph{Real-rootedness conjectures in matroid theory}, 2021, Thesis (Bachelor)--Stanford University, \url{https://mcs0042.github.io/bachelor.pdf}.

\bibitem[SV15]{savage-visontai}
Carla~D. Savage and Mirk\'{o} Visontai, \emph{The {$\mathbf{s}$}-{E}ulerian polynomials have only real roots}, Trans. Amer. Math. Soc. \textbf{367} (2015), no.~2, 1441--1466. \MR{3280050}

\bibitem[SW20]{shareshian-wachs}
John Shareshian and Michelle~L. Wachs, \emph{Gamma-positivity of variations of {E}ulerian polynomials}, J. Comb. \textbf{11} (2020), no.~1, 1--33. \MR{4015851}

\end{thebibliography}

\end{document}